\newtheorem{theorem}{Theorem}
\theoremstyle{plain}
\newtheorem{axiom}[theorem]{Axiom}
\newtheorem{conclusion}[theorem]{Conclusion}
\newtheorem{conjecture}[theorem]{Conjecture}
\newtheorem{corollary}[theorem]{Corollary}
\newtheorem{definition}[theorem]{Definition}
\newtheorem{example}[theorem]{Example}
\newtheorem{exercise}[theorem]{Exercise}
\newtheorem{lemma}[theorem]{Lemma}
\newtheorem{notation}[theorem]{Notation}
\newtheorem{proposition}[theorem]{Proposition}
\newtheorem{remark}[theorem]{Remark}
\newtheorem{summary}[theorem]{Summary}
\numberwithin{equation}{chapter}
\chardef\@x10\chardef\@xv60
\def\tcitime{
\def\@time{%
  \@minute\time\@hour\@minute\divide\@hour\@xv
  \ifnum\@hour<\@x 0\fi\the\@hour:%
  \multiply\@hour\@xv\advance\@minute-\@hour
  \ifnum\@minute<\@x 0\fi\the\@minute
  }}%
\def\QCTOpt[#1]#2{%
  \def\QCTOptB{#1}
  \def\QCTOptA{#2}
}
\def\QCTNOpt#1{%
  \def\QCTOptA{#1}
  \let\QCTOptB\empty
}
\def\Qct{%
  \@ifnextchar[{%
    \QCTOpt}{\QCTNOpt}
}
\def\QCBOpt[#1]#2{%
  \def\QCBOptB{#1}
  \def\QCBOptA{#2}
}
\def\QCBNOpt#1{%
  \def\QCBOptA{#1}
  \let\QCBOptB\empty
}
\def\Qcb{%
  \@ifnextchar[{%
    \QCBOpt}{\QCBNOpt}
}
\def\PrepCapArgs{%
  \ifx\QCBOptA\empty
    \ifx\QCTOptA\empty
      {}%
    \else
      \ifx\QCTOptB\empty
        {\QCTOptA}%
      \else
        [\QCTOptB]{\QCTOptA}%
      \fi
    \fi
  \else
    \ifx\QCBOptA\empty
      {}%
    \else
      \ifx\QCBOptB\empty
        {\QCBOptA}%
      \else
        [\QCBOptB]{\QCBOptA}%
      \fi
    \fi
  \fi
}
\def\GRAPHICSPS#1{%
 \ifcase\GRAPHICSTYPE
   \special{ps: #1}%
 \or
   \special{language "PS", include "#1"}%
 \fi
}%
\def\graffile#1#2#3#4{%
    \bgroup
    \leavevmode
    \@ifundefined{bbl@deactivate}{\def~{\string~}}{\activesoff}
    \raise -#4 \BOXTHEFRAME{%
        \hbox to #2{\raise #3\hbox to #2{\null #1\hfil}}}%
    \egroup
}%
\def\draftbox#1#2#3#4{%
 \leavevmode\raise -#4 \hbox{%
  \frame{\rlap{\protect\tiny #1}\hbox to #2%
   {\vrule height#3 width\z@ depth\z@\hfil}%
  }%
 }%
}%
\newif\ifwasdraft
\def\GRAPHIC#1#2#3#4#5{%
 \ifnum\draft=\@ne\draftbox{#2}{#3}{#4}{#5}%
  \else\graffile{#1}{#3}{#4}{#5}%
  \fi
 }%
\def\addtoLaTeXparams#1{%
    \edef\LaTeXparams{\LaTeXparams #1}}%
\newif\ifBoxFrame \BoxFramefalse
\newif\ifOverFrame \OverFramefalse
\newif\ifUnderFrame \UnderFramefalse
\def\BOXTHEFRAME#1{%
   \hbox{%
      \ifBoxFrame
         \frame{#1}%
      \else
         {#1}%
      \fi
   }%
}
\def\doFRAMEparams#1{\BoxFramefalse\OverFramefalse\UnderFramefalse\readFRAMEparams#1\end}%
\def\readFRAMEparams#1{%
 \ifx#1\end%
  \let\next=\relax
  \else
  \ifx#1i\dispkind=\z@\fi
  \ifx#1d\dispkind=\@ne\fi
  \ifx#1f\dispkind=\tw@\fi
  \ifx#1t\addtoLaTeXparams{t}\fi
  \ifx#1b\addtoLaTeXparams{b}\fi
  \ifx#1p\addtoLaTeXparams{p}\fi
  \ifx#1h\addtoLaTeXparams{h}\fi
  \ifx#1X\BoxFrametrue\fi
  \ifx#1O\OverFrametrue\fi
  \ifx#1U\UnderFrametrue\fi
  \ifx#1w
    \ifnum\draft=1\wasdrafttrue\else\wasdraftfalse\fi
    \draft=\@ne
  \fi
  \let\next=\readFRAMEparams
  \fi
 \next
 }%
\def\IFRAME#1#2#3#4#5#6{%
      \bgroup
      \let\QCTOptA\empty
      \let\QCTOptB\empty
      \let\QCBOptA\empty
      \let\QCBOptB\empty
      #6%
      \parindent=0pt%
      \leftskip=0pt
      \rightskip=0pt
      \setbox0 = \hbox{\QCBOptA}%
      \@tempdima = #1\relax
      \ifOverFrame
          \typeout{This is not implemented yet}%
          \show\HELP
      \else
         \ifdim\wd0>\@tempdima
            \advance\@tempdima by \@tempdima
            \ifdim\wd0 >\@tempdima
               \textwidth=\@tempdima
               \setbox1 =\vbox{%
                  \noindent\hbox to \@tempdima{\hfill\GRAPHIC{#5}{#4}{#1}{#2}{#3}\hfill}\\%
                  \noindent\hbox to \@tempdima{\parbox[b]{\@tempdima}{\QCBOptA}}%
               }%
               \wd1=\@tempdima
            \else
               \textwidth=\wd0
               \setbox1 =\vbox{%
                 \noindent\hbox to \wd0{\hfill\GRAPHIC{#5}{#4}{#1}{#2}{#3}\hfill}\\%
                 \noindent\hbox{\QCBOptA}%
               }%
               \wd1=\wd0
            \fi
         \else
            \ifdim\wd0>0pt
              \hsize=\@tempdima
              \setbox1 =\vbox{%
                \unskip\GRAPHIC{#5}{#4}{#1}{#2}{0pt}%
                \break
                \unskip\hbox to \@tempdima{\hfill \QCBOptA\hfill}%
              }%
              \wd1=\@tempdima
           \else
              \hsize=\@tempdima
              \setbox1 =\vbox{%
                \unskip\GRAPHIC{#5}{#4}{#1}{#2}{0pt}%
              }%
              \wd1=\@tempdima
           \fi
         \fi
         \@tempdimb=\ht1
         \advance\@tempdimb by \dp1
         \advance\@tempdimb by -#2%
         \advance\@tempdimb by #3%
         \leavevmode
         \raise -\@tempdimb \hbox{\box1}%
      \fi
      \egroup%
}%
\def\DFRAME#1#2#3#4#5{%
 \begin{center}
     \let\QCTOptA\empty
     \let\QCTOptB\empty
     \let\QCBOptA\empty
     \let\QCBOptB\empty
     \ifOverFrame 
        #5\QCTOptA\par
     \fi
     \GRAPHIC{#4}{#3}{#1}{#2}{\z@}
     \ifUnderFrame 
        \nobreak\par\nobreak#5\QCBOptA
     \fi
 \end{center}%
 }%
\def\FFRAME#1#2#3#4#5#6#7{%
  \@ifundefined{floatstyle}
    {
     \begin{figure}[#1]%
    }
    {
	 \ifx#1h
      \begin{figure}[H]%
	 \else
      \begin{figure}[#1]%
	 \fi
	}
  \let\QCTOptA\empty
  \let\QCTOptB\empty
  \let\QCBOptA\empty
  \let\QCBOptB\empty
  \ifOverFrame
    #4
    \ifx\QCTOptA\empty
    \else
      \ifx\QCTOptB\empty
        \caption{\QCTOptA}%
      \else
        \caption[\QCTOptB]{\QCTOptA}%
      \fi
    \fi
    \ifUnderFrame\else
      \label{#5}%
    \fi
  \else
    \UnderFrametrue%
  \fi
  \begin{center}\GRAPHIC{#7}{#6}{#2}{#3}{\z@}\end{center}%
  \ifUnderFrame
    #4
    \ifx\QCBOptA\empty
      \caption{}%
    \else
      \ifx\QCBOptB\empty
        \caption{\QCBOptA}%
      \else
        \caption[\QCBOptB]{\QCBOptA}%
      \fi
    \fi
    \label{#5}%
  \fi
  \end{figure}%
 }%
\def\makeactives{
  \catcode`\"=\active
  \catcode`\;=\active
  \catcode`\:=\active
  \catcode`\'=\active
  \catcode`\~=\active
}
   \gdef\activesoff{%
      \def"{\string"}
      \def;{\string;}
      \def:{\string:}
      \def'{\string'}
      \def~{\string~}
    }
\def\FRAME#1#2#3#4#5#6#7#8{%
 \bgroup
 \ifnum\draft=\@ne
   \wasdrafttrue
 \else
   \wasdraftfalse%
 \fi
 \def\LaTeXparams{}%
 \dispkind=\z@
 \def\LaTeXparams{}%
 \doFRAMEparams{#1}%
 \ifnum\dispkind=\z@\IFRAME{#2}{#3}{#4}{#7}{#8}{#5}\else
  \ifnum\dispkind=\@ne\DFRAME{#2}{#3}{#7}{#8}{#5}\else
   \ifnum\dispkind=\tw@
    \edef\@tempa{\noexpand\FFRAME{\LaTeXparams}}%
    \@tempa{#2}{#3}{#5}{#6}{#7}{#8}%
    \fi
   \fi
  \fi
  \ifwasdraft\draft=1\else\draft=0\fi{}%
  \egroup
 }%
\def\TEXUX#1{"texux"}
\def\limfunc#1{\mathop{\rm #1}}%
\def\func#1{\mathop{\rm #1}\nolimits}%
\long\def\QQQ#1#2{%
     \long\expandafter\def\csname#1\endcsname{#2}}%
\long\def\QQA#1#2{}%
\def\QTR#1#2{{\csname#1\endcsname #2}}
\def\EXPAND#1[#2]#3{}%
\def\NOEXPAND#1[#2]#3{}%
\def\LaTeXparent#1{}%
\def\ChildStyles#1{}%
\def\ChildDefaults#1{}%
\def\QTagDef#1#2#3{}%
  \providecommand{\UNICODE}[2][]{}
\def\QQfnmark#1{\footnotemark}
 \def\abstract{%
  \if@twocolumn
   \section*{Abstract (Not appropriate in this style!)}%
   \else \small 
   \begin{center}{\bf Abstract\vspace{-.5em}\vspace{\z@}}\end{center}%
   \quotation 
   \fi
  }%
   \def\registered{\relax\ifmmode{}\r@gistered
                    \else$\m@th\r@gistered$\fi}%
 \def\r@gistered{^{\ooalign
  {\hfil\raise.07ex\hbox{$\scriptstyle\rm\text{R}$}\hfil\crcr
  \mathhexbox20D}}}}{}%
\newdimen\theight
\def\Column{%
 \vadjust{\setbox\z@=\hbox{\scriptsize\quad\quad tcol}%
  \theight=\ht\z@\advance\theight by \dp\z@\advance\theight by \lineskip
  \kern -\theight \vbox to \theight{%
   \rightline{\rlap{\box\z@}}%
   \vss
   }%
  }%
 }%
\def\qed{%
 \ifhmode\unskip\nobreak\fi\ifmmode\ifinner\else\hskip5\p@\fi\fi
 \hbox{\hskip5\p@\vrule width4\p@ height6\p@ depth1.5\p@\hskip\p@}%
 }%
\def\miss{\hbox{\vrule height2\p@ width 2\p@ depth\z@}}%
\def\tcol#1{{\baselineskip=6\p@ \vcenter{#1}} \Column}  %
\def\newfmtname{LaTeX2e}
  \DeclareOldFontCommand{\rm}{\normalfont\rmfamily}{\mathrm}
  \DeclareOldFontCommand{\sf}{\normalfont\sffamily}{\mathsf}
  \DeclareOldFontCommand{\tt}{\normalfont\ttfamily}{\mathtt}
  \DeclareOldFontCommand{\bf}{\normalfont\bfseries}{\mathbf}
  \DeclareOldFontCommand{\it}{\normalfont\itshape}{\mathit}
  \DeclareOldFontCommand{\sl}{\normalfont\slshape}{\@nomath\sl}
  \DeclareOldFontCommand{\sc}{\normalfont\scshape}{\@nomath\sc}
\def\alpha{{\Greekmath 010B}}%
\def\beta{{\Greekmath 010C}}%
\def\gamma{{\Greekmath 010D}}%
\def\delta{{\Greekmath 010E}}%
\def\epsilon{{\Greekmath 010F}}%
\def\zeta{{\Greekmath 0110}}%
\def\eta{{\Greekmath 0111}}%
\def\theta{{\Greekmath 0112}}%
\def\iota{{\Greekmath 0113}}%
\def\kappa{{\Greekmath 0114}}%
\def\lambda{{\Greekmath 0115}}%
\def\mu{{\Greekmath 0116}}%
\def\nu{{\Greekmath 0117}}%
\def\xi{{\Greekmath 0118}}%
\def\pi{{\Greekmath 0119}}%
\def\rho{{\Greekmath 011A}}%
\def\sigma{{\Greekmath 011B}}%
\def\tau{{\Greekmath 011C}}%
\def\upsilon{{\Greekmath 011D}}%
\def\phi{{\Greekmath 011E}}%
\def\chi{{\Greekmath 011F}}%
\def\psi{{\Greekmath 0120}}%
\def\omega{{\Greekmath 0121}}%
\def\varepsilon{{\Greekmath 0122}}%
\def\vartheta{{\Greekmath 0123}}%
\def\varpi{{\Greekmath 0124}}%
\def\varrho{{\Greekmath 0125}}%
\def\varsigma{{\Greekmath 0126}}%
\def\varphi{{\Greekmath 0127}}%
\def\nabla{{\Greekmath 0272}}
\def\FindBoldGroup{%
   {\setbox0=\hbox{$\mathbf{x\global\edef\theboldgroup{\the\mathgroup}}$}}%
}
\def\Greekmath#1#2#3#4{%
    \if@compatibility
        \ifnum\mathgroup=\symbold
           \mathchoice{\mbox{\boldmath$\displaystyle\mathchar"#1#2#3#4$}}%
                      {\mbox{\boldmath$\textstyle\mathchar"#1#2#3#4$}}%
                      {\mbox{\boldmath$\scriptstyle\mathchar"#1#2#3#4$}}%
                      {\mbox{\boldmath$\scriptscriptstyle\mathchar"#1#2#3#4$}}%
        \else
           \mathchar"#1#2#3#4%
        \fi 
    \else 
        \FindBoldGroup
        \ifnum\mathgroup=\theboldgroup 
           \mathchoice{\mbox{\boldmath$\displaystyle\mathchar"#1#2#3#4$}}%
                      {\mbox{\boldmath$\textstyle\mathchar"#1#2#3#4$}}%
                      {\mbox{\boldmath$\scriptstyle\mathchar"#1#2#3#4$}}%
                      {\mbox{\boldmath$\scriptscriptstyle\mathchar"#1#2#3#4$}}%
        \else
           \mathchar"#1#2#3#4%
        \fi     	    
	  \fi}
\newif\ifGreekBold  \GreekBoldfalse
\let\SAVEPBF=\pbf
\def\pbf{\GreekBoldtrue\SAVEPBF}%
  \newcounter{equationnumber}  
  \def\mathletters{%
     \addtocounter{equation}{1}
     \edef\@currentlabel{\theequation}%
     \setcounter{equationnumber}{\c@equation}
     \setcounter{equation}{0}%
     \edef\theequation{\@currentlabel\noexpand\alph{equation}}%
  }
    \def\BibTeX{{\rm B\kern-.05em{\sc i\kern-.025em b}\kern-.08em
                 T\kern-.1667em\lower.7ex\hbox{E}\kern-.125emX}}}{}%
\def\AmS{{\protect\usefont{OMS}{cmsy}{m}{n}%
                A\kern-.1667em\lower.5ex\hbox{M}\kern-.125emS}}}{}%
\def\@@eqncr{\let\@tempa\relax
    \ifcase\@eqcnt \def\@tempa{& & &}\or \def\@tempa{& &}%
      \else \def\@tempa{&}\fi
     \@tempa
     \if@eqnsw
        \iftag@
           \@taggnum
        \else
           \@eqnnum\stepcounter{equation}%
        \fi
     \fi
     \global\tag@false
     \global\@eqnswtrue
     \global\@eqcnt\z@\cr}
\def\TCItag{\@ifnextchar*{\@TCItagstar}{\@TCItag}}
\def\@TCItag#1{%
    \global\tag@true
    \global\def\@taggnum{(#1)}}
\def\@TCItagstar*#1{%
    \global\tag@true
    \global\def\@taggnum{#1}}
\let\DOTSI\relax
\def\RIfM@{\relax\ifmmode}%
\def\FN@{\futurelet\next}%
\def\iint{\DOTSI\intno@\tw@\FN@\ints@}%
\def\iiint{\DOTSI\intno@\thr@@\FN@\ints@}%
\def\iiiint{\DOTSI\intno@4 \FN@\ints@}%
\def\idotsint{\DOTSI\intno@\z@\FN@\ints@}%
\def\ints@{\findlimits@\ints@@}%
\newif\iflimtoken@
\newif\iflimits@
\def\findlimits@{\limtoken@true\ifx\next\limits\limits@true
 \else\ifx\next\nolimits\limits@false\else
 \limtoken@false\ifx\ilimits@\nolimits\limits@false\else
 \ifinner\limits@false\else\limits@true\fi\fi\fi\fi}%
\def\multint@{\int\ifnum\intno@=\z@\intdots@                          
 \else\intkern@\fi                                                    
 \ifnum\intno@>\tw@\int\intkern@\fi                                   
 \ifnum\intno@>\thr@@\int\intkern@\fi                                 
 \int}
\def\multintlimits@{\intop\ifnum\intno@=\z@\intdots@\else\intkern@\fi
 \ifnum\intno@>\tw@\intop\intkern@\fi
 \ifnum\intno@>\thr@@\intop\intkern@\fi\intop}%
\def\intic@{%
    \mathchoice{\hskip.5em}{\hskip.4em}{\hskip.4em}{\hskip.4em}}%
\def\negintic@{\mathchoice
 {\hskip-.5em}{\hskip-.4em}{\hskip-.4em}{\hskip-.4em}}%
\def\ints@@{\iflimtoken@                                              
 \def\ints@@@{\iflimits@\negintic@
   \mathop{\intic@\multintlimits@}\limits                             
  \else\multint@\nolimits\fi                                          
  \eat@}
 \else                                                                
 \def\ints@@@{\iflimits@\negintic@
  \mathop{\intic@\multintlimits@}\limits\else
  \multint@\nolimits\fi}\fi\ints@@@}%
\def\intkern@{\mathchoice{\!\!\!}{\!\!}{\!\!}{\!\!}}%
\def\plaincdots@{\mathinner{\cdotp\cdotp\cdotp}}%
\def\intdots@{\mathchoice{\plaincdots@}%
 {{\cdotp}\mkern1.5mu{\cdotp}\mkern1.5mu{\cdotp}}%
 {{\cdotp}\mkern1mu{\cdotp}\mkern1mu{\cdotp}}%
 {{\cdotp}\mkern1mu{\cdotp}\mkern1mu{\cdotp}}}%
\def\RIfM@{\relax\protect\ifmmode}
\def\text{\RIfM@\expandafter\text@\else\expandafter\mbox\fi}
\let\nfss@text\text
\def\text@#1{\mathchoice
   {\textdef@\displaystyle\f@size{#1}}%
   {\textdef@\textstyle\tf@size{\firstchoice@false #1}}%
   {\textdef@\textstyle\sf@size{\firstchoice@false #1}}%
   {\textdef@\textstyle \ssf@size{\firstchoice@false #1}}%
   \glb@settings}
\def\textdef@#1#2#3{\hbox{{%
                    \everymath{#1}%
                    \let\f@size#2\selectfont
                    #3}}}
\newif\iffirstchoice@
\def\Let@{\relax\iffalse{\fi\let\\=\cr\iffalse}\fi}%
\def\vspace@{\def\vspace##1{\crcr\noalign{\vskip##1\relax}}}%
\def\multilimits@{\bgroup\vspace@\Let@
 \baselineskip\fontdimen10 \scriptfont\tw@
 \advance\baselineskip\fontdimen12 \scriptfont\tw@
 \lineskip\thr@@\fontdimen8 \scriptfont\thr@@
 \lineskiplimit\lineskip
 \vbox\bgroup\ialign\bgroup\hfil$\m@th\scriptstyle{##}$\hfil\crcr}%
\def\Sb{_\multilimits@}%
\def\endSb{\crcr\egroup\egroup\egroup}%
\def\Sp{^\multilimits@}%
\newdimen\ex@
\def\rightarrowfill@#1{$#1\m@th\mathord-\mkern-6mu\cleaders
 \hbox{$#1\mkern-2mu\mathord-\mkern-2mu$}\hfill
 \mkern-6mu\mathord\rightarrow$}%
\def\leftarrowfill@#1{$#1\m@th\mathord\leftarrow\mkern-6mu\cleaders
 \hbox{$#1\mkern-2mu\mathord-\mkern-2mu$}\hfill\mkern-6mu\mathord-$}%
\def\leftrightarrowfill@#1{$#1\m@th\mathord\leftarrow
\mkern-6mu\cleaders
 \hbox{$#1\mkern-2mu\mathord-\mkern-2mu$}\hfill
 \mkern-6mu\mathord\rightarrow$}%
\def\overrightarrow{\mathpalette\overrightarrow@}%
\def\overrightarrow@#1#2{\vbox{\ialign{##\crcr\rightarrowfill@#1\crcr
 \noalign{\kern-\ex@\nointerlineskip}$\m@th\hfil#1#2\hfil$\crcr}}}%
\def\overleftarrow{\mathpalette\overleftarrow@}%
\def\overleftarrow@#1#2{\vbox{\ialign{##\crcr\leftarrowfill@#1\crcr
 \noalign{\kern-\ex@\nointerlineskip}$\m@th\hfil#1#2\hfil$\crcr}}}%
\def\overleftrightarrow{\mathpalette\overleftrightarrow@}%
\def\overleftrightarrow@#1#2{\vbox{\ialign{##\crcr
   \leftrightarrowfill@#1\crcr
 \noalign{\kern-\ex@\nointerlineskip}$\m@th\hfil#1#2\hfil$\crcr}}}%
\def\underrightarrow{\mathpalette\underrightarrow@}%
\def\underrightarrow@#1#2{\vtop{\ialign{##\crcr$\m@th\hfil#1#2\hfil
  $\crcr\noalign{\nointerlineskip}\rightarrowfill@#1\crcr}}}%
\def\underleftarrow{\mathpalette\underleftarrow@}%
\def\underleftarrow@#1#2{\vtop{\ialign{##\crcr$\m@th\hfil#1#2\hfil
  $\crcr\noalign{\nointerlineskip}\leftarrowfill@#1\crcr}}}%
\def\underleftrightarrow{\mathpalette\underleftrightarrow@}%
\def\underleftrightarrow@#1#2{\vtop{\ialign{##\crcr$\m@th
  \hfil#1#2\hfil$\crcr
 \noalign{\nointerlineskip}\leftrightarrowfill@#1\crcr}}}%
\def\qopnamewl@#1{\mathop{\operator@font#1}\nlimits@}
\let\nlimits@\displaylimits
\def\setboxz@h{\setbox\z@\hbox}
\def\varlim@#1#2{\mathop{\vtop{\ialign{##\crcr
 \hfil$#1\m@th\operator@font lim$\hfil\crcr
 \noalign{\nointerlineskip}#2#1\crcr
 \noalign{\nointerlineskip\kern-\ex@}\crcr}}}}
 \def\rightarrowfill@#1{\m@th\setboxz@h{$#1-$}\ht\z@\z@
  $#1\copy\z@\mkern-6mu\cleaders
  \hbox{$#1\mkern-2mu\box\z@\mkern-2mu$}\hfill
  \mkern-6mu\mathord\rightarrow$}
\def\leftarrowfill@#1{\m@th\setboxz@h{$#1-$}\ht\z@\z@
  $#1\mathord\leftarrow\mkern-6mu\cleaders
  \hbox{$#1\mkern-2mu\copy\z@\mkern-2mu$}\hfill
  \mkern-6mu\box\z@$}
\def\projlim{\qopnamewl@{proj\,lim}}
\def\injlim{\qopnamewl@{inj\,lim}}
\def\varinjlim{\mathpalette\varlim@\rightarrowfill@}
\def\varprojlim{\mathpalette\varlim@\leftarrowfill@}
\def\varliminf{\mathpalette\varliminf@{}}
\def\varliminf@#1{\mathop{\underline{\vrule\@depth.2\ex@\@width\z@
   \hbox{$#1\m@th\operator@font lim$}}}}
\def\varlimsup{\mathpalette\varlimsup@{}}
\def\varlimsup@#1{\mathop{\overline
  {\hbox{$#1\m@th\operator@font lim$}}}}
\def\align{\@verbatim \frenchspacing\@vobeyspaces \@alignverbatim
You are using the "align" environment in a style in which it is not defined.}
\let\csname endalign*\endcsname =\endtrivlist
\def\alignat{\@verbatim \frenchspacing\@vobeyspaces \@alignatverbatim
You are using the "alignat" environment in a style in which it is not defined.}
\let\csname endalignat*\endcsname =\endtrivlist
\def\xalignat{\@verbatim \frenchspacing\@vobeyspaces \@xalignatverbatim
You are using the "xalignat" environment in a style in which it is not defined.}
\let\csname endxalignat*\endcsname =\endtrivlist
\def\gather{\@verbatim \frenchspacing\@vobeyspaces \@gatherverbatim
You are using the "gather" environment in a style in which it is not defined.}
\let\csname endgather*\endcsname =\endtrivlist
\def\multiline{\@verbatim \frenchspacing\@vobeyspaces \@multilineverbatim
You are using the "multiline" environment in a style in which it is not defined.}
\let\csname endmultiline*\endcsname =\endtrivlist
\def\arrax{\@verbatim \frenchspacing\@vobeyspaces \@arraxverbatim
You are using a type of "array" construct that is only allowed in AmS-LaTeX.}
\def\tabulax{\@verbatim \frenchspacing\@vobeyspaces \@tabulaxverbatim
You are using a type of "tabular" construct that is only allowed in AmS-LaTeX.}
\let\csname endarrax*\endcsname =\endtrivlist
\let\csname endtabulax*\endcsname =\endtrivlist
 \def\endequation{%
     \ifmmode\ifinner 
      \iftag@
        \addtocounter{equation}{-1} 
        $\hfil
           \displaywidth\linewidth\@taggnum\egroup \endtrivlist
        \global\tag@false
        \global\@ignoretrue   
      \else
        $\hfil
           \displaywidth\linewidth\@eqnnum\egroup \endtrivlist
        \global\tag@false
        \global\@ignoretrue 
      \fi
     \else   
      \iftag@
        \addtocounter{equation}{-1} 
        \eqno \hbox{\@taggnum}
        \global\tag@false%
        $$\global\@ignoretrue
      \else
        \eqno \hbox{\@eqnnum}
        $$\global\@ignoretrue
      \fi
     \fi\fi
 } 
 \newif\iftag@ \tag@false
 \def\TCItag{\@ifnextchar*{\@TCItagstar}{\@TCItag}}
 \def\@TCItag#1{%
     \global\tag@true
     \global\def\@taggnum{(#1)}}
 \def\@TCItagstar*#1{%
     \global\tag@true
     \global\def\@taggnum{#1}}
     \def\tag{\@ifnextchar*{\@tagstar}{\@tag}}
     \def\@tag#1{%
         \global\tag@true
         \global\def\@taggnum{(#1)}}
     \def\@tagstar*#1{%
         \global\tag@true
         \global\def\@taggnum{#1}}
\begin{document}
\frontmatter
\title[Sobolev inequalities]{Sharp local boundedness and maximum principle in
	the infinitely degenerate regime via DeGiorgi iteration\thanks{\textit{2010 Mathematics Subject Classification}.
Primary 35B65, 35D30, 35H99. Secondary 51F99, 46E35.}}
\author{Lyudmila Korobenko}
\address{Reed College\\
Portland, Oregon, USA}
\email{korobenko@reed.edu}
\author{Cristian Rios}
\address{University of Calgary\\
Calgary, Alberta, Canada}
\email{crios@ucalgary.ca}
\author{Eric Sawyer}
\address{McMaster University\\
Hamilton, Ontario, Canada}
\email{sawyer@mcmaster.ca}
\author{Ruipeng Shen}
\address{Center for Applied Mathematics\\
Tianjin University \\
Tianjin 300072, P.R.China}
\email{srpgow@163.com}
\date{\today }

\begin{abstract}
	We obtain local boundedness and maximum principles for weak subsolutions to
	certain infinitely degenerate elliptic divergence form inhomogeneous
	equations.
	For example, we consider the family $\left\{ f_{\sigma }\right\} _{\sigma
		>0} $ with 
	\begin{equation*}
	f_{\sigma }\left( x\right) =e^{-\left( \frac{1}{\left\vert x\right\vert }%
		\right) ^{\sigma }},\ \ \ \ \ -\infty <x<\infty ,
	\end{equation*}%
	of infinitely degenerate functions at the origin, and show that all weak
	solutions to the associated infinitely degenerate quasilinear equations of
	the form 
	\begin{equation*}
	\func{div}A\left( x,u\right) \func{grad}u=\phi \left( x\right) ,\ \ \
	A\left( x,z\right) \sim \left[ 
	\begin{array}{cc}
	I_{n-1} & 0 \\ 
	0 & f\left( x_{1}\right) ^{2}%
	\end{array}%
	\right] ,
	\end{equation*}%
	with rough data $A$ and $\phi $, are locally bounded for admissible $\phi $
	provided $0<\sigma <1$. We also show that these conditions are \textit{%
		necessary} for local boundedness in dimension $n\geq 3$, thus paralleling
	the known theory for the smooth Kusuoka-Strook operators $\frac{\partial ^{2}%
	}{\partial x_{1}^{2}}+\frac{\partial ^{2}}{\partial x_{2}^{2}}+f_{\sigma
	}\left( x\right) ^{2}\frac{\partial ^{2}}{\partial x_{3}^{2}}$. We also show
	that subsolutions satisfy a maximum principle for admissible $\phi $ under the same restriction on the degeneracy.
	
	In order to prove these theorems, we first establish abstract results in
	which certain Poincar\'{e} and Orlicz Sobolev inequalities are assumed to
	hold. We then develop subrepresentation inequalities for control geometries
	in order to obtain the needed Poincar\'{e} and Orlicz Sobolev inequalities.
\end{abstract}

\maketitle
\tableofcontents

\chapter*{Preface}

There is a large and well-developed theory of elliptic and subelliptic
equations with rough data, beginning with work of DeGiorgi-Nash-Moser, and
also a smaller theory still in its infancy of infinitely degenerate elliptic
equations with smooth data, beginning with work of Fedii and Kusuoka-Strook,
and continued by Morimoto and Christ. Our purpose here is to initiate a
study of the DeGiorgi regularity theory, as presented by Caffarelli-Vasseur,
in the context of equations that are both infinitely degenerate elliptic and
have rough data. This monograph can be viewed as taking the first steps in
such an investigation and more specifically, in identifying a number of
surprises encountered in the implementation of DeGiorgi iteration in the
infinitely degenerate regime. The similar approach of Moser in the
infinitely degenerate regime is initiated in our paper \cite{KoRiSaSh1}, but
is both technially more complicated and more demanding of the underlying
geometry. As a consequence, the results in \cite{KoRiSaSh1} for local
boundedness are considerably weaker than the sharp results obtained here
with the DeGiorgi approach. On the other hand, the method of Moser does
apply to obtain continuity for solutions to inhomogeneous equations, but at
the expense of a much more elaborate proof strategy. The parallel approach
of Nash seems difficult to adapt to the infinitely degenerate case, but
remains a possibility for future research.

\mainmatter

\part{Overview}

The regularity theory of subelliptic linear equations with smooth
coefficients is well established, as evidenced by the results of H\"{o}%
rmander \cite{Ho} and Fefferman and Phong \cite{FePh}. In \cite{Ho}, H\"{o}%
rmander obtained hypoellipticity of sums of squares of smooth vector fields
whose Lie algebra spans at every point. In \cite{FePh}, Fefferman and Phong
considered general nonnegative semidefinite smooth linear operators, and
characterized subellipticity in terms of a containment condition involving
Euclidean balls and \textquotedblright subunit\textquotedblright\ balls
related to the geometry of the nonnegative semidefinite form associated to
the operator.

The theory in the infinite regime however, has only had its surface
scratched so far, as evidenced by the results of Fedii \cite{Fe} and Kusuoka
and Strook \cite{KuStr}. In \cite{Fe}, Fedii proved that the two-dimensional
operator $\frac{\partial }{\partial x^{2}}+f\left( x\right) ^{2}\frac{%
\partial }{\partial y^{2}}$ is hypoelliptic merely under the assumption that 
$f$ is smooth and positive away from $x=0$. In \cite{KuStr}, Kusuoka and
Strook showed that under the same conditions on $f\left( x\right) $, the
three-dimensional analogue $\frac{\partial ^{2}}{\partial x^{2}}+\frac{%
\partial ^{2}}{\partial y^{2}}+f\left( x\right) ^{2}\frac{\partial ^{2}}{%
\partial z^{2}}$ of Fedii's operator is hypoelliptic \emph{if and only if} $%
\lim_{x\rightarrow 0}x\ln f\left( x\right) =0$. These results, together with
some further refinements of Christ \cite{Chr}, illustrate the complexities
associated with regularity in the infinite regime, and point to the fact
that the theory here is still in its infancy.

The problem of extending these results to include quasilinear operators
requires an understanding of the corresponding theory for linear operators
with \emph{nonsmooth} coefficients, generally as rough as the weak solution
itself. In the elliptic case this theory is well-developed and appears for
example in Gilbarg and Trudinger \cite{GiTr} and many other sources. The key
breakthrough there was the H\"{o}lder \emph{apriori} estimate of DeGiorgi,
and its later generalizations independently by Nash and Moser. The extension
of the DeGiorgi-Nash-Moser theory to the subelliptic or finite type setting,
was initiated by Franchi \cite{Fr}, and then continued by many authors,
including one of the present authors with Wheeden \cite{SaWh4}.

The subject of the present monograph is the extension of DeGiorgi-Nash-Moser
theory to the infinitely degenerate regime, and more specifically the
techniques of DeGiorgi\footnote{%
We thank Pablo Ra\'{u}l Stinga for bringing DeGiorgi's method to our
attention.}. Our theorems fall into two broad categories. First, there is
the \emph{abstract theory} in all dimensions, in which we assume appropriate 
\emph{Orlicz} Sobolev inequalities, as opposed to the familiar $L^{p}$
Sobolev inequalities, and deduce local boundedness and maximum principles
for weak subsolutions. This theory relies heavily on extensions of a lemma of DeGiorgi
to the infinitely degenerate regime. Second, there is the \emph{geometric
theory}, in which we establish the required Orlicz Sobolev inequalities for
large families of infinitely degenerate geometries, obtaining sharp results
in dimension $n\geq 3$ for local boundedness. For this we need
subrepresentation theorems with kernels of the form $K\left( x,y\right) =%
\frac{\widehat{d}\left( x,y\right) }{V\left( x,y\right) }$ where 
\begin{equation*}
\widehat{d}\left( x,y\right) =\min \left\{ d\left( x,y\right) ,\frac{1}{%
\left\vert F^{\prime }\left( x_{1}+d\left( x,y\right) \right) \right\vert }%
\right\} ,
\end{equation*}%
$F=\ln \frac{1}{f}$ and $d\left( x,y\right) $ is the control distance
associated with $f$, and where $V\left( x,y\right) $ is the volume of a
control ball centered at $x$ with radius $d\left( x,y\right) $. Typically, $%
\widehat{d}\left( x,y\right) $ is much smaller than $d\left( x,y\right) $ in
the infinitely degenerate regime.

Finally, the contributions of Nash to the classical DeGiorgi-Nash-Moser
theory revolve around moment estimates for solutions, and we have been
unable to extend these to the infinitely degenerate regime, leaving a
tantalizing loose end. We now turn to a more detailed description of these
results and questions in the introduction that follows.

\chapter{Introduction}

In 1971 Fedii proved in \cite{Fe} that the linear second order partial
differential operator%
\begin{equation*}
Lu\left( x,y\right) \equiv \left\{ \frac{\partial }{\partial x^{2}}+f\left(
x\right) ^{2}\frac{\partial }{\partial y^{2}}\right\} u\left( x,y\right)
\end{equation*}%
is \emph{hypoelliptic}, i.e. every distribution solution $u\in \mathcal{D}%
^{\prime }\left( \mathbb{R}^{2}\right) $ to the equation $Lu=\phi \in
C^{\infty }\left( \mathbb{R}^{2}\right) $ in $\mathbb{R}^{2}$ is smooth,
i.e. $u\in C^{\infty }\left( \mathbb{R}^{2}\right) $, provided:

\begin{itemize}
\item $f\in C^{\infty }\left( \mathbb{R}\right) $,

\item $f\left( 0\right) =0$ and $f$ is positive on $\left( -\infty ,0\right)
\cup \left( 0,\infty \right) $.
\end{itemize}

The main feature of this remarkable theorem is that the order of vanishing
of $f$ at the origin is unrestricted, in particular it can vanish to
infinite order. If we consider the analogous (special form) quasilinear
operator,%
\begin{equation*}
\mathcal{L}u\left( x,y\right) \equiv \left\{ \frac{\partial }{\partial x^{2}}%
+f\left( x,u\left( x,y\right) \right) ^{2}\frac{\partial }{\partial y^{2}}%
\right\} u\left( x,y\right) ,
\end{equation*}%
then of course $f\left( x,u\left( x,y\right) \right) $ makes no sense for $u$
a distribution, but in the special case where $f\left( x,z\right) \approx
f\left( x,0\right) $, the appropriate notion of hypoellipticity for $%
\mathcal{L}$ becomes that of $W_{A}^{1,2}\left( \mathbb{R}^{2}\right) $%
-hypoellipticity with $A\equiv \left[ 
\begin{array}{cc}
1 & 0 \\ 
0 & f\left( x,0\right) ^{2}%
\end{array}%
\right] $, which when $A$ is understood, we refer to as simply \emph{weak
hypoellipticity}.

We say that $\mathcal{L}$ is $W_{A}^{1,2}\left( \mathbb{R}^{2}\right) $%
-hypoelliptic if every $W_{A}^{1,2}\left( \mathbb{R}^{2}\right) $-weak
solution $u$ to the equation $\mathcal{L}u=\phi $ is smooth for all smooth
data $\phi \left( x,y\right) $. Here $u\in W_{A}^{1,2}\left( \mathbb{R}%
^{2}\right) $ is a $W_{A}^{1,2}\left( \mathbb{R}^{2}\right) $-weak solution
to $L_{\limfunc{quasi}}u=\phi $ if%
\begin{equation*}
-\int \left( \nabla w\right) ^{\limfunc{tr}}\left[ 
\begin{array}{cc}
1 & 0 \\ 
0 & f\left( x,u\left( x,y\right) \right) ^{2}%
\end{array}%
\right] \nabla u=\int \phi w,\ \ \ \ \ \text{for all }w\in W_{A}^{1,2}\left( 
\mathbb{R}^{2}\right) _{0}\ .
\end{equation*}%
See below for a precise definition of the degenerate Sobolev space $%
W_{A}^{1,2}\left( \mathbb{R}^{2}\right) $, that informally consists of all $%
w\in L^{2}\left( \mathbb{R}^{2}\right) $ for which $\int \left( \nabla
w\right) ^{\limfunc{tr}}A\nabla w<\infty $.

There is apparently no known $W_{A}^{1,2}\left( \mathbb{R}^{2}\right) $%
-hypoelliptic quasilinear operator $\mathcal{L}$ with coefficient $f\left(
x,z\right) $ that vanishes to \emph{infinite} order when $x=0$, despite the
abundance of results when $f$ vanishes to \emph{finite} order. 

Our method for proving regularity of weak solutions $u$ to $\mathcal{L}%
u=\phi $ is to view $u$ as a weak solution to the linear equation 
\begin{equation*}
Lu\left( x,y\right) \equiv \left\{ \frac{\partial }{\partial x^{2}}+g\left(
x,y\right) ^{2}\frac{\partial }{\partial y^{2}}\right\} u\left( x,y\right)
=\phi \left( x,y\right) ,
\end{equation*}%
where both $g\left( x,y\right) =f\left( x,u\left( x,y\right) \right) $ and $%
\phi \left( x,y\right) $ need no longer be smooth, but $g\left( x,y\right) $
satisfies the estimate%
\begin{equation*}
\frac{1}{C}f\left( x,0\right) \leq g\left( x,y\right) \leq Cf\left(
x,0\right) ,\ \ \ \ x\in \mathbb{R},
\end{equation*}%
and $\phi \left( x,y\right) $ is measurable and admissible - see below for
definitions. The method we employ is an adaptation of DeGiorgi iteration.
The infinite degeneracy of $L$\ forces our adaptation of DeGiorgi iteration
to use Young functions in place of power functions.

Another motivation for this approach is the following three dimensional
analogue of Fedii's equation, which Kusuoka and Strook \cite{KuStr}
considered in 1985 
\begin{equation*}
L_{1}\equiv \frac{\partial ^{2}}{\partial x_{1}^{2}}+\frac{\partial ^{2}}{%
\partial x_{2}^{2}}+f\left( x_{1}\right) ^{2}\frac{\partial ^{2}}{\partial
x_{3}^{2}}\ ,
\end{equation*}%
and showed the surprising result that when $f\left( x_{1}\right) $ is smooth
and positive away from the origin, the smooth linear operator $L_{1}$ is
hypoelliptic \emph{if and only if}%
\begin{equation*}
\lim_{r\rightarrow 0}r\ln f\left( r\right) =0.
\end{equation*}%
This is precisely the condition we show to be necessary and sufficient for
local boundedness of weak solutions to our rough homogeneous equations. Thus
we will begin with an abstract approach in higher dimensions, where we
assume certain Orlicz Sobolev inequalities hold, and then specialize to
geometries that are sufficient to prove the required Orlicz Sobolev
inequalities.

More generally, we consider the divergence form equation 
\begin{equation*}
Lu=\nabla ^{\limfunc{tr}}A\left( x\right) \nabla u=\phi ,\ \ \ \ \ x\in
\Omega ,
\end{equation*}%
and the corresponding second order special quasilinear equation (where only $%
u$, and not $\nabla u$, appears nonlinearly),%
\begin{equation}
\mathcal{L}u\equiv \nabla ^{\limfunc{tr}}\mathcal{A}\left( x,u\left(
x\right) \right) \nabla u=\phi ,\ \ \ \ \ x\in \Omega ,  \label{eq_0}
\end{equation}%
and we assume the following quadratic form condition on the `quasilinear'
matrix $\mathcal{A}(x,z)$, 
\begin{equation}
k\,\xi ^{T}A(x)\xi \leq \xi ^{T}\mathcal{A}(x,z)\xi \leq K\,\xi ^{T}A(x)\xi
\ ,  \label{struc_0}
\end{equation}%
for a.e. $x\in \Omega $ and all $z\in \mathbb{R}$, $\xi \in \mathbb{R}^{n}$.
Here $k,K$ are positive constants and $A(x)=B\left( x\right) ^{\limfunc{tr}%
}B\left( x\right) $ where $B\left( x\right) $ is a Lipschitz continuous $%
n\times n$ real-valued matrix defined for $x\in \Omega $. We define the $A$%
-gradient by%
\begin{equation}
\nabla _{A}=B\left( x\right) \nabla \ ,  \label{def A grad}
\end{equation}%
and the associated degenerate Sobolev space $W_{A}^{1,2}\left( \Omega
\right) $ to have norm%
\begin{equation*}
\left\Vert v\right\Vert _{W_{A}^{1,2}}\equiv \sqrt{\int_{\Omega }\left(
\left\vert v\right\vert ^{2}+\nabla v^{\func{tr}}A\nabla v\right) }=\sqrt{%
\int_{\Omega }\left( \left\vert v\right\vert ^{2}+\left\vert \nabla
_{A}v\right\vert ^{2}\right) }.
\end{equation*}

\begin{notation}
Somewhat informally, we use normal font $Lu=\nabla ^{\limfunc{tr}}A\left(
x\right) \nabla u$ for divergence\ form linear operators with nonnegative
matrix $A\left( x\right) $, and we use calligraphic font $\mathcal{L}%
u=\nabla ^{\limfunc{tr}}\mathcal{A}\left( x,u\right) \nabla u$ to denote the
corresponding special form quasilinear operators with matrix $\mathcal{A}%
\left( x,u\right) $ comparable to $A\left( x\right) $.
\end{notation}

In order to define the notion of weak solution to an inhomogeneous equation $%
\mathcal{L}u=\phi $ will assume that either $\phi \in L_{\limfunc{loc}%
}^{2}\left( \Omega \right) $ or that $\phi \in X\left( \Omega \right) $,
where $X\left( \Omega \right) $ is the space of $A$-admissible functions
defined as follows.

\begin{definition}
\label{def A admiss new}Let $\Omega $ be a bounded domain in $\mathbb{R}^{n}$%
. Fix $x\in \Omega $ and $\rho >0$. We say $\phi $ is $A$\emph{-admissible}
at $\left( x,\rho \right) $ if 
\begin{equation*}
\Vert \phi \Vert _{X\left( B\left( x,\rho \right) \right) }\equiv \sup_{v\in
\left( W_{A}^{1,1}\right) _{0}(B\left( x,\rho \right) )}\frac{\int_{B\left(
x,\rho \right) }\left\vert v\phi \right\vert \,dy}{\int_{B\left( x,\rho
\right) }\Vert \nabla _{A}v\Vert \,dy}<\infty .
\end{equation*}%
We say $\phi $ is $A$\emph{-admissible} in $\Omega $, written $\phi \in
X\left( \Omega \right) $, if $\phi $ is $A$\emph{-admissible} at $\left(
x,\rho \right) $ for all $B\left( x,\rho \right) $ contained in $\Omega $.
\end{definition}

\begin{definition}
Let $\Omega $ be a bounded domain in $\mathbb{R}^{n}$ with $A$ and $\mathcal{%
A}$ as above. Assume that $\phi \in L_{\limfunc{loc}}^{2}\left( \Omega
\right) \cup X\left( \Omega \right) $. We say that $u\in W_{A}^{1,2}\left(
\Omega \right) $ is a \emph{weak solution} to $\mathcal{L}u=\phi $ provided%
\begin{equation*}
-\int_{\Omega }\nabla w\left( x\right) ^{\limfunc{tr}}\mathcal{A}\left(
x,u(x)\right) \nabla u=\int_{\Omega }\phi w
\end{equation*}%
for all $w\in \left( W_{A}^{1,2}\right) _{0}\left( \Omega \right) $, where $%
\left( W_{A}^{1,2}\right) _{0}\left( \Omega \right) $ denotes the closure in 
$W_{A}^{1,2}\left( \Omega \right) $ of the subspace of Lipschitz continuous
functions with compact support in $\Omega $. Similarly, $u\in
W_{A}^{1,2}\left( \Omega \right) $ is a \emph{weak solution} to $Lu=\phi $
provided%
\begin{equation*}
-\int_{\Omega }\nabla w\left( x\right) ^{\limfunc{tr}}A\left( x\right)
\nabla u=\int_{\Omega }\phi w
\end{equation*}%
for all $w\in \left( W_{A}^{1,2}\right) _{0}\left( \Omega \right) $.
\end{definition}

Note that our structural condition (\ref{struc_0}) implies that the integral
on the left above is absolutely convergent, and our assumption that $\phi
\in L_{\limfunc{loc}}^{2}\left( \Omega \right) \cup X\left( \Omega \right) $
implies that the integral on the right above is absolutely convergent: 
\begin{eqnarray*}
\int_{\Omega }\left\vert \phi w\right\vert &\leq &\left( \int_{\Omega \cap 
\limfunc{supp}w}\left\vert \phi \right\vert ^{2}\right) ^{\frac{1}{2}}\left(
\int_{\Omega }\left\vert w\right\vert ^{2}\right) ^{\frac{1}{2}}, \\
\int_{\Omega }\left\vert \phi w\right\vert &\leq &\Vert \phi \Vert _{X\left(
\Omega \right) }\int_{\Omega }\Vert \nabla _{A}w\Vert \leq \Vert \phi \Vert
_{X\left( \Omega \right) }\left( \int_{\Omega }\Vert \nabla _{A}w\Vert
^{2}\right) ^{\frac{1}{2}}.
\end{eqnarray*}%
Weak sub and super solutions are defined by replacing $=$ with $\geq $ and $%
\leq $ respectively in the display above. We can define the gradient $\nabla
_{\mathcal{A}}$ more generally for nonnegative semidefinite matrices $%
\mathcal{A}\left( x,u(x)\right) $, where for convenience in notation we
suppress the dependence on $u$.

\begin{definition}
Given a real symmetric nonnegative semidefinite $n\times n\,\ $matrix $A$,
we can write $A=B^{\limfunc{tr}}B$ with $B=D\left( \left\{ \sqrt{\lambda _{j}%
}\right\} _{j=1}^{n}\right) P$ where $D\left( \left\{ d_{j}\right\}
_{j=1}^{n}\right) =\left[ 
\begin{array}{cccc}
d_{1} & 0 & \cdots & 0 \\ 
0 & d_{2} & \cdots & 0 \\ 
\vdots & \vdots & \ddots & \vdots \\ 
0 & 0 & \cdots & d_{n}%
\end{array}%
\right] $ is the diagonal matrix having $d_{j}$ along the diagonal, and $P$
is the orthogonal matrix that diagonalizes $A$ by the spectral theorem, i.e. 
$PAP^{-1}=D\left( \left\{ \lambda _{j}\right\} _{j=1}^{n}\right) $. This
representation is unique up to permutation of the eigenvalues $\left\{
\lambda _{j}\right\} _{j=1}^{n}$ of $A$. Then we define $\nabla _{A}=B\nabla 
$. In the case that $\mathcal{A}\left( x\right) =A\left( x,u\left( x\right)
\right) $, this gives us a definition of $\nabla _{\mathcal{A}}$ for which%
\begin{equation*}
\int_{\Omega }\left( \nabla w\right) ^{\limfunc{tr}}\mathcal{A}\left(
x,u(x)\right) \nabla w=\int_{\Omega }\left( \nabla w\right) ^{\limfunc{tr}}%
\mathcal{A}\left( x\right) \nabla w=\int_{\Omega }\left\vert \nabla _{%
\mathcal{A}}w\right\vert ^{2}.
\end{equation*}
\end{definition}

We will first obtain \emph{abstract} local boundedness and  maximum principles, in which we \emph{assume} appropriate Orlicz-Sobolev
inequalities hold. Then we will apply our study of degenerate geometries to
prove that these Orlicz-Sobolev inequalities hold in specific situations,
thereby obtaining our \emph{geometric} local boundedness and maximum
principles, in which we only assume information on the \emph{size} of the degenerate geometries.

\chapter{DeGiorgi iteration, local boundedness, and maximum principle}

Recall that the methods of DeGiorgi and Moser iteration play off a Sobolev
inequality, that holds for all compactly supported functions, against a
Cacciopoli inequality, that holds only for subsolutions or supersolutions of
the equation. First, from results of Korobenko, Maldonado and Rios in \cite%
{KoMaRi}, it is known that if there exists a Sobolev bump inequality of the
form%
\begin{equation*}
\left\Vert u\right\Vert _{L^{q}\left( \mu _{B}\right) }\leq Cr\left(
B\right) \left\Vert \left\vert \nabla _{A}u\right\vert \right\Vert
_{L^{p}\left( \mu _{B}\right) },\ \ \ \ \ u\in Lip_{\limfunc{compact}}\left(
B\right) ,
\end{equation*}%
for some pair of exponents $1\leq p<q\leq \infty $, and where the balls $B$
are the Carnot-Carath\'{e}odory control balls for the degenerate vector
field $\nabla _{A}=\left( \frac{\partial }{\partial x},f\frac{\partial }{%
\partial y}\right) $ with radius $r\left( B\right) $, and $d\mu _{B}\left(
x,y\right) =\frac{dxdy}{\left\vert B\right\vert }$ is normalized Lebesgue
measure on $B$, then Lebesgue measure must be \emph{doubling} on control
balls. As a consequence, the function $f$ \emph{cannot} vanish to infinite
order. Thus in order to have any hope of implementing either DeGiorgi or
Moser iteration in the infinitely degenerate regime, we must search for a
weaker Sobolev bump inequality, and the natural setting for this is an
Orlicz Sobolev bump inequality%
\begin{equation*}
\left\Vert w\right\Vert _{L^{\Phi }\left( B\right) }\leq C\varphi \left(
r\left( B\right) \right) \left\Vert \nabla _{A}w\right\Vert _{L^{1}\left(
\mu _{B}\right) },\ \ \ \ \ w\in Lip_{\limfunc{compact}}\left( B\right) ,
\end{equation*}%
where the Young function $\Phi \left( t\right) $ is increasing to $\infty $
and convex on $\left( 0,\infty \right) $, but asymptotically closer to the
identity $t$ than any power function $t^{1+\sigma }$, $\sigma >0$. The
`superradius' $\varphi \left( r\right) $ here is nondecreasing and $\varphi
\left( r\right) \geq r$.

We now recall the definition of Orlicz norms. Suppose that $\mu $ is a $%
\sigma $-finite measure on a set $X$, and $\Phi :\left[ 0,\infty \right)
\rightarrow \left[ 0,\infty \right) $ is a Young function, which for our
purposes is a convex piecewise differentiable (meaning there are at most
finitely many points where the derivative of $\Phi $ may fail to exist, but
right and left hand derivatives exist everywhere) function such that $\Phi
\left( 0\right) =0$ and

\begin{equation*}
\frac{\Phi \left( x\right) }{x}\rightarrow \infty \text{ as }x\rightarrow
\infty .
\end{equation*}%
Let $L_{\ast }^{\Phi }$ be the set of measurable functions $f:X\rightarrow 
\mathbb{R}$ such that the integral%
\begin{equation*}
\int_{X}\Phi \left( \left\vert f\right\vert \right) d\mu ,
\end{equation*}%
is finite, where as usual, functions that agree almost everywhere are
identified. Since the set $L_{\ast }^{\Phi }$ may not be closed under scalar
multiplication, we define $L^{\Phi }$ to be the linear span of $L_{\ast
}^{\Phi }$, and then define%
\begin{equation*}
\left\Vert f\right\Vert _{L^{\Phi }\left( \mu \right) }\equiv \inf \left\{
k\in \left( 0,\infty \right) :\int_{X}\Phi \left( \frac{\left\vert
f\right\vert }{k}\right) d\mu \leq 1\right\} .
\end{equation*}%
The Banach space $L^{\Phi }\left( \mu \right) $ is precisely the space of
measurable functions $f$ for which the norm $\left\Vert f\right\Vert
_{L^{\Phi }\left( \mu \right) }$ is finite. The conjugate Young function $%
\widetilde{\Phi }$ is defined by $\widetilde{\Phi }^{\prime }=\left( \Phi
^{\prime }\right) ^{-1}$ and can be used to give an equivalent norm%
\begin{equation*}
\left\Vert f\right\Vert _{L_{\ast }^{\Phi }\left( \mu \right) }\equiv \sup
\left\{ \int_{X}\left\vert fg\right\vert d\mu :\int_{X}\widetilde{\Phi }%
\left( \left\vert g\right\vert \right) d\mu \leq 1\right\} .
\end{equation*}%
The above considerations motivate our approach, to which we now turn.

Let $\Omega $ be a bounded domain in $\mathbb{R}^{n}$. There is a quadruple $%
\left( \mathcal{A},d,\varphi ,\Phi \right) $ of objects of interest in our
abstract local boundedness, and maximum principle theorems in $%
\Omega $, namely

\begin{enumerate}
\item the matrix $\mathcal{A}=\mathcal{A}\left( x,z\right) $ associated with
our equation and the $A$-gradient,

\item a Young function $\Phi $ appearing in our Orlicz Sobolev inequality,

\item a metric $d$ giving rise to the balls $B\left( x,r\right) $ that
appear in our Orlicz Sobolev inequality, and also in our sequence $\left\{
\psi _{j}\right\} _{j=1}^{\infty }$ of accumulating Lipschitz functions, and

\item a positive function $\varphi \left( r\right) $ for $r\in \left(
0,R\right) $ that appears in place of the radius $r$ in our Orlicz Sobolev
inequality.
\end{enumerate}

For the abstract theory we will assume two connections between these
objects, namely

\begin{itemize}
\item the existence of an appropriate sequence $\left\{ \psi _{j}\right\}
_{j=1}^{\infty }$ of accumulating Lipschitz functions that connects two of
the objects of interest $\mathcal{A}$ and $d$, and

\item an Orlicz Sobolev bump inequality, 
\begin{equation*}
\left\Vert w\right\Vert _{L^{\Phi }\left( B\right) }\leq \varphi \left(
r\left( B\right) \right) \left\Vert \nabla _{A}w\right\Vert _{L^{1}\left(
B\right) },\ \ \ \ \ \limfunc{supp}w\subset B,
\end{equation*}%
that connects all four objects of interest $\mathcal{A}$, $d$, $\varphi $
and $\Phi $.
\end{itemize}

We now describe these matters in more detail.

\begin{definition}[Standard sequence of accumulating Lipschitz functions]
\label{def_cutoff}Let $\Omega $ be a bounded domain in $\mathbb{R}^{n}$. Fix 
$r>0$ and $x\in \Omega $. We define an $\left( \mathcal{A},d\right) $-\emph{%
standard} sequence of Lipschitz cutoff functions $\left\{ \psi _{j}\right\}
_{j=1}^{\infty }$ at $\left( x,r\right) $, along with sets $%
B(x,r_{j})\supset \limfunc{supp}\psi _{j}$, to be a sequence satisfying $%
\psi _{j}=1$on $B(x,r_{j+1})$, $r_{1}=r$, $r_{\infty }\equiv
\lim_{j\rightarrow \infty }r_{j}=\frac{1}{2}r$, $r_{j}-r_{j+1}=\frac{c}{%
j^{\gamma }}r$ for a uniquely determined constant $c$ and $\gamma >1$, and $%
\left\Vert \nabla _{A}\psi _{j}\right\Vert _{\infty }\lesssim \frac{%
j^{\gamma }}{r}$ with $\nabla _{A}$ as in (\ref{def A grad}) (see e.g. \cite%
{SaWh4}). 
\end{definition}

We will need to assume the following single scale $\left( \Phi ,\varphi
\right) $-Orlicz Sobolev bump inequality:

\begin{definition}
\label{single scale sob}Let $\Omega $ be a bounded domain in $\mathbb{R}^{n}$%
. Fix $x\in \Omega $ and $\rho >0$. Then the single scale $\left( \Phi
,\varphi \right) $-Orlicz Sobolev bump inequality at $\left( x,\rho \right) $
is: 
\begin{equation}
\left\Vert w\right\Vert _{L^{\Phi }\left( B\right) }\leq \varphi \left( \rho
\right) \left\Vert \nabla _{A}w\right\Vert _{L^{1}\left( B\right) },\ \ \ \
\ w\in Lip_{0}\left( B\left( x,\rho \right) \right) ,  \label{Phi bump' new}
\end{equation}%
\end{definition}

A particular family of Orlicz bump functions that is crucial for our theorem
is the family 
\begin{equation*}
\Phi _{N}\left( t\right) \equiv \left\{ 
\begin{array}{ccc}
t(\ln t)^{N} & \text{ if } & t\geq E=E_{N}=e^{2N} \\ 
\left( \ln E\right) ^{N}t & \text{ if } & 0\leq t\leq E=E_{N}=e^{2N}%
\end{array}%
\right. .
\end{equation*}%
The bump $\Phi _{N}$ is submultiplicative for each $N\geq 1$, i.e.%
\begin{equation*}
\Phi _{N}\left( st\right) \leq \Phi _{N}\left( s\right) \Phi _{N}\left(
t\right) ,\ \ \ \ \ s,t>0,
\end{equation*}%
which can be easily seen using that for $s,t\geq e^{2N}$, 
\begin{eqnarray*}
st\left[ \ln \left( st\right) \right] ^{N} &=&st\left[ \ln s+\ln t\right]
^{N}\leq s\left[ \ln s\right] ^{N}t\left[ \ln t\right] ^{N} \\
&\Longleftrightarrow &\ln s+\ln t\leq \left[ \ln s\right] \left[ \ln t\right]
,
\end{eqnarray*}%
and then that $a+b\leq ab$ if $a,b\geq 2$. Submultiplicativity plays a
critical role in proving our Orlicz Sobolev inequalities below.

For the inhomogeneous equation $\mathcal{L}u=\phi $ we will assume the
forcing function $\phi $ is\ $A$-admissible in $\Omega $. 

\section{Local boundedness and maximum principle for subsolutions}

Recall that a measurable function $u$ in $\Omega $ is \emph{locally bounded
above} at $x$ if $u$ can be modified on a set of measure zero so that the
modified function $\widetilde{u}$ is bounded above in some neighbourhood of $%
x$.

\begin{theorem}[abstract local boundedness]
\label{bound_gen_thm}Let $\Omega $ be a bounded domain in $\mathbb{R}^{n}$
with $n\geq 2$. Suppose that $\mathcal{A}(x,z)$ is a nonnegative
semidefinite matrix in $\Omega \times \mathbb{R}$ that satisfies the
structural condition (\ref{struc_0}). Let $d(x,y)$ be a symmetric metric in $%
\Omega $, and suppose that $B(x,r)=\{y\in \Omega :d(x,y)<r\}$ with $x\in
\Omega $ are the corresponding metric balls. Fix $x\in \Omega $. Then every
weak subsolution of (\ref{eq_0}) is \emph{locally bounded above} at $x$
provided there is $r_{0}>0$ such that:

\begin{enumerate}
\item the function $\phi $ is $A$-admissible at $\left( x,r_{0}\right) $,

\item the single scale $\left( \Phi ,\varphi \right) $-Orlicz Sobolev bump
inequality (\ref{Phi bump' new}) holds at $\left( x,r_{0}\right) $ with $%
\Phi =\Phi _{N}$ for some $N>1$,

\item there exists an $\left( \mathcal{A},d\right) $-\emph{standard}
accumulating sequence of Lipschitz cutoff functions at $\left(
x,r_{0}\right) $.
\end{enumerate}
\end{theorem}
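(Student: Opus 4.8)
The plan is to run a DeGiorgi iteration adapted to the Orlicz setting, playing the single scale $(\Phi_N,\varphi)$-Orlicz Sobolev bump inequality against a Caccioppoli inequality for subsolutions, along the accumulating sequence of balls $B(x,r_j)$ furnished by the standard sequence $\{\psi_j\}$. First I would record the Caccioppoli estimate: for a weak subsolution $u$ of $\mathcal{L}u=\phi$, a level $k$, and a Lipschitz cutoff $\psi$ supported in $B(x,\rho)$, testing the weak formulation with $w=\psi^2(u-k)_+$ and using the structural condition (\ref{struc_0}) gives
\begin{equation*}
\int \psi^2\,|\nabla_A (u-k)_+|^2 \;\lesssim\; \int |\nabla_A\psi|^2\,(u-k)_+^2 \;+\; \Bigl|\int \phi\,\psi^2 (u-k)_+\Bigr|,
\end{equation*}
and the forcing term is controlled by $A$-admissibility of $\phi$ at $(x,r_0)$: $\int|\phi\,\psi^2(u-k)_+|\le \|\phi\|_{X(B(x,r_0))}\int \|\nabla_A(\psi^2(u-k)_+)\|$, which after Cauchy--Schwarz and absorption folds into the left side plus lower-order terms on $A_k:=\{u>k\}\cap B(x,\rho)$ weighted by $|A_k|$ or $\mu_B(A_k)$. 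This is the standard energy bound; the only novelty is keeping careful track of how the admissibility norm enters, since we cannot use an $L^q$ norm of $\phi$.

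Next I would set up the iteration. Fix levels $k_j = M(1-2^{-j})$ (with $M$ to be chosen) and the radii $r_j$ from Definition~\ref{def_cutoff}, so $r_j-r_{j+1}=c j^{-\gamma}r_0$ and $\|\nabla_A\psi_j\|_\infty\lesssim j^\gamma/r_0$. Applying the Orlicz Sobolev bump inequality (\ref{Phi bump' new}) to $w=\psi_j(u-k_j)_+$ on the ball $B(x,r_j)$ bounds $\|\psi_j(u-k_j)_+\|_{L^{\Phi_N}(B)}$ by $\varphi(r_0)\|\nabla_A(\psi_j(u-k_j)_+)\|_{L^1(B)}$, and then Hölder in Orlicz spaces against the conjugate $\widetilde{\Phi_N}$ converts an $L^{\Phi_N}$ mass on the superlevel set into the measure of that set times an $L^1$-type quantity. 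Combining with Caccioppoli yields a recursion of the shape
\begin{equation*}
Y_{j+1} \;\le\; C\,\varphi(r_0)\,2^{j}\,\frac{j^{\gamma}}{r_0}\,\Theta\!\left(\frac{1}{Y_j}\right)\,Y_j,
\end{equation*}
where $Y_j$ is the (normalized) measure of $\{u>k_j\}\cap B(x,r_j)$ together with an energy contribution, and $\Theta$ is the factor coming from inverting $\Phi_N$ — roughly $\Theta(1/Y_j)\sim (\ln(1/Y_j))^{-N}$ because $\Phi_N(t)\sim t(\ln t)^N$. The decisive point is the DeGiorgi-type lemma (the ``extension of DeGiorgi's lemma to the infinitely degenerate regime'' the introduction promised): a sequence $Y_j\ge 0$ satisfying $Y_{j+1}\le C\,b^{j}\,(\ln \tfrac{1}{Y_j})^{-N}Y_j$ with $N>1$ and $b>1$ still converges to $0$, provided $Y_0$ is small enough — and this smallness of $Y_0=\mu_B(\{u>M/2\})$ is arranged by taking $M$ large, using $u\in W_A^{1,2}$. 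Note it is precisely the hypothesis $N>1$ that makes the geometric-type series $\sum b^j(\ln\frac1{Y_j})^{-N}$ summable after the logarithmic gains accumulate; with $N\le 1$ the iteration would not close, which is the structural reason behind the sharp threshold.

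The main obstacle is this last step: unlike the classical case, where one feeds off a genuine power gain $q/p>1$ in the Sobolev exponent and the iteration closes by a clean $Y_{j+1}\le CB^jY_j^{1+\varepsilon}$ recursion, here the ``gain'' is only a slowly varying logarithmic factor, so I must show the iterated logarithmic improvements beat the exponentially growing constants $b^j j^{\gamma}$. The proof of the iteration lemma is where all the care goes — one tracks $L_j:=\ln(1/Y_j)$, shows $L_{j+1}\ge L_j + N\ln L_j - j\ln b - O(\ln j)$, and argues that once $L_0$ is large the term $N\ln L_j$ (compounded) dominates, forcing $L_j\to\infty$, i.e. $Y_j\to 0$. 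Granting that lemma, $Y_j\to 0$ gives $\mu_B(\{u>M\}\cap B(x,r_0/2))=0$, hence $u\le M$ a.e.\ on $B(x,r_0/2)$, which is exactly local boundedness above at $x$. I would also double-check that the Orlicz Hölder step and the submultiplicativity of $\Phi_N$ are used only in estimating $\|\mathbf 1_{A_k}\|_{L^{\widetilde{\Phi_N}}}$ in terms of $\mu_B(A_k)$ and a controlled power of $N$, so that the constants in the recursion depend on $N$, $\varphi(r_0)$, $r_0$, $k$, $K$, $\gamma$ but not on $j$.
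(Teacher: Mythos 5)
Your overall architecture is the right one and matches the paper's: Caccioppoli for $\psi^2(u-k)_+$ with the forcing handled by $A$-admissibility, the Orlicz Sobolev bump played against an Orlicz H\"older estimate of $\Vert \mathbf{1}_{A_k}\Vert_{L^{\widetilde{\Phi_N}}}$ via $\Gamma(t)=1/\widetilde{\Phi}^{-1}(1/t)\lesssim (\ln\frac1t)^{-N}$, and then a logarithmic DeGiorgi iteration. But the iteration lemma you rely on is false as stated, and this is exactly the trap of the infinitely degenerate regime. If $Y_{j+1}\le C\,b^{j}(\ln\frac{1}{Y_j})^{-N}Y_j$ with $b>1$, then setting $L_j=\ln\frac1{Y_j}$ gives $L_{j+1}\ge L_j+N\ln L_j-j\ln b-\ln C$; since the gain forces at most $L_j\lesssim L_0+Nj\ln j$, the cumulative gain $\sum_{j<J}N\ln L_j=O(J\ln J)$ can never beat the cumulative loss $\sum_{j<J}j\ln b\gtrsim J^2$, so $L_J\to-\infty$ no matter how large $L_0$ is. The exponential factor $b^j$ comes precisely from your choice of geometrically spaced levels $k_j=M(1-2^{-j})$: Chebyshev at level $k_{j+1}-k_j=M2^{-j-1}$ produces $4^{j}$, and no power $N$ of a logarithm recovers from that.

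The paper's proof (Proposition \ref{DG}) avoids this by spacing the truncation levels \emph{polynomially}, $C_k=\tau\Vert\phi\Vert_X(1-c(k+1)^{-\varepsilon/2})$, matched to radii with gaps $r_k-r_{k+1}\approx k^{-(1+\varepsilon/2)}r$. Then Chebyshev and the cutoff gradients contribute only factors $(k+2)^{2+\varepsilon}$ and $(k+1)^{1+\varepsilon/2}$, so the log-form recursion reads $b_{k+1}\ge b_k-(1+\tfrac{\varepsilon}{2})\ln(k+1)+N\ln\bigl(b_k-(2+\varepsilon)\ln(k+2)-\gamma\bigr)-O(1)$, and the induction hypothesis $b_k\ge b_0+k$ closes because $N\ln(b_0+k-O(\ln k))-(1+\tfrac{\varepsilon}{2})\ln(k+1)\to\infty$ once $N>1+\tfrac{\varepsilon}{2}$ (achievable for any $N>1$ by shrinking $\varepsilon$). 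This is where the hypothesis $N>1$ is genuinely used; with your geometric levels it is used nowhere that helps. A secondary point: the paper centers the truncations at multiples of $\Vert\phi\Vert_X$ (with a rescaling parameter $\tau$) rather than at an arbitrary large $M$, both to obtain the pointwise bound $\Vert\phi\Vert_X\lesssim (k+2)^{1+\varepsilon/2}u_k$ on $\{u_{k+1}>0\}$ needed for the inhomogeneous Caccioppoli inequality (\ref{Cacc_inhomog}), and to produce the quantitative inner ball inequality (\ref{Inner ball inequ}); your ``take $M$ large'' step would need to be reworked accordingly.
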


\begin{remark}
The hypotheses required for local boundedness of weak solutions to $Lu=\phi $
at a single \emph{fixed} point $x$ in $\Omega $ are quite weak; namely we
only need that the inhomogeneous term $\phi $ is $A$\emph{-admissible} at 
\textbf{just one} point $\left( x,r_{0}\right) $ for some $r_{0}>0$, and
that there are two \emph{single} \emph{scale} conditions relating the
geometry to the equation at \textbf{the one} point $\left( x,r_{0}\right) $.
\end{remark}

\begin{remark}
For the purposes of this paper we could simply take the metric $d$ to be the
Carnot-Carath\'{e}odory metric associated with $A$, but the present
formulation allows for additional flexibility in the choice of balls used
for DeGiorgi iteration in other situations.
\end{remark}

In the special case that a weak subsolution $u$ to (\ref{eq_0}) is \emph{%
nonpositive} on the boundary of a ball $B\left( x,r_{0}\right) $, we can
obtain a global boundedness inequality $\left\Vert u\right\Vert _{L^{\infty
}\left( B\left( x,r_{0}\right) \right) }\lesssim \Vert \phi \Vert _{X\left(
B\left( x,r_{0}\right) \right) }$ from the arguments used for Theorem \ref%
{bound_gen_thm}, simply by noting that integration by parts no longer
requires premultiplication by a Lipschitz cutoff function. Moreover, the
ensuing arguments work just as well for an arbitrary bounded open set $%
\Omega $ in place of the ball $B\left( x,r_{0}\right) $, provided only that
we assume our Sobolev inequality for $\Omega $ instead of for the ball $%
B\left( x,r_{0}\right) $. Of course there is no role played here by a
superradius $\varphi $. This type of result is usually referred to as a 
\emph{maximum principle}, and we now formulate our theorem precisely.

\begin{definition}
Fix a bounded domain $\Omega \subset \mathbb{R}^{n}$. Then the $\Phi $%
-Orlicz Sobolev bump inequality for $\Omega $ is: 
\begin{equation}
\left\Vert w\right\Vert _{L^{\Phi }\left( \Omega \right) }\leq C\left\Vert
\nabla _{A}w\right\Vert _{L^{1}\left( \Omega \right) },\ \ \ \ \ w\in
Lip_{0}\left( \Omega \right) .  \label{Phi bump' max}
\end{equation}
\end{definition}

\begin{definition}
\label{def A admiss max}Fix a bounded domain $\Omega \subset \mathbb{R}^{n}$%
. We say $\phi $ is $A$\emph{-admissible} for $\Omega $ if 
\begin{equation*}
\Vert \phi \Vert _{X\left( \Omega \right) }\equiv \sup_{v\in \left(
W_{A}^{1,1}\right) _{0}(\Omega )}\frac{\int_{\Omega }\left\vert v\phi
\right\vert \,dy}{\int_{\Omega }\Vert \nabla _{A}v\Vert \,dy}<\infty .
\end{equation*}
\end{definition}

We say a function $u\in W_{A}^{1,2}\left( \Omega \right) $ is \emph{bounded
by a constant }$\ell \in \mathbb{R}$ on the boundary $\partial \Omega $ if $%
\left( u-\ell \right) ^{+}=\max \left\{ u-\ell ,0\right\} \in \left(
W_{A}^{1,2}\right) _{0}\left( \Omega \right) $. We define $\sup_{x\in
\partial \Omega }u\left( x\right) $ to be $\inf \left\{ \ell \in \mathbb{R}%
:\left( u-\ell \right) ^{+}\in \left( W_{A}^{1,2}\right) _{0}\left( \Omega
\right) \right\} $.

\begin{theorem}[abstract maximum principle]
\label{max}Let $\Omega $ be a bounded domain in $\mathbb{R}^{n}$ with $n\geq
2$. Suppose that $\mathcal{A}(x,z)$ is a nonnegative semidefinite matrix in $%
\Omega \times \mathbb{R}$ that satisfies the structural condition (\ref%
{struc_0}). Let $u$ be a nonnegative subsolution of (\ref{eq_0}). Then the
following maximum principle holds, 
\begin{equation*}
\limfunc{esssup}_{x\in \Omega }u\left( x\right) \leq \sup_{x\in \partial
\Omega }u\left( x\right) +C\left\Vert \phi \right\Vert _{X(\Omega )}\ ,
\end{equation*}%
where the constant $C$ depends only on $\Omega $, provided that:

\begin{enumerate}
\item the function $\phi $ is $A$-admissible for $\Omega $,

\item the $\Phi $-Orlicz Sobolev bump inequality (\ref{Phi bump' max}) for $%
\Omega $ holds with $\Phi =\Phi _{N}$ for some $N>1$.
\end{enumerate}
\end{theorem}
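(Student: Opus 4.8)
The plan is to run a De Giorgi iteration adapted to Orlicz scales, mirroring the proof of Theorem~\ref{bound_gen_thm} but without any cutoff functions or superradius. Set $M=\sup_{x\in\partial\Omega}u$ (finite WLOG), replace $u$ by $u-M$ so that $u^{+}\in (W_A^{1,2})_0(\Omega)$, and for levels $k\geq 0$ work with the truncations $v_k=(u-k)^{+}$. Since $u$ is a nonnegative subsolution and $v_k\in (W_A^{1,2})_0(\Omega)$ is an admissible (nonnegative) test function supported where $u>k$, we obtain the Caccioppoli-type estimate
\begin{equation*}
\int_\Omega |\nabla_{\mathcal A} v_k|^2 \;\leq\; \frac{1}{k}\int_\Omega \nabla w^{\mathrm{tr}}\mathcal A\nabla u \;+\;\int_\Omega \phi\, v_k \;\lesssim\; \int_\Omega |\phi|\, v_k \;\leq\; \|\phi\|_{X(\Omega)}\int_\Omega \|\nabla_{\mathcal A} v_k\|,
\end{equation*}
using the structure condition~(\ref{struc_0}) to pass between $\mathcal A$ and $A$, and the definition of $\|\phi\|_{X(\Omega)}$ in Definition~\ref{def A admiss max}; the point is that no Lipschitz premultiplier is needed precisely because $v_k$ already vanishes on $\partial\Omega$ in the $(W_A^{1,2})_0$ sense.

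Next I would combine this with the $\Phi$-Orlicz Sobolev bump inequality~(\ref{Phi bump' max}) applied to $w=v_k$: $\|v_k\|_{L^{\Phi}(\Omega)}\leq C\|\nabla_A v_k\|_{L^1(\Omega)}$. The Caccioppoli bound controls $\|\nabla_A v_k\|_{L^1}$ by $\|\nabla_A v_k\|_{L^1}$ times a constant plus the $\|\phi\|_X$ term, so after absorbing one factor one gets something like $\|\nabla_A v_k\|_{L^1(\Omega)}\lesssim \|\phi\|_{X(\Omega)}\,|\{u>k\}|$ on the set where $v_k>0$ (here one must be a bit careful: the $L^1$ norm of the gradient is bounded in terms of the measure of the superlevel set via the admissibility pairing, not by an $L^2$ Caccioppoli alone). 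Writing $a(k)=|\{x\in\Omega: u(x)>k\}|$ and using Chebyshev in the Orlicz norm — i.e. $\Phi\big((h-k)/\|v_k\|_{L^\Phi}\big)\,a(h)\leq \int\Phi(v_k/\|v_k\|_{L^\Phi})\leq 1$ for $h>k$ — produces the recursive inequality
\begin{equation*}
a(h)\;\leq\; \frac{1}{\Phi\!\left(\dfrac{h-k}{C\|\phi\|_{X(\Omega)}\, a(k)}\right)},\qquad h>k\geq 0 .
\end{equation*}

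The final step is the De Giorgi convergence lemma for the Young function $\Phi=\Phi_N$: one shows that a nonincreasing $a:[0,\infty)\to[0,\infty)$ satisfying the above recursion with $N>1$ must vanish for $k\geq k_\infty$ with $k_\infty\leq C'\|\phi\|_{X(\Omega)}$, which gives $\operatorname*{esssup}_\Omega u \leq C'\|\phi\|_{X(\Omega)}$, i.e. after undoing the translation, $\operatorname*{esssup}_\Omega u\leq \sup_{\partial\Omega}u + C\|\phi\|_{X(\Omega)}$. I expect the main obstacle to be exactly this Orlicz De Giorgi lemma: for power functions $\Phi(t)=t^{1+\varepsilon}$ the geometric decay of levels is classical, but for $\Phi_N(t)\sim t(\ln t)^N$ the gain per step is only logarithmic, so one needs the $N>1$ hypothesis to make the series $\sum_j$ of level increments $h_{j+1}-h_j$ converge — this is presumably an extension of the De Giorgi lemma already invoked for Theorem~\ref{bound_gen_thm}, and I would cite or adapt that lemma rather than reprove it, checking only that the homogeneous right-hand side here (with $\|\phi\|_{X(\Omega)}$ in place of a superradius-weighted data norm) feeds into it correctly. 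A secondary technical point is justifying that $v_k$ is a legitimate test function and that all integrals converge, which follows from $u\in W_A^{1,2}(\Omega)$, the structure condition, and the admissibility of $\phi$ exactly as in the convergence estimates displayed before Theorem~\ref{bound_gen_thm}.
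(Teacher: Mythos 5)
Your overall strategy is sound and genuinely different from the paper's. The paper's proof (Theorem \ref{2D max}) reruns the DeGiorgi iteration of Proposition \ref{DG} with $\psi_k\equiv 1$: it iterates on the $L^2$ quantities $U_k=\int_\Omega|u_k|^2$ for truncations at polynomially accumulating levels $C_k=\tau\|\phi\|_X(1-c(k+1)^{-\varepsilon/2})$, using H\"older with the \emph{conjugate} Young function $\widetilde{\Phi}$ and the estimate $\Gamma(t)=1/\widetilde{\Phi}^{-1}(1/t)\lesssim(\ln\tfrac1t)^{-N}$, and closes with the induction $b_{k+1}\geq b_k-\ln C+h(b_k-3\ln(k+2)-\dots)$. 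You instead iterate on the superlevel-set function $a(k)=|\{u>k\}|$ and use Orlicz--Chebyshev with $\Phi$ itself. Your intermediate steps check out: testing with $v_k$ gives $\int|\nabla_A v_k|^2\lesssim\|\phi\|_X\int|\nabla_A v_k|$, Cauchy--Schwarz then yields $\int|\nabla_A v_k|\lesssim\|\phi\|_X\,a(k)$ (the spurious $\frac1k\int\nabla w^{\mathrm{tr}}\mathcal A\nabla u$ term in your display should simply be deleted), and the Sobolev bump plus Chebyshev give the recursion $a(h)\leq\Phi_N\bigl((h-k)/(C\|\phi\|_X a(k))\bigr)^{-1}$. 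This is arguably cleaner than the paper's route, at the cost of concentrating all the difficulty in the final convergence lemma.

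That final lemma is the one genuine gap: it cannot be "cited" from the paper, because the paper's DeGiorgi-type lemma is stated for the $U_k$-recursion $U_{k+1}\leq C\frac{\varphi(r)}{r}(k+1)^{1+\varepsilon/2}U_k\,\Gamma(\cdot)$ involving $\widetilde{\Phi}$, not for your $a(h)$-recursion involving $\Phi$. You must prove it, and the proof is delicate for exactly the reason you flag. Writing $b_j=\ln(1/a(k_j))$ with increments $\delta_j=k_{j+1}-k_j$, the recursion gives $b_{j+1}\geq b_j+\ln\delta_j+N\ln(b_j+\ln\delta_j)-\ln(C\|\phi\|_X)$; with dyadic increments $\delta_j\sim 2^{-j}$ the loss $-\ln\delta_j\sim j$ overwhelms the gain $N\ln b_j$, and even with $\delta_j\sim j^{-2}$ the induction $b_j\geq b_0+j$ requires $N>2$. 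One must take $\delta_j\sim j^{-(1+\varepsilon)}$ with $0<\varepsilon<2(N-1)$ (precisely the paper's choice of accumulation rate), and one must first achieve the initial smallness $b_0\geq B_N$ by a single large jump $h_0\sim\Lambda\|\phi\|_X$ using the recursion once from $a(0)\leq|\Omega|$ (this is where the constant acquires its dependence on $\Omega$), in place of the paper's rescaling parameter $\tau$. None of this is fatal, but as written the proposal defers the only step where the hypothesis $N>1$ is actually used, so the proof is incomplete until that lemma is supplied.
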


In order to obtain a \emph{geometric} local boundedness theorem, as well as
a \emph{geometric} maximum principle, we will take the metric $d$ in Theorem %
\ref{bound_gen_thm} to be the Carnot-Caratheodory metric associated with the
vector field $\nabla _{A}$, and we will replace the hypotheses (2) and (3)
in Theorem \ref{bound_gen_thm} with a geometric description of appropriate
balls. For this we need to introduce a family of infinitely degenerate
geometries that are simple enough that we can compute the balls, prove the
required Sobolev Orlicz bump inequality, and define an appropriate
accumulating sequence of Lipschitz cutoff functions.

We consider special quasilinear operators%
\begin{equation*}
\mathcal{L}u\left( x,y\right) \equiv \nabla ^{\func{tr}}\mathcal{A}\left(
x,u\left( x\right) \right) \nabla u\left( x\right) ,\ \ \ \ \ x\in \Omega ,
\end{equation*}%
where $\Omega \subset \mathbb{R}^{n}$ and the $n\times n$ matrix $\mathcal{A}%
\left( x,z\right) $ has bounded measurable coefficients and is comparable to
an $n$-dimensional matrix%
\begin{equation*}
A\left( x\right) \equiv \left[ 
\begin{array}{ccccc}
1 & 0 & \cdots & 0 & 0 \\ 
0 & \ddots &  &  & \vdots \\ 
\vdots &  & \ddots & 0 & 0 \\ 
0 &  & 0 & 1 & 0 \\ 
0 & \cdots & 0 & 0 & f\left( x_{1}\right) ^{2}%
\end{array}%
\right] ,\ \ \ \ \ f\left( s\right) =e^{-F\left( s\right) },
\end{equation*}%
by which we mean that 
\begin{eqnarray}
&&\frac{1}{C}\left( \xi _{1}^{2}+...+\xi _{n-1}^{2}+f\left( x_{1}\right)
^{2}\xi _{n}^{2}\right)  \label{form bound'} \\
&\leq &\left( \xi _{1},...,\xi _{n}\right) \mathcal{A}\left( x,z\right)
\left( 
\begin{array}{c}
\xi _{1} \\ 
\vdots \\ 
\xi _{n}%
\end{array}%
\right)  \notag \\
&\leq &C\left( \xi _{1}^{2}+...+\xi _{n-1}^{2}+f\left( x_{1}\right) ^{2}\xi
_{n}^{2}\right) ,\ \ \ \ a.e.x\in \Omega ,\ z\in \mathbb{R}\text{ and }\xi
\in \mathbb{R}^{n},  \notag
\end{eqnarray}%
and where the degeneracy function $f\left( x\right) =e^{-F\left( x\right) }$
is even and there is $R>0$ such that $F$ satisfies the following five
structure conditions for some constants $C\geq 1$ and $\varepsilon >0$.

\begin{definition}[structure conditions]
\label{structure conditions}A function $F:\left( 0,R\right) \rightarrow 
\mathbb{R}$ is said to satisfy \emph{geometric structure conditions} if:

\begin{enumerate}
\item $\lim_{x\rightarrow 0^{+}}F\left( x\right) =+\infty $;

\item $F^{\prime }\left( x\right) <0$ and $F^{\prime \prime }\left( x\right)
>0$ for all $x\in (0,R)$;

\item $\frac{1}{C}\left\vert F^{\prime }\left( r\right) \right\vert \leq
\left\vert F^{\prime }\left( x\right) \right\vert \leq C\left\vert F^{\prime
}\left( r\right) \right\vert $ for $\frac{1}{2}r<x<2r<R$;

\item $\frac{1}{-xF^{\prime }\left( x\right) }$ is increasing in the
interval $\left( 0,R\right) $ and satisfies $\frac{1}{-xF^{\prime }\left(
x\right) }\leq \frac{1}{\varepsilon }\,$for $x\in (0,R)$;

\item $\frac{F^{\prime \prime }\left( x\right) }{-F^{\prime }\left( x\right) 
}\approx \frac{1}{x}$ for $x\in (0,R)$.
\end{enumerate}
\end{definition}

\begin{remark}
We make no smoothness assumption on $f$ other than the existence of the
second derivative $f^{\prime \prime }$ on the open interval $(0,R)$. Note
also that at one extreme, $f$ can be of finite type, namely $f\left(
x\right) =x^{\alpha }$ for any $\alpha >0$, and at the other extreme, $f$
can be of strongly degenerate type, namely $f\left( x\right) =e^{-\frac{1}{%
x^{\alpha }}}$ for any $\alpha >0$. Assumption (1) rules out the elliptic
case $f\left( 0\right) >0$.
\end{remark}

Using these geometric structure conditions, we can show that standard
sequences of Lipschitz cutoff functions always exist for our geometries.

\begin{lemma}
\label{cutoff exist}If $\gamma >1$ and $A\left( x\right) $ is a \emph{%
continuous} nonnegative semidefinite $n\times n$ matrix valued function on a
bounded domain $\Omega \subset \mathbb{R}^{n}$ as above, and if $d$ is the
associated control metric, then for every $r>0$ and $x\in \Omega $, there is
an $\left( A,d\right) $-\emph{standard} sequence of Lipschitz cutoff
functions $\left\{ \psi _{j}\right\} _{j=1}^{\infty }$ at $\left( x,r\right) 
$, associated with balls $B(x,r_{j})\supset \limfunc{supp}\psi _{j}$ as in
Definition \ref{def_cutoff}.
\end{lemma}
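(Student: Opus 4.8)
The plan is to build the cutoffs as truncations of the control distance $d(x,\cdot)$, which automatically produces functions with a controlled $A$-gradient, and to fix the radii $r_j$ so that the telescoping requirement $r_j-r_{j+1}=\frac{c}{j^\gamma}r$ is forced by the two endpoint constraints $r_1=r$ and $r_\infty=\tfrac12 r$. Concretely, set $c=\bigl(2\zeta(\gamma)\bigr)^{-1}$, where $\zeta(\gamma)=\sum_{k\ge 1}k^{-\gamma}<\infty$ because $\gamma>1$; this is the \emph{uniquely determined constant}. Define $r_j=r\bigl(1-c\sum_{k=1}^{j-1}k^{-\gamma}\bigr)$. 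Then $r_1=r$, the sequence is strictly decreasing, $r_j-r_{j+1}=\frac{c}{j^\gamma}r$, and $r_\infty=r(1-c\zeta(\gamma))=\tfrac12 r$, so the radii satisfy every requirement of Definition \ref{def_cutoff}. A routine, arbitrarily small inward adjustment of the transition interval (see below), absorbed into the implied constants, will let me arrange the strict containment $\limfunc{supp}\psi_j\subset B(x,r_j)$ rather than mere closure containment.

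Next I would introduce the one-dimensional piecewise-linear profile $\eta_j:[0,\infty)\to[0,1]$ with $\eta_j\equiv 1$ on $[0,r_{j+1}]$, $\eta_j$ affine and decreasing on $[r_{j+1},\tfrac12(r_j+r_{j+1})]$, and $\eta_j\equiv 0$ on $[\tfrac12(r_j+r_{j+1}),\infty)$, so that $\eta_j$ is Lipschitz with constant $\frac{2}{r_j-r_{j+1}}=\frac{2j^\gamma}{cr}$, and put $\psi_j(y)=\eta_j\bigl(d(x,y)\bigr)$. Since $d$ is a genuine finite metric on $\Omega$ with $\{d(x,\cdot)<\rho\}=B(x,\rho)$, one reads off at once that $\psi_j$ is continuous, $\psi_j=1$ on $B(x,r_{j+1})$, and $\limfunc{supp}\psi_j\subset\{d(x,\cdot)\le\tfrac12(r_j+r_{j+1})\}\subset B(x,r_j)$.

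The heart of the matter is the estimate $\|\nabla_A\psi_j\|_\infty\lesssim j^\gamma/r$. For this I would use that $d(x,\cdot)$ is $1$-Lipschitz with respect to the control metric $d$ itself (triangle inequality), so the composition $\psi_j=\eta_j\circ d(x,\cdot)$ is $d$-Lipschitz with constant $\frac{2j^\gamma}{cr}$, and then invoke the standard identification that a function which is Lipschitz with respect to $d$ with constant $L$ lies in $\bigl(W_A^{1,\infty}\bigr)_{\mathrm{loc}}$ with $\|\nabla_A\cdot\|_\infty\le L$ a.e.; the mechanism is that testing along subunit curves $\gamma$, with $\gamma'=B(\gamma)^{\mathrm{tr}}h$ and $|h|\le 1$, gives $\frac{d}{dt}\psi_j(\gamma(t))=\nabla_A\psi_j(\gamma(t))\cdot h(t)$, and comparing with the $d$-Lipschitz bound along curves of length tending to $0$ forces $|\nabla_A\psi_j|\le L$. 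This yields $\|\nabla_A\psi_j\|_\infty\le\frac{2j^\gamma}{cr}=\frac{4\zeta(\gamma)}{r}\,j^\gamma\lesssim j^\gamma/r$ with constant depending only on $\gamma$, which is exactly the bound in Definition \ref{def_cutoff}. Hence $\{\psi_j\}_{j=1}^\infty$ together with the balls $B(x,r_j)$ is an $(A,d)$-standard sequence at $(x,r)$.

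The only place I expect friction is that last identification. Because the degeneracy function $f$ may vanish to infinite order, $d(x,\cdot)$ is in general \emph{not} Euclidean-Lipschitz near $\{x_1=0\}$, so $\psi_j$ need not be Lipschitz in the ordinary sense; the correct reading — and all that the DeGiorgi iteration uses — is that $\psi_j\in\bigl(W_A^{1,\infty}\bigr)_0(\Omega)\subset\bigl(W_A^{1,2}\bigr)_0(\Omega)$ with the displayed gradient bound. Making "$d$-Lipschitz $\Rightarrow$ $|\nabla_A\psi_j|\le L$ a.e." rigorous requires knowing that $d$-Lipschitz functions possess a weak $A$-gradient of pointwise length at most $L$, and this is where the hypothesis that $A(x)$ is merely \emph{continuous} (rather than Lipschitz) is used: continuity makes short Euclidean segments in subunit directions "almost subunit", so that the relevant directional derivatives of $\psi_j$ inherit the bound $L$. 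Beyond this point the argument uses none of the finer structure conditions on $F$; conditions (1)--(5) of Definition \ref{structure conditions} enter only later, when the balls themselves must be computed.
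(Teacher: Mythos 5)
Your bookkeeping is correct: $c=\bigl(2\zeta(\gamma)\bigr)^{-1}$ is the unique admissible constant, the radii $r_j=r\bigl(1-c\sum_{k=1}^{j-1}k^{-\gamma}\bigr)$ satisfy all the requirements of Definition \ref{def_cutoff}, and $\psi_j=\eta_j\circ d(x,\cdot)$ is the standard candidate. But the proof has a genuine gap exactly at the step you yourself flag as ``friction,'' and that step is the entire mathematical content of the lemma. The implication ``$\psi_j$ is $d$-Lipschitz with constant $L$ $\Rightarrow$ $\psi_j$ possesses a weak $A$-gradient with $|\nabla_A\psi_j|\le L$ a.e.'' is asserted rather than proved: your subunit-curve computation presupposes that $\nabla_A\psi_j$ already exists as a weak gradient and that enough subunit curves pass through a.e. point to detect it, which is precisely what must be established. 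This is the eikonal inequality $|\nabla_A d(x,\cdot)|\le 1$ a.e. in the sense of \cite{FrSeSe} and \cite{GaNh}, and it requires the ACL property of $d(x,\cdot)$ along a.e. coordinate line (including lines crossing the degenerate hyperplane), not just the triangle inequality. Your remark that mere \emph{continuity} of $A$ is what makes the identification work is also backwards: this identification generally needs \emph{more} regularity of the vector fields, not less, and the paper's standing assumption is that $B$ is Lipschitz.

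Second --- and this is what the paper's own one-paragraph proof is actually about --- your $\psi_j$ are, as you concede, not Euclidean-Lipschitz, whereas Definition \ref{def_cutoff} asks for Lipschitz cutoffs and, more importantly, the test class $\left( W_{A}^{1,2}\right)_{0}$ in the weak formulation is \emph{defined} as the closure of Euclidean-Lipschitz compactly supported functions. So even granting the gradient bound, you still owe an argument that $\psi_j$ (and hence $\psi_j^2u_+$) lies in that closure; declaring that the ``correct reading'' of the definition is $\bigl(W_A^{1,\infty}\bigr)_0$ does not discharge this. The paper closes exactly this gap by citing Proposition 68 of \cite{SaWh4}, whose single geometric input --- the one the authors go out of their way to verify --- is that the annuli $B(x,t)\setminus B(x,s)$ have positive \emph{Euclidean} thickness; that thickness is what permits the metric cutoff to be regularized into an honest Lipschitz function without destroying the sets where it is constant or the bound $\left\Vert \nabla_A\psi_j\right\Vert_{\infty}\lesssim j^{\gamma}/r$. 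Your construction never uses this fact, and without it the passage from ``$d$-Lipschitz with bounded $A$-gradient'' to ``admissible Lipschitz cutoff'' does not go through.
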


\begin{proof}
This follows immediately from Proposition 68 on page 90 of \cite{SaWh4},
once we observe that in the proof of Proposition 68, we can take $N$ to be
any real number greater than $1$ (so that $\sum_{j=1}^{\infty }j^{-N}<\infty 
$), and that the assumption of the containment condition in Proposition 68
there was \emph{only} used in the proof to conclude that the annuli $%
B(x,t)\setminus B(x,s)$ have positive Euclidean thickness for $0<s<t<\infty $
- i.e. that the boundaries $\partial B(x,t)$ and $\partial B(x,s)$ are
pairwise disjoint. This is certainly the case for the control balls $B(x,r)$
associated with our geometries $F$ satisfying Definition \ref{structure
conditions}, and so the proof of Proposition 68 of \cite{SaWh4} applies to
prove Lemma \ref{cutoff exist}.
\end{proof}

In the next theorem we will consider the geometry of balls defined by%
\begin{equation*}
F_{\sigma }\left( r\right) =r^{-\sigma },
\end{equation*}%
where $\sigma >0$. Note that $f_{\sigma }=e^{-F_{\sigma }}$ vanishes to
infinite order at $r=0$, and that $f_{\sigma }$ vanishes to a faster order
than $f_{\sigma ^{\prime }}$ if $\sigma >\sigma ^{\prime }$. We also define
the simpler linear operator%
\begin{equation*}
Lu\equiv \func{div}A\left( x\right) \nabla u.
\end{equation*}%
with $A(x)$ as in (\ref{struc_0}).

\begin{theorem}
\label{loc}Suppose that $\Omega \subset \mathbb{R}^{n}$ is a domain in $%
\mathbb{R}^{n}$ with $n\geq 2$ and that 
\begin{equation*}
\mathcal{L}u\equiv \func{div}\mathcal{A}\left( x,u\right) \nabla u,\ \ \ \ \
x=\left( x_{1},...,x_{n}\right) \in \Omega ,
\end{equation*}%
where$\ \mathcal{A}\left( x,z\right) \sim \left[ 
\begin{array}{cc}
I_{n-1} & 0 \\ 
0 & f\left( x_{1}\right) ^{2}%
\end{array}%
\right] $, $I_{n-1}$ is the $\left( n-1\right) \times \left( n-1\right) $
identity matrix, $\mathcal{A}$ has bounded measurable components, and the
geometry $F=-\ln f$ satisfies the geometric structure conditions in
Definition \ref{structure conditions}.

\begin{enumerate}
\item If $F\leq F_{\sigma }$ for some $0<\sigma <1$, then every weak
subsolution to $\mathcal{L}u=\phi $ with $A$-admissible $\phi $ is locally
bounded in $\Omega $.

\item On the other hand, if $n\geq 3$ and $\sigma \geq 1$, then there exists
a locally unbounded weak solution $u$ in a neighbourhood of the origin in $%
\mathbb{R}^{n}$ to the equation $Lu=0$ with geometry $F=F_{\sigma }$.
\end{enumerate}
\end{theorem}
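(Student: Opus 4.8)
\textbf{Proof proposal for Theorem \ref{loc}.}

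The plan is to derive both parts of the theorem from the abstract results of this chapter, Theorem \ref{bound_gen_thm} for part (1) and an explicit separation-of-variables construction for part (2). For part (1), the strategy is to verify the three hypotheses of Theorem \ref{bound_gen_thm} at every point $x\in\Omega$ with a suitably small radius $r_0$. Hypothesis (1), $A$-admissibility of $\phi$ at $(x,r_0)$, is assumed. Hypothesis (3), the existence of a standard accumulating sequence of Lipschitz cutoff functions, is exactly Lemma \ref{cutoff exist}, applied with the control metric $d$ associated to $A$. Thus the entire content of part (1) reduces to verifying hypothesis (2): the single scale $(\Phi_N,\varphi)$-Orlicz Sobolev bump inequality \eqref{Phi bump' new} holds at $(x,r_0)$ for some $N>1$ and some admissible superradius $\varphi$. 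This is where the restriction $0<\sigma<1$ must enter. The plan is to invoke the subrepresentation inequalities and the resulting Orlicz Sobolev inequalities developed in the geometric part of the paper: one computes the control balls $B(x,r)$ for the geometry $F=F_\sigma$, estimates their volumes $V(x,r)$, estimates the kernel $\widehat d(x,y)/V(x,y)$ with $\widehat d$ as in the introduction (here $\widehat d(x,y)=\min\{d(x,y),\,|F'(x_1+d(x,y))|^{-1}\}$), and checks that the monotonicity/decay of this kernel yields a bump inequality into the Orlicz class $L^{\Phi_N}$ for some $N>1$ precisely when $\sigma<1$. The comparison hypothesis $F\le F_\sigma$ together with structure conditions (1)--(5) lets one dominate the general geometry by the model geometry $F_\sigma$, so it suffices to treat $F=F_\sigma$ itself. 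Local boundedness above of subsolutions then follows at each point, and applying the same to $-u$ (which is a subsolution of the equation governed by the comparable matrix) gives local boundedness.

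For part (2), the plan is an explicit construction. With $F=F_\sigma$ and $\sigma\ge 1$, $n\ge 3$, separate variables: look for a solution of $Lu=0$ of the form $u(x)=v(x_1)\,w(x_2)$ (or more precisely a product $v(x_1)e^{i\lambda x_3}$ type ansatz, or a real harmonic-type combination), exploiting that $A(x)$ depends only on $x_1$ and the remaining variables appear with constant coefficients. This reduces the PDE to an ODE in $x_1$ of the form $v''(x_1)-\lambda^2 f_\sigma(x_1)^2 v(x_1)=0$ (choosing a single frequency $\lambda$ in a third variable and leaving the other $n-2\ge 1$ transverse variables constant — here $n\ge 3$ is used so that there is at least one genuinely degenerate direction plus one nondegenerate transverse direction to host the frequency). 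One then analyzes this ODE near $x_1=0$: because $f_\sigma(x_1)^2=e^{-2x_1^{-\sigma}}$ is extremely small near the origin, the ``potential'' $\lambda^2 f_\sigma^2$ is integrable near $0$, and standard ODE asymptotics produce a solution $v$ with a prescribed limiting behavior; the condition $\sigma\ge 1$ is what forces $\lim_{r\to 0}r\ln f_\sigma(r)\ne 0$ (indeed $r\ln f_\sigma(r)=-r^{1-\sigma}\not\to 0$), which is precisely the Kusuoka--Strook threshold, and one arranges the free constants so that the resulting $u$ fails to be locally bounded near the origin while still lying in $W_A^{1,2}$ of a neighborhood — checking the finiteness of $\int(|u|^2+|\nabla_A u|^2)$ uses that $f_\sigma^2$ kills the $x_3$-derivative term and that the growth of $v$ is slow enough (e.g. logarithmic or a mild power blow-up) to remain $L^2$ while still being unbounded. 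This mirrors the hypoellipticity counterexample of Kusuoka--Strook, adapted to the weak/divergence-form setting.

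The main obstacle is the verification of hypothesis (2) in part (1): establishing the Orlicz Sobolev bump inequality \eqref{Phi bump' new} with the correct Young function $\Phi_N$, $N>1$, and a legitimate superradius $\varphi(r)\ge r$, for the geometry $F_\sigma$ with $\sigma<1$. This is not a routine computation — it is the technical heart of the geometric half of the paper, requiring the subrepresentation theorem with kernel $\widehat d/V$, sharp estimates on control-ball volumes in the infinitely degenerate regime, and the submultiplicativity of $\Phi_N$ (noted after its definition) to pass from the pointwise subrepresentation bound to the Orlicz norm estimate. The precise exponent $N$ obtained will be a function of $\sigma$, degenerating as $\sigma\uparrow 1$, and the borderline case $\sigma=1$ is exactly where no admissible $N>1$ survives — consistent with part (2). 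A secondary, lesser obstacle in part (2) is checking that the constructed $u$ genuinely lies in the degenerate Sobolev space $W_A^{1,2}$ on a neighborhood of the origin and is a \emph{weak} solution (not merely a classical one away from $x_1=0$), which requires verifying that no distributional mass is created on the hyperplane $\{x_1=0\}$; this is where the infinite degeneracy of $f_\sigma$ actually helps, since the conormal derivative $f_\sigma^2\partial_{x_3}u$ vanishes to infinite order there.
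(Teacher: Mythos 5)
Your plan for part (1) is essentially the paper's: one verifies the hypotheses of Theorem \ref{bound_gen_thm} using the cutoff sequence from Lemma \ref{cutoff exist} and the Orlicz Sobolev inequality of Proposition \ref{sob_nd}, whose hypotheses (in particular $|F'(r)|\le C r^{-(1+\frac{1-\varepsilon}{N})}$) hold for $F_{\sigma}$ precisely when $\sigma<\frac{1}{N}$ for some $N>1$, i.e. exactly when $\sigma<1$; the superradius is then $\varphi(r)=Cr^{1-\sigma N}$ and Proposition \ref{DG} delivers the inner ball inequality. You correctly identify Proposition \ref{sob_nd} as the technical heart, so part (1) is in order.

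Part (2), however, contains a genuine gap. A single separated mode $u=v(x_{1})e^{i\lambda x_{3}}$ (constant in the transverse variables) reduces $Lu=0$ to the ODE $v''-\lambda^{2}f_{\sigma}(x_{1})^{2}v=0$, and this ODE has \emph{no singularity} at $x_{1}=0$: the coefficient $\lambda^{2}f_{\sigma}^{2}$ is bounded (indeed smooth, vanishing to infinite order), so every solution is $C^{2}$ and bounded near the origin. There is no ``logarithmic or mild power blow-up'' to arrange --- that behaviour belongs to genuinely singular ODEs such as $v''+cx^{-2}v=0$, not to $v''=qv$ with $q\ge 0$ bounded. Hence no single frequency (and no finite superposition) can yield an unbounded solution, and the dichotomy $\sigma\ge 1$ versus $\sigma<1$ is invisible at the level of one mode. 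The unboundedness in the paper's construction (Theorem \ref{actual}, following Morimoto) comes from an \emph{infinite} superposition $w(x,y)=\sum_{n}e^{iyn}v_{0}(x,n)a_{n}$ of least Dirichlet eigenfunctions of $-\partial_{x}^{2}+g(x)n^{2}$ on $(-1,1)$, combined with the power series $u=\sum_{N}\frac{t^{2N}}{(2N)!}w_{N}$ in the remaining nondegenerate variable $t$; this is where $n\ge 3$ enters --- the extra dimension is needed to carry the $t$-series, not to ``host the frequency,'' which lives in the degenerate direction $y$. The hypothesis $\sigma\ge 1$ enters quantitatively through the eigenvalue bound $\lambda_{0}(1,n)\lesssim(\ln n)^{2}$, which is what makes the $t$-series converge and keeps $u$ in $W_{A}^{1,2}$, while the divergence of $\sum_{n=2}^{\infty}|\sum_{k=1}^{n-1}a_{n-k}a_{k}|^{2}$ together with $v_{0}(0,n)^{2}\ge\frac{1}{2}$ (an $L^{4}$--Plancherel argument) is what destroys local boundedness. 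Your outline omits all of these ingredients, and the route you describe would produce only bounded solutions.
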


\begin{theorem}
	\label{max copy(1)}Suppose that $F$ satisfies the geometric structure
	conditions in Definition \ref{structure conditions} and $F\leq F_{\sigma }$ for some $0<\sigma <1$. Assume that $u$ is a
	weak subsolution to $\mathcal{L}u=\phi $ in a domain $\Omega \subset \mathbb{%
		R}^{n}$ with $n\geq 2$, where $\mathcal{L}$ has degeneracy $F$ and $\phi $
	is $A$-admissible. Moreover, suppose that $u$ is bounded in the weak sense
	on the boundary $\partial \Omega $. Then $u$ is globally bounded in $\Omega $
	and satisfies%
	\begin{equation*}
		\sup_{\Omega }u\leq \sup_{\partial \Omega }u+C\left\Vert \phi \right\Vert
		_{X\left( \Omega \right) }\ 
	\end{equation*}
	with the constant $C$ depending only on $\Omega$.
\end{theorem}

\chapter{Organization of the proofs}

In Part 2 we use DeGiorgi iteration to prove the abstract local boundedness and
maximum principle theorems. Then in Part 3 we first calculate
geodesics and volumes of balls in our geometries $F$ satisfying the
geometric structure conditions in Definition \ref{structure conditions}, and
second establish subrepresentation and Orlicz Sobolev inequalities. Finally
then we can prove the geometric theorems. 

\part{Abstract theory}

There are two main ingredients needed to prove local boundedness and maximum
principle, namely the Orlicz Sobolev inequality for
compactly supported functions, and the Caccioppoli inequality for subsolutions of the
degenerate equation. We start with the Orlicz Sobolev inequality we need.

Let $\Phi $ be a Young function on $\left( 0,\infty \right) $ and let $F$ be
a geometry satisfying the geometric structure conditions in Definition \ref%
{structure conditions}. We will assume initially that we have an Orlicz
Sobolev norm inequality for the control balls $B$ in some domain $\Omega
\subset \mathbb{R}^{n}$: 
\begin{equation}
\left\Vert w\right\Vert _{L^{\Phi }\left( \mu _{B}\right) }\leq \varphi
\left( r_{0}\right) \ \left\Vert \nabla _{A}w\right\Vert _{L^{1}\left( \mu
_{B}\right) },\ \ \ \ \ w\in \left( W_{A}^{1,1}\right) _{0}\left( B\right) ,
\label{OSN}
\end{equation}%
for some increasing `superradius' function $\varphi \left( r_{0}\right) \geq
r_{0}$, where $r_{0}$ is the control radius of a control ball $B$ and $\mu
_{B}$ is normalized Lebesgue measure on $B$. We prove this inequality for
appropriate geometries and superradii below in Proposition \ref{sob_nd} and
Lemma \ref{norm half}.

Next, we establish a Caccioppoli inequality for weak subsolutions that holds
independently of any geometric considerations.

\begin{proposition}\label{Cacc_prop}
If $u$ is a weak subsolution to $\mathcal{L}u=\phi $ with $\mathcal{L}$ as
in (\ref{eq_0}), (\ref{struc_0}), and $A$-admissible $\phi $, then the the
following Caccioppoli inequality holds for $u_{+}$ on a ball $B$ with
Lipschitz $\psi $ supported in $B$: 
\begin{equation}
\int |\nabla _{A}(\psi u_{+})|^{2}d\mu \leq C\left( \left\Vert \psi
\right\Vert _{L^{2}\left( \mu \right) }^{2}\left\Vert \phi \right\Vert
_{X}^{2}+\left\Vert \nabla _{A}\psi \right\Vert _{L^{\infty
}}^{2}\int_{B}u_{+}^{2}d\mu \right) ,  \label{standard Cacc}
\end{equation}%
where $d\mu =\frac{dx}{\left\vert B\right\vert }$, and where the constant $C$
depends only on the constants in (\ref{struc_0}) and not on $r(B)$.
\end{proposition}

\begin{proof}
Recall we use $C$ and $c$ to denote constants that may change from line to
line. For convenience in notation we denote the matrix function $\mathcal{A}%
\left( x,u\left( x\right) \right) $ by $\mathcal{A}$, and its associated
gradient by $\nabla _{\mathcal{A}}$. Since our conclusion involves the
matrix $A$, while the definition of $u$ being a subsolution involves the
matrix $\mathcal{A}$, we must apply care in using the comparability of the
matrices $A$ and $\mathcal{A}$ in the positive definite sense. We have the\
pointwise vector identity%
\begin{eqnarray*}
&&|\nabla _{\mathcal{A}}(\psi u_{+})|^{2}-\nabla \left( \psi
^{2}u_{+}\right) \mathcal{A}\nabla u_{+} \\
&=&\left\vert \psi \nabla _{\mathcal{A}}u_{+}+u_{+}\nabla _{\mathcal{A}}\psi
\right\vert ^{2}-\left[ \psi ^{2}\nabla u_{+}+u_{+}\nabla \psi ^{2}\right] 
\mathcal{A}\nabla u_{+} \\
&=&\left\vert \psi \nabla _{\mathcal{A}}u_{+}\right\vert ^{2}+2\left\langle
\psi \nabla _{\mathcal{A}}u_{+},u_{+}\nabla _{\mathcal{A}}\psi \right\rangle
+\left\vert u_{+}\nabla _{\mathcal{A}}\psi \right\vert ^{2} \\
&&-\left\vert \psi \nabla _{\mathcal{A}}u_{+}\right\vert ^{2}-2\left\langle
u_{+}\nabla _{\mathcal{A}}\psi ,\psi \nabla _{\mathcal{A}}u_{+}\right\rangle
\\
&=&\left\vert u_{+}\nabla _{\mathcal{A}}\psi \right\vert ^{2}.
\end{eqnarray*}%
Then using (\ref{struc_0}), we obtain%
\begin{equation*}
c|\nabla _{A}(\psi u_{+})|^{2}\leq \nabla \left( \psi ^{2}u_{+}\right) 
\mathcal{A}\nabla u_{+}+C\left\vert u_{+}\nabla _{A}\psi \right\vert ^{2},
\end{equation*}%
and using the fact that $d\mu \left( x\right) $ is a constant multiple of
Lebesgue measure $dx$, we integrate to obtain that%
\begin{equation}
\left\Vert \nabla _{A}(\psi u_{+})\right\Vert _{L^{2}\left( \mu \right)
}^{2}\leq C\int \nabla \left( \psi ^{2}u_{+}\right) \mathcal{A}\nabla
u_{+}d\mu +C\left\Vert u_{+}\nabla _{A}\psi \right\Vert _{L^{2}\left( \mu
\right) }^{2}.  \label{one}
\end{equation}

Next, since $u$ is a weak subsolution to $\mathcal{L}u=\phi $, we have $%
\nabla \mathcal{A}\nabla u\geq \phi $ in the weak sense, which implies 
\begin{equation}
\int \nabla \left( \psi ^{2}u_{+}\right) \mathcal{A}\nabla u_{+}d\mu =\int
\nabla \left( \psi ^{2}u_{+}\right) \mathcal{A}\nabla ud\mu \leq -\int
\left( \psi ^{2}u_{+}\right) \phi d\mu \leq \left\Vert \phi \right\Vert
_{X}\int \left\vert \nabla _{\mathcal{A}}\left( \psi ^{2}u_{+}\right)
\right\vert d\mu \ .  \label{two}
\end{equation}%
Now we compute that 
\begin{eqnarray*}
\int \left\vert \nabla _{\mathcal{A}}\left( \psi ^{2}u_{+}\right)
\right\vert d\mu &=&\int \left\vert \psi u_{+}\nabla _{\mathcal{A}}\psi
+\psi \nabla _{\mathcal{A}}(\psi u_{+})\right\vert d\mu \\
&\leq &\left\Vert \psi u_{+}\right\Vert _{L^{2}\left( \mu \right)
}\left\Vert \nabla _{\mathcal{A}}\psi \right\Vert _{L^{2}\left( \mu \right)
}+\left\Vert \nabla _{\mathcal{A}}(\psi u_{+})\right\Vert _{L^{2}\left( \mu
\right) }\left\Vert \psi \right\Vert _{L^{2}\left( \mu \right) } \\
&\leq &\left\Vert \nabla _{\mathcal{A}}\psi \right\Vert _{L^{2}\left( d\mu
\right) }\left\Vert \psi u_{+}\right\Vert _{L^{2}\left( \mu \right)
}+\left\Vert \psi \right\Vert _{L^{2}\left( d\mu \right) }\left\Vert \nabla
_{\mathcal{A}}(\psi u_{+})\right\Vert _{L^{2}\left( \mu \right) },
\end{eqnarray*}%
and combining this with (\ref{one}) and (\ref{two}), and using (\ref{struc_0}%
) again, we obtain 
\begin{eqnarray}
&&  \label{three} \\
\left\Vert \nabla _{A}(\psi u_{+})\right\Vert _{L^{2}\left( \mu \right)
}^{2} &\leq &C\left\Vert \phi \right\Vert _{X}\left\{ \left\Vert \nabla
_{A}\psi \right\Vert _{L^{2}\left( d\mu \right) }\left\Vert \psi
u_{+}\right\Vert _{L^{2}\left( \mu \right) }+\left\Vert \psi \right\Vert
_{L^{2}\left( d\mu \right) }\left\Vert \nabla _{A}(\psi u_{+})\right\Vert
_{L^{2}\left( \mu \right) }\right\}  \notag \\
&&\ \ \ \ \ \ \ \ \ \ \ \ \ \ \ \ \ \ \ \ +C\left\Vert \nabla _{A}\psi
\right\Vert _{L^{\infty }}^{2}\int_{B}u_{+}^{2}d\mu \ .  \notag
\end{eqnarray}%
To estimate the first term on the right hand side of (\ref{three}), we apply
Young's inequality twice to get 
\begin{eqnarray*}
&&\left\Vert \phi \right\Vert _{X}\left( \left\Vert \nabla _{A}\psi
\right\Vert _{L^{2}\left( d\mu \right) }\left\Vert \psi u_{+}\right\Vert
_{L^{2}\left( \mu \right) }+\left\Vert \psi \right\Vert _{L^{2}\left( d\mu
\right) }\left\Vert \nabla _{A}(\psi u_{+})\right\Vert _{L^{2}\left( \mu
\right) }\right) \\
&\leq &\frac{1}{\varepsilon }\left\Vert \phi \right\Vert
_{X}^{2}+\varepsilon \left( \left\Vert \nabla _{A}\psi \right\Vert
_{L^{2}\left( d\mu \right) }\left\Vert \psi u_{+}\right\Vert _{L^{2}\left(
\mu \right) }+\left\Vert \psi \right\Vert _{L^{2}\left( d\mu \right)
}\left\Vert \nabla _{A}(\psi u_{+})\right\Vert _{L^{2}\left( \mu \right)
}\right) ^{2} \\
&\leq &\frac{1}{\varepsilon }\left\Vert \phi \right\Vert
_{X}^{2}+C\varepsilon \left( \left\Vert \nabla _{A}\psi \right\Vert
_{L^{2}\left( d\mu \right) }^{2}\left\Vert \psi u_{+}\right\Vert
_{L^{2}\left( \mu \right) }^{2}+\left\Vert \psi \right\Vert _{L^{2}\left(
d\mu \right) }^{2}\left\Vert \nabla _{A}(\psi u_{+})\right\Vert
_{L^{2}\left( \mu \right) }^{2}\right) ,
\end{eqnarray*}%
and combining this with (\ref{three}), we obtain%
\begin{eqnarray}
\left\Vert \nabla _{A}(\psi u_{+})\right\Vert _{L^{2}\left( \mu \right)
}^{2} &\leq &C\frac{1}{\varepsilon }\left\Vert \phi \right\Vert
_{X}^{2}+C\varepsilon \left\Vert \nabla _{A}\psi \right\Vert _{L^{2}\left(
d\mu \right) }^{2}\left\Vert \psi u_{+}\right\Vert _{L^{2}\left( \mu \right)
}^{2}  \label{we obtain} \\
&&+C\varepsilon \left\Vert \psi \right\Vert _{L^{2}\left( d\mu \right)
}^{2}\left\Vert \nabla _{A}(\psi u_{+})\right\Vert _{L^{2}\left( \mu \right)
}^{2}+C\left\Vert \nabla _{A}\psi \right\Vert _{L^{\infty
}}^{2}\int_{B}u_{+}^{2}d\mu \ .  \notag
\end{eqnarray}%
Now choose $\varepsilon $ so small that 
\begin{equation*}
C\varepsilon \left\Vert \psi \right\Vert _{L^{2}\left( d\mu \right)
}^{2}=C\varepsilon \frac{1}{\left\vert B\right\vert }\int_{B}\psi ^{2}\leq
C\varepsilon \left\Vert \psi \right\Vert _{L^{\infty }}^{2}<\frac{1}{2},
\end{equation*}%
and then absorb the third term on the right hand side of (\ref{we obtain})
into its left hand side to obtain 
\begin{equation*}
\left\Vert \nabla _{A}(\psi u_{+})\right\Vert _{L^{2}\left( \mu \right)
}^{2}\leq C\left( \left\Vert \psi \right\Vert _{L^{2}\left( d\mu \right)
}^{2}\left\Vert \phi \right\Vert _{X}^{2}+\left\Vert \nabla _{A}\psi
\right\Vert _{L^{\infty }}^{2}\int_{B}u_{+}^{2}d\mu \right)
\end{equation*}%
upon using $\left\Vert \nabla _{A}\psi \right\Vert _{L^{2}\left( d\mu
\right) }^{2}\leq \left\Vert \nabla _{A}\psi \right\Vert _{L^{\infty }}^{2}$%
. This completes the proof of (\ref{standard Cacc}).
\end{proof}

\begin{remark}
It is important to note that (\ref{standard Cacc}) holds for $u_{+}$
whenever $u$ is a weak subsolution \emph{without} assuming that $u_{+}$ is
also a subsolution.
\end{remark}

\begin{corollary}\label{Cacc_cor}[of the proof]
Let $u$ be a weak subsolution to $\mathcal{L}u=\phi $ with $\mathcal{L}$ as
in (\ref{eq_0}), (\ref{struc_0}), and $A$-admissible $\phi $, and suppose
for some ball $B$, $P>0$, and a nonnegative function $v\in W_{A}^{1,2}$
there holds 
\begin{equation}
\left\Vert \phi \right\Vert _{X}^{2}\leq Pv(x),\quad \text{a.e.}\ x\in
\{u>0\}\cap B.  \label{P_bound}
\end{equation}%
Then 
\begin{equation}
\int |\nabla _{A}(\psi u_{+})|^{2}d\mu \leq C\left( \left\Vert \nabla
_{A}\psi \right\Vert _{L^{\infty }}+P\right) ^{2}\int \left(
u_{+}^{2}+v^{2}\right) d\mu ,  \label{Cacc_inhomog}
\end{equation}%
where $d\mu =\frac{dx}{\left\vert B\right\vert }$, and where $\psi $ is a
Lipschitz cutoff function supported in $B$.
\end{corollary}

\begin{proof}
First, recall that from (\ref{one}) we have 
\begin{equation}
\int |\nabla _{A}(\psi u_{+})|^{2}d\mu \leq C\int \nabla \left( \psi
^{2}u_{+}\right) \mathcal{A}\nabla u_{+}d\mu +C\left\Vert \nabla _{A}\psi
\right\Vert _{L^{\infty }}^{2}\int_{B}u_{+}^{2}d\mu \ ,  \label{one'}
\end{equation}%
where the constant $C$ depends only on constants in (\ref{struc_0}). Now
using (\ref{two}) and (\ref{P_bound}) we have 
\begin{align*}
\int \nabla \left( \psi ^{2}u_{+}\right) \mathcal{A}\nabla u_{+}d\mu & \leq
\left\Vert \phi \right\Vert _{X}\int \left\vert \nabla _{\mathcal{A}}\left(
\psi ^{2}u_{+}\right) \right\vert d\mu \leq P\int v\left\vert \nabla _{%
\mathcal{A}}\left( \psi ^{2}u_{+}\right) \right\vert d\mu \\
& =P\int \left\vert \psi u_{+}v\nabla _{\mathcal{A}}\psi d\mu +\psi v\nabla
_{\mathcal{A}}(\psi u_{+})\right\vert d\mu \\
& \leq P\left\Vert \psi u_{+}\right\Vert _{L^{2}\left( \mu \right)
}\left\Vert v\nabla _{\mathcal{A}}\psi \right\Vert _{L^{2}\left( \mu \right)
}+\varepsilon \left\Vert \nabla _{\mathcal{A}}(\psi u_{+})\right\Vert
_{L^{2}\left( \mu \right) }^{2}+\frac{P^{2}}{\varepsilon }\left\Vert \psi
v\right\Vert _{L^{2}\left( \mu \right) }^{2} \\
& \leq CP\left\Vert \psi u_{+}\right\Vert _{L^{2}\left( \mu \right)
}\left\Vert v\nabla _{A}\psi \right\Vert _{L^{2}\left( \mu \right)
}+\varepsilon C\left\Vert \nabla _{A}(\psi u_{+})\right\Vert _{L^{2}\left(
\mu \right) }^{2}+\frac{P^{2}}{\varepsilon }\left\Vert \psi v\right\Vert
_{L^{2}\left( \mu \right) }^{2} \\
& \leq CP\left\Vert \nabla _{A}\psi \right\Vert _{L^{\infty }}\int \left(
u_{+}^{2}+v^{2}\right) d\mu +C\varepsilon \left\Vert \nabla _{A}(\psi
u_{+})\right\Vert _{L^{2}\left( \mu \right) }^{2}+\frac{P^{2}}{\varepsilon }%
\int (\psi v)^{2}d\mu ,
\end{align*}%
where in passing from the third line to the fourth line above, we have
replaced $\mathcal{A}$ with $A$ at the expense of multiplying by the
constant $C$ in (\ref{struc_0}). Combining this with (\ref{one'}), and
choosing $\varepsilon $ small enough to absorb the second summand on the
right, we obtain 
\begin{equation*}
\int |\nabla _{A}(\psi u_{+})|^{2}d\mu \leq C\left( \left\Vert \nabla
_{A}\psi \right\Vert _{L^{\infty }}+P\right) ^{2}\int \left(
u_{+}^{2}+v^{2}\right) d\mu .
\end{equation*}
\end{proof}

\chapter{Local Boundedness}

\label{Chapter local boundedness}

We begin with a short review of that part of the theory of Orlicz norms that
is relevant for us.

\section{Orlciz norms}

Recall that if $\Phi :\left[ 0,\infty \right) \rightarrow \left[ 0,\infty
\right) $ is a Young function, we define $L^{\Phi }$ to be the linear span
of $L_{\ast }^{\Phi }$, the set of measurable functions $f:X\rightarrow 
\mathbb{R}$ such that the integral $\int_{X}\Phi \left( \left\vert
f\right\vert \right) d\mu $ is finite, and then define%
\begin{equation*}
	\left\Vert f\right\Vert _{L^{\Phi }\left( \mu \right) }\equiv \inf \left\{
	k\in \left( 0,\infty \right) :\int_{X}\Phi \left( \frac{\left\vert
		f\right\vert }{k}\right) d\mu \leq 1\right\} .
\end{equation*}%
In our application to DeGiorgi iteration the convex bump function $\Phi
\left( t\right) $ will satisfy in addition:

\begin{itemize}
	\item The function $\frac{\Phi (t)}{t}$ is positive, nondecreasing and tends
	to $\infty $ as $t\rightarrow \infty $;
	
	\item $\Phi$ is submultiplicative on an interval $\left( E,\infty \right) $
	for some $E>1$: 
	\begin{equation}
	\Phi \left( ab\right) \leq \Phi \left( a\right) \Phi \left( b\right) ,\ \ \
	\ \ a,b>E.  \label{submult}
	\end{equation}
\end{itemize}

Note that if we consider more generally the quasi-submultiplicative
condition,%
\begin{equation}
\Phi \left( ab\right) \leq K\Phi \left( a\right) \Phi \left( b\right) ,\ \ \
\ \ a,b>E,  \label{submult quasi}
\end{equation}%
for some constant $K$, then $\Phi \left( t\right) $ satisfies (\ref{submult
	quasi}) if and only if $\Phi _{K}\left( t\right) \equiv K\Phi \left(t\right) 
$ satisfies (\ref{submult}). Thus we can alway rescale a
quasi-submultiplicative function to be submultiplicative.

Now let us consider the \emph{linear extension} $\Phi _{\func{ext}}$ of a
function $\Phi :\left[ E,\infty \right) \rightarrow \left[ 0,\infty \right) $
to the entire positive real axis $\left( 0,\infty \right) $ given by%
\begin{equation*}
	\Phi _{\func{ext}}\left( t\right) =\left\{ 
	\begin{array}{ccc}
		\Phi \left( t\right) & \text{ if } & E\leq t<\infty \\ 
		\frac{\Phi \left( E\right) }{E}t & \text{ if } & 0\leq t\leq E%
	\end{array}%
	\right. .
\end{equation*}%
We claim that $\Phi _{\func{ext}}$ is submultiplicative on $\left( 0,\infty
\right) $, i.e. 
\begin{equation*}
	\Phi _{\func{ext}}\left( ab\right) \leq \Phi _{\func{ext}}\left( a\right)
	\Phi _{\func{ext}}\left( b\right) ,\ \ \ \ \ a,b>0.
\end{equation*}%
In fact, the identity$\frac{\Phi _{\func{ext}}(t)}{t}=\frac{\Phi _{\func{ext}%
	}(\max \{t,E\})}{\max \{t,E\}}$ and the monotonicity of $\frac{\Phi (t)}{t}$
imply 
\begin{eqnarray*}
	\frac{\Phi _{\func{ext}}(ab)}{ab} &\leq &\frac{\Phi _{\func{ext}}(\max
		\{a,E\}\max \{b,E\})}{\max \{a,E\}\max \{b,E\}} \\
	&\leq &\frac{\Phi _{\func{ext}}(\max \{a,E\})}{\max \{a,E\}}\cdot \frac{\Phi
		_{\func{ext}}(\max \{b,E\})}{\max \{b,E\}}=\frac{\Phi _{\func{ext}}(a)}{a}%
	\frac{\Phi _{\func{ext}}(b)}{b}.
\end{eqnarray*}

\begin{conclusion}
	\label{sub extensions}If $\Phi :[E,\infty )\rightarrow {\mathbb{R}}^{+}$ is
	a submultiplicative piecewise differentiable strictly convex function with
	the property that $\frac{\Phi (t)}{t}$ is nondecreasing on $[E,\infty )$,
	then we can extend $\Phi $ to a submultiplicative piecewise differentiable
	convex function $\Phi _{\func{ext}}$ on $\left[ 0,\infty \right) $ that
	vanishes at $0$ \emph{if and only if} 
	\begin{equation}
	\Phi ^{\prime }\left( E\right) \geq \frac{\Phi \left( E\right) }{E}.
	\label{extension}
	\end{equation}
\end{conclusion}

So now we suppose that $\Phi $ and $E\in \left( 0,\infty \right) $ satisfy (%
\ref{extension}) and that $\Phi _{\func{ext}}$ is a submultiplicative
piecewise differentiable convex function on $\left[ 0,\infty \right) $ that
vanishes at $0$, and moreover is strictly convex on $\left( E,\infty \right) 
$. Let 
\begin{equation*}
	\Psi \left( t\right) =\Phi _{\func{ext}}^{\prime }\left( t\right) =\left\{ 
	\begin{array}{ccc}
		\Phi ^{\prime }\left( t\right) & \text{ if } & E\leq t<\infty \\ 
		\frac{\Phi \left( E\right) }{E} & \text{ if } & 0\leq t<E%
	\end{array}%
	\right. .
\end{equation*}%
Now $\Psi $ is increasing on $\left( 0,\infty \right) $, but is constant on $%
\left( 0,E\right) $, and in addition has a jump discontinuity at $E$ if $%
\Phi ^{\prime }\left( E\right) >\frac{\Phi \left( E\right) }{E}$. Since $%
\Psi ^{\left( -1\right) }$ does not exist on all of $\left( 0,\infty \right) 
$, we instead \textbf{define} the function $\Psi ^{\left( -1\right) }$ by
reflecting the graph of $\Psi $ about the line $s=t$ in the $\left(
t,s\right) $-plane: 
\begin{equation*}
	\Psi ^{\left( -1\right) }\left( s\right) =\left\{ 
	\begin{array}{ccc}
		\left( \Phi ^{\prime }\right) ^{\left( -1\right) }\left( s\right) & \text{
			if } & \Phi ^{\prime }\left( E\right) \leq s<\infty \\ 
		E & \text{ if } & \frac{\Phi \left( E\right) }{E}\leq s<\Phi ^{\prime
		}\left( E\right) \\ 
		0 & \text{ if } & 0<s<\frac{\Phi \left( E\right) }{E}%
	\end{array}%
	\right. .
\end{equation*}%
Finally we \textbf{define} the conjugate function $\widetilde{\Phi }$ of $%
\Phi $ by the formula%
\begin{align*}
	\widetilde{\Phi }\left( s\right) &\equiv \int_{0}^{s}\Psi ^{\left( -1\right)
	}\left( x\right) dx\\
	&=\left\{ 
	\begin{array}{ccc}
		E\Phi ^{\prime }\left( E\right) -\Phi \left( E\right) +\int_{\Phi ^{\prime
			}\left( E\right) }^{s}\left( \Phi ^{\prime }\right) ^{\left( -1\right)
		}\left( x\right) dx & \text{ if } & \Phi ^{\prime }\left( E\right) \leq
		s<\infty \\ 
		Es-\Phi \left( E\right) & \text{ if } & \frac{\Phi \left( E\right) }{E}\leq
		s<\Phi ^{\prime }\left( E\right) \\ 
		0 & \text{ if } & 0<s<\frac{\Phi \left( E\right) }{E}%
	\end{array}%
	\right. .
\end{align*}%
One now has the standard Young's inequality for the pair of functions $%
\left( \Psi ,\Psi ^{\left( -1\right) }\right) $:%
\begin{equation*}
	ts\leq \Phi \left( t\right) +\widetilde{\Phi }\left( s\right) ,\ \ \ \ \
	0\leq t,s<\infty .
\end{equation*}%
Indeed, the left hand side is the area of the rectangle $\left[ 0,t\right]
\times \left[ 0,s\right] $ in the $\left( t,s\right) $-plane, which is at
most the area $\int_{0}^{t}\Psi \left( y\right) dy$ under the graph of $\Psi 
$ up to $t$ plus the area $\int_{0}^{s}\Psi ^{\left( -1\right) }\left(
y\right) dy$ under the graph of $\Psi ^{\left( -1\right) }$ up to $s$. As a
consequence we have the following generalization H\"{o}lder's inequality:%
\begin{eqnarray*}
	&&\frac{\int_{X}\left\vert f\left( x\right) g\left( x\right) \right\vert
		d\mu \left( x\right) }{\left\Vert f\right\Vert _{L^{\Phi }\left( \mu \right)
		}\left\Vert g\right\Vert _{L^{\widetilde{\Phi }}\left( \mu \right) }}%
	=\int_{X}\frac{\left\vert f\left( x\right) \right\vert }{\left\Vert
		f\right\Vert _{L^{\Phi }\left( \mu \right) }}\frac{\left\vert g\left(
		x\right) \right\vert }{\left\Vert g\right\Vert _{L^{\widetilde{\Phi }}\left(
			\mu \right) }}d\mu \left( x\right) \\
	&&\ \ \ \ \ \ \ \ \ \ \leq \int_{X}\left\{ \Phi \left( \frac{\left\vert
		f\left( x\right) \right\vert }{\left\Vert f\right\Vert _{L^{\Phi }\left( \mu
			\right) }}\right) +\widetilde{\Phi }\left( \frac{\left\vert g\left( x\right)
		\right\vert }{\left\Vert g\right\Vert _{L^{\widetilde{\Phi }}\left( \mu
			\right) }}\right) \right\} d\mu \left( x\right) \leq 1+1=2.
\end{eqnarray*}

We now restrict attention to a particular family of bump functions $\Phi
_{N} $, that we will use in our adaptation of the DeGiorgi iteration scheme,
namely 
\begin{equation}
\Phi _{N}\left( t\right) \equiv \left\{ 
\begin{array}{ccc}
t(\ln t)^{N} & \text{ if } & t\geq E=E_{N}=e^{2N} \\ 
\left( \ln E\right) ^{N}t & \text{ if } & 0\leq t\leq E=E_{N}=e^{2N}%
\end{array}%
\right. .  \label{def Phi N ext}
\end{equation}%
We have that $\Phi _{N}$ is submultiplicative for $s,t\geq e^{2N}$ and $%
N\geq 1$ upon using that 
\begin{eqnarray*}
	st\left[ \ln \left( st\right) \right] ^{N} &=&st\left[ \ln s+\ln t\right]
	^{N}\leq s\left[ \ln s\right] ^{N}t\left[ \ln t\right] ^{N} \\
	&\Longleftrightarrow &\ln s+\ln t\leq \left[ \ln s\right] \left[ \ln t\right]
	,
\end{eqnarray*}%
and $a+b\leq ab$ if $a,b\geq 2$. Thus the above considerations apply to show
that for $N\geq 1$, the Young function $\Phi =\Phi _{N}$ is convex and
submultiplicative, and that the H\"{o}lder inequality 
\begin{equation*}
	\int_{X}\left\vert f\left( x\right) g\left( x\right) \right\vert d\mu \left(
	x\right) \leq 2\left\Vert f\right\Vert _{L^{\Phi }\left( \mu \right)
	}\left\Vert g\right\Vert _{L^{\widetilde{\Phi }}\left( \mu \right) }
\end{equation*}%
holds with conjugate function 
\begin{equation*}
	\widetilde{\Phi }\left( s\right) =\left\{ 
	\begin{array}{ccc}
		\frac{1}{2}\left( 2Ne^{2}\right) ^{N}+\int_{\frac{3}{2}\left( 2N\right)
			^{N}}^{s}\left( \Phi ^{\prime }\right) ^{\left( -1\right) }\left( x\right) dx
		& \text{ if } & \frac{3}{2}\left( 2N\right) ^{N}\leq s<\infty \\ 
		e^{2N}\left\{ s-\left( 2N\right) ^{N}\right\} & \text{ if } & \left(
		2N\right) ^{N}\leq s<\frac{3}{2}\left( 2N\right) ^{N} \\ 
		0 & \text{ if } & 0<s<\left( 2N\right) ^{N}%
	\end{array}%
	\right. ,
\end{equation*}%
since $\frac{d}{dt}\left\{ t\left( \ln t\right) ^{N}\right\} =\left( \ln
t\right) ^{N}+N\left( \ln t\right) ^{N-1}=\left( \ln t\right) ^{N}\left( 1+%
\frac{N}{\ln t}\right) $ implies%
\begin{equation*}
	E=e^{2N},\ \ \ \Phi \left( E\right) =\left( 2Ne^{2}\right) ^{N},\ \ \ \frac{%
		\Phi \left( E\right) }{E}=\left( 2N\right) ^{N},\ \ \ \Phi ^{\prime }\left(
	E\right) =\frac{3}{2}\left( 2N\right) ^{N}.
\end{equation*}

Finally, we will use the estimate%
\begin{equation}
\frac{1}{\widetilde{\Phi _{N}}^{\left( -1\right) }(\frac{1}{x})}\leq \frac{%
	(2\ln(2N))^{N}}{\left( \ln \frac{1}{x}\right) ^{N}},\ \ \ \ \ 0<x\ll 1,  \label{use}
\end{equation}%
where $\widetilde{\Phi _{N}}^{\left( -1\right) }$ is the inverse of the
conjugate Young function $\widetilde{\Phi _{N}}$. To see this, first note
that we can write 
\begin{equation*}
	\Phi (t)=\Phi _{N}(t)=t\left( \ln t\right) ^{N},\ \ \ \ \ t\geq e^{2N},
\end{equation*}%
and therefore 
\begin{equation*}
	\Phi ^{\prime }(t)=\left( \ln t\right) ^{N}+tN\left( \ln t\right) ^{N-1}%
	\frac{1}{t}\geq \left( \ln t\right) ^{N},\ \ \ \ \ t\geq e^{2N}.
\end{equation*}%
With $s=\Phi ^{\prime }(t)$, we then have 
\begin{equation*}
	s\geq \left( \ln t\right) ^{N}\text{ for }t\geq e^{2N},\text{ i.e. }\left(
	\Phi ^{\prime }\right) ^{-1}\left( s\right) =t\leq e^{s^{\frac{1}{N}}}\text{
		for }s\geq \left( 2N\right) ^{N},
\end{equation*}%
and thus 
\begin{eqnarray*}
	\widetilde{\Phi }^{\prime }\left( s\right)  &=&\left( \Phi ^{\prime }\right)
	^{-1}\left( s\right) \leq e^{s^{\frac{1}{N}}}\text{ for }s\geq \left(
	2N\right) ^{N}\text{,} \\
	\text{ i.e. }\widetilde{\Phi }(s) &\leq &Ns^{1-\frac{1}{N}}e^{s^{\frac{1}{N}%
	}}\text{ for }s\geq \left( 2N\right) ^{N},
\end{eqnarray*}%
since%
\begin{eqnarray*}
	\frac{d}{ds}\left( Ns^{1-\frac{1}{N}}e^{s^{\frac{1}{N}}}\right)  &=&Ns^{1-%
		\frac{1}{N}}e^{s^{\frac{1}{N}}}\frac{1}{N}s^{\frac{1}{N}-1}+N\left( 1-\frac{1%
	}{N}\right) s^{-\frac{1}{N}}e^{s^{\frac{1}{N}}} \\
	&=&e^{s^{\frac{1}{N}}}+\left( N-1\right) s^{-\frac{1}{N}}e^{s^{\frac{1}{N}}}
	\\
	&=&e^{s^{\frac{1}{N}}}\left\{ 1+\frac{N-1}{s^{\frac{1}{N}}}\right\} \geq
	e^{s^{\frac{1}{N}}}\geq \frac{d}{ds}\widetilde{\Phi }\left( s\right) .
\end{eqnarray*}

In order to estimate $\widetilde{\Phi }^{-1}(t)$%
, we write 
\begin{align*}
	t& =\widetilde{\Phi }(s)\leq Ns^{1-\frac{1}{N}}e^{s^{\frac{1}{N}}}; \\
	\ln t& \leq \ln N+\left( 1-\frac{1}{N}\right) \ln s+s^{\frac{1}{N}}\leq (2\ln(2N))s^{%
		\frac{1}{N}},\ \ \ \ \ \text{for all}\ \ s\geq \left( 2N\right) ^{N}; \\
	\widetilde{\Phi }^{-1}(t)& =s\geq \left( \frac{1}{2\ln(2N)}\ln t\right) ^{N},\ \ \
	\ \ t\geq e^{2N}.
\end{align*}%
Then we obtain (\ref{use}) by noting that%
\begin{equation*}
	\frac{1}{\widetilde{\Phi }^{-1}(\frac{1}{x})}\leq \frac{1}{\left( \frac{1}{2\ln(2N)}%
		\ln \frac{1}{x}\right) ^{N}}=\frac{(2\ln(2N))^{N}}{\left( \ln \frac{1}{x}\right) ^{N}}%
	,\ \ \ \ \ 0<x<e^{-2N}.
\end{equation*}

\section{DeGiorgi iteration}

In the next proposition we apply DeGiorgi iteration to a sequence of Orlicz
Sobolev and Caccioppoli inequalities involving a family of bump functions
adapted to the strongly degenerate geometries $F_{\sigma }$. Recall that the
strongly degenerate geometries $F_{\alpha }$ have degeneracy function%
\begin{equation*}
	f(x)=e^{-\frac{1}{x^{\alpha }}},\quad \alpha >0,\quad x\geq 0,
\end{equation*}%
and that the family of bump functions $\left\{ \Phi _{N}\right\} _{N>1}$ is
given by 
\begin{equation}
\Phi _{N}\left( t\right) \equiv \left\{ 
\begin{array}{ccc}
t(\ln t)^{N} & \text{ if } & t\geq E=E_{N}=e^{2N} \\ 
\left( \ln E\right) ^{N}t & \text{ if } & 0\leq t\leq E=E_{N}=e^{2N}%
\end{array}%
\right. .  \label{bump_N}
\end{equation}

\begin{proposition}
	\label{DG}Assume that the Orlicz Sobolev norm inequality (\ref{OSN}) holds
	with $\Phi =\Phi _{N}$ for some $N>1$ and superradius $\varphi \left(
	r\right) $, and with a geometry $F$ satisfying Definition \ref{structure
		conditions}.
	
	\begin{enumerate}
		\item Then every weak subsolution $u$ to $\mathcal{L}u=\phi $ in $\Omega $,
		with $\mathcal{L}$ as in (\ref{eq_0}), (\ref{struc_0}), and $A$-admissible $%
		\phi $, satisfies the \emph{inner ball inequality}%
		\begin{equation}
		\left\Vert u_{+}\right\Vert _{L^{\infty }(\frac{1}{2}B)}\leq A_{N,\varepsilon}(r)\left( 
		\frac{1}{\left\vert B\right\vert }\int_{B}u_{+}^{2}\right) ^{\frac{1}{2}%
		}+\left\Vert \phi \right\Vert _{X},  \label{Inner ball inequ}
		\end{equation}%
		\begin{equation}
		\text{where }A_{N,\varepsilon}(r)\equiv C_{1}\exp \left\{ C_{2}\left( \frac{\varphi (r)}{%
			r}\right) ^{\frac{1}{N-1-\varepsilon}}\right\} ,  \label{def AN}
		\end{equation}%
		for every ball $B\subset \Omega $ with radius $r$ that is centered on the $x_{n}$-axis, and every $0<\varepsilon<N-1$. Here the constants $C_{1}$ and $C_{2}$ depend on $N$ and $\varepsilon$ but not on $r$.
		
		\item Thus $u$ is locally bounded above in $\Omega $, since $\mathcal{L}$ is
		elliptic away from the $x_{n}$-axis by the structure conditions in
		Definition \ref{structure conditions}.
		
		\item In particular, weak solutions are locally bounded in $\Omega $.
	\end{enumerate}
\end{proposition}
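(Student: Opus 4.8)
The plan is to prove the inner ball inequality \eqref{Inner ball inequ} by running a DeGiorgi iteration scheme on a nested sequence of balls $B(x,r_j)$ shrinking from $B=B(x,r)$ to $\frac12 B$, using the standard accumulating Lipschitz cutoffs $\{\psi_j\}$ from Lemma~\ref{cutoff exist}, and playing the Caccioppoli inequality \eqref{standard Cacc} against the Orlicz Sobolev inequality \eqref{OSN}. First I would set up the truncation levels: fix a large constant $k_\infty$ (eventually comparable to the right-hand side of \eqref{Inner ball inequ}) and truncation levels $k_j \uparrow k_\infty$, and set $w_j = (u_+ - k_j)^+$. Applying \eqref{standard Cacc} to $\psi_j w_j$ and then the Orlicz Sobolev inequality to $\psi_j w_j$, one gets $\|\psi_j w_j\|_{L^{\Phi}(\mu_{B_j})}$ controlled by $\varphi(r)$ times $\|\nabla_A(\psi_j w_j)\|_{L^1(\mu_{B_j})}$, which by Cauchy–Schwarz on the support is $\lesssim \varphi(r)\,\mu(\{w_j>0\}\cap B_j)^{1/2}\|\nabla_A(\psi_j w_j)\|_{L^2(\mu_{B_j})}$, and then the Caccioppoli bound turns the last factor into $(j^\gamma/r)\|w_{j-1}\|_{L^2(\mu_{B_{j-1}})} + \|\phi\|_X$-type terms. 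The role of the forcing term $\|\phi\|_X$ is handled by absorbing it into the final constant on the right of \eqref{Inner ball inequ}, exactly as in the classical DeGiorgi argument; once $k_\infty \ge \|\phi\|_X$ the $\phi$-contribution in the Caccioppoli estimate is dominated by the $w$-terms on $\{u_+ > k_j\}$.

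Next I would convert the Orlicz-norm information into an $L^2$ decay estimate for the excess energy. Introduce the dimensionless quantities $a_j = \mu(\{u_+ > k_j\}\cap B_j) + \frac{1}{(k_\infty-k_j)^2}\frac{1}{|B|}\int_{B_j} w_j^2\,dx$ (or a similar normalized "DeGiorgi quantity"); the $L^{\Phi}$ bound together with the generalized Hölder inequality $\int |fg| \le 2\|f\|_{L^\Phi}\|g\|_{L^{\widetilde\Phi}}$ and the estimate \eqref{use} for $\widetilde{\Phi_N}^{(-1)}$ gives, on the sublevel set where $w_j>0$ so that $w_{j-1} \ge k_j - k_{j-1}$, an inequality of Chebyshev type that extracts a factor $\bigl(\ln \frac{1}{a_{j-1}}\bigr)^{-N}$ — this is precisely where the Orlicz gain over $L^1$ appears and where submultiplicativity of $\Phi_N$ is used. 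The upshot is a recursion of the shape
\begin{equation*}
a_j \le C\,\Bigl(\frac{\varphi(r)}{r}\Bigr)^{?}\, j^{\gamma'}\, b^{j}\, a_{j-1}\,\Bigl(\ln \tfrac{1}{a_{j-1}}\Bigr)^{-N},
\end{equation*}
for suitable fixed constants $C, b>1, \gamma'$, valid as long as $a_{j-1}$ stays small.

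The heart of the matter is then a nonlinear fast-geometric-convergence lemma: if $a_0$ is sufficiently small (which is arranged by choosing $k_\infty$, i.e. the prefactor $A_N(r)$, large enough relative to $\bigl(\frac{1}{|B|}\int_B u_+^2\bigr)^{1/2}$), then $a_j \to 0$, which forces $u_+ \le k_\infty$ a.e. on $\frac12 B$. Because the usual power-type DeGiorgi recursion $a_j \le C b^j a_{j-1}^{1+\delta}$ is unavailable here — there is no $\delta>0$, only the logarithmic gain — one must track the iteration of $t \mapsto t(\ln\frac1t)^{-N}$, whose orbits decay like $\exp(-c\,m^{1/(N-1)} \,)$ rather than doubly exponentially; matching this sub-geometric decay against the geometric growth $b^j j^{\gamma'}$ is exactly what dictates the $\exp\{C_2(\varphi(r)/r)^{1/N}\}$ form of $A_N(r)$ in \eqref{def AN}. \textbf{This DeGiorgi lemma adapted to the logarithmic Young function is the main obstacle}, and it is presumably the "surprise" alluded to in the Preface; I would prove it by showing inductively that $a_j \le e^{-\lambda \, 2^{\,?}}$-type bounds propagate, or more carefully by setting $s_j = \ln\frac{1}{a_j}$ and showing $s_j$ grows at least linearly once $s_0 \ge C_2'(\varphi(r)/r)^{1/N}$, using $N>1$ crucially so that $(\ln\frac1t)^{-N}$ beats the linear-in-$j$ loss.

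Finally, parts (2) and (3) are immediate corollaries: away from the $x_n$-axis the structure conditions of Definition~\ref{structure conditions} give $f(x_1)\ge c>0$ locally, so $\mathcal{L}$ is uniformly elliptic there and classical DeGiorgi–Nash–Moser (or Theorem~\ref{bound_gen_thm} with $\Phi(t)=t^{n/(n-1)}$ or the standard $L^p$ Sobolev inequality) gives local boundedness; combined with the inner ball inequality \eqref{Inner ball inequ} near the axis, one covers any compact subset of $\Omega$ by finitely many such balls to conclude $u$ is locally bounded above, and applying the same to $\pm u$ when $u$ is a solution gives local boundedness. The only point needing a word of care is that the balls in \eqref{Inner ball inequ} are required to be centered on the $x_n$-axis, so the covering argument must use axis-centered balls near the degeneracy and Euclidean balls elsewhere.
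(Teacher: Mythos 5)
Your overall architecture (Caccioppoli vs.\ Orlicz--Sobolev, truncations, Orlicz--H\"older to extract a $\bigl(\ln\frac{1}{a_{j-1}}\bigr)^{-N}$ gain, then a nonlinear iteration lemma) is the same as the paper's, and your treatment of the inhomogeneous term and of parts (2)--(3) is correct in spirit. But there is a genuine gap at the heart of the argument: the recursion you write down,
\begin{equation*}
a_j \le C\,\Bigl(\tfrac{\varphi(r)}{r}\Bigr)^{?}\, j^{\gamma'}\, b^{j}\, a_{j-1}\,\Bigl(\ln \tfrac{1}{a_{j-1}}\Bigr)^{-N},\qquad b>1,
\end{equation*}
cannot be closed for \emph{any} choice of $a_0$. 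Setting $s_j=\ln\frac{1}{a_j}$ it reads $s_j \ge s_{j-1}+N\ln s_{j-1}-j\ln b-\gamma'\ln j-\ln C'$, and the gain $N\ln s_{j-1}$ is only $O(\ln j)$ along any orbit that the recursion itself can sustain (one checks that no polynomial or exponential lower ansatz for $s_j$ propagates), whereas the loss $j\ln b$ is linear in $j$; so $s_j$ cannot tend to $+\infty$ and the iteration breaks down. This is precisely the point where the logarithmic Orlicz gain differs qualitatively from the power gain $a_{j-1}^{\delta}$ of classical DeGiorgi, which absorbs $b^j$ via doubly exponential decay. (Your heuristic that orbits of $t\mapsto t(\ln\frac1t)^{-N}$ decay like $\exp(-cm^{1/(N-1)})$ is also off --- they decay roughly like $\exp(-cNm\ln m)$ --- but that is secondary.)

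The missing idea is that the geometric factor $b^j$ must not appear at all: both the radii and the truncation levels have to be spaced \emph{polynomially}, not dyadically. The paper's ``standard'' cutoffs already build this in, $r_j-r_{j+1}=c\,r/j^{\gamma}$ with $\gamma=1+\frac{\varepsilon}{2}$, so $\|\nabla_A\psi_j\|_\infty\lesssim j^{1+\varepsilon/2}/r$, and the truncation levels are taken as $C_k=\tau\|\phi\|_X\bigl(1-c(k+1)^{-\varepsilon/2}\bigr)$, so that $C_{k+1}-C_k\approx k^{-1-\varepsilon/2}$ and the Chebyshev and Caccioppoli constants are powers of $k$, not of $2$. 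Then every per-step loss in $b_k=\ln\frac{1}{U_k}$ is $O(\ln k)$, the Orlicz gain $N\ln b_k\approx N\ln k$ wins exactly when $N>1+\frac{\varepsilon}{2}$ (possible for any $N>1$ by shrinking $\varepsilon$), and the induction $b_k\ge b_0+k$ closes once $b_0\gtrsim(\varphi(r)/r)^{1/N}$ --- which is where the prefactor $A_N(r)=C_1\exp\{C_2(\varphi(r)/r)^{1/N}\}$ actually comes from (via the rescaling parameter $\tau$ in the truncation levels), not from any competition with a $b^j$ term. Without replacing the dyadic spacing by polynomial spacing, your iteration lemma is false and the proof does not go through.
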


\begin{proof}
	Without loss of generality, we may assume that $B=B\left( 0,r\right) $ is a
	ball centered at the origin with radius $r>0$. Let $\left\{ \psi
	_{j}\right\} _{j=1}^{\infty }$ be a \emph{standard} sequence of Lipschitz
	cutoff functions at $\left( 0,r\right) $ as in Definition \ref{def_cutoff}
	with $\gamma =1+\frac{\varepsilon }{2}$, and associated with the balls $%
	B_{j}\equiv B(0,r_{j})\supset \func{supp}\psi _{j+1}$, $\psi _{j}=1$ on $%
	B_{j}$, with $r_{1}=r$, $r_{\infty }\equiv \lim_{j\rightarrow \infty }r_{j}=%
	\frac{r}{2}$, $r_{j}-r_{j+1}=\frac{c}{j^{1+\frac{\varepsilon }{2}}}r$ for a
	uniquely determined constant $c=c_{\varepsilon }$, and $\left\Vert \nabla
	_{A}\psi _{j}\right\Vert _{\infty }\lesssim \frac{j^{1+\frac{\varepsilon }{2}%
	}}{r}$ with $\nabla _{A}$ as in (\ref{def A grad}) above (see Proposition 68
	in \cite{SaWh4} for more detail). Following DeGiorgi (\cite{DeG}, see also 
	\cite{CaVa}), we consider the family of truncations 
	\begin{equation}
	u_{k}=(u-C_{k})_{+},\quad C_{k}=\tau \left\Vert \phi \right\Vert _{X}\left(
	1-c\left( k+1\right) ^{-\varepsilon /2}\right) ,  \label{truncations}
	\end{equation}%
	and denote the $L^{2}$ norm of the truncation $u_{k}$ by 
	\begin{equation}
	U_{k}\equiv \int_{B_{k}}|u_{k}|^{2}d\mu ,  \label{def Uk}
	\end{equation}%
	where $d\mu =\frac{dx}{|B(0,r)|}=\frac{dx}{\left\vert B\right\vert }$ is
	independent of $k$. Here we have introduced a parameter $\tau \geq 1$ that
	will be used later for rescaling. We will assume $||\phi||_X>0$, otherwise we replace it with parameter $m>0$ and take the limit $m\to 0$ at the end of the argument.
	
	Using H\"{o}lder's inequality for Young functions we can write 
	\begin{equation}
	\int \left( \psi _{k+1}u_{k+1}\right) ^{2}d\mu \leq C||\left( \psi
	_{k+1}u_{k+1}\right) ^{2}||_{L^{\Phi }\left( B_{k};\mu \right) }\cdot
	||1||_{L^{\tilde{\Phi}}\left( \{\psi _{k+1}u_{k+1}>0\};\mu \right) }\ ,
	\label{hol}
	\end{equation}%
	where the norms are taken with respect to the measure $\mu $. For the first
	factor on the right we have, using the Orlicz Sobolev inequality (\ref{OSN})
	and Cauchy-Schwartz inequality with $\delta $ to be determined later,%
	\begin{align*}
		||\left( \psi _{k+1}u_{k+1}\right) ^{2}&||_{L^{\Phi }\left( B_{k};\mu \right)
		} \\
		&\leq C\varphi (r)\int \left\vert \nabla _{A}\left( \psi
		_{k+1}^{2}u_{k+1}^{2}\right) \right\vert d\mu =C\varphi (r)\int \left\vert
		\nabla _{A}\left( \psi _{k+1}^{2}u_{k+1}^{2}\right) \right\vert d\mu \\
		&=C\varphi (r)\int \left\vert \psi _{k+1}u_{k+1}\right\vert \left\vert
		\nabla _{A}\left( \psi _{k+1}u_{k+1}\right) \right\vert d\mu \\
		&\leq \delta \int \left\vert \nabla _{A}\left( \psi _{k+1}u_{k+1}\right)
		\right\vert ^{2}d\mu +\frac{C}{\delta }\varphi (r)^{2}\int \left( \psi
		_{k+1}^{2}u_{k+1}^{2}\right) d\mu .
	\end{align*}
	
	We would now like to apply the Caccioppoli inequality (\ref{Cacc_inhomog})
	with an appropriate function $v$ to the first term on the right, and
	therefore we need to establish estimate (\ref{P_bound}). For that we observe
	that 
	\begin{eqnarray}
	u_{k+1} &>&0\Longrightarrow u>C_{k+1}=\tau \left\Vert \phi \right\Vert
	_{X}\left( 1-c\left( k+2\right) ^{-\varepsilon /2}\right)  \label{uk_support}
	\\
	&\Longrightarrow &u_{k}=\left( u-C_{k}\right) _{+}>c\tau \left\Vert \phi
	\right\Vert _{X}\left[ \left( k+1\right) ^{-\frac{\varepsilon }{2}}-\left(
	k+2\right) ^{-\frac{\varepsilon }{2}}\right] ,
	\end{eqnarray}%
	and%
	\begin{eqnarray*}
		&&c\tau \left\Vert \phi \right\Vert _{X}\left[ \left( k+1\right) ^{-\frac{%
				\varepsilon }{2}}-\left( k+2\right) ^{-\frac{\varepsilon }{2}}\right] =c\tau
		\left\Vert \phi \right\Vert _{X}\left( k+1\right) ^{-\frac{\varepsilon }{2}}%
		\left[ 1-\left( \frac{k+1}{k+2}\right) ^{\frac{\varepsilon }{2}}\right] \\
		&=&c\tau \left\Vert \phi \right\Vert _{X}\left( k+1\right) ^{-\frac{%
				\varepsilon }{2}}\left( 1-\frac{k+1}{k+2}\right) \frac{\varepsilon }{2}%
		\theta ^{\frac{\varepsilon }{2}-1}\ \ \ \ \ \text{where }\frac{k+1}{k+2}%
		<\theta <1 \\
		&\geq &\frac{\varepsilon }{2}c\tau \left\Vert \phi \right\Vert _{X}\left(
		k+1\right) ^{-\frac{\varepsilon }{2}}\frac{1}{k+2}\left( \frac{k+1}{k+2}%
		\right) ^{\frac{\varepsilon }{2}-1}\geq \frac{\varepsilon }{2}c\tau
		\left\Vert \phi \right\Vert _{X}\left( k+2\right) ^{-1-\frac{\varepsilon }{2}%
		}.
	\end{eqnarray*}%
	This implies 
	\begin{equation}\label{phi_x_bound}
	\left\Vert \phi \right\Vert _{X}\leq \frac{2}{c\tau \varepsilon }\left(
	k+2\right) ^{1+\frac{\varepsilon }{2}}u_{k}\leq \frac{2}{c\varepsilon }%
	\left( k+2\right) ^{1+\frac{\varepsilon }{2}}u_{k},
	\end{equation}%
	which is (\ref{P_bound}) with $P=\frac{2}{c\varepsilon }\left( k+2\right)
	^{1+\frac{\varepsilon }{2}}$ and $v=u_{k}$. Note that we have used our
	assumption that $\tau \geq 1$ in the display above.
	
	Thus by the Caccioppoli inequality (\ref{Cacc_inhomog}) we have, 
	\begin{align*}
		\int \left\vert \nabla _{A}\left( \psi _{k+1}u_{k+1}\right) \right\vert
		^{2}&d\mu \\
		&\leq C\left( \left\Vert \nabla _{A}\psi _{k+1}\right\Vert
		_{L^{\infty }}+\frac{2}{c\varepsilon }\left( k+2\right) ^{1+\frac{%
				\varepsilon }{2}}\right) ^{2}\int_{B_{k}}\left( u_{k+1}^{2}+u_{k}^{2}\right)
		d\mu \\
		&\leq C\frac{\left( k+1\right) ^{2+\varepsilon }}{r^{2}}%
		\int_{B_{k}}u_{k}^{2}d\mu \ .
	\end{align*}%
	Finally, choosing $\delta =\frac{r\varphi \left( r\right) }{\left(
		k+1\right) ^{1+\frac{\varepsilon }{2}}}$, we obtain%
	\begin{align}
		||\left( \psi _{k+1}u_{k+1}\right) ^{2}||_{L^{\Phi }\left( B_{k};\mu \right)
		} &\leq \delta C\frac{\left( k+1\right) ^{2+\varepsilon }}{r^{2}}%
		\int_{B_{k}}u_{k}^{2}d\mu +\frac{C}{\delta }\varphi (r)^{2}\int \left( \psi
		_{k+1}^{2}u_{k+1}^{2}\right) d\mu  \label{sob_cac} \\
		&\leq C\frac{\varphi (r)}{r}\left( k+1\right) ^{1+\frac{\varepsilon }{2}%
		}\int_{B_{k}}u_{k}^{2}d\mu \ . \notag 
	\end{align}
	
	For the second factor in (\ref{hol}) we claim 
	\begin{equation}
	||1||_{L^{\tilde{\Phi}}\left( \{\psi _{k+1}u_{k+1}>0\}d\mu \right) }\leq
	\Gamma \left( \left\vert \left\{ \psi _{k}u_{k}>\frac{\varepsilon }{2}%
	c\left( k+2\right) ^{-1-\frac{\varepsilon }{2}}\right\} \right\vert _{\mu
	}\right) ,  \label{super}
	\end{equation}%
	with the notation%
	\begin{equation}
	\Gamma \left( t\right) \equiv \frac{1}{\tilde{\Phi}^{-1}\left( \frac{1}{t}%
		\right) }.  \label{psi_inv}
	\end{equation}%
	First recall 
	\begin{equation*}
		||f||_{L^{\tilde{\Phi}}(X)}\equiv \inf \left\{ a:\int_{X}\tilde{\Phi}\left( 
		\frac{f}{a}\right) \leq 1\right\} ,
	\end{equation*}%
	and note 
	\begin{equation*}
		\int_{\{\psi _{k+1}u_{k+1}>0\}}\tilde{\Phi}\left( \frac{1}{a}\right) =\tilde{%
			\Phi}\left( \frac{1}{a}\right) \left\vert \left\{ \psi
		_{k+1}u_{k+1}>0\right\} \right\vert _{\mu }.
	\end{equation*}%
	Now take 
	\begin{equation*}
		a=\Gamma \left( \left\vert \left\{ \psi _{k+1}u_{k+1}>0\right\} \right\vert
		_{\mu }\right) \equiv \frac{1}{\tilde{\Phi}^{-1}\left( \frac{1}{\left\vert
				\left\{ \psi _{k+1}u_{k+1}>0\right\} \right\vert _{\mu }}\right) }
	\end{equation*}%
	which obviously satisfies 
	\begin{equation*}
		\int_{\left\{ \psi _{k+1}u_{k+1}>0\right\} }\tilde{\Phi}\left( \frac{1}{a}%
		\right) d\mu =1.
	\end{equation*}%
	This gives 
	\begin{equation*}
		||1||_{L^{\tilde{\Psi}}\left( \left\{ \psi _{k+1}u_{k+1}>0\right\} d\mu
			\right) }\leq a=\Gamma \left( \left\vert \left\{ \psi
		_{k+1}u_{k+1}>0\right\} \right\vert _{\mu }\right) ,
	\end{equation*}%
	and to conclude (\ref{super}) we only need to observe that 
	\begin{equation*}
		\left\{ \psi _{k+1}u_{k+1}>0\right\} \subset \left\{ \psi _{k+1}u_{k}>\frac{%
			\varepsilon }{2}c\tau \left\Vert \phi \right\Vert _{X}\left( k+2\right) ^{-1-%
			\frac{\varepsilon }{2}}\right\} ,
	\end{equation*}%
	which follows from (\ref{uk_support}).
	
	Next we use Chebyshev's inequality to obtain 
	\begin{equation}
	\left\vert \left\{ \psi _{k+1}u_{k}>\frac{\varepsilon }{2}c\left( k+2\right)
	^{-1-\frac{\varepsilon }{2}}\right\} \right\vert _{\mu }\leq \gamma \left(
	k+2\right) ^{2+\varepsilon }\int \left( \psi _{k}u_{k}\right) ^{2}d\mu ,
	\label{cheb}
	\end{equation}%
	where $\gamma =\frac{4}{c^{2}\tau ^{2}\left\Vert \phi \right\Vert
		_{X}^{2}\varepsilon ^{2}}$. Combining (\ref{hol})-(\ref{cheb}) we obtain 
	\begin{align*}
		\int &\left( \psi _{k+1}u_{k+1}\right) ^{2}d\mu  \\
		&\leq C||\left( \psi
		_{k+1}u_{k+1}\right) ^{2}||_{L^{\Phi }\left( B_{k};\mu \right) }\cdot
		||1||_{L^{\tilde{\Phi}}\left( \{\psi _{k+1}u_{k+1}>0\};\mu \right) } \\
		&\leq C\frac{\varphi (r)}{r}\frac{\left( k+1\right) ^{1+\frac{\varepsilon }{%
					2}}}{\left\vert B_{1}\right\vert }\int_{B_{k}}u_{k}^{2}\cdot \Gamma \left(
		\left\vert \left\{ \psi _{k}u_{k}>\frac{\varepsilon }{2}c\left( k+2\right)
		^{-1-\frac{\varepsilon }{2}}\right\} \right\vert _{\mu }\right)  \\
		&\leq C\frac{\varphi (r)}{r}\left( k+1\right) ^{1+\frac{\varepsilon }{2}%
		}\left( \int_{B_{k}}u_{k}^{2}d\mu \right) \Gamma \left( \gamma \left(
		k+2\right) ^{2+\varepsilon }\int \left( \psi _{k}u_{k}\right) ^{2}d\mu
		\right)  \\
		&=C\frac{\varphi (r)}{r}\left( k+1\right) ^{1+\frac{\varepsilon }{2}}\left(
		\int_{B_{k}}u_{k}^{2}d\mu \right) \Gamma \left( \gamma \left( k+2\right)
		^{2+\varepsilon }\int \left( \psi _{k}u_{k}\right) ^{2}d\mu \right) ,
	\end{align*}%
	or in terms of the quantities $U_{k}$,%
	\begin{eqnarray}
	U_{k+1} &\leq &C\frac{\varphi (r)}{r}\left( k+1\right) ^{1+\frac{\varepsilon 
		}{2}}U_{k}\Gamma \left( \gamma \left( k+2\right) ^{2+\varepsilon
	}U_{k}\right)   \label{iter} \\
	&=&C\frac{\varphi (r)}{r}\left( k+1\right) ^{1+\frac{\varepsilon }{2}}U_{k}%
	\frac{1}{\tilde{\Phi}^{-1}\left( \frac{1}{\gamma \left( k+2\right)
			^{2+\varepsilon }U_{k}}\right) }.  \notag
	\end{eqnarray}
	
	Now we use the estimate (\ref{use}) on $\frac{1}{\tilde{\Phi}^{-1}\left( 
		\frac{1}{x}\right) }$ to determine the values of $N$ and $\varepsilon $ for
	which DeGiorgi iteration provides local boundedness of weak subsolutions,
	i.e. for which $U_{k}\rightarrow 0$ as $k\rightarrow \infty $ provided $U_{0}
	$ is small enough. From (\ref{use}) and (\ref{iter}) we have 
	\begin{equation*}
		U_{k+1}\leq C\frac{\varphi (r)}{r}\frac{\left( k+1\right) ^{1+\frac{%
					\varepsilon }{2}}U_{k}}{\left( \ln \frac{1}{\gamma }\frac{1}{\left(
				k+2\right) ^{(2+\varepsilon )}U_{k}}\right) ^{N}},
	\end{equation*}%
	provided%
	\begin{equation*}
		\gamma \left( k+2\right) ^{(2+\varepsilon )}U_{k}<e^{-2N},
	\end{equation*}%
	and using the notation 
	\begin{equation*}
		b_{k}\equiv \ln \frac{1}{U_{k}},
	\end{equation*}%
	we can rewrite this as 
	\begin{eqnarray*}
		b_{k+1} &\geq &b_{k}-(1+\frac{\varepsilon }{2})\ln \left( k+1\right)  \\
		&&+N\ln \left( b_{k}-(2+\varepsilon )\ln \left( k+2\right) -\ln \gamma
		\right) -\ln \left( C\frac{\varphi (r)}{r}\right) ,
	\end{eqnarray*}%
	for $k\geq 0$, provided%
	\begin{equation*}
		\frac{1}{\gamma }\left( k+2\right) ^{-(2+\varepsilon )}\frac{1}{U_{k}}%
		>e^{2N};
	\end{equation*}%
	i.e.%
	\begin{equation}
	b_{k}>\left( 2+\varepsilon \right) \ln \left( k+2\right) +\ln \gamma +2N,\ \
	\ \ \ k\geq 0.  \label{prov}
	\end{equation}%
	We now use induction to show that
	
	\textbf{Claim}: Both (\ref{prov}) and 
	\begin{equation}
	b_{k}\geq b_{0}+k,\ \ \ \ \ k\geq 0,  \label{bk}
	\end{equation}
	
	hold for $b_{0}$ taken sufficiently large depending on $N>1$ and $%
	0<r<R $.
	
	Indeed, both (\ref{prov}) and (\ref{bk}) are trivial if $k=0$ and $b_{0}$ is
	large enough. Assume now that the claim is true for some $k\geq 0$. Then 
	\begin{eqnarray*}
		b_{k+1} &\geq &b_{0}+k+1+N\ln \left( b_{0}+k-(2+\varepsilon )\ln \left(
		k+2\right) -\ln \gamma \right)  \\
		&&-(1+\frac{\varepsilon }{2})\ln \left( k+1\right) -1-\ln \left( C\frac{%
			\varphi (r)}{r}\right) .
	\end{eqnarray*}%
	Now for $ N>1+\frac{\varepsilon }{2}$ we have 
	\begin{equation*}
		N\ln \left( b_{0}+k-(2+\varepsilon )\ln \left( k+2\right) -\ln \gamma
		\right) -(1+\frac{\varepsilon }{2})\ln \left( k+1\right) -1-\ln \left( C%
		\frac{\varphi (r)}{r}\right) \rightarrow \infty ,
	\end{equation*}%
	as $k\rightarrow \infty $, and therefore for $b_{0}$ sufficiently large depending on $N$ and $r$, we
	obtain 
	\begin{equation*}
		N\ln \left( b_{0}+k-(2+\varepsilon )\ln \left( k+2\right) -\ln \gamma
		\right) -(1+\frac{\varepsilon }{2})\ln \left( k+1\right) -1-\ln \left( C%
		\frac{\varphi (r)}{r}\right) \geq 0,
	\end{equation*}%
	for all $k\geq 1$, which gives (\ref{bk}) for $k+1$,%
	\begin{equation*}
		b_{k+1}\geq b_{0}+k+1,
	\end{equation*}%
	and also (\ref{prov}) for $k+1$, 
	\begin{eqnarray*}
		b_{k+1} &\geq &b_{0}+k+1>\left( 2+\varepsilon \right) \ln 2+\ln \gamma
		+2N+k+1 \\
		&\geq &\left( 2+\varepsilon \right) \ln \left( k+3\right) +\ln \gamma +2N.
	\end{eqnarray*}
	
	We note that it is sufficient to require 
	\[
	(N-1-\varepsilon)\ln( b_0-8-\ln\gamma)\geq \ln\left(C\frac{\varphi(r)}{r}\right),
	\]
	or
	\begin{equation}
	b_{0}\geq A_{N,\varepsilon}\left( \frac{\varphi (r)}{r}\right) ^{\frac{1}{N-1-\varepsilon}}+\ln (e^8\gamma)
	=A_{N,\varepsilon}\left( \frac{\varphi (r)}{r}\right) ^{\frac{1}{N-1-\varepsilon}}+\ln \frac{4e^8}{%
		c^{2}\tau ^{2}\left\Vert \phi \right\Vert _{X}^{2}\varepsilon ^{2}}
	\label{b_0_cond}
	\end{equation}%
	for $A_{N,\varepsilon}$ sufficiently large depending on $N$ and $\varepsilon$. This completes the proof of
	the induction step and therefore $b_{k}\rightarrow \infty $ as $k\rightarrow
	\infty $, or $U_{k}\rightarrow 0$ as $k\rightarrow \infty $, provided $%
	U_{0}=e^{-b_{0}}$ is sufficiently small.
	
	Altogether, we have shown that 
	\begin{equation*}
		u_{\infty }=(u-\tau \left\Vert \phi \right\Vert _{X})_{+}=0\quad \text{on}\
		B_{\infty }=B\left( 0,\frac{r}{2}\right) =\frac{1}{2}B\left( 0,r\right) ,
	\end{equation*}%
	and thus that 
	\begin{equation}
	u\leq \tau \left\Vert \phi \right\Vert _{X}\quad \text{on}\ B_{\infty }\ ,
	\label{bddness_temp}
	\end{equation}%
	provided $U_{0}=\int_{B}|u_{0}|^{2}d\mu =\frac{1}{\left\vert B\right\vert }%
	\int_{B}|u_{0}|^{2}$ is sufficiently small. From (\ref{b_0_cond}) it follows
	that it is sufficient to require 
	\begin{equation}
	\sqrt{\frac{1}{\left\vert B\right\vert }\int_{B}|u_{0}|^{2}}=e^{-b_{0}}\leq
	\tau \left\Vert \phi \right\Vert _{X}C\varepsilon\exp \left(
	-\frac{A_{N,\varepsilon}}{2}\left( \frac{\varphi (r)}{r}\right) ^{\frac{1}{N-1-\varepsilon}}\right) =\eta
	_{N,\varepsilon}(r)\tau \left\Vert \phi \right\Vert _{X}\ ,  \label{def delta}
	\end{equation}%
	where $A_{N,\varepsilon}$ is the constant in (\ref{b_0_cond}), $C=c/2e^4$ and%
	\begin{equation*}
		\eta _{N,\varepsilon}(r)\equiv C\varepsilon \exp \left\{ -\frac{A_{N,\varepsilon}}{2}%
		\left( \frac{\varphi (r)}{r}\right) ^{\frac{1}{N-1-\varepsilon}}\right\} .
	\end{equation*}
	
	To recover the general case we now consider two cases, $\sqrt{\frac{1}{%
			\left\vert B\right\vert }\int_{B}|u_{0}|^{2}}\leq \eta _{N,\varepsilon}(r)\left\Vert
	\phi \right\Vert _{X}$ and $\sqrt{\frac{1}{\left\vert B\right\vert }%
		\int_{B}|u_{0}|^{2}}>\eta _{N,\varepsilon}(r)\left\Vert \phi \right\Vert _{X}$. In the
	first case we obtain that (\ref{def delta}) holds with $\tau =1$ and thus 
	\begin{equation*}
		||u_{+}||_{L^{\infty }(\frac{1}{2}B)}\leq \left\Vert \phi \right\Vert _{X}\ .
	\end{equation*}%
	In the second case, when $\sqrt{\frac{1}{\left\vert B\right\vert }%
		\int_{B}|u_{0}|^{2}}>\eta _{N,\varepsilon}(r)\left\Vert \phi \right\Vert _{X}$, we let $%
	\tau =\frac{\sqrt{\frac{1}{\left\vert B\right\vert }\int_{B}|u_{0}|^{2}}}{%
		\eta _{N,\varepsilon}(r)\left\Vert \phi \right\Vert _{X}}>1$ so that (\ref{def delta})
	holds, and then from (\ref{bddness_temp}) we get 
	\begin{equation*}
		||u_{+}||_{L^{\infty }(\frac{1}{2}B)}\leq \frac{\sqrt{\frac{1}{\left\vert
					B\right\vert }\int_{B}|u_{0}|^{2}}}{\eta _{N,\varepsilon}(r)}.
	\end{equation*}%
	Altogether this gives 
	\begin{equation*}
		||u_{+}||_{L^{\infty }(\frac{1}{2}B)}\leq \left\Vert \phi \right\Vert
		_{X}+A_{N,\varepsilon}(r)\sqrt{\frac{1}{\left\vert B\right\vert }\int_{B}|u_{0}|^{2}},
	\end{equation*}%
	where 
	\begin{equation*}
		A_{N,\varepsilon}\left( r\right) =\frac{1}{\eta _{N,\varepsilon}\left( r\right) }=C_{1}\exp \left(
		C_{2}\left( \frac{\varphi \left( r\right) }{r}\right) ^{\frac{1}{N-1-\varepsilon}}\right) ,
	\end{equation*}%
	and where the constants $C_{1}$ and $C_{2}$ depend on $N$ and $\varepsilon$, but do not depend
	on $r$.
\end{proof}

The following corollary makes the somewhat trivial observation that we may
replace the factor $\frac{1}{\left\vert B\right\vert }$ with the much
smaller factor $\frac{1}{\left\vert 3B\right\vert }$ at the expense of
replacing the constant $A_{N,\varepsilon}\left( r\right) $ with the much larger constant 
$A_{N,\varepsilon}\left( 3r\right) $. This turns out to be a convenient renormalization
of the local boundedness inequality to prove a continuity theorem in the subsequent paper.

\begin{corollary}[of the proof]
	\label{renorm}Suppose all the assumptions of Proposition \ref{DG} are
	satisfied. Then 
	\begin{equation}
	\left\Vert u_{+}\right\Vert _{L^{\infty }(\frac{1}{2}B)}\leq A_{N,\varepsilon}\left(
	3r\right) \left( \frac{1}{\left\vert 3B\right\vert }\int_{B}u_{+}^{2}\right)
	^{\frac{1}{2}}+\left\Vert \phi \right\Vert _{X}\ ,  \label{Inner ball inequ'}
	\end{equation}%
	with $A_{N,\varepsilon}$ as in (\ref{def AN}).
\end{corollary}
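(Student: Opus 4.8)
The plan is to re-run the DeGiorgi iteration of Proposition \ref{DG} essentially verbatim, changing only the normalization of the ambient measure. Fix a ball $B=B(0,r)$ centered at the origin (which we take on the $x_n$-axis) with $3B\subset\Omega$, and this time set $d\mu=\frac{dx}{|3B|}$ in place of $\frac{dx}{|B|}$. Keep the same standard accumulating sequence $\{\psi_j\}$ at $(0,r)$ from Definition \ref{def_cutoff}, with $\limfunc{supp}\psi_j\subset B_j=B(0,r_j)\subset B\subset 3B$, $r_1=r$, $r_\infty=\frac r2$, $r_j-r_{j+1}=\frac{c}{j^{1+\varepsilon/2}}r$ and $\Vert\nabla_A\psi_j\Vert_\infty\lesssim j^{1+\varepsilon/2}/r$; these depend only on the radius $r$, not on how Lebesgue measure is normalized, so they are unchanged.

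The only genuine modification occurs in the Orlicz Sobolev step: since each $\psi_{k+1}u_{k+1}$ is supported in $B_k\subset 3B$, we invoke the hypothesis (\ref{OSN}) \emph{for the ball $3B$} — whose superradius is $\varphi(3r)$ — rather than for $B$, using $(W_A^{1,1})_0(B)\subset(W_A^{1,1})_0(3B)$; this yields $\Vert w\Vert_{L^\Phi(\mu_{3B})}\le\varphi(3r)\Vert\nabla_A w\Vert_{L^1(\mu_{3B})}$ with no spurious volume factor. It is essential to pass to the larger ball here: merely rewriting the inequality on $B$ in terms of the measure $\mu_{3B}$ would introduce the factor $|3B|/|B|$, which in the infinitely degenerate regime can be as large as $\exp(c/r^\sigma)$, and this is in fact the one point requiring care. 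By contrast, the Caccioppoli inequalities (\ref{standard Cacc}) and (\ref{Cacc_inhomog}) and the lower bound (\ref{P_bound}) for $\Vert\phi\Vert_X$ in terms of $u_k$ all hold verbatim with $\mu=\mu_{3B}$: their proofs use only that $d\mu$ is a constant multiple of $dx$ with total mass $\le 1$ on the relevant ball and that $0\le\psi\le 1$, never the exact normalizing constant.

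Carrying the iteration through with these changes, the truncation norms become $U_k=\frac{1}{|3B|}\int_{B_k}|u_k|^2$; choosing $\delta=\frac{r\varphi(3r)}{(k+1)^{1+\varepsilon/2}}$ as in Proposition \ref{DG} gives the same recursion (\ref{iter}) with $\varphi(r)$ replaced by $\varphi(3r)$, and the same induction — via the estimate (\ref{use}) and the restriction $1<N\le 2$ — shows $b_k=\ln\frac1{U_k}\to\infty$ provided $b_0$ satisfies (\ref{b_0_cond}) with $\varphi(3r)/r$ in place of $\varphi(r)/r$. Since $\varphi(3r)/r=3\,\varphi(3r)/(3r)$, the resulting exponent is $\big(\varphi(3r)/(3r)\big)^{1/N}$ up to the harmless factor $3^{1/N}\le 3$, absorbed into the constant $C_2$ of (\ref{def AN}); thus the bound is governed by $A_N(3r)$. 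The two-case rescaling argument in the parameter $\tau$ then delivers
\[
\Vert u_+\Vert_{L^\infty(\tfrac12 B)}\le A_N(3r)\Big(\tfrac{1}{|3B|}\textstyle\int_{B}|u_0|^2\Big)^{1/2}+\Vert\phi\Vert_X\le A_N(3r)\Big(\tfrac{1}{|3B|}\textstyle\int_{B}u_+^2\Big)^{1/2}+\Vert\phi\Vert_X,
\]
using $u_0=(u-C_0)_+\le u_+$, which is precisely (\ref{Inner ball inequ'}). Everything beyond the observation highlighted in the second paragraph is a routine re-reading of the proof of Proposition \ref{DG} with $|B|$ replaced by $|3B|$ and $\varphi(r)$ by $\varphi(3r)$.
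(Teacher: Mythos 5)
Your proof is correct and follows essentially the same route as the paper's: reuse the same cutoffs supported in $B$, apply the Orlicz Sobolev inequality relative to $3B$ so that the measure becomes $dx/|3B|$ and the superradius ratio becomes $\varphi(3r)/(3r)$, and note that all other estimates are unaffected. Your explicit handling of the harmless factor $3^{1/N}$ from $\varphi(3r)/r=3\,\varphi(3r)/(3r)$ is a slightly more careful bookkeeping of a point the paper passes over silently.
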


\begin{proof}
	The standard cutoff functions $\{\psi _{k}\}$ defined in the proof of
	Proposition \ref{Inner ball inequ} can also be considered as cutoff
	functions supported on $3B=B(0,3r)$. Thus we can repeat the proof of the
	proposition but applying the Orlicz Sobolev inequality relative to $3B$
	instead of $B$. This results in changing the measure $d\mu =\frac{dx}{%
		\left\vert B\right\vert }$ to the measure $d\mu =\frac{dx}{\left\vert
		3B\right\vert }$, and in changing the superradius ratio $\frac{\varphi
		\left( r\right) }{r}$ to $\frac{\varphi \left( 3r\right) }{3r}$. The
	remaining estimates are unchanged since the cutoff functions are still only
	supported inside $B$, so the regions of integration remain unchanged.
\end{proof}

\chapter{Maximum Principle}

Now we turn to the abstract maximum principle for weak subsolutions. We will assume that $f(x)\neq 0$ if $x\neq 0$, and that $F$ satisfies the five geometric
structure conditions in Definition \ref{structure conditions}. We also assume the
following global Orlicz Sobolev inequality 
\begin{equation}
\left\Vert w\right\Vert _{L^{\Phi }\left( \Omega \right) }\leq C
\left( \Omega \right) \left\Vert \nabla _{A}w\right\Vert _{L^{1}\left(
	\Omega \right) },\ \ \ \ \ w\in \left( W_{A}^{1,1}\right) _{0}\left( \Omega
\right)  \label{Sob_gen}
\end{equation}%
where $\Phi =\Phi_N $ with $N>1$ as defined in (\ref{bump_N}).

The second ingredient of the proof is the following Caccioppoli inequality, which we show follows from the proof of Proposition \ref{Cacc_prop} similar to Corollary \ref{Cacc_cor}.

\begin{proposition}
	Let $u\in \left(W^{1,2}_{A}\right)_{0}(B)$ be a weak subsolution to $\mathcal{L}u=\phi $ with $\mathcal{L}$ as
	in (\ref{eq_0}), (\ref{struc_0}), and $A$-admissible $\phi $, and suppose
	for some ball $B$, a constant $P>0$, and a nonnegative function $v\in W_{A}^{1,2}(B)$
	there holds 
	\begin{equation}
	\left\Vert \phi \right\Vert _{X}^{2}\leq Pv(x),\quad \text{a.e.}\ x\in
	\{u>0\}\cap B.  \label{P_bound'}
	\end{equation}%
	Then 
	\begin{equation}
	\int_{B} |\nabla _{A} u_{+}|^{2}d\mu \leq CP^{2}\int_{B} v^{2} d\mu ,  \label{Cacc_max}
	\end{equation}%
	where $d\mu =\frac{dx}{\left\vert B\right\vert }$, and $C=1/c$ with $c$ from (\ref{struc_0}). The same inequality holds with $u$ and $u_{-}$ in place of $u_{+}$.
\end{proposition}
\begin{proof}
	Since $w\equiv u_{+}\in \left(W^{1,2}_{A}\right)_{0}(B)$ and $u$ is a weak subsolution to $\mathcal{L}u=\phi $ we have
	\begin{eqnarray*}
		\int \nabla \left( u_{+}\right) \mathcal{A}\nabla u_{+}d\mu &=&\int
		\nabla u_{+} \mathcal{A}\nabla ud\mu 
		\leq -\int
		u_{+}\phi d\mu \notag\\
		&\leq& \left\Vert \phi \right\Vert
		_{X}\int \left\vert \nabla _{\mathcal{A}}u_{+}
		\right\vert d\mu \leq P\int v \left\vert \nabla _{\mathcal{A}}u_{+}
		\right\vert d\mu \notag \ ,  
	\end{eqnarray*}%
	where for the last inequality we used condition (\ref{P_bound'}). Using H\"{o}lder's inequality and (\ref{struc_0}) this gives
	\[
	\int \left\vert \nabla _{A}u_{+}
	\right\vert^{2} d\mu \leq \frac{1}{c} \int \left\vert \nabla _{\mathcal{A}}u_{+}
	\right\vert^{2} d\mu \leq CP^{2}\int v^2 d\mu,
	\]
	which is (\ref{Cacc_max}). Using $w=u$ or $w=u_{-}$ as a test function we obtain the last statement of the Proposition.
\end{proof}

We are now ready to prove the maximum principle.
\begin{theorem}
	\label{2D max}Let $\Omega$ be a bounded open subset of $\mathbb{R}^n$, and assume the global
	Orlicz Sobolev inequality (\ref{Sob_gen}) holds for some $N>1$. Assume that $u$ is a weak
	subsolution to $\mathcal{L}u=\phi $ in $\Omega $ with $A$-admissible $\phi $%
	, and that $u$ is bounded on the boundary $\partial \Omega $. Then the
	following maximum principle holds,%
	\begin{equation*}
		\sup_{\Omega }u\leq \sup_{\partial \Omega }u+C\left\Vert \phi \right\Vert
		_{X(\Omega )}\ ,
	\end{equation*}%
	and in particular $u$ is globally bounded.
\end{theorem}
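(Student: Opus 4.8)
The plan is to run the DeGiorgi iteration of Proposition~\ref{DG} in its \emph{cutoff-free} form, as anticipated in the remark after Theorem~\ref{bound_gen_thm}. First I would set $\ell=\sup_{\partial\Omega}u$ (finite by hypothesis) and replace $u$ by $u-\ell$; since only $\nabla u$ enters the equation this is still a weak subsolution of $\mathcal{L}u=\phi$, and now $u^{+}\in(W_A^{1,2})_0(\Omega)$, so it suffices to prove $\sup_\Omega u\le C\|\phi\|_{X(\Omega)}$. Introduce truncations $u_k=(u-C_k)^{+}$ with $C_k=\tau\|\phi\|_X\bigl(1-c(k+1)^{-\varepsilon}\bigr)$ for a small parameter $\varepsilon>0$ and a rescaling parameter $\tau\ge1$ (used exactly as in Proposition~\ref{DG} to reduce to the case $U_0=\int_\Omega u_0^2\,d\mu$ small), and set $U_k=\int_\Omega u_k^2\,d\mu$. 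The point of the cutoff-free regime is that each $u_k$ already lies in $(W_A^{1,2})_0(\Omega)$, hence may be used directly as a test function, and the superradius ratio $\varphi(r)/r$ of Proposition~\ref{DG} collapses to the fixed constant $\varphi(\Omega)$.

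Each iteration step has two parts. \emph{Energy estimate:} testing the subsolution inequality against $w=u_{k+1}\ge0$, using that $\nabla_A u_{k+1}$ is supported a.e. on $S_{k+1}:=\{u>C_{k+1}\}$, applying the $A$-admissibility of $\phi$ together with Cauchy--Schwarz, and applying Chebyshev $|S_{k+1}|_\mu=|\{u_k>C_{k+1}-C_k\}|_\mu\lesssim(k+2)^{2+2\varepsilon}\|\phi\|_X^{-2}U_k$, one obtains $\|\nabla_A u_{k+1}\|_{L^1(\mu)}\lesssim\|\phi\|_X|S_{k+1}|_\mu$ and $\int|\nabla_A u_{k+1}|^2\,d\mu\lesssim\|\phi\|_X^2|S_{k+1}|_\mu$. \emph{Sobolev step:} applying (\ref{Sob_gen}) to $u_{k+1}^2$, splitting the $L^1$-norm by Cauchy--Schwarz, and pairing with $\|1\|_{L^{\widetilde{\Phi}}(S_{k+1};\mu)}=\Gamma(|S_{k+1}|_\mu)$, $\Gamma(t)=1/\widetilde{\Phi}^{-1}(1/t)$, exactly as in the H\"older step (\ref{hol})--(\ref{super}) of Proposition~\ref{DG}. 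Combining the two parts produces a closed recursion $U_{k+1}\le C\varphi(\Omega)^2(k+2)^{\alpha}\,U_k\,\Gamma\!\bigl(c\,(k+2)^{-\beta}\|\phi\|_X^{2}/U_k\bigr)$ with $\alpha,\beta>0$ depending only on $\varepsilon$.

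To finish I would argue as in Proposition~\ref{DG}: with $b_k=\ln(1/U_k)$ and $h(t)=-\ln\Gamma(e^{-t})$, Lemma~\ref{h_properties} gives $h(t)\to\infty$, and the recursion reads $b_{k+1}\ge b_k+h\bigl(b_k-\beta\ln(k+2)+\mathrm{const}\bigr)-\alpha\ln(k+2)-\ln(C\varphi(\Omega)^2)$. An induction then gives $b_k\ge b_0+k$ (together with the auxiliary bound $b_k>\beta\ln(k+2)+\mathrm{const}$) once $b_0$ is large, so $U_k\to0$, i.e. $(u-\tau\|\phi\|_X)^{+}=0$ a.e.; the $\tau$-dichotomy at the end of Proposition~\ref{DG} upgrades this to $\sup_\Omega u\le\|\phi\|_X+C\bigl(\tfrac1{|\Omega|}\int_\Omega(u^{+})^2\bigr)^{1/2}$, and since $\|u^{+}\|_{L^2(\Omega)}\le C_\Omega\|\phi\|_{X(\Omega)}$ (a one-shot application of (\ref{Sob_gen}) and the energy estimate to $w=u^{+}$), the conclusion $\sup_\Omega u\le\sup_{\partial\Omega}u+C\|\phi\|_{X(\Omega)}$ follows.

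The crux — and the only place the planar hypothesis is needed — is that this iteration must converge for \emph{every} Young function with $\Phi(t)/t\to\infty$, whereas Lemma~\ref{h_properties} supplies only $h(t)\to\infty$ with no rate; in Proposition~\ref{DG} one needed $h(t)\gtrsim N\ln t$ with $N>1$ to absorb the $\alpha\ln(k+2)$ loss from the level spacing. Part of the deficit is covered by the cutoff-free structure (the factor $\varphi(\Omega)$ sits inside the logarithm rather than being exponentiated), but I expect the essential extra input to be that in the plane the non-degenerate variable $x_1$ is one-dimensional: for $w=u_{k+1}$ the elementary estimate $\sup_{x_1}|w(x_1,x_2)|^2\le 2\int_{\mathbb{R}}|w\,\partial_{x_1}w|(s,x_2)\,ds$, integrated over the $x_2$-projection of $S_{k+1}$ and combined with the energy bound, gives the extra gain $|\pi_{x_2}(S_{k+1})|\lesssim\|\phi\|_X|S_k|_\mu/(C_{k+1}-C_k)$ and hence, via $|S_{k+1}|_\mu\le\mathrm{diam}(\Omega)\,|\pi_{x_2}(S_{k+1})|$, a sharpened bound on $|S_{k+1}|_\mu$ to feed back into the recursion. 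Making this planar gain, the choice of levels $C_k$, and the rescaling $\tau$ cooperate so that the induction closes for an arbitrary mild $\Phi$ is where the real work lies.
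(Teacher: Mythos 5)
Your setup is the paper's: subtract $\sup_{\partial \Omega }u$ so that every truncation $u_{k}=(u-C_{k})^{+}$ lies in $\left( W_{A}^{1,2}\right) _{0}(\Omega )$, run DeGiorgi with $\psi _{k}\equiv 1$, and pair the Orlicz Sobolev inequality applied to $u_{k+1}^{2}$ with the energy estimate from testing against $u_{k+1}$ and with Chebyshev at the level spacing. Up to the recursion
\begin{equation*}
U_{k+1}\leq C\varphi (\Omega )^{2}(k+2)^{\alpha }U_{k}\,\Gamma \bigl(
c(k+2)^{-\beta }\Vert \phi \Vert _{X}^{2}/U_{k}\bigr)
\end{equation*}
this is the argument of Theorem \ref{2D max}, and your bookkeeping of the $(k+2)^{\alpha }$ prefactor (which comes from the spacing $C_{k+1}-C_{k}\approx (k+2)^{-1-\varepsilon }$ and survives the removal of the cutoffs) is in fact the more careful one.

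The gap is that you do not close the iteration, and the mechanism you propose for closing it is not the paper's and is not carried out. You correctly observe that a multiplicative $(k+2)^{\alpha }$ outside $\Gamma $ forces $h(t)\gtrsim \alpha \ln t$, which Lemma \ref{h_properties} does not provide for a general $\Phi $ with $\Phi (t)/t\rightarrow \infty $. The paper's resolution is not a planar gain: its proof asserts that in the cutoff-free regime the recursion becomes $U_{k+1}\lesssim CU_{k}\Gamma \left( C(k+2)^{3}U_{k}\right) $, with the $k$-power appearing \emph{only inside} the argument of $\Gamma $, where it contributes merely an additive $-3\ln (k+2)$ to the argument of $h$; this is dominated by the linear growth $b_{k}\geq b_{0}+k$, so $h\rightarrow \infty $ alone closes the induction. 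Planarity is never used in that iteration (the same scheme is invoked for the $n$-dimensional Theorem \ref{geom max princ}); the domain enters only through the hypothesis (\ref{Sob_gen}). Your substitute --- the one-dimensional estimate $\sup_{x_{1}}|w|^{2}\leq 2\int |w\,\partial _{x_{1}}w|$ feeding a sharpened bound on $|S_{k+1}|$ --- is left as a sketch, with no verification that the resulting coupled recursion converges for an arbitrary mild $\Phi $; by your own admission this is "where the real work lies," and it is exactly the decisive step. (Your derivation does expose a point where the paper is terse: the factor $P\approx (k+2)^{1+\varepsilon /2}$ from the level spacing does not disappear with the cutoffs, so the absence of any prefactor in the paper's displayed recursion deserves justification. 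But as a proof of the stated theorem, your proposal stops at the step that matters.)
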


\begin{proof}
	First, we proceed similarly to the proof of Proposition \ref{DG}. Define
	\[
	u_k\equiv (u-\tau\sup_{\partial \Omega}u -C_k)_{+},\quad C_{k}=\tau \left\Vert \phi \right\Vert _{X}\left(
	1-c\left( k+1\right) ^{-\varepsilon /2}\right) ,\quad \tau\geq 1,
	\]
	and note that $u_{k}\equiv 0$ for all $k$ on $\partial \Omega $, so we can formally take $%
	\psi _{k}\equiv 1$ for all $k$.
	Let $B=B(0,r_0)$ be a ball containing $\Omega$ and extend $u_k$ to be zero outside $\Omega$. Then we can assume $u_k\in \left(W^{1,2}_{A}\right)_{0}(B)$ and therefore Caccioppoli inequality (\ref{Cacc_max}) holds.
	Using (\ref{Cacc_max}) and the bound (\ref{phi_x_bound}) on $\{x\in \Omega: u_{k+1}>0\}$ we obtain the following version of inequality (\ref{iter})
	\begin{equation*}
		U_{k+1}\leq C(k+2)^{1+\varepsilon/2}U_{k}\frac{1}{\tilde{\Phi}^{-1}\left( \frac{1}{\gamma \left( k+2\right)
				^{2+\varepsilon }U_{k}}\right) },
	\end{equation*}%
	where $U_{k}=\int_{\Omega}|u_{k}|^{2}$. Proceeding exactly as in the proof of Proposition \ref{DG} we thus obtain
	\begin{equation*}
		u_{\infty }=(u-\tau \sup_{\partial \Omega}u-\tau \left\Vert \phi \right\Vert _{X})_{+}=0 ,
	\end{equation*}
	provided $U_0$ is sufficiently small, and by the same argument as in the proof of Proposition \ref{DG}
	\[
	||u||_{L^{\infty}(\Omega)}\leq ||\phi||_{X}+\sup_{\partial \Omega}u+ C||u||_{L^2(\Omega)}.
	\]
	Finally, we use Caccioppoli inequality together with Orlicz Sobolev inequality to bound the last term
	\[
	||u||_{L^2}^{2}\leq ||u^2||_{L^{\Phi}}\leq C||\nabla_{A}u||_{L^2}^{2}\leq C||\phi||_{X},
	\]
	where we used (\ref{Cacc_max}) with $u$ in place of $u_{+}$, $v\equiv ||\varphi||_{X}$, and $P=1$.
\end{proof}

\part{Geometric theory}

In this third part of the paper, we turn to the problem of finding specific
geometric conditions on the structure of our equations that permit us to
prove the Orlicz Sobolev inequality
needed to apply the abstract theory in Part 2 above. The first chapter here
deals with basic geometric estimates for a specific family of geometries,
which are then applied in the next chapter to obtain the needed Orlicz
Sobolev inequality. Finally, in the
third chapter in this part we prove our geometric theorems on local
boundedness and the maximum principle for weak solutions.

\chapter{Infinitely Degenerate Geometries}

In this first chapter of the third part of the paper, we begin with
degenerate geometries in the plane, the properties of their geodesics and
balls, and the associated subrepresentation inequalities. The final chapter
will treat higher dimensional geometries. Recall from (\ref{form bound'})
that we are considering the inverse metric tensor given by the $2\times 2$
diagonal matrix 
\begin{equation*}
	A=%
	\begin{bmatrix}
		1 & 0 \\ 
		0 & f\left( x\right) ^{2}%
	\end{bmatrix}%
	.
\end{equation*}%
Here the function $f\left( x\right) $ is an even twice continuously
differentiable function on the real line $\mathbb{R}$ with $f(0)=0$ and $%
f^{\prime }(x)>0$ for all $x>0$. The $A$-distance $dt$ is given by 
\begin{equation*}
	dt^{2}=dx^{2}+\frac{1}{f\left( x\right) ^{2}}dy^{2}.
\end{equation*}%
This distance coincides with the control distance as in \cite{SaWh4}, etc.
since a vector $v$ is subunit for an invertible symmetric matrix $A$, i.e. $%
\left( v\cdot \xi \right) ^{2}\leq \xi ^{\func{tr}}A\xi $ for all $\xi $,
if and only if $v^{\func{tr}}A^{-1}v\leq 1$. Indeed, if $v$ is subunit
for $A$, then 
\begin{equation*}
	\left( v^{\func{tr}}A^{-1}v\right) ^{2}=\left( v\cdot A^{-1}v\right)
	^{2}\leq \left( A^{-1}v\right) ^{\func{tr}}AA^{-1}v=v^{\func{tr}%
	}A^{-1}v,
\end{equation*}%
and for the converse, Cauchy-Schwartz gives 
\begin{equation*}
	\left( v\cdot \xi \right) ^{2}=\left( v^{\func{tr}}\xi \right)
	^{2}=\left( v^{\func{tr}}A^{-1}A\xi \right) ^{2}\leq \left( v^{\func{tr%
	}}A^{-1}AA^{-1}v\right) \left( \xi ^{\func{tr}}A\xi \right) =\left( v^{%
		\func{tr}}A^{-1}v\right) \left( \xi ^{\func{tr}}A\xi \right) .
\end{equation*}

\section{Calculation of the $A$-geodesics}

We now compute the equation satisfied by an $A$-geodesic $\gamma $ passing
through the origin. A geodesic minimizes the distance 
\begin{equation*}
	\int_{0}^{x_{0}}\sqrt{1+\frac{\left( \frac{dy}{dx}\right) ^{2}}{f^{2}}}dx,\
	\ \ \ \ \text{where\ }\left( x,y\right) \text{ is on }\gamma \text{,}
\end{equation*}%
and so the calculus of variations gives the equation 
\begin{equation*}
	\frac{d}{dx}\left[ \frac{\frac{dy}{dx}}{f^{2}\sqrt{1+\frac{\left( \frac{dy}{%
					dx}\right) ^{2}}{f^{2}}}}\right] =0.
\end{equation*}%
Consequently, the function 
\begin{equation*}
	\lambda =\frac{f^{2}\sqrt{1+\frac{\left( \frac{dy}{dx}\right) ^{2}}{f^{2}}}}{%
		\frac{dy}{dx}}
\end{equation*}%
is actually a positive constant conserved along the geodesic $y=y\left(
x\right) $ that satisfies%
\begin{equation*}
	\lambda ^{2}=\frac{f^{2}\left[ f^{2}+\left( \frac{dy}{dx}\right) ^{2}\right] 
	}{\left( \frac{dy}{dx}\right) ^{2}}\Longrightarrow \left( \lambda
	^{2}-f^{2}\right) \left( \frac{dy}{dx}\right) ^{2}=f^{4}.
\end{equation*}%
Thus if $\gamma _{0,\lambda }$ denotes the geodesic starting at the origin
going in the vertical direction for $x>0$, and parameterized by the constant 
$\lambda $, we have $\lambda =f\left( x\right) $ if and only if $\frac{dy}{dx%
}=\infty $. For convenience we temporarily assume that $f$ is defined on $%
\left( 0,\infty \right) $. Thus the geodesic $\gamma _{0,\lambda }$ turns
back toward the $y$-axis at the unique point $\left( X\left( \lambda \right)
,Y\left( \lambda \right) \right) $ on the geodesic where $\lambda =f\left(
X\left( \lambda \right) \right) $, provided of course that $\lambda <f\left(
\infty \right) \equiv \sup_{x>0}f\left( x\right) $. On the other hand, if $%
\lambda >f\left( \infty \right) $, then $\frac{dy}{dx}$ is essentially
constant for $x$ large and the geodesics $\gamma _{0,\lambda }$ for $\lambda
>f\left( \infty \right) $ look like straight lines with slope $\frac{f\left(
	\infty \right) ^{2}}{\sqrt{\lambda ^{2}-f\left( \infty \right) ^{2}}}$ for $%
x $ large. Finally, if $\lambda =f\left( \infty \right) $, then the geodesic 
$\gamma _{0,\lambda }$ has slope that blows up at infinity.

\begin{definition}
	We refer to the parameter $\lambda $ as the \emph{turning parameter} of the
	geodesic $\gamma _{0,\lambda }$, and to the point $T\left( \lambda \right)
	=\left( X\left( \lambda \right) ,Y\left( \lambda \right) \right) $ as the 
	\emph{turning point} on the geodesic $\gamma _{0,\lambda }$.
\end{definition}

\begin{summary}
	We summarize the turning behaviour of the geodesic $\gamma _{0,\lambda }$ as
	the turning parameter $\lambda $ decreases from $\infty $ to $0$:
	
	\begin{enumerate}
		\item When $\lambda =\infty $ the geodesic $\gamma _{0,\infty }$ is
		horizontal,
		
		\item As $\lambda $ decreases from $\infty $ to $f\left( \infty \right) $,
		the geodesics $\gamma _{0,\lambda }$ are asymptotically lines whose slopes
		increase to infinity,
		
		\item At $\lambda =f\left( \infty \right) $ the geodesic $\gamma _{0,f\left(
			\infty \right) }$ has slope that increases to infinity as $x$ increases,
		
		\item As $\lambda $ decreases from $f\left( \infty \right) $ to $0$, the
		geodesics $\gamma _{0,\lambda }$ are turn back at $X\left( \lambda \right)
		=f^{-1}\left( \lambda \right) $, and return to the $y$-axis in a path
		symmetric about the line $y=Y\left( \lambda \right) $.
	\end{enumerate}
\end{summary}

Solving for $\frac{dy}{dx}$ we obtain the equation 
\begin{equation*}
	\frac{dy}{dx}=\frac{\pm f^{2}\left( x\right) }{\sqrt{\lambda ^{2}-f\left(
			x\right) ^{2}}}.
\end{equation*}%
Thus the geodesic $\gamma _{0,\lambda }$ that starts from the origin going
in the vertical direction for $x>0$, and with turning parameter $\lambda $,
is given by 
\begin{equation*}
	y=\int_{0}^{x}\frac{f\left( u\right) ^{2}}{\sqrt{\lambda ^{2}-f\left(
			u\right) ^{2}}}du,\qquad x>0.
\end{equation*}%
Since the metric is invariant under vertical translations, we see that the
geodesic $\gamma _{\eta ,\lambda }\left( t\right) $ whose lower point of
intersection with the $y$-axis has coordinates $\left( 0,\eta \right) $, and
whose positive turning parameter is $\lambda $, is given by the equation%
\begin{equation*}
	y=\eta +\int_{0}^{x}\frac{f\left( u\right) ^{2}}{\sqrt{\lambda ^{2}-f\left(
			u\right) ^{2}}}du,\qquad x>0.
\end{equation*}%
Thus the entire family of $A$-geodesics in the right half plane is $\left\{
\gamma _{\eta ,\lambda }\right\} $ parameterized by $\left( \eta ,\lambda
\right) \in \left( -\infty ,\infty \right) \times \left( 0,\infty \right] $,
where when $\lambda =\infty $, the geodesic $\gamma _{\eta ,\infty }\left(
t\right) $ is the horizontal line through the point $\left( 0,\eta \right) $.

\section{Calculation of $A$-arc length}

Let $dt$ denote $A$-arc length along the geodesic $\gamma _{0,\lambda }$ and
let $ds$ denote Euclidean arc length along $\gamma _{0,\lambda }$.

\begin{lemma}
	For $0<x<X\left( \lambda \right) $ and $\left( x,y\right) $ on the lower
	half of the geodesic $\gamma _{0,\lambda }$ we have%
	\begin{eqnarray*}
		\frac{dy}{dx} &=&\frac{f\left( x\right) ^{2}}{\sqrt{\lambda ^{2}-f\left(
				x\right) ^{2}}}, \\
		\frac{dt}{dx}\left( x,y\right) &=&\frac{\lambda }{\sqrt{\lambda ^{2}-f\left(
				x\right) ^{2}}}, \\
		\frac{dt}{ds}\left( x,y\right) &=&\frac{\lambda }{\sqrt{\lambda ^{2}-f(x)^{2}%
				\left[ 1-f(x)^{2}\right] }}.
	\end{eqnarray*}
\end{lemma}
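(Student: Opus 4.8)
The plan is to derive all three identities directly from the two ingredients already in hand: the geodesic equation $\frac{dy}{dx}=\frac{\pm f(x)^{2}}{\sqrt{\lambda^{2}-f(x)^{2}}}$ established just above, and the two expressions for arc length, namely the $A$-arc length $dt^{2}=dx^{2}+\frac{1}{f(x)^{2}}dy^{2}$ and the Euclidean arc length $ds^{2}=dx^{2}+dy^{2}$. Note first that on the lower half of $\gamma_{0,\lambda}$ we have $0<x<X(\lambda)$, and since $X(\lambda)=f^{-1}(\lambda)$ with $f$ increasing on $(0,\infty)$, this forces $f(x)<\lambda$, so $\lambda^{2}-f(x)^{2}>0$ throughout; hence every radical below is the square root of a positive quantity and the only real point of care is the consistent choice of branch.

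First I would record the formula for $\frac{dy}{dx}$: on the lower half the geodesic is parametrized by $y=\int_{0}^{x}\frac{f(u)^{2}}{\sqrt{\lambda^{2}-f(u)^{2}}}\,du$ with $x$ increasing, so the $+$ branch is selected and $\frac{dy}{dx}=\frac{f(x)^{2}}{\sqrt{\lambda^{2}-f(x)^{2}}}$. This is exactly the content of the calculus-of-variations computation preceding the lemma, so nothing new is needed.

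Next, for $\frac{dt}{dx}$ I would substitute this into $\left(\frac{dt}{dx}\right)^{2}=1+\frac{1}{f(x)^{2}}\left(\frac{dy}{dx}\right)^{2}$, obtaining $1+\frac{f(x)^{2}}{\lambda^{2}-f(x)^{2}}=\frac{\lambda^{2}}{\lambda^{2}-f(x)^{2}}$, and then take the positive square root (both $t$ and $x$ increase along the lower half) to get $\frac{dt}{dx}=\frac{\lambda}{\sqrt{\lambda^{2}-f(x)^{2}}}$. Similarly, $\left(\frac{ds}{dx}\right)^{2}=1+\left(\frac{dy}{dx}\right)^{2}=\frac{\lambda^{2}-f(x)^{2}+f(x)^{4}}{\lambda^{2}-f(x)^{2}}$, so forming the quotient $\frac{dt}{ds}=\frac{dt/dx}{ds/dx}$ yields $\frac{dt}{ds}=\frac{\lambda}{\sqrt{\lambda^{2}-f(x)^{2}+f(x)^{4}}}$, and rewriting $\lambda^{2}-f(x)^{2}+f(x)^{4}=\lambda^{2}-f(x)^{2}\bigl(1-f(x)^{2}\bigr)$ gives the stated expression for $\frac{dt}{ds}$.

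I do not expect any genuine obstacle here — the lemma is a short bookkeeping exercise manipulating the two arc-length forms against the geodesic ODE. The only things that merit an explicit sentence are the positivity of $\lambda^{2}-f(x)^{2}$ on $(0,X(\lambda))$, which legitimizes all the radicals, and the observation that along the lower half of $\gamma_{0,\lambda}$ the parameters $x$, $t$, $s$ all increase together, so the positive branch of each square root is the correct one.
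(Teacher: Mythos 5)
Your proposal is correct and follows essentially the same route as the paper: differentiate the explicit parametrization to get $\frac{dy}{dx}$, substitute into $dt^{2}=dx^{2}+\frac{1}{f(x)^{2}}dy^{2}$ for $\frac{dt}{dx}$, and form the quotient $\frac{dt/dx}{ds/dx}$ for $\frac{dt}{ds}$. The added remarks on the positivity of $\lambda^{2}-f(x)^{2}$ and the choice of the positive branch are harmless refinements of the same computation.
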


\begin{proof}
	First we note that $y=\int_{0}^{x}\frac{f\left( u\right) ^{2}}{\sqrt{\lambda
			^{2}-f^{2}\left( u\right) }}du$ implies $\frac{dy}{dx}=\frac{f\left(
		x\right) ^{2}}{\sqrt{\lambda ^{2}-f\left( x\right) ^{2}}}$. Thus from $%
	dt^{2}=dx^{2}+\frac{1}{f\left( x\right) ^{2}}dy^{2}$ we have%
	\begin{eqnarray*}
		\left( \frac{dt}{dx}\right) ^{2} &=&1+\frac{1}{f\left( x\right) ^{2}}\left( 
		\frac{dy}{dx}\right) ^{2}=1+\frac{1}{f\left( x\right) ^{2}}\left( \frac{dy}{%
			dx}\right) ^{2} \\
		&=&1+\frac{1}{f\left( x\right) ^{2}}\frac{f\left( x\right) ^{4}}{\lambda
			^{2}-f\left( x\right) ^{2}}=\frac{\lambda ^{2}}{\lambda ^{2}-f\left(
			x\right) ^{2}}.
	\end{eqnarray*}%
	Then the density of $t$ with respect to $s$ at the point $\left( x,y\right) $
	on the lower half of the geodesic $\gamma _{0,\lambda }$ is given by 
	\begin{eqnarray*}
		\frac{dt}{ds} &=&\frac{\frac{dt}{dx}}{\frac{ds}{dx}}=\frac{\frac{\lambda }{%
				\sqrt{\lambda ^{2}-f\left( x\right) ^{2}}}}{\sqrt{1+\left( \frac{dy}{dx}%
				\right) ^{2}}}=\frac{\frac{\lambda }{\sqrt{\lambda ^{2}-f\left( x\right) ^{2}%
		}}}{\sqrt{1+\frac{f\left( x\right) ^{4}}{\lambda ^{2}-f\left( x\right) ^{2}}}%
		} \\
		&=&\frac{\lambda }{\sqrt{\left( \lambda ^{2}-f\left( x\right) ^{2}\right)
				\left( 1+\frac{f\left( x\right) ^{4}}{\lambda ^{2}-f\left( x\right) ^{2}}%
				\right) }}=\frac{\lambda }{\sqrt{\lambda ^{2}-f\left( x\right) ^{2}+f\left(
				x\right) ^{4}}} \\
		&=&\frac{\lambda }{\sqrt{\lambda ^{2}-f\left( x\right) ^{2}\left[ 1-f\left(
				x\right) ^{2}\right] }}.
	\end{eqnarray*}
\end{proof}

Thus at the $y$-axis when $x=0$, we have $\frac{dt}{ds}=1$, and at the
turning point $T\left( \lambda \right) =\left( X\left( \lambda \right)
,Y\left( \lambda \right) \right) $ of the geodesic, when $\lambda
^{2}=f(x)^{2}$, we have $\frac{dt}{ds}=\frac{1}{\lambda }=\frac{1}{f(x)}$.
This reflects the fact that near the $y$ axis, the geodesic is nearly
horizontal and so the metric arc length is close to Euclidean arc length;
while at the turning point for $\lambda $ small, the density of metric arc
length is large compared to Euclidean arc length since movement in the
vertical direction meets with much resistance when $x$ is small.

In order to make precise estimates of arc length, we will need to assume
some additional properties on the function $f\left( x\right) $ when $%
\left\vert x\right\vert $ is small.

\begin{description}
	\item[Assumptions] Fix $R>0$ and let $F\left( x\right) =-\ln f\left(
	x\right) $ for $0<x<R$, so that%
	\begin{equation*}
		f\left( x\right) =e^{-F\left( \left\vert x\right\vert \right) },\ \ \ \ \
		0<\left\vert x\right\vert <R.
	\end{equation*}%
	We assume the following for some constants $C\geq 1$ and $0<\varepsilon <1$:
	
	\begin{enumerate}
		\item $\lim_{x\rightarrow 0^{+}}F\left( x\right) =+\infty $;
		
		\item $F^{\prime }\left( x\right) <0$ and $F^{\prime \prime }\left( x\right)
		>0$ for all $x\in (0,R)$;
		
		\item $\frac{1}{C}\left\vert F^{\prime }\left( r\right) \right\vert \leq
		\left\vert F^{\prime }\left( x\right) \right\vert \leq C\left\vert F^{\prime
		}\left( r\right) \right\vert ,\ \ \ \ \ \frac{1}{2}r<x<2r<R$;
		
		\item $\frac{1}{-xF^{\prime }\left( x\right) }$ is increasing in the
		interval $\left( 0,R\right) $ and satisfies $\frac{1}{-xF^{\prime }\left(
			x\right) }\leq \frac{1}{\varepsilon }\,$for $x\in (0,R)$;
		
		\item $\frac{F^{\prime \prime }\left( x\right) }{-F^{\prime }\left( x\right) 
		}\approx \frac{1}{x}$ for $x\in (0,R)$.
	\end{enumerate}
\end{description}

These assumptions have the following consequences.

\begin{lemma}
	\label{consequences}Suppose that $R$, $f$ and $F$ are as above.
	
	\begin{enumerate}
		\item If $0<x_{1}<x_{2}<R$, then we have%
		\begin{equation*}
			F\left( x_{1}\right) >F\left( x_{2}\right) +\varepsilon \ln \frac{x_{2}}{%
				x_{1}},\text{ equivalently }f\left( x_{1}\right) <\left( \frac{x_{1}}{x_{2}}%
			\right) ^{\varepsilon }f\left( x_{2}\right) .
		\end{equation*}
		
		\item If $x_{1},x_{2}\in (0,R)$ and $\max \left\{ \varepsilon x_{1},x_{1}-%
		\frac{1}{\left\vert F^{\prime }\left( x_{1}\right) \right\vert }\right\}
		\leq x_{2}\leq x_{1}+\frac{1}{\left\vert F^{\prime }\left( x_{1}\right)
			\right\vert }$, then we have%
		\begin{eqnarray*}
			\left\vert F^{\prime }\left( x_{1}\right) \right\vert &\approx &\left\vert
			F^{\prime }\left( x_{2}\right) \right\vert , \\
			f\left( x_{1}\right) &\approx &f\left( x_{2}\right) .
		\end{eqnarray*}
		
		\item If $x\in (0,R)$, then we have 
		\begin{equation*}
			\frac{F^{\prime \prime }\left( x\right) }{\left\vert F^{\prime }\left(
				x\right) \right\vert ^{2}}\approx \frac{1}{-xF^{\prime }\left( x\right) }%
			\lesssim 1.
		\end{equation*}
	\end{enumerate}
\end{lemma}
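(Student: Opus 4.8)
The lemma is essentially a bookkeeping exercise: all three parts follow by integrating the logarithmic derivatives controlled by conditions (1)--(5) of Definition \ref{structure conditions} (equivalently the Assumptions block), so I will sketch each quickly and flag the one spot that requires care.

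For part (1), I would start from condition (4): since $\frac{1}{-xF'(x)}$ is increasing and bounded above by $\frac1\varepsilon$ on $(0,R)$, we get $-xF'(x)\ge\varepsilon$, i.e.\ $-F'(x)\ge\varepsilon/x$, for every $x\in(0,R)$. Integrating over $[x_1,x_2]$,
\[
F(x_1)-F(x_2)=\int_{x_1}^{x_2}\bigl(-F'(x)\bigr)\,dx\ \ge\ \varepsilon\int_{x_1}^{x_2}\frac{dx}{x}=\varepsilon\ln\frac{x_2}{x_1},
\]
and exponentiating $f=e^{-F}$ gives the equivalent inequality for $f$; the strictness comes from the strict monotonicity in (4), which forces $-F'(x)>\varepsilon/x$ on a set of positive measure in $(x_1,x_2)$. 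This step is immediate.

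For part (2), the key observation is that condition (5) is exactly the statement $\bigl|(\ln|F'|)'(x)\bigr|=\frac{F''(x)}{|F'(x)|}\approx\frac1x$, so for any $a<b$ in $(0,R)$ one has $\bigl|\ln|F'(b)|-\ln|F'(a)|\bigr|\le\int_a^b\frac Cx\,dx=C\ln\frac ba$. Thus it suffices to bound $\frac{\max\{x_1,x_2\}}{\min\{x_1,x_2\}}$ by a constant depending only on $\varepsilon$. If $x_2\ge x_1$, then by (4) $\frac{1}{|F'(x_1)|}=\frac{x_1}{-x_1F'(x_1)}\le \varepsilon^{-1}x_1$, so $x_2\le x_1+\frac{1}{|F'(x_1)|}\le(1+\varepsilon^{-1})x_1$; if $x_2<x_1$, the hypothesis $x_2\ge\varepsilon x_1$ gives $x_1/x_2\le\varepsilon^{-1}$ directly. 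Feeding this back in yields $|F'(x_1)|\approx|F'(x_2)|$, and in fact $|F'(\xi)|\approx|F'(x_1)|$ uniformly for $\xi$ between $x_1$ and $x_2$ (since those ratios are dominated by the same bound). Consequently
\[
\bigl|\ln f(x_1)-\ln f(x_2)\bigr|=\Bigl|\int_{x_1}^{x_2}F'(x)\,dx\Bigr|\ \approx\ |F'(x_1)|\,|x_1-x_2|\ \le\ |F'(x_1)|\cdot\frac{1}{|F'(x_1)|}=1,
\]
using $|x_1-x_2|\le\frac{1}{|F'(x_1)|}$ from the hypothesis, which gives $f(x_1)\approx f(x_2)$.

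For part (3), simply write $\frac{F''(x)}{|F'(x)|^2}=\frac{F''(x)}{|F'(x)|}\cdot\frac{1}{|F'(x)|}$; by (5) the first factor is $\approx\frac1x$, so the product is $\approx\frac{1}{x|F'(x)|}=\frac{1}{-xF'(x)}$, and $\frac{1}{-xF'(x)}\le\frac1\varepsilon\lesssim1$ is precisely condition (4). The only genuinely delicate point is in part (2): because the ratio $x_2/x_1$ need not lie in $[\tfrac12,2]$, condition (3) cannot be applied verbatim and one must integrate (5) as above, and the lower bound $x_2\ge\varepsilon x_1$ built into the hypothesis is exactly what keeps that logarithmic integral finite in the case $x_2<x_1$.
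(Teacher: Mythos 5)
Your proof is correct and follows essentially the same route as the paper's: part (1) by integrating $-F'(x)\ge\varepsilon/x$ (a restatement of condition (4)), part (2) by first bounding the ratio of $x_1$ and $x_2$ by a constant depending only on $\varepsilon$ and then estimating $\left|F(x_1)-F(x_2)\right|\approx|F'(x_1)|\,|x_1-x_2|\le1$, and part (3) by factoring $\frac{F''}{|F'|^2}=\frac{F''}{|F'|}\cdot\frac{1}{|F'|}$ and invoking (4) and (5). The one point of divergence is how the comparability $|F'(x_1)|\approx|F'(x_2)|$ is obtained in part (2): the paper applies the doubling condition (3) directly (iterated a bounded number of times, depending on $\varepsilon$, to cover the ratio $1+\varepsilon^{-1}$), whereas you integrate condition (5) to control the increment of $\ln|F'|$. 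Both work; your remark that (3) ``cannot be applied verbatim'' is slightly overstated, since finitely many doublings suffice, but integrating (5) is an equally clean alternative. One small caveat, shared with the paper: the \emph{strict} inequality in part (1) is not actually forced by condition (4), since ``increasing'' there need not be strict (for $F(x)=\varepsilon\ln(1/x)$ one gets equality), and your appeal to strict monotonicity does not repair this; it is harmless because the lemma is only ever used in its non-strict form.
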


\begin{proof}
	Assumptions (2) and (4) give $\left\vert F^{\prime }\left( x_{1}\right)
	\right\vert >\frac{\varepsilon }{x}$, and so we have%
	\begin{equation*}
		F\left( x_{1}\right) -F\left( x_{2}\right) >\int_{x_{1}}^{x_{2}}\frac{%
			\varepsilon }{x}dx=\varepsilon \ln \frac{x_{2}}{x_{1}},
	\end{equation*}%
	which proves Part (1) of the lemma. Without loss of generality, assume now
	that $x_{1}\leq x_{2}\leq x_{1}+\frac{1}{\left\vert F^{\prime }\left(
		x_{1}\right) \right\vert }$. Then by Assumption (4) we also have $x_{1}\leq
	x_{2}\leq \left( 1+\frac{1}{\varepsilon }\right) x_{1}$, and then by
	Assumption (3), the first assertion in Part (2) of the lemma holds, and with
	the bound, 
	\begin{eqnarray*}
		F\left( x_{1}\right) -F\left( x_{2}\right) &=&\int_{x_{1}}^{x_{2}}-F^{\prime
		}\left( x\right) ~dx \\
		&\approx &\left\vert F^{\prime }\left( x_{1}\right) \right\vert \left(
		x_{2}-x_{1}\right) \leq 1.
	\end{eqnarray*}%
	From this we get%
	\begin{equation*}
		1\leq \frac{f\left( x_{2}\right) }{f\left( x_{1}\right) }=e^{F\left(
			x_{1}\right) -F\left( x_{2}\right) }\lesssim 1,
	\end{equation*}%
	which proves the second assertion in Part (2) of the lemma. Finally,
	Assumptions (4) and (5) give%
	\begin{equation*}
		\frac{F^{\prime \prime }\left( x\right) }{\left\vert F^{\prime }\left(
			x\right) \right\vert ^{2}}=\frac{F^{\prime \prime }\left( x\right) }{%
			-F^{\prime }\left( x\right) }\frac{1}{-F^{\prime }\left( x\right) }\approx 
		\frac{1}{x}\frac{1}{-F^{\prime }\left( x\right) }\lesssim 1,
	\end{equation*}%
	which proves Part (3) of the lemma.
\end{proof}

\begin{lemma}
	Suppose $\lambda >0$, $0<x<X\left( \lambda \right) $ and 
	\begin{equation*}
		y=\int_{0}^{x}\frac{f\left( u\right) ^{2}}{\sqrt{\lambda ^{2}-f^{2}\left(
				u\right) }}du.
	\end{equation*}%
	Then $\left( x,y\right) $ lies on the lower half of the geodesic $\gamma
	_{0,\lambda }$ and%
	\begin{equation*}
		y\approx \frac{f\left( x\right) ^{2}}{\lambda \left\vert F^{\prime }\left(
			x\right) \right\vert }.
	\end{equation*}
\end{lemma}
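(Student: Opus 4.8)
The first assertion is immediate: the parametrization of the geodesic $\gamma_{0,\lambda}$ derived above is precisely $y=\int_{0}^{x}f(u)^{2}/\sqrt{\lambda^{2}-f(u)^{2}}\,du$ for $0<x<X(\lambda)$, so $(x,y)$ is the point of $\gamma_{0,\lambda}$ below the turning point with first coordinate $x$. The real content is the two‑sided estimate $y\approx f(x)^{2}/(\lambda|F'(x)|)$, and the plan is to change variables to $v=f(u)^{2}=e^{-2F(u)}$. Since $F'<0$, $dv=-2F'(u)f(u)^{2}\,du=2|F'(u)|\,v\,du$, and since $f$ is strictly increasing on $(0,R)$ the map $u\mapsto v$ is a bijection of $(0,x)$ onto $(0,f(x)^{2})$; writing $u=u(v)$ for the inverse, this transforms the integral into
\[
y=\int_{0}^{f(x)^{2}}\frac{dv}{2\,|F'(u(v))|\,\sqrt{\lambda^{2}-v}}\ .
\]
Everything then reduces to controlling $|F'(u(v))|$ together with the elementary identity $\int_{0}^{t}dv/\sqrt{\lambda^{2}-v}=2\bigl(\lambda-\sqrt{\lambda^{2}-t}\bigr)$.

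For the upper bound, Assumption (2) ($F''>0$) makes $|F'|=-F'$ decreasing, so $|F'(u(v))|\ge|F'(x)|$ throughout $(0,f(x)^{2})$; pulling this constant out and evaluating gives
\[
y\le\frac{1}{2|F'(x)|}\int_{0}^{f(x)^{2}}\frac{dv}{\sqrt{\lambda^{2}-v}}=\frac{\lambda-\sqrt{\lambda^{2}-f(x)^{2}}}{|F'(x)|}\le\frac{f(x)^{2}}{\lambda\,|F'(x)|}\ ,
\]
the last step from $\lambda-\sqrt{\lambda^{2}-t}=t/(\lambda+\sqrt{\lambda^{2}-t})\le t/\lambda$.

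For the lower bound I would keep only the range $v\in(f(x)^{2}/2,\,f(x)^{2})$. There $u(v)\in(x_{0},x)$, where $x_{0}\in(0,x)$ is defined by $f(x_{0})^{2}=f(x)^{2}/2$; applying Lemma \ref{consequences}(1) to $x_{0}<x$ gives $\frac{1}{\sqrt{2}}=f(x_{0})/f(x)<(x_{0}/x)^{\varepsilon}$, hence $x_{0}\ge 2^{-1/(2\varepsilon)}x$. Thus $[x_{0},x]$ lies in $[2^{-1/(2\varepsilon)}x,\,x]$, which can be covered by a number of intervals $[\frac{1}{2}r,2r]$ depending only on $\varepsilon$; applying Assumption (3) on each yields $|F'(u)|\le C'|F'(x)|$ on $[x_{0},x]$ with $C'=C'(C,\varepsilon)$. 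Hence
\[
y\ge\frac{1}{2C'|F'(x)|}\int_{f(x)^{2}/2}^{f(x)^{2}}\frac{dv}{\sqrt{\lambda^{2}-v}}=\frac{\sqrt{\lambda^{2}-f(x)^{2}/2}-\sqrt{\lambda^{2}-f(x)^{2}}}{C'|F'(x)|}\ge\frac{f(x)^{2}}{4C'\lambda\,|F'(x)|}\ ,
\]
using once more $\sqrt{a}-\sqrt{b}=(a-b)/(\sqrt{a}+\sqrt{b})$ with $\sqrt{a}+\sqrt{b}\le 2\lambda$. Combined with the upper bound this gives $y\approx f(x)^{2}/(\lambda|F'(x)|)$.

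The only delicate point is the lower bound, and specifically the passage from $x_{0}\ge 2^{-1/(2\varepsilon)}x$ to $|F'|\approx|F'(x)|$ on $[x_{0},x]$: this is exactly where the structure conditions must be combined---Lemma \ref{consequences}(1) (which rests on Assumptions (2) and (4)) to pin down the location of $x_{0}$, and then the quasi‑doubling Assumption (3) to transport the estimate on $|F'|$ from the scale $x$ down to the scale $x_{0}$. All the remaining manipulations are routine one‑variable calculus.
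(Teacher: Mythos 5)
Your proof is correct and follows essentially the same route as the paper's: both hinge on the substitution $v=f(u)^{2}$, on Lemma \ref{consequences}(1) together with Assumption (3) to freeze $\left\vert F^{\prime }\left( u\right) \right\vert \approx \left\vert F^{\prime }\left( x\right) \right\vert$ on the portion of the integral that carries the mass, and on the elementary identity $\lambda -\sqrt{\lambda ^{2}-t}\approx t/\lambda$. The only cosmetic differences are that you localize in the $v$-variable rather than in $u$ (the paper first restricts to $u\in (x/2,x)$ using monotonicity of the integrand and only then changes variables), and that your upper bound uses nothing but the monotonicity of $1/\left\vert F^{\prime }\right\vert$ coming from $F^{\prime \prime }>0$ --- which is precisely the observation the paper records separately in Remark \ref{upper bound}.
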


\begin{proof}
	Using first that $\frac{f\left( u\right) ^{2}}{\sqrt{\lambda
			^{2}-f^{2}\left( u\right) }}$ is increasing in $u$, and then that $F\left(
	u\right) =-\ln f\left( u\right) $, we have%
	\begin{equation*}
		y=\int_{0}^{x}\frac{f\left( u\right) ^{2}}{\sqrt{\lambda ^{2}-f\left(
				u\right) ^{2}}}du\approx \int_{\frac{x}{2}}^{x}\frac{f\left( u\right) ^{2}}{%
			\sqrt{\lambda ^{2}-f\left( u\right) ^{2}}}du=\int_{\frac{x}{2}}^{x}\frac{1}{%
			-2F^{\prime }\left( u\right) }\frac{\left[ f\left( u\right) ^{2}\right]
			^{\prime }}{\sqrt{\lambda ^{2}-f\left( u\right) ^{2}}}du,
	\end{equation*}%
	and then using Assumption (3) we get%
	\begin{equation*}
		y\approx \frac{1}{-F^{\prime }\left( x\right) }\int_{\frac{x}{2}}^{x}\frac{%
			\left[ f\left( u\right) ^{2}\right] ^{\prime }du}{2\sqrt{\lambda
				^{2}-f\left( u\right) ^{2}}}=\frac{1}{-F^{\prime }\left( x\right) }%
		\int_{f\left( \frac{x}{2}\right) ^{2}}^{f\left( x\right) ^{2}}\frac{dv}{2%
			\sqrt{\lambda ^{2}-v}}.
	\end{equation*}%
	Now from Part (1) of Lemma \ref{consequences} we obtain $f\left( \frac{x}{2}%
	\right) ^{2}<\left( \frac{1}{2}\right) ^{2\varepsilon }f\left( x\right) ^{2}$
	and so%
	\begin{equation*}
		y\approx \frac{1}{-F^{\prime }\left( x\right) }\int_{0}^{f\left( x\right)
			^{2}}\frac{dv}{2\sqrt{\lambda ^{2}-v}}=\frac{\lambda -\sqrt{\lambda
				^{2}-f\left( x\right) ^{2}}}{-F^{\prime }\left( x\right) }\approx \frac{%
			f\left( x\right) ^{2}}{\lambda \left\vert F^{\prime }\left( x\right)
			\right\vert },
	\end{equation*}%
	where the final estimate follows from $1-\sqrt{1-t}=\frac{t}{1+\sqrt{1-t}}%
	\approx t$, $0<t<1$, with $t=\frac{f\left( x\right) ^{2}}{\lambda ^{2}}$.
\end{proof}

\begin{remark}
	\label{upper bound}We actually have the upper bound $y\leq \frac{f\left(
		x\right) ^{2}}{\lambda \left\vert F^{\prime }\left( x\right) \right\vert }$
	since $F^{\prime \prime }\left( x\right) >0$. Indeed, then $\frac{1}{%
		-F^{\prime }\left( x\right) }$ is increasing and for $f\left( x\right)
	<\lambda $ we have%
	\begin{eqnarray*}
		y &=&\int_{0}^{x}\frac{f\left( u\right) ^{2}}{\sqrt{\lambda ^{2}-f\left(
				u\right) ^{2}}}du=\int_{0}^{x}\frac{1}{-2F^{\prime }\left( u\right) }\frac{%
			\left[ f\left( u\right) ^{2}\right] ^{\prime }}{\sqrt{\lambda ^{2}-f\left(
				u\right) ^{2}}}du \\
		&\leq &\frac{1}{-F^{\prime }\left( x\right) }\int_{0}^{x}\frac{\left[
			f\left( u\right) ^{2}\right] ^{\prime }}{2\sqrt{\lambda ^{2}-f\left(
				u\right) ^{2}}}du=\frac{f\left( x\right) ^{2}}{-\lambda F^{\prime }\left(
			x\right) }.
	\end{eqnarray*}
\end{remark}

Now we can estimate the $A$-arc length of the geodesic $\gamma _{0,\lambda }$
between the two points $P_{0}=\left( 0,0\right) $ and $P_{1}=\left(
x_{1},y_{1}\right) $ where $0<x_{1}<X\left( \lambda \right) $ and 
\begin{equation*}
	y_{1}=\int_{0}^{x_{1}}\frac{f\left( u\right) ^{2}}{\sqrt{\lambda
			^{2}-f^{2}\left( u\right) }}du.
\end{equation*}%
We have the formula%
\begin{equation*}
	d\left( P_{0},P_{1}\right) =\int_{P_{0}}^{P_{1}}dt=\int_{P_{0}}^{P_{1}}\frac{%
		dt}{dx}dx=\int_{0}^{x_{1}}\frac{\lambda }{\sqrt{\lambda ^{2}-f\left(
			x\right) ^{2}}}dx,
\end{equation*}%
from which we obtain $x_{1}<d\left( P_{0},P_{1}\right) $.

\begin{lemma}
	\label{arc length}With notation as above we have%
	\begin{eqnarray*}
		&&x_{1}<d\left( P_{0},P_{1}\right) \leq d\left( \left( 0,0\right) ,\left(
		x_{1},0\right) \right) +d\left( \left( x_{1},0\right) ,\left(
		x_{1},y_{1}\right) \right) \ ; \\
		&&d\left( \left( 0,0\right) ,\left( x_{1},0\right) \right) =x_{1}\ , \\
		&&d\left( \left( x_{1},0\right) ,\left( x_{1},y_{1}\right) \right) \leq 
		\frac{f\left( x_{1}\right) }{-\lambda F^{\prime }\left( x_{1}\right) }\leq 
		\frac{1}{-F^{\prime }\left( x_{1}\right) }<\frac{1}{\varepsilon }x_{1}\ .
	\end{eqnarray*}%
	In particular we have $d\left( P_{0},P_{1}\right) \approx x_{1}$.
\end{lemma}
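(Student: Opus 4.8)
The plan is to read the lemma off from the distance formula $d(P_{0},P_{1})=\int_{0}^{x_{1}}\frac{\lambda }{\sqrt{\lambda ^{2}-f(x)^{2}}}\,dx$ established just above, together with the triangle inequality for the control metric $d$ and the two estimates already in hand (the Remark bounding $y_{1}$, and Assumption~(4)). First I would record the lower bound $x_{1}<d(P_{0},P_{1})$, which is immediate from that formula since the integrand strictly exceeds $1$ throughout $(0,x_{1}]$ (there $f>0$, so $\sqrt{\lambda ^{2}-f(x)^{2}}<\lambda $). Next, because $(x_{1},y_{1})$ lies on the lower half of $\gamma _{0,\lambda }$, strictly before the turning point, the arc of $\gamma _{0,\lambda }$ joining $P_{0}$ to $P_{1}$ is still length-minimizing, so that arc length genuinely equals the control distance $d(P_{0},P_{1})$; the triangle inequality for $d$ then gives $d(P_{0},P_{1})\le d((0,0),(x_{1},0))+d((x_{1},0),(x_{1},y_{1}))$.

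For the first term I would use the horizontal segment $y\equiv 0$, on which $dy=0$ so $dt=dx$, giving $A$-length $x_{1}$; conversely any competing path projects onto the $x$-axis with $A$-length at least $\int |dx|\ge x_{1}$, so $d((0,0),(x_{1},0))=x_{1}$ exactly. For the second term I would use the vertical segment at the fixed abscissa $x=x_{1}$, where $dx=0$ and $f$ takes the constant value $f(x_{1})>0$, so its $A$-length is $\int_{0}^{y_{1}}\frac{dy}{f(x_{1})}=\frac{y_{1}}{f(x_{1})}$; the Remark preceding the lemma supplies $y_{1}\le \frac{f(x_{1})^{2}}{-\lambda F^{\prime }(x_{1})}$ (via $F^{\prime \prime }>0$), hence $d((x_{1},0),(x_{1},y_{1}))\le \frac{f(x_{1})}{-\lambda F^{\prime }(x_{1})}$. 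Since $f$ is increasing on $(0,R)$ and $x_{1}<X(\lambda )=f^{-1}(\lambda )$, we have $f(x_{1})<\lambda $, so this is at most $\frac{1}{-F^{\prime }(x_{1})}$, and Assumption~(4) in the form $\frac{1}{-F^{\prime }(x)}\le \frac{x}{\varepsilon }$ finishes the chain with the bound $\frac{x_{1}}{\varepsilon }$.

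Assembling the pieces yields $x_{1}<d(P_{0},P_{1})\le \left( 1+\tfrac{1}{\varepsilon }\right) x_{1}$, i.e. $d(P_{0},P_{1})\approx x_{1}$ with constants depending only on $\varepsilon $. I do not expect any genuine obstacle: the proof is a bookkeeping assembly of the preceding estimates. The only points that merit a line of care are (i) that the triangle inequality being invoked is the one for the control metric $d$ — legitimate precisely because $d$ is a bona fide metric and the sub-arc of $\gamma _{0,\lambda }$ up to $P_{1}$ is minimizing — and (ii) the elementary monotonicity fact $f(x_{1})<\lambda $ for $x_{1}<X(\lambda )$, which is where the hypothesis $x_{1}<X(\lambda )$ actually enters.
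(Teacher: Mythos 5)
Your proof is correct and follows essentially the same route as the paper: the lower bound comes from the arc-length formula, the upper bound from the triangle inequality with the horizontal and vertical segments, and the key estimate $d((x_{1},0),(x_{1},y_{1}))\leq \frac{y_{1}}{f(x_{1})}\leq \frac{f(x_{1})}{-\lambda F'(x_{1})}$ from Remark \ref{upper bound}, finished by $f(x_{1})\leq \lambda$ and Assumption (4). The extra details you supply (the exact equality $d((0,0),(x_{1},0))=x_{1}$ via projection, and the minimizing property of the sub-arc) are correct and merely fill in steps the paper leaves implicit.
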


\begin{proof}
	From Remark \ref{upper bound} we have%
	\begin{equation*}
		d\left( \left( x_{1},0\right) ,\left( x_{1},y_{1}\right) \right) \leq \frac{%
			y_{1}}{f\left( x_{1}\right) }\leq \frac{f\left( x_{1}\right) }{-\lambda
			F^{\prime }\left( x_{1}\right) },
	\end{equation*}%
	and then we use $f\left( x_{1}\right) \leq \lambda $ and Assumption (4).
\end{proof}

\begin{corollary}
	\label{similar r and x}$\left\vert F^{\prime }\left( d\left(
	P_{0},P_{1}\right) \right) \right\vert \approx \left\vert F^{\prime }\left(
	x_{1}\right) \right\vert $ and $f\left( d\left( P_{0},P_{1}\right) \right)
	\approx f\left( x_{1}\right) $.
\end{corollary}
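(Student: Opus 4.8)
The plan is to read this off directly from Lemma \ref{arc length} together with Part (2) of Lemma \ref{consequences}. Lemma \ref{arc length} supplies not merely the comparison $d(P_0,P_1)\approx x_1$ but the sharper two-sided estimate
\[
x_1\le d(P_0,P_1)\le x_1+\frac{1}{\left\vert F^{\prime}(x_1)\right\vert},
\]
the lower bound coming from $\frac{dt}{dx}=\frac{\lambda}{\sqrt{\lambda^2-f(x)^2}}\ge 1$, and the upper bound from the triangle inequality $d(P_0,P_1)\le x_1+d\big((x_1,0),(x_1,y_1)\big)$ together with $d\big((x_1,0),(x_1,y_1)\big)\le\frac{f(x_1)}{-\lambda F^{\prime}(x_1)}\le\frac{1}{\left\vert F^{\prime}(x_1)\right\vert}$ (using $f(x_1)\le\lambda$). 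Since Assumption (4) gives $\frac{1}{-x_1F^{\prime}(x_1)}\le\frac1\varepsilon$, this also forces $d(P_0,P_1)\le\big(1+\tfrac1\varepsilon\big)x_1$, so in particular $d(P_0,P_1)\in(0,R)$ once $x_1$ is small relative to $R$ and $\varepsilon$ — the only regime of interest for the subrepresentation estimates that follow.

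Now set $x_2\equiv d(P_0,P_1)$. I claim the pair $(x_1,x_2)$ satisfies the hypotheses of Part (2) of Lemma \ref{consequences}. Indeed, $x_2\ge x_1\ge\varepsilon x_1$ since $\varepsilon\le 1$, trivially $x_2\ge x_1\ge x_1-\frac{1}{\left\vert F^{\prime}(x_1)\right\vert}$, and the displayed bound gives $x_2\le x_1+\frac{1}{\left\vert F^{\prime}(x_1)\right\vert}$, so that
\[
\max\left\{\varepsilon x_1,\;x_1-\frac{1}{\left\vert F^{\prime}(x_1)\right\vert}\right\}\le x_2\le x_1+\frac{1}{\left\vert F^{\prime}(x_1)\right\vert}.
\]
Applying Part (2) of Lemma \ref{consequences} then yields at once $\left\vert F^{\prime}(x_1)\right\vert\approx\left\vert F^{\prime}(x_2)\right\vert$ and $f(x_1)\approx f(x_2)$, i.e. $\left\vert F^{\prime}(d(P_0,P_1))\right\vert\approx\left\vert F^{\prime}(x_1)\right\vert$ and $f(d(P_0,P_1))\approx f(x_1)$, which is exactly the corollary.

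There is essentially no obstacle: the statement is just a repackaging of the quantitative arc-length estimate of Lemma \ref{arc length} with the local comparability of $F^{\prime}$ on intervals of length $\lesssim 1/\left\vert F^{\prime}\right\vert$ guaranteed by the structure conditions, and the only point needing a word of care is the membership $d(P_0,P_1)\in(0,R)$ noted above. If one prefers to avoid citing Part (2) of Lemma \ref{consequences}, the same conclusion follows by iterating Assumption (3) an $O\!\big(\log\tfrac1\varepsilon\big)$ number of times across the interval $[x_1,d(P_0,P_1)]\subset\big[x_1,(1+\tfrac1\varepsilon)x_1\big]$ to compare $F^{\prime}$, and then integrating $-F^{\prime}$ over that interval, whose length is $\le 1/\left\vert F^{\prime}(x_1)\right\vert$, to obtain $\big|\ln\tfrac{f(x_1)}{f(d(P_0,P_1))}\big|\lesssim 1$ and hence the comparison of $f$.
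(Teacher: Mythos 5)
Your proof is correct and is exactly the paper's argument: the paper's proof of this corollary is the one-line instruction to combine Part (2) of Lemma \ref{consequences} with Lemma \ref{arc length}, and you have simply spelled out the verification that $x_{2}=d(P_{0},P_{1})$ lands in the window $\max\{\varepsilon x_{1},x_{1}-1/|F'(x_{1})|\}\leq x_{2}\leq x_{1}+1/|F'(x_{1})|$ required by that lemma. Your added remark about keeping $d(P_{0},P_{1})$ inside $(0,R)$ is a reasonable (if unstated in the paper) point of care.
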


\begin{proof}
	Combine Part (2) of Lemma \ref{consequences} with Lemma \ref{arc length}.
\end{proof}

\section{Integration over $A$-balls and Area\label{Regions}}

Here we investigate properties of the $A$-ball $B\left( 0,r_{0}\right) $
centered at the origin $0$ with radius $r_{0}>0$:%
\begin{equation*}
	B\left( 0,r_{0}\right) \equiv \left\{ x\in \mathbb{R}^{2}:d\left( 0,x\right)
	<r_{0}\right\} ,\ \ \ \ \ r_{0}>0.
\end{equation*}%
For this we will use `$A$-polar coordinates' where $d\left( 0,\left(
x,y\right) \right) $ plays the role of the radial variable, and the turning
parameter $\lambda $ plays the role of the angular coordinate. More
precisely, given Cartesian coordinates $\left( x,y\right) $, the $A$-polar
coordinates $\left( r,\lambda \right) $ are given implicitly by the pair of
equations%
\begin{eqnarray}
r &=&\int_{0}^{x}\frac{\lambda }{\sqrt{\lambda ^{2}-f\left( u\right) ^{2}}}%
du\ ,  \label{r and y} \\
y &=&\int_{0}^{x}\frac{f\left( u\right) ^{2}}{\sqrt{\lambda ^{2}-f\left(
		u\right) ^{2}}}du\ .  \notag
\end{eqnarray}%
In this section we will work out the change of variable formula for the
quarter $A$-ball $QB(0,r_{0})$ lying in the first quadrant. See Figure \ref{ball_origin}.

\begin{figure}
	\includegraphics{./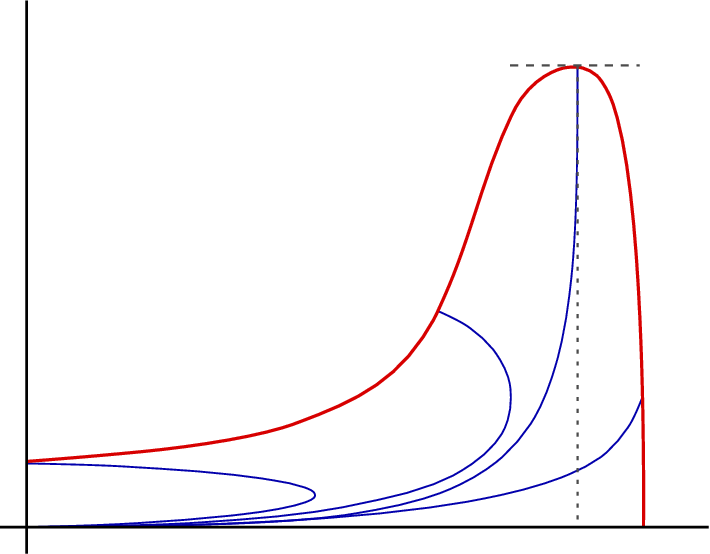}
	\caption{A first quadrant view of a control ball centered at the origin.}
	\label{ball_origin}
\end{figure}

\begin{definition}
	\label{definition of L and Y} Let $\lambda \in (0,\infty )$. The geodesic
	with turning parameter $\lambda $ first moves to the right and then curls
	back at the turning point $T\left( \lambda \right) =\left( X\left( \lambda
	\right) ,Y\left( \lambda \right) \right) $ when $x=X\left( \lambda \right)
	\equiv f^{-1}\left( \lambda \right) $. If $R\left( \lambda \right) $ denotes
	the $A$-arc length from the origin to the turning point $T\left( \lambda
	\right) $, we have%
	\begin{eqnarray*}
		R\left( \lambda \right) &\equiv &d\left( 0,T\left( \lambda \right) \right)
		=\int_{0}^{X\left( \lambda \right) }\frac{\lambda }{\sqrt{\lambda
				^{2}-f\left( u\right) ^{2}}}du, \\
		Y\left( \lambda \right) &=&\int_{0}^{X\left( \lambda \right) }\frac{f\left(
			u\right) ^{2}}{\sqrt{\lambda ^{2}-f\left( u\right) ^{2}}}du.
	\end{eqnarray*}
\end{definition}

The two parts of the geodesic $\gamma _{0,\lambda }$,cut at the point $%
T\left( \lambda \right) $, have different equations:%
\begin{equation}
y=\left\{ 
\begin{array}{ll}
\int_{0}^{x}\frac{f\left( u\right) ^{2}}{\sqrt{\lambda ^{2}-f\left( u\right)
		^{2}}}du & \text{when}\;y\in \left[ 0,Y\left( \lambda \right) \right] \\ 
2Y\left( \lambda \right) -\int_{0}^{x}\frac{f\left( u\right) ^{2}}{\sqrt{%
		\lambda ^{2}-f\left( u\right) ^{2}}}du & \text{when}\;y\in \left[ Y\left(
\lambda \right) ,2Y\left( \lambda \right) \right]%
\end{array}%
\right. .  \label{two-piece geodesic}
\end{equation}%
We define the region covered by the first equation for the geodesics to be
Region 1, and the region covered by the second equation for the geodesics to
be Region 2. They are separated by the curve $y=Y(f(x))$. We now calculate
the first derivative matrix $%
\begin{bmatrix}
\frac{\partial x}{\partial r} & \frac{\partial x}{\partial \lambda } \\ 
\frac{\partial y}{\partial r} & \frac{\partial y}{\partial \lambda }%
\end{bmatrix}%
$ and the Jacobian $\frac{\partial \left( x,y\right) }{\partial \left(
	r,\lambda \right) }$ in Regions 1 and 2 separately.

\subsection{Region 1}

Applying implicit differentiation to the first equation in (\ref{r and y}),
we have 
\begin{align*}
	1=\frac{\partial r}{\partial r}& =\frac{\partial x}{\partial r}\cdot \frac{%
		\lambda }{\sqrt{\lambda ^{2}-f\left( x\right) ^{2}}}, \\
	0=\frac{\partial r}{\partial \lambda }& =\frac{\partial x}{\partial \lambda }%
	\cdot \frac{\lambda }{\sqrt{\lambda ^{2}-f\left( x\right) ^{2}}}+\int_{0}^{x}%
	\frac{\partial }{\partial \lambda }\left[ \frac{\lambda }{\sqrt{\lambda
			^{2}-f\left( u\right) ^{2}}}\right] du,
\end{align*}%
where%
\begin{equation*}
	\frac{\partial }{\partial \lambda }\left[ \frac{\lambda }{\sqrt{\lambda
			^{2}-f\left( u\right) ^{2}}}\right] =\frac{1\cdot \sqrt{\lambda ^{2}-f\left(
			u\right) ^{2}}-\lambda \cdot \frac{2\lambda }{2\sqrt{\lambda ^{2}-f\left(
				u\right) ^{2}}}}{\lambda ^{2}-f\left( u\right) ^{2}}=\frac{-f\left( u\right)
		^{2}}{\left( \lambda ^{2}-f\left( u\right) ^{2}\right) ^{\frac{3}{2}}}.
\end{equation*}
Thus we have%
\begin{eqnarray*}
	\frac{\partial x}{\partial r} &=&\frac{\sqrt{\lambda ^{2}-f\left( x\right)
			^{2}}}{\lambda }, \\
	\frac{\partial x}{\partial \lambda } &=&\frac{\sqrt{\lambda ^{2}-f\left(
			x\right) ^{2}}}{\lambda }\cdot \int_{0}^{x}\frac{f\left( u\right) ^{2}}{%
		\left( \lambda ^{2}-f\left( u\right) ^{2}\right) ^{\frac{3}{2}}}du.
\end{eqnarray*}
Applying implicit differentiation to the second equation in (\ref{r and y}),
we have 
\begin{align*}
	\frac{\partial y}{\partial r}& =\frac{\partial x}{\partial r}\cdot \frac{%
		f\left( x\right) ^{2}}{\sqrt{\lambda ^{2}-f\left( x\right) ^{2}}}; \\
	\frac{\partial y}{\partial \lambda }& =\frac{\partial x}{\partial \lambda }%
	\cdot \frac{f\left( x\right) ^{2}}{\sqrt{\lambda ^{2}-f\left( x\right) ^{2}}}%
	+\int_{0}^{x}\frac{\partial }{\partial \lambda }\left[ \frac{f\left(
		u\right) ^{2}}{\sqrt{\lambda ^{2}-f\left( u\right) ^{2}}}\right] du \\
	& =\frac{\partial x}{\partial \lambda }\cdot \frac{f\left( x\right) ^{2}}{%
		\sqrt{\lambda ^{2}-f\left( x\right) ^{2}}}+\int_{0}^{x}\frac{-\lambda
		f\left( u\right) ^{2}}{\left( \lambda ^{2}-f\left( u\right) ^{2}\right) ^{%
			\frac{3}{2}}}du
\end{align*}%
Plugging the equation for $\frac{\partial x}{\partial \lambda }$ into these
equations, we obtain%
\begin{eqnarray*}
	\frac{\partial y}{\partial r} &=&\frac{f\left( x\right) ^{2}}{\lambda }, \\
	\frac{\partial y}{\partial \lambda } &=&\frac{f\left( x\right) ^{2}-\lambda
		^{2}}{\lambda }\cdot \int_{0}^{x}\frac{f\left( u\right) ^{2}}{\left( \lambda
		^{2}-f\left( u\right) ^{2}\right) ^{\frac{3}{2}}}du,
\end{eqnarray*}%
and this completes the calculation of the first derivative matrix $%
\begin{bmatrix}
\frac{\partial x}{\partial r} & \frac{\partial x}{\partial \lambda } \\ 
\frac{\partial y}{\partial r} & \frac{\partial y}{\partial \lambda }%
\end{bmatrix}%
$.

Now we can calculate the Jacobian 
\begin{eqnarray*}
	\frac{\partial \left( x,y\right) }{\partial \left( r,\lambda \right) }
	&=&\det 
	\begin{bmatrix}
		\frac{\sqrt{\lambda ^{2}-f\left( x\right) ^{2}}}{\lambda } & \frac{\sqrt{%
				\lambda ^{2}-f\left( x\right) ^{2}}}{\lambda }\cdot \int_{0}^{x}\frac{%
			f\left( u\right) ^{2}}{\left( \lambda ^{2}-f\left( u\right) ^{2}\right) ^{%
				\frac{3}{2}}}du \\ 
		\frac{f\left( x\right) ^{2}}{\lambda } & \frac{f\left( x\right) ^{2}-\lambda
			^{2}}{\lambda }\cdot \int_{0}^{x}\frac{f\left( u\right) ^{2}}{\left( \lambda
			^{2}-f\left( u\right) ^{2}\right) ^{\frac{3}{2}}}du%
	\end{bmatrix}
	\\
	&=&-\sqrt{\lambda ^{2}-f\left( x\right) ^{2}}\int_{0}^{x}\frac{f\left(
		u\right) ^{2}}{\left( \lambda ^{2}-f\left( u\right) ^{2}\right) ^{\frac{3}{2}%
	}}du.
\end{eqnarray*}%
In addition we have 
\begin{equation*}
	\int_{0}^{x}\frac{f\left( u\right) ^{2}}{\left( \lambda ^{2}-f\left(
		u\right) ^{2}\right) ^{\frac{3}{2}}}du\approx \int_{x/2}^{x}\frac{f\left(
		u\right) ^{2}}{\left( \lambda ^{2}-f\left( u\right) ^{2}\right) ^{\frac{3}{2}%
	}}du=\int_{x/2}^{x}\frac{\frac{d}{du}\left[ f\left( u\right) ^{2}\right]
		\cdot \frac{f\left( u\right) }{2f^{\prime }\left( u\right) }}{\left( \lambda
		^{2}-f\left( u\right) ^{2}\right) ^{\frac{3}{2}}}du,
\end{equation*}%
where 
\begin{equation*}
	\frac{f\left( u\right) }{2f^{\prime }\left( u\right) }=\frac{1}{-2F^{\prime
		}\left( u\right) }\approx \frac{1}{-F^{\prime }\left( x\right) },
\end{equation*}%
and so we have 
\begin{equation*}
	\int_{0}^{x}\frac{f\left( u\right) ^{2}}{\left( \lambda ^{2}-f\left(
		u\right) ^{2}\right) ^{\frac{3}{2}}}du\approx \frac{1}{-F^{\prime }\left(
		x\right) }\int_{f\left( \frac{x}{2}\right) ^{2}}^{f\left( x\right) ^{2}}%
	\frac{1}{\left( \lambda ^{2}-v\right) ^{\frac{3}{2}}}dv.
\end{equation*}%
By Part (1) of Lemma \ref{consequences}, we have $f\left( \frac{x}{2}\right)
<\left( \frac{1}{2}\right) ^{\varepsilon }f\left( x\right) $, and as a
result, we obtain 
\begin{align*}
	\int_{0}^{x}\frac{f\left( u\right) ^{2}}{\left( \lambda ^{2}-f\left(
		u\right) ^{2}\right) ^{\frac{3}{2}}}du&\approx \frac{1}{-F^{\prime }\left(
		x\right) }\int_{0}^{f\left( x\right) ^{2}}\frac{1}{\left( \lambda
		^{2}-v\right) ^{\frac{3}{2}}}dv\\
	&\approx \frac{1}{-F^{\prime }\left( x\right) }%
	\left( \frac{1}{\sqrt{\lambda ^{2}-f\left( x\right) ^{2}}}-\frac{1}{\lambda }%
	\right) .
\end{align*}%
Altogether we have the estimate 
\begin{equation}
\left\vert \frac{\partial \left( x,y\right) }{\partial \left( r,\lambda
	\right) }\right\vert \approx \frac{1}{-F^{\prime }\left( x\right) }\cdot 
\frac{\lambda -\sqrt{\lambda ^{2}-f\left( x\right) ^{2}}}{\lambda }\approx 
\frac{f\left( x\right) ^{2}}{\lambda ^{2}\left\vert F^{\prime }\left(
	x\right) \right\vert }.  \label{Jacobian2}
\end{equation}%
From Corollary \ref{similar r and x}, we also have 
\begin{equation}
\left\vert \frac{\partial \left( x,y\right) }{\partial \left( r,\lambda
	\right) }\right\vert \approx \frac{f\left( r\right) ^{2}}{\lambda
	^{2}\left\vert F^{\prime }\left( r\right) \right\vert }.  \label{Jacobian3}
\end{equation}

\subsection{Region 2}

In Region 2 we have the following pair of formulas:%
\begin{eqnarray}
r &=&2R\left( \lambda \right) -\int_{0}^{x}\frac{\lambda }{\sqrt{\lambda
		^{2}-f\left( u\right) ^{2}}}du\ ,  \label{r and y Region 2} \\
y &=&2Y\left( \lambda \right) -\int_{0}^{x}\frac{f\left( u\right) ^{2}}{%
	\sqrt{\lambda ^{2}-f\left( u\right) ^{2}}}du\ .  \notag
\end{eqnarray}%
where we recall that $R\left( \lambda \right) =\int_{0}^{X\left( \lambda
	\right) }\frac{\lambda }{\sqrt{\lambda ^{2}-f\left( u\right) ^{2}}}du$ is
the arc length of the geodesic $\gamma _{0,\lambda }$ from the origin $0$ to
the turning point $T\left( \lambda \right) $. Before proceeding, we
calculate the derivative of $Y\left( \lambda \right) $. We note that due to
cancellation, the derivative $R^{\prime }\left( \lambda \right) $ does not
explicitly enter into the formula for the Jacobian $\frac{\partial \left(
	x,y\right) }{\partial \left( r,\lambda \right) }$ below, so we defer its
calculation for now.

\begin{lemma}
	The derivative of $Y\left( \lambda \right) $ is given by%
	\begin{equation*}
		Y^{\prime }\left( \lambda \right) =\int_{0}^{f^{-1}\left( \lambda \right) }%
		\frac{F^{\prime \prime }\left( u\right) }{\left\vert F^{\prime }\left(
			u\right) \right\vert ^{2}}\frac{\lambda }{\sqrt{\lambda ^{2}-f\left(
				u\right) ^{2}}}du.
	\end{equation*}
\end{lemma}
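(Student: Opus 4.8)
The plan is to differentiate $Y$ by first rewriting it, via one integration by parts in $u$, in a form whose integrand \emph{vanishes} (rather than blows up) at the turning point $u=X(\lambda)=f^{-1}(\lambda)$, after which the differentiation in $\lambda$ becomes routine. First I would use that $F=-\ln f$ gives $\frac{f(u)}{f'(u)}=\frac{1}{-F'(u)}=\frac{1}{|F'(u)|}$, and that $\frac{d}{du}\bigl(-\sqrt{\lambda^2-f(u)^2}\bigr)=\frac{f(u)f'(u)}{\sqrt{\lambda^2-f(u)^2}}$ on $(0,X(\lambda))$, to write
$$\frac{f(u)^2}{\sqrt{\lambda^2-f(u)^2}}=\frac{1}{|F'(u)|}\,\frac{d}{du}\Bigl(-\sqrt{\lambda^2-f(u)^2}\Bigr).$$
Integrating by parts on $[0,X(\lambda)]$, the boundary term at $u=X(\lambda)$ vanishes because $f(X(\lambda))=\lambda$, and the boundary term at $u=0^{+}$ vanishes because $f(0)=0$ while $|F'(u)|\ge\varepsilon/u\to\infty$ by Assumption (4). (The product $\frac{1}{|F'(u)|}\sqrt{\lambda^2-f(u)^2}$ is absolutely continuous on $[0,X(\lambda)]$: $\frac{1}{|F'|}$ is increasing, continuous up to $0$ with value $0$, and $\bigl(\frac{1}{|F'|}\bigr)'=\frac{F''}{|F'|^2}\in L^1$ since $\int_0^{X(\lambda)}\frac{F''}{|F'|^2}\,du=\frac{1}{|F'(X(\lambda))|}<\infty$; and $\sqrt{\lambda^2-f^2}$ is decreasing, continuous, with integrable derivative.) Using $\frac{d}{du}\frac{1}{|F'(u)|}=\frac{F''(u)}{|F'(u)|^2}$ this yields
$$Y(\lambda)=\int_0^{X(\lambda)}\frac{F''(u)}{|F'(u)|^2}\,\sqrt{\lambda^2-f(u)^2}\;du.$$

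Next I would differentiate this new representation in $\lambda$. Now the integrand vanishes at the upper limit $u=X(\lambda)$, so the contribution of the moving endpoint is $\frac{F''(X(\lambda))}{|F'(X(\lambda))|^2}\sqrt{\lambda^2-f(X(\lambda))^2}\,X'(\lambda)=0$, and differentiating $\sqrt{\lambda^2-f(u)^2}$ under the integral gives
$$Y'(\lambda)=\int_0^{X(\lambda)}\frac{F''(u)}{|F'(u)|^2}\,\frac{\lambda}{\sqrt{\lambda^2-f(u)^2}}\;du,$$
which is the asserted formula since $X(\lambda)=f^{-1}(\lambda)$. To make this rigorous I would form $\frac{Y(\lambda+h)-Y(\lambda)}{h}$ and split it as an integral over $[0,X(\lambda)]$ plus one over $[X(\lambda),X(\lambda+h)]$. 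On $[0,X(\lambda)]$ the difference quotient of $\sqrt{\lambda^2-f(u)^2}$ equals $\frac{2\lambda+h}{\sqrt{(\lambda+h)^2-f^2}+\sqrt{\lambda^2-f^2}}\le\frac{C}{\sqrt{\lambda^2-f(u)^2}}$ and converges pointwise to $\frac{\lambda}{\sqrt{\lambda^2-f(u)^2}}$, so, since $\frac{F''(u)}{|F'(u)|^2}\lesssim 1$ is bounded and $\frac{1}{\sqrt{\lambda^2-f(u)^2}}\sim(X(\lambda)-u)^{-1/2}$ is integrable near $u=X(\lambda)$, dominated convergence applies. On $[X(\lambda),X(\lambda+h)]$, an interval of length $O(h)$ because $X=f^{-1}\in C^1$, one has $\sqrt{(\lambda+h)^2-f(u)^2}=O(\sqrt h)$, so that remainder is $O(\sqrt h)\to 0$; the case $h<0$ is symmetric.

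The one point that needs care — the main obstacle — is exactly that one cannot differentiate $Y(\lambda)=\int_0^{X(\lambda)}\frac{f^2}{\sqrt{\lambda^2-f^2}}\,du$ directly: the $\lambda$-derivative of the integrand, $\frac{-\lambda f^2}{(\lambda^2-f^2)^{3/2}}$, is non-integrable near $u=X(\lambda)$, and the contribution of the moving endpoint is likewise infinite. These two infinities cancel formally, and the integration by parts of the first step is precisely what exposes this cancellation, trading the singular integrand for one that vanishes at the turning point. The boundedness $\frac{F''(u)}{|F'(u)|^2}\approx\frac{1}{-uF'(u)}\lesssim 1$ used in the dominated-convergence step is Part (3) of Lemma \ref{consequences} (equivalently, a direct consequence of Assumptions (4) and (5)).
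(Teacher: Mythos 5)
Your argument is correct and is essentially the paper's own proof: the paper likewise integrates by parts using $\frac{1}{F'(u)}\frac{d}{du}\sqrt{\lambda^2-f(u)^2}$ to rewrite $Y(\lambda)=\int_0^{f^{-1}(\lambda)}\frac{F''(u)}{|F'(u)|^2}\sqrt{\lambda^2-f(u)^2}\,du$, and then differentiates, the endpoint term vanishing because $\lambda^2-f(f^{-1}(\lambda))^2=0$. Your version merely adds the (correct) justifications for the vanishing boundary terms and for differentiating under the integral, which the paper leaves implicit.
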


\begin{proof}
	Integrating by parts we obtain%
	\begin{eqnarray*}
		Y\left( \lambda \right) &=&\int_{0}^{f^{-1}\left( \lambda \right) }\frac{%
			-f\left( u\right) }{f^{\prime }\left( u\right) }\cdot \frac{d}{du}\sqrt{%
			\lambda ^{2}-f\left( u\right) ^{2}}du \\
		&=&\int_{0}^{f^{-1}\left( \lambda \right) }\frac{1}{F^{\prime }\left(
			u\right) }\cdot \frac{d}{du}\sqrt{\lambda ^{2}-f\left( u\right) ^{2}}du \\
		&=&-\int_{0}^{f^{-1}\left( \lambda \right) }\sqrt{\lambda ^{2}-f\left(
			u\right) ^{2}}\cdot \frac{d}{du}\frac{1}{F^{\prime }\left( u\right) }du \\
		&=&\int_{0}^{f^{-1}\left( \lambda \right) }\frac{F^{\prime \prime }\left(
			u\right) }{\left\vert F^{\prime }\left( u\right) \right\vert ^{2}}\sqrt{%
			\lambda ^{2}-f\left( u\right) ^{2}}du,
	\end{eqnarray*}%
	and so from $\lambda ^{2}-f\left( f^{-1}\left( \lambda \right) \right)
	^{2}=0 $, we have 
	\begin{equation*}
		Y^{\prime }\left( \lambda \right) =0+\int_{0}^{f^{-1}\left( \lambda \right) }%
		\frac{F^{\prime \prime }\left( u\right) }{\left\vert F^{\prime }\left(
			u\right) \right\vert ^{2}}\frac{\lambda }{\sqrt{\lambda ^{2}-f\left(
				u\right) ^{2}}}du.
	\end{equation*}
\end{proof}

Applying implicit differentiation to the first equation in (\ref{r and y
	Region 2}), we have 
\begin{align*}
	1=\frac{\partial r}{\partial r}& =-\frac{\partial x}{\partial r}\cdot \frac{%
		\lambda }{\sqrt{\lambda ^{2}-f\left( x\right) ^{2}}}, \\
	0=\frac{\partial r}{\partial \lambda }& =2R^{\prime }\left( \lambda \right) -%
	\frac{\partial x}{\partial \lambda }\cdot \frac{\lambda }{\sqrt{\lambda
			^{2}-f\left( x\right) ^{2}}}-\int_{0}^{x}\frac{\partial }{\partial \lambda }%
	\left[ \frac{\lambda }{\sqrt{\lambda ^{2}-f\left( u\right) ^{2}}}\right] du,
\end{align*}%
where 
\begin{equation*}
	\frac{\partial }{\partial \lambda }\left[ \frac{\lambda }{\sqrt{\lambda
			^{2}-f\left( u\right) ^{2}}}\right] =\frac{1\cdot \sqrt{\lambda ^{2}-f\left(
			u\right) ^{2}}-\lambda \cdot \frac{2\lambda }{2\sqrt{\lambda ^{2}-f\left(
				u\right) ^{2}}}}{\lambda ^{2}-f\left( u\right) ^{2}}=\frac{-f\left( u\right)
		^{2}}{\left( \lambda ^{2}-f\left( u\right) ^{2}\right) ^{\frac{3}{2}}}.
\end{equation*}%
Thus we have%
\begin{eqnarray*}
	\frac{\partial x}{\partial r} &=&-\frac{\sqrt{\lambda ^{2}-f\left( x\right)
			^{2}}}{\lambda }, \\
	\frac{\partial x}{\partial \lambda } &=&\frac{2\sqrt{\lambda ^{2}-f\left(
			x\right) ^{2}}}{\lambda }L^{\prime }\left( \lambda \right) +\frac{\sqrt{%
			\lambda ^{2}-f\left( x\right) ^{2}}}{\lambda }\cdot \int_{0}^{x}\frac{%
		f\left( u\right) ^{2}}{\left( \lambda ^{2}-f\left( u\right) ^{2}\right) ^{%
			\frac{3}{2}}}du.
\end{eqnarray*}%
Applying implicit differentiation to the second equation in (\ref{r and y
	Region 2}), we have 
\begin{align*}
	\frac{\partial y}{\partial r}& =-\frac{\partial x}{\partial r}\cdot \frac{%
		f\left( x\right) ^{2}}{\sqrt{\lambda ^{2}-f\left( x\right) ^{2}}}; \\
	\frac{\partial y}{\partial \lambda }& =2Y^{\prime }\left( \lambda \right) -%
	\frac{\partial x}{\partial \lambda }\cdot \frac{f\left( x\right) ^{2}}{\sqrt{%
			\lambda ^{2}-f\left( x\right) ^{2}}}-\int_{0}^{x}\frac{\partial }{\partial
		\lambda }\left[ \frac{f\left( u\right) ^{2}}{\sqrt{\lambda ^{2}-f\left(
			u\right) ^{2}}}\right] du \\
	& =2Y^{\prime }\left( \lambda \right) -\frac{\partial x}{\partial \lambda }%
	\cdot \frac{f\left( x\right) ^{2}}{\sqrt{\lambda ^{2}-f\left( x\right) ^{2}}}%
	-\int_{0}^{x}\frac{-\lambda f\left( u\right) ^{2}}{\left( \lambda
		^{2}-f\left( u\right) ^{2}\right) ^{\frac{3}{2}}}du
\end{align*}%
Plugging the equation for $\frac{\partial x}{\partial \lambda }$ above into
these equations, we have%
\begin{eqnarray*}
	\frac{\partial y}{\partial r} &=&\frac{f\left( x\right) ^{2}}{\lambda }, \\
	\frac{\partial y}{\partial \lambda } &=&2Y^{\prime }\left( \lambda \right) -%
	\frac{2f\left( x\right) ^{2}}{\lambda }R^{\prime }\left( \lambda \right) +%
	\frac{\lambda ^{2}-f\left( x\right) ^{2}}{\lambda }\cdot \int_{0}^{x}\frac{%
		f\left( u\right) ^{2}}{\left( \lambda ^{2}-f\left( u\right) ^{2}\right) ^{%
			\frac{3}{2}}}du.
\end{eqnarray*}%
Thus the Jacobian is given by 
\begin{align*}
	&\frac{\partial \left( x,y\right) }{\partial \left( r,\lambda \right) }\\
	&=\det 
	\begin{bmatrix}
		-\frac{\sqrt{\lambda ^{2}-f\left( x\right) ^{2}}}{\lambda } & \frac{2\sqrt{%
				\lambda ^{2}-f\left( x\right) ^{2}}}{\lambda }R^{\prime }\left( \lambda
		\right) +\frac{\sqrt{\lambda ^{2}-f\left( x\right) ^{2}}}{\lambda }\cdot
		\int_{0}^{x}\frac{f\left( u\right) ^{2}}{\left( \lambda ^{2}-f\left(
			u\right) ^{2}\right) ^{\frac{3}{2}}}du \\ 
		\frac{f\left( x\right) ^{2}}{\lambda } & 2Y^{\prime }\left( \lambda \right) -%
		\frac{2f\left( x\right) ^{2}}{\lambda }R^{\prime }\left( \lambda \right) +%
		\frac{\lambda ^{2}-f\left( x\right) ^{2}}{\lambda }\cdot \int_{0}^{x}\frac{%
			f\left( u\right) ^{2}}{\left( \lambda ^{2}-f\left( u\right) ^{2}\right) ^{%
				\frac{3}{2}}}du%
	\end{bmatrix}
	\\
	& =\det 
	\begin{bmatrix}
		-\frac{\sqrt{\lambda ^{2}-f\left( x\right) ^{2}}}{\lambda } & \frac{\sqrt{%
				\lambda ^{2}-f\left( x\right) ^{2}}}{\lambda }\cdot \int_{0}^{x}\frac{%
			f\left( u\right) ^{2}}{\left( \lambda ^{2}-f\left( u\right) ^{2}\right) ^{%
				\frac{3}{2}}}du \\ 
		\frac{f\left( x\right) ^{2}}{\lambda } & 2Y^{\prime }\left( \lambda \right) +%
		\frac{\lambda ^{2}-f\left( x\right) ^{2}}{\lambda }\cdot \int_{0}^{x}\frac{%
			f\left( u\right) ^{2}}{\left( \lambda ^{2}-f\left( u\right) ^{2}\right) ^{%
				\frac{3}{2}}}du%
	\end{bmatrix}
	\\
	&= -\sqrt{\lambda ^{2}-f\left( x\right) ^{2}}\left\{ \int_{0}^{x}\frac{%
		f\left( u\right) ^{2}}{\left( \lambda ^{2}-f\left( u\right) ^{2}\right) ^{%
			\frac{3}{2}}}du+\frac{2}{\lambda }Y^{\prime }\left( \lambda \right) \right\}
	\\
	&= -\sqrt{\lambda ^{2}-f\left( x\right) ^{2}}\left\{ \int_{0}^{x}\frac{%
		f\left( u\right) ^{2}}{\left( \lambda ^{2}-f\left( u\right) ^{2}\right) ^{%
			\frac{3}{2}}}du+\frac{2}{\lambda }\int\limits_{0}^{f^{-1}\left( \lambda \right) }%
	\frac{F^{\prime \prime }\left( u\right) }{\left\vert F^{\prime }\left(
		u\right) \right\vert ^{2}}\frac{\lambda }{\sqrt{\lambda ^{2}-f\left(
			u\right) ^{2}}}du\right\} \\
	&= -\sqrt{\lambda ^{2}-f\left( x\right) ^{2}}\left\{ \int_{0}^{x}\frac{%
		f\left( u\right) ^{2}}{\left( \lambda ^{2}-f\left( u\right) ^{2}\right) ^{%
			\frac{3}{2}}}du+\int\limits_{0}^{f^{-1}(\lambda )}\frac{F^{\prime \prime }\left(
		u\right) }{\left\vert F^{\prime }\left( u\right) \right\vert ^{2}}\cdot 
	\frac{2}{\sqrt{\lambda ^{2}-f\left( u\right) ^{2}}}du\right\} .
\end{align*}%
In fact, we have 
\begin{align*}
	\int_{0}^{x}&\frac{f\left( u\right) ^{2}}{\left( \lambda ^{2}-f\left(
		u\right) ^{2}\right) ^{\frac{3}{2}}}du =\int_{0}^{x}\frac{f\left( u\right) 
	}{f^{\prime }\left( u\right) }\cdot \frac{d}{du}\left[ \frac{1}{\sqrt{%
			\lambda ^{2}-f\left( u\right) ^{2}}}\right] du \\
	& =\int_{0}^{x}\frac{1}{-F^{\prime }\left( u\right) }\cdot \frac{d}{du}\left[
	\frac{1}{\sqrt{\lambda ^{2}-f\left( u\right) ^{2}}}\right] du \\
	& =\frac{1}{-F^{\prime }\left( x\right) }\cdot \frac{1}{\sqrt{\lambda
			^{2}-f\left( x\right) ^{2}}}-\int_{0}^{x}\frac{1}{\sqrt{\lambda ^{2}-f\left(
			u\right) ^{2}}}\cdot \frac{d}{du}\left[ \frac{1}{-F^{\prime }\left( u\right) 
	}\right] du \\
	& =\frac{1}{-F^{\prime }\left( x\right) }\cdot \frac{1}{\sqrt{\lambda
			^{2}-f\left( x\right) ^{2}}}-\int_{0}^{x}\frac{1}{\sqrt{\lambda ^{2}-f\left(
			u\right) ^{2}}}\cdot \frac{F^{\prime \prime }\left( u\right) }{\left\vert
		F^{\prime }\left( u\right) \right\vert ^{2}}du
\end{align*}%
As a result, we have within a factor of $2$, 
\begin{align}\label{jacobian est1}
	\left\vert \frac{\partial \left( x,y\right) }{\partial \left( r,\lambda
		\right) }\right\vert  \approx &\sqrt{\lambda ^{2}-f\left( x\right) ^{2}}%
	\left\{ \frac{1}{-F^{\prime }\left( x\right) }\cdot \frac{1}{\sqrt{\lambda
			^{2}-f\left( x\right) ^{2}}}\right. \\
	&\qquad\qquad\qquad\qquad+\left.\int_{0}^{f^{-1}\left( \lambda \right) }\frac{%
		F^{\prime \prime }\left( u\right) }{\left\vert F^{\prime }\left( u\right)
		\right\vert ^{2}}\cdot \frac{1}{\sqrt{\lambda ^{2}-f\left( u\right) ^{2}}}%
	du\right\} \notag \\
	=&\frac{1}{-F^{\prime }\left( x\right) }+\sqrt{\lambda ^{2}-f\left(
		x\right) ^{2}}\int_{0}^{f^{-1}\left( \lambda \right) }\frac{F^{\prime \prime
		}\left( u\right) }{\left\vert F^{\prime }\left( u\right) \right\vert ^{2}}%
	\cdot \frac{1}{\sqrt{\lambda ^{2}-f\left( u\right) ^{2}}}du.\notag 
\end{align}%
By Assumption (5), we have 
\begin{equation*}
	\int_{0}^{f^{-1}\left( \lambda \right) }\frac{F^{\prime \prime }\left(
		u\right) }{\left\vert F^{\prime }\left( u\right) \right\vert ^{2}}\cdot 
	\frac{1}{\sqrt{\lambda ^{2}-f\left( u\right) ^{2}}}du\approx
	\int_{0}^{f^{-1}\left( \lambda \right) }\frac{1}{-uF^{\prime }\left(
		u\right) }\cdot \frac{1}{\sqrt{\lambda ^{2}-f\left( u\right) ^{2}}}du.
\end{equation*}%
By Assumptions (3) and (4), the function $\frac{1}{-uF^{\prime }\left(
	u\right) }$ increases and satisfies the doubling property, and so 
\begin{align}\label{jacobian est2}
	\int_{0}^{f^{-1}\left( \lambda \right) }\frac{F^{\prime \prime }\left(
		u\right) }{\left\vert F^{\prime }\left( u\right) \right\vert ^{2}}&\cdot 
	\frac{1}{\sqrt{\lambda ^{2}-f\left( u\right) ^{2}}}du\\
	& \approx \frac{1}{%
		-f^{-1}\left( \lambda \right) F^{\prime }\left( f^{-1}\left( \lambda \right)
		\right) }\int_{0}^{f^{-1}\left( \lambda \right) }\frac{1}{\sqrt{\lambda
			^{2}-f\left( u\right) ^{2}}}du  \notag \\
	& =\frac{1}{-f^{-1}\left( \lambda \right) F^{\prime }\left( f^{-1}\left(
		\lambda \right) \right) }\frac{R\left( \lambda \right) }{\lambda }  \notag \\
	& \simeq \frac{1}{-\lambda F^{\prime }\left( f^{-1}\left( \lambda \right)
		\right) }  \notag
\end{align}%
since $R\left( \lambda \right) \approx f^{-1}\left( \lambda \right) $ by
Lemma \ref{arc length}. Finally we can combine (\ref{jacobian est1}) and (%
\ref{jacobian est2}) to obtain 
\begin{equation*}
	\left\vert \frac{\partial \left( x,y\right) }{\partial \left( r,\lambda
		\right) }\right\vert \approx \frac{1}{-F^{\prime }\left( x\right) }+\frac{%
		\sqrt{\lambda ^{2}-f\left( x\right) ^{2}}}{\lambda }\cdot \frac{1}{%
		-F^{\prime }\left( f^{-1}\left( \lambda \right) \right) }\approx \frac{1}{%
		-F^{\prime }\left( f^{-1}\left( \lambda \right) \right) }.
\end{equation*}%
According to Corollary \ref{similar r and x}, we also have%
\begin{equation*}
	\left\vert \frac{\partial \left( x,y\right) }{\partial \left( r,\lambda
		\right) }\right\vert \approx \frac{1}{-F^{\prime }(R\left( \lambda \right) )}%
	.
\end{equation*}

\subsection{Integral of Radial Functions}

Summarizing our estimates on the Jacobian we have 
\begin{equation*}
	\left\vert \frac{\partial \left( x,y\right) }{\partial \left( r,\lambda
		\right) }\right\vert \approx \left\{ 
	\begin{array}{ll}
		\frac{f\left( r\right) ^{2}}{\lambda ^{2}\left\vert F^{\prime }\left(
			r\right) \right\vert }\simeq \frac{f\left( x\right) ^{2}}{\lambda
			^{2}\left\vert F^{\prime }\left( x\right) \right\vert } & \text{when}%
		\;r<R\left( \lambda \right) \\ 
		&  \\ 
		\frac{1}{\left\vert F^{\prime }\left( f^{-1}\left( \lambda \right) \right)
			\right\vert }\simeq \frac{1}{\left\vert F^{\prime }\left( R\left( \lambda
			\right) \right) \right\vert } & \text{when}\;R\left( \lambda \right)
		<r<2R\left( \lambda \right)%
	\end{array}%
	\right. .
\end{equation*}%
Therefore we have the following change of variable formula for nonnegative
functions $w$: 
\begin{align*}
	&\iint_{QB\left( 0,r_{0}\right) }wdxdy =\int_{0}^{r_{0}}\left[
	\int_{R^{-1}\left( \frac{r}{2}\right) }^{\infty }w\left\vert \frac{\partial
		(x,y)}{\partial (r,\lambda )}\right\vert d\lambda \right] dr \\
	& \approx \int_{0}^{r_{0}}\left[ \int_{R^{-1}\left( \frac{r}{2}\right)
	}^{R^{-1}\left( r\right) }w\left( r,\lambda \right) \frac{1}{\left\vert
		F^{\prime }\left( R\left( \lambda \right) \right) \right\vert }d\lambda
	+\int_{R^{-1}\left( r\right) }^{\infty }w\left( r,\lambda \right) \frac{%
		f\left( r\right) ^{2}}{\lambda ^{2}\left\vert F^{\prime }\left( r\right)
		\right\vert }d\lambda \right] dr \\
	& \approx \int_{0}^{r_{0}}\left[ \int_{R^{-1}\left( \frac{r}{2}\right)
	}^{R^{-1}\left( r\right) }w(r,\lambda )\frac{1}{|F^{\prime }(r)|}d\lambda
	+\int_{R^{-1}(r)}^{\infty }w(r,\lambda )\frac{f^{2}(r)}{\lambda
		^{2}|F^{\prime }(r)|}d\lambda \right] dr
\end{align*}%
If $w$ is a radial function, then we have%
\begin{align*}
	\iint_{QB\left( 0,r_{0}\right) }wdxdy& \approx \int_{0}^{r_{0}}w\left(
	r\right) \left[ \int_{R^{-1}\left( \frac{r}{2}\right) }^{R^{-1}\left(
		r\right) }\frac{1}{\left\vert F^{\prime }\left( r\right) \right\vert }%
	d\lambda +\int_{R^{-1}\left( r\right) }^{\infty }\frac{f\left( r\right) ^{2}%
	}{\lambda ^{2}\left\vert F^{\prime }\left( r\right) \right\vert }d\lambda %
	\right] dr \\
	& \approx \int_{0}^{r_{0}}w\left( r\right) \left[ \frac{R^{-1}\left(
		r\right) -R^{-1}\left( \frac{r}{2}\right) }{\left\vert F^{\prime }\left(
		r\right) \right\vert }+\frac{f\left( r\right) ^{2}}{R^{-1}\left( r\right)
		\left\vert F^{\prime }\left( r\right) \right\vert }\right] dr.
\end{align*}%
From Corollary \ref{similar r and x}, we have $R^{-1}\left( r\right) \simeq
f\left( r\right) $, and so we have 
\begin{equation}
\iint_{B\left( 0,r_{0}\right) }w\left( r\right) dxdy\approx
\int_{0}^{r_{0}}w\left( r\right) \frac{f\left( r\right) }{\left\vert
	F^{\prime }\left( r\right) \right\vert }dr.  \label{radial integration}
\end{equation}

\begin{conclusion}
	The area of the $A$-ball $B\left( 0,r_{0}\right) $ satisfies%
	\begin{equation}
	\left\vert B\left( 0,r_{0}\right) \right\vert =\iint_{B\left( 0,r_{0}\right)
	}dxdy\approx \int_{0}^{r_{0}}\frac{f\left( r\right) }{\left\vert F^{\prime
		}\left( r\right) \right\vert }dr\approx \frac{f\left( r_{0}\right) }{%
		\left\vert F^{\prime }\left( r_{0}\right) \right\vert ^{2}}.
	\label{ball-origin}
	\end{equation}
\end{conclusion}

\begin{proof}
	Since $F\left( r\right) =-\ln f\left( r\right) $, we have $F^{\prime }\left(
	r\right) =-\frac{f^{\prime }\left( r\right) }{f\left( r\right) }$ and $\frac{%
		f\left( r\right) }{-F^{\prime }\left( r\right) }=\frac{f\left( r\right) ^{2}%
	}{f^{\prime }\left( r\right) }=\frac{f\left( r\right) ^{2}}{f^{\prime
		}\left( r\right) ^{2}}f^{\prime }\left( r\right) =\frac{f^{\prime }\left(
		r\right) }{\left\vert F^{\prime }\left( r\right) \right\vert ^{2}}$, and so%
	\begin{eqnarray*}
		\iint_{B\left( 0,r_{0}\right) }dxdy &\approx &\int_{0}^{r_{0}}\frac{f\left(
			r\right) }{\left\vert F^{\prime }\left( r\right) \right\vert }dr\approx
		\int_{\frac{r_{0}}{2}}^{r_{0}}\frac{f\left( r\right) }{\left\vert F^{\prime
			}\left( r\right) \right\vert }dr=\int_{\frac{r_{0}}{2}}^{r_{0}}\frac{%
			f^{\prime }\left( r\right) }{\left\vert F^{\prime }\left( r\right)
			\right\vert ^{2}}dr \\
		&\approx &\frac{1}{\left\vert F^{\prime }\left( r_{0}\right) \right\vert ^{2}%
		}\int_{\frac{r_{0}}{2}}^{r_{0}}f^{\prime }\left( r\right) dr=\frac{f\left(
			r_{0}\right) -f\left( \frac{r_{0}}{2}\right) }{\left\vert F^{\prime }\left(
			r_{0}\right) \right\vert ^{2}}\approx \frac{f\left( r_{0}\right) }{%
			\left\vert F^{\prime }\left( r_{0}\right) \right\vert ^{2}}.
	\end{eqnarray*}
\end{proof}

\subsection{Balls centered at an arbitrary point\label{arbitrary balls}}

In this section we consider the \textquotedblleft height\textquotedblright\
of an arbitrary $A$-ball and the relative position at which it is achieved
in the ball.

\begin{definition}
	\label{def r*}Let $X=(x_{1},0)$ be a point on the positive $x$-axis and let $%
	r$ be a positive real number. Let the upper half of the boundary of the ball 
	$B\left( X,r\right) $ be given as a graph of the function $\varphi \left(
	x\right) $, $x_{1}-r<x<x_{1}+r$. Denote by $\beta _{X,Q}$ the geodesic
	connecting the center $X$ of the ball $B\left( X,r\right) $ with a point $Q$
	on the boundary $\partial B\left( X,r\right) $ of the ball $B\left(
	X,r\right) $.\\[0.15in]
	Denote by $P=P_{x_{1},r}=\left( x_{1}+r^{\ast },h\right) $ the unique point
	on the boundary $\partial B\left( X,r\right) $ of the ball $B\left(
	X,r\right) $ with $r^{\ast }>0$ and $h>0$ at which the geodesic $\beta
	_{X,P} $ connecting $X$ and $P$ has a \emph{vertical tangent} at $P$. This
	defines 
	\begin{equation*}
		r^{\ast }=r^{\ast }\left( x_{1},r\right) \text{ and }h=h\left(
		x_{1},r\right) =\varphi \left( x_{1}+r^{\ast }\right)
	\end{equation*}%
	implicitly as functions of the two independent variables $x_{1}$ and $r$.
\end{definition}

We will often write simply $r^{\ast }$ and $h$ in place of $r^{\ast }\left(
x_{1},r\right) $ and $h\left( x_{1},r\right) $ respectively when $x_{1}$ and 
$r$ are understood. See figure \ref{ball_right}.

\begin{figure}
	\includegraphics{./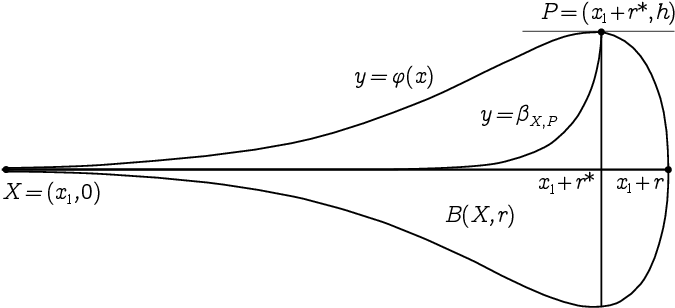}
	\caption{The right `half' of a control ball centered at $\left( x_{1},0\right) $.}
	\label{ball_right}
\end{figure}

\begin{proposition}
	\label{height}Let $\beta _{X,P}$, $r^{\ast }$ and $h$ be defined as above.
	Define $\lambda \left( x\right) $ implicitly by%
	\begin{equation*}
		r=\int_{x_{1}}^{x}\frac{\lambda \left( x\right) }{\sqrt{\lambda \left(
				x\right) ^{2}-f\left( u\right) ^{2}}}du.
	\end{equation*}%
	Then
	
	\begin{enumerate}
		\item For $x_{1}-r<x<x_{1}+r$ we have $\varphi \left( x\right) \leq \varphi
		\left( x_{1}+r^{\ast }\right) =h$.
		
		\item If $r\geq \frac{1}{\left\vert F^{\prime }\left( x_{1}\right)
			\right\vert }$, then 
		\begin{equation*}
			h\approx \frac{f\left( x_{1}+r\right) }{\left\vert F^{\prime }\left(
				x_{1}+r\right) \right\vert }\text{ and }r-r^{\ast }\approx \frac{1}{%
				\left\vert F^{\prime }\left( x_{1}+r\right) \right\vert }.
		\end{equation*}
		
		\item If $r\leq \frac{1}{\left\vert F^{\prime }\left( x_{1}\right)
			\right\vert }$, then%
		\begin{equation*}
			h\approx rf\left( x_{1}\right) \text{ and }r-r^{\ast }\approx r.
		\end{equation*}
	\end{enumerate}
\end{proposition}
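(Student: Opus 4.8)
The plan is to pin down the point $P=P_{x_1,r}$ explicitly, derive the extremal property (1) from a ``distance to a horizontal line'' computation, and read off (2)--(3) from integral estimates for the geodesic that joins $X=(x_1,0)$ to its turning point. A geodesic leaving $X$ into $\{y>0\}$ has a vertical tangent exactly at its turning point (where $f$ equals the turning parameter), so $P$ is the turning point of the unique geodesic from $X$ whose turning parameter $\lambda_0$ makes the $A$-arc length from $X$ to that turning point equal to $r$. Setting $x^{\ast}:=x_1+r^{\ast}=f^{-1}(\lambda_0)$ this reads
\[
r=\int_{x_1}^{x^{\ast}}\frac{\lambda_0\,du}{\sqrt{\lambda_0^{2}-f(u)^{2}}},\qquad h=\varphi(x^{\ast})=\int_{x_1}^{x^{\ast}}\frac{f(u)^{2}\,du}{\sqrt{\lambda_0^{2}-f(u)^{2}}},
\]
so $\lambda(x^{\ast})=\lambda_0$ in the notation of the proposition; uniqueness of $\lambda_0$ (hence of $P$) comes from the strict monotonicity in $\lambda$ of the first integral, which dominates $f^{-1}(\lambda)-x_1$.

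For (1): for $c>0$ the minimizing geodesic from $X$ to the line $\{y=c\}$ meets that line $A$-orthogonally, and since a vector is $A$-orthogonal to the horizontal direction iff it is vertical, this geodesic has a vertical tangent at the contact point, i.e.\ it meets $\{y=c\}$ at its own turning point. Hence $d(X,\{y=c\})$ is the arc length $\int_{x_1}^{f^{-1}(\lambda)}\lambda(\lambda^{2}-f(u)^{2})^{-1/2}\,du$ for the $\lambda$ determined by $\int_{x_1}^{f^{-1}(\lambda)}f(u)^{2}(\lambda^{2}-f(u)^{2})^{-1/2}\,du=c$; both quantities are increasing in $\lambda$, so $d(X,\{y=c\})$ is increasing in $c$ and equals $r$ exactly at $c=h$. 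Thus the greatest height on $\overline{B(X,r)}$ is $h$, attained at $P=(x^{\ast},h)$, which gives $\varphi(x)\le h=\varphi(x_1+r^{\ast})$ for $x\in(x_1-r,x_1+r)$. (On $(x_1+r^{\ast},x_1+r)$ one may instead differentiate the defining relations for $\lambda(x)$ to get the corroborating identity $\varphi'(x)=-\sqrt{\lambda(x)^{2}-f(x)^{2}}<0$.)

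For (2) and (3): two bounds hold for every $r$. First $r\ge r^{\ast}$, since any path from $X$ to $P$ has $A$-length at least its horizontal displacement $x^{\ast}-x_1$. Second, routing from $X$ horizontally to $(x^{\ast},0)$ and then vertically to $P$, and invoking the bound of Remark~\ref{upper bound} (valid because $1/(-F')$ is increasing), $r\le r^{\ast}+h/\lambda_0\le r^{\ast}+|F'(x^{\ast})|^{-1}$; so $0\le r-r^{\ast}\le|F'(x^{\ast})|^{-1}$. The remaining work is to evaluate $r$ and $h$ above, with $\lambda_0=f(x^{\ast})$. Writing $f(u)/\lambda_0=e^{-(F(u)-F(x^{\ast}))}$ and, for $u$ near $x^{\ast}$, $F(u)-F(x^{\ast})\approx|F'(x^{\ast})|(x^{\ast}-u)$ (structure condition (3)), one splits each integral at the scale $x^{\ast}-u\approx|F'(x^{\ast})|^{-1}$: away from $x^{\ast}$ the weight $(1-(f/\lambda_0)^{2})^{-1/2}$ is $\approx 1$ (using Part (1) of Lemma~\ref{consequences} on the range $u<x^{\ast}/2$), and near $x^{\ast}$ it is the integrable singularity $\approx(2|F'(x^{\ast})|(x^{\ast}-u))^{-1/2}$. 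This yields: when $r^{\ast}\gtrsim|F'(x^{\ast})|^{-1}$, $r\approx r^{\ast}$, $r-r^{\ast}\approx|F'(x^{\ast})|^{-1}$, $h\approx\lambda_0|F'(x^{\ast})|^{-1}$; when $r^{\ast}\lesssim|F'(x^{\ast})|^{-1}$, $r\approx\sqrt{r^{\ast}/|F'(x^{\ast})|}$ (so $r^{\ast}\approx r^{2}|F'(x^{\ast})|$ and $r-r^{\ast}\approx r$) and $h\approx\lambda_0 r$. One then checks the hypothesis picks out the regime: $r\le|F'(x_1)|^{-1}$ forces $x^{\ast}\le 2x_1$ and the second regime (the first would give $r\approx r^{\ast}>|F'(x^{\ast})|^{-1}\ge|F'(x_1)|^{-1}$), whence $|F'(x^{\ast})|\approx|F'(x_1)|$, $\lambda_0\approx f(x_1)$ and $r^{\ast}\lesssim r^{2}|F'(x_1)|\lesssim r$, giving (3); while $r\ge|F'(x_1)|^{-1}$ forces $r^{\ast}\approx r$ (directly in the first regime, and in the second because there $r^{\ast}\approx r^{2}|F'(x_1)|\gtrsim r$), whence $x^{\ast}\approx x_1+r$, and structure conditions (3)--(4) iterated over boundedly many dyadic scales (using $|F'(x^{\ast})|^{-1}=x^{\ast}/(-x^{\ast}F'(x^{\ast}))$ and monotonicity of $1/(-uF'(u))$) give $|F'(x^{\ast})|\approx|F'(x_1+r)|$; together with $F(x_1)-F(x^{\ast})\approx|F'(x^{\ast})|r^{\ast}\gtrsim1$, hence $\sqrt{\lambda_0^{2}-f(x_1)^{2}}\approx\lambda_0\approx f(x_1+r)$, this produces $h\approx f(x_1+r)/|F'(x_1+r)|$ and $r-r^{\ast}\approx 1/|F'(x_1+r)|$, giving (2).

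The conceptual steps (locating $P$, the orthogonality argument for (1)) are short; the main obstacle is the bookkeeping in the last paragraph — separating the two regimes $r^{\ast}\gtrsim|F'(x^{\ast})|^{-1}$ and $r^{\ast}\lesssim|F'(x^{\ast})|^{-1}$, showing the hypothesis on $r$ selects the right one, and transporting the estimates from the scale $x^{\ast}=x_1+r^{\ast}$ to the scales $x_1+r$ and $x_1$ by iterated doubling. One should read ``case (3)'' as $r\le c_0/|F'(x_1)|$ and ``case (2)'' as $r\ge C_0/|F'(x_1)|$ for suitable absolute constants; this is harmless since the two families of estimates agree, up to constants, when $r\approx 1/|F'(x_1)|$.
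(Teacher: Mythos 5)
Your argument is correct, and for parts (2) and (3) it is essentially the paper's own: you re-derive the two comparisons $h\approx f\left( x_{1}+r^{\ast }\right) \left( r-r^{\ast }\right) $ and $h\approx \left\vert F^{\prime }\left( x_{1}+r^{\ast }\right) \right\vert ^{-1}\sqrt{f\left( x_{1}+r^{\ast }\right) ^{2}-f\left( x_{1}\right) ^{2}}$ (Lemmas \ref{height 1} and \ref{height 2}), the upper bound $r-r^{\ast }\leq \left\vert F^{\prime }\left( x_{1}+r^{\ast }\right) \right\vert ^{-1}$, and then run the same two-case analysis; your splitting of the integrals at the scale $x^{\ast }-u\approx \left\vert F^{\prime }\left( x^{\ast }\right) \right\vert ^{-1}$ replaces the paper's substitution $v=f\left( u\right) ^{2}$ but yields identical estimates. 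The genuinely different ingredient is your proof of part (1) by first variation: the minimizing geodesic to a horizontal line meets it $A$-orthogonally, hence vertically, hence at its turning point, and $d\left( X,\left\{ y=c\right\} \right) $ is (strictly) increasing in $c$ because any path to $\left\{ y=c^{\prime }\right\} $ with $c^{\prime }>c$ first crosses $\left\{ y=c\right\} $; this is cleaner and more geometric than the paper's computation of $\varphi ^{\prime }\left( x\right) =-\sqrt{\lambda \left( x\right) ^{2}-f\left( x\right) ^{2}}$ (which you also record), at the mild cost of invoking existence and smoothness of minimizers near the degenerate axis. One step you state too quickly: in the regime $r^{\ast }\lesssim \left\vert F^{\prime }\left( x^{\ast }\right) \right\vert ^{-1}$ with $r\geq \left\vert F^{\prime }\left( x_{1}\right) \right\vert ^{-1}$, the claim $r^{\ast }\approx r^{2}\left\vert F^{\prime }\left( x^{\ast }\right) \right\vert \gtrsim r$ amounts to $r\left\vert F^{\prime }\left( x^{\ast }\right) \right\vert \gtrsim 1$, which does not follow from $r\left\vert F^{\prime }\left( x_{1}\right) \right\vert \geq 1$ alone since $\left\vert F^{\prime }\left( x^{\ast }\right) \right\vert \leq \left\vert F^{\prime }\left( x_{1}\right) \right\vert $; one must argue that either $r^{\ast }\leq \left\vert F^{\prime }\left( x_{1}\right) \right\vert ^{-1}$ (then part (2) of Lemma \ref{consequences} gives $\left\vert F^{\prime }\left( x^{\ast }\right) \right\vert \approx \left\vert F^{\prime }\left( x_{1}\right) \right\vert $) or $r^{\ast }>\left\vert F^{\prime }\left( x_{1}\right) \right\vert ^{-1}$, in which case structure conditions (3) and (4) again force $\left\vert F^{\prime }\left( x^{\ast }\right) \right\vert \approx \left\vert F^{\prime }\left( x_{1}\right) \right\vert $ or $r\lesssim r^{\ast }$ directly. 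You flag exactly this bookkeeping as the remaining work, and it closes along the lines you indicate (this is the paper's Case A/Case B split), so I regard it as an acknowledged detail rather than a gap.
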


We begin by proving part (1) of Proposition \ref{height}. First consider the
case $x\geq x_{1}+r^{\ast }$. Then we are in Region 1 and so $\lambda \left(
x\right) \geq f\left( x\right) $ and we have 
\begin{equation*}
	\varphi \left( x\right) =\int_{x_{1}}^{x}\frac{f\left( u\right) ^{2}}{\sqrt{%
			\lambda \left( x\right) ^{2}-f\left( u\right) ^{2}}}du.
\end{equation*}%
Differentiating $\varphi \left( x\right) $ we get%
\begin{equation*}
	\varphi ^{\prime }\left( x\right) =\frac{f\left( x\right) ^{2}}{\sqrt{%
			\lambda \left( x\right) ^{2}-f\left( x\right) ^{2}}}-\left( \int_{x_{1}}^{x}%
	\frac{f\left( u\right) ^{2}}{\left( \lambda \left( x\right) ^{2}-f\left(
		u\right) ^{2}\right) ^{\frac{3}{2}}}du\right) \lambda \left( x\right)
	\lambda ^{\prime }\left( x\right) ,
\end{equation*}%
and differentiating the definition of $\lambda \left( x\right) $ implicitly
gives%
\begin{equation*}
	0=\frac{\lambda \left( x\right) }{\sqrt{\lambda \left( x\right) ^{2}-f\left(
			x\right) ^{2}}}-\left( \int_{x_{1}}^{x}\frac{f\left( u\right) ^{2}}{\left(
		\lambda \left( x\right) ^{2}-f\left( u\right) ^{2}\right) ^{\frac{3}{2}}}%
	du\right) \lambda ^{\prime }\left( x\right) .
\end{equation*}%
Combining equalities yields%
\begin{equation*}
	\varphi ^{\prime }\left( x\right) =\frac{f\left( x\right) ^{2}}{\sqrt{%
			\lambda \left( x\right) ^{2}-f\left( x\right) ^{2}}}-\lambda \left( x\right) 
	\frac{\lambda \left( x\right) }{\sqrt{\lambda \left( x\right) ^{2}-f\left(
			x\right) ^{2}}}=-\sqrt{\lambda \left( x\right) ^{2}-f\left( x\right) ^{2}}.
\end{equation*}%
When $x=x_{1}+r^{\ast }$ we have $\infty =\frac{dy}{dx}=\frac{f\left(
	x\right) ^{2}}{\sqrt{\lambda \left( x\right) ^{2}-f\left( x\right) ^{2}}}$,
which implies $\lambda \left( x\right) =f\left( x\right) $, and hence $%
\varphi ^{\prime }\left( x_{1}+r^{\ast }\right) =0$. Thus we have $\varphi
\left( x\right) \leq \varphi \left( x_{1}+r^{\ast }\right) =h$ for $x\geq
x_{1}+r^{\ast }$. Similar arguments show that $\varphi \left( x\right) \leq
\varphi \left( x_{1}+r^{\ast }\right) =h$ for $x_{1}-r\leq x<x_{1}+r^{\ast }$%
, and this completes the proof of part (1).

Now we turn to the proofs of parts (2) and (3) of Proposition \ref{height}.
The locus $\left( x,y\right) $ of the geodesic $\beta _{X,P}$ satisfies 
\begin{equation}
y=\int_{x_{1}}^{x}\frac{f\left( u\right) ^{2}}{\sqrt{\left( \lambda ^{\ast
		}\right) ^{2}-f\left( u\right) ^{2}}}\,du,  \label{eqn for geo1}
\end{equation}%
where $\lambda ^{\ast }=f\left( x_{1}+r^{\ast }\right) $. We will use the
following two lemmas in the proofs of parts (2) and (3) of Proposition \ref%
{height}.

\begin{lemma}
	\label{height 1} The height $h=h\left( x_{1},r\right) $ and the horizontal
	displacement $r-r^{\ast }=r-r^{\ast }\left( x_{1},r\right) $ satisfy 
	\begin{equation*}
		f\left( x_{1}+r^{\ast }\right) \cdot \left( r-r^{\ast }\right) \leq h\leq
		2f\left( x_{1}+r^{\ast }\right) \cdot \left( r-r^{\ast }\right) .
	\end{equation*}
\end{lemma}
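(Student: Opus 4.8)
The plan is to express both $h$ and $r-r^{\ast}$ as explicit integrals over the common interval $[x_{1},x_{1}+r^{\ast}]$ and then compare the two integrands pointwise. First I would observe that, by the definition of $r^{\ast}$ and $h$ in Definition \ref{def r*}, the geodesic $\beta_{X,P}$ has a vertical tangent at $P=(x_{1}+r^{\ast},h)$, so its turning parameter is $\lambda^{\ast}=f(x_{1}+r^{\ast})$; since $f$ is increasing on $(0,R)$ we have $f(u)<\lambda^{\ast}$ for $x_{1}<u<x_{1}+r^{\ast}$, hence $\beta_{X,P}$ stays in Region $1$ and the single-branch formula $(\ref{eqn for geo1})$ is valid up to $x=x_{1}+r^{\ast}$. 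Evaluating it at the endpoint gives $h=\int_{x_{1}}^{x_{1}+r^{\ast}}\frac{f(u)^{2}}{\sqrt{(\lambda^{\ast})^{2}-f(u)^{2}}}\,du$, while the arc-length density $\frac{dt}{dx}=\frac{\lambda^{\ast}}{\sqrt{(\lambda^{\ast})^{2}-f(u)^{2}}}$ together with the trivial identity $\int_{x_{1}}^{x_{1}+r^{\ast}}1\,du=r^{\ast}$ gives $r-r^{\ast}=\int_{x_{1}}^{x_{1}+r^{\ast}}\frac{\lambda^{\ast}-\sqrt{(\lambda^{\ast})^{2}-f(u)^{2}}}{\sqrt{(\lambda^{\ast})^{2}-f(u)^{2}}}\,du$.

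Next I would rationalize the numerator using $\lambda^{\ast}-\sqrt{(\lambda^{\ast})^{2}-f(u)^{2}}=\frac{f(u)^{2}}{\lambda^{\ast}+\sqrt{(\lambda^{\ast})^{2}-f(u)^{2}}}$, so that the integrand of $r-r^{\ast}$ becomes exactly the integrand of $h$ divided by the factor $\lambda^{\ast}+\sqrt{(\lambda^{\ast})^{2}-f(u)^{2}}$. Since $0\leq\sqrt{(\lambda^{\ast})^{2}-f(u)^{2}}\leq\lambda^{\ast}$ throughout the interval of integration, that factor lies between $\lambda^{\ast}$ and $2\lambda^{\ast}$; integrating the resulting pointwise inequalities yields $\frac{h}{2\lambda^{\ast}}\leq r-r^{\ast}\leq\frac{h}{\lambda^{\ast}}$, and multiplying through by $\lambda^{\ast}=f(x_{1}+r^{\ast})$ gives the asserted estimate $f(x_{1}+r^{\ast})(r-r^{\ast})\leq h\leq 2f(x_{1}+r^{\ast})(r-r^{\ast})$.

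I do not expect a genuine obstacle here: the argument is an elementary comparison of one integrand with a scalar rescaling of itself. The only points deserving a line of justification are (i) confirming that $P$ really is the turning point of $\beta_{X,P}$, equivalently $\lambda^{\ast}=f(x_{1}+r^{\ast})$, which is immediate from the vertical-tangent condition of Definition \ref{def r*} together with the monotonicity of $f$; and (ii) noting that the two integrals converge in spite of the integrable $(x_{1}+r^{\ast}-u)^{-1/2}$-type singularity at the upper endpoint, which is handled exactly as in the arc-length estimates earlier in the chapter (using $f^{\prime}>0$). With these in hand the lemma follows from the single algebraic identity above.
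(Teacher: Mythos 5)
Your proposal is correct and follows essentially the same route as the paper: both write $r-r^{\ast}$ as $\int_{x_{1}}^{x_{1}+r^{\ast}}\bigl(\tfrac{\lambda ^{\ast }}{\sqrt{(\lambda ^{\ast })^{2}-f(u)^{2}}}-1\bigr)\,du$, rationalize to exhibit the integrand of $h$ divided by $\lambda ^{\ast }+\sqrt{(\lambda ^{\ast })^{2}-f(u)^{2}}\in [\lambda ^{\ast },2\lambda ^{\ast }]$, and conclude via $\lambda ^{\ast }=f(x_{1}+r^{\ast })$. No gaps; your added remarks on convergence at the endpoint and on identifying $P$ as the turning point are harmless extra care.
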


\begin{proof}
	The $A$-arc length $r$ of $\beta _{X,P}$ is given by%
	\begin{equation*}
		r=\int_{x_{1}}^{x_{1}+r^{\ast }}\frac{\lambda ^{\ast }}{\sqrt{\left( \lambda
				^{\ast }\right) ^{2}-f\left( u\right) ^{2}}}\,du.
	\end{equation*}%
	Thus 
	\begin{align*}
		r-r^{\ast }&=\int_{x_{1}}^{x_{1}+r^{\ast }}\left( \frac{\lambda ^{\ast }}{%
			\sqrt{\left( \lambda ^{\ast }\right) ^{2}-f\left( u\right) ^{2}}}-1\right)
		\,du\\
		&=\int_{x_{1}}^{x_{1}+r^{\ast }}\frac{f\left( u\right) ^{2}}{\sqrt{\left(
				\lambda ^{\ast }\right) ^{2}-f\left( u\right) ^{2}}}\cdot \frac{1}{\lambda
			^{\ast }+\sqrt{\left( \lambda ^{\ast }\right) ^{2}-f\left( u\right) ^{2}}}%
		\,du
	\end{align*}%
	Comparing this with the height $h=\int_{x_{1}}^{x_{1}+r^{\ast }}\frac{%
		f^{2}(u)}{\sqrt{\left( \lambda ^{\ast }\right) ^{2}-f\left( u\right) ^{2}}}%
	\,du$, we have 
	\begin{equation*}
		\frac{h}{2\lambda ^{\ast }}\leq r-r^{\ast }\leq \frac{h}{\lambda ^{\ast }}.
	\end{equation*}%
	This completes the proof since $\lambda ^{\ast }=f\left( x_{1}+r^{\ast
	}\right) $.
\end{proof}

\begin{lemma}
	\label{height 2} The height $h$ satisfies the estimate 
	\begin{equation*}
		h\approx \frac{1}{\left\vert F^{\prime }\left( x_{1}+r^{\ast }\right)
			\right\vert }\sqrt{f\left( x_{1}+r^{\ast }\right) ^{2}-f\left( x_{1}\right)
			^{2}}.
	\end{equation*}%
	In fact the right hand side is an exact upper bound: 
	\begin{equation*}
		h\leq \frac{1}{\left\vert F^{\prime }\left( x_{1}+r^{\ast }\right)
			\right\vert }\sqrt{f\left( x_{1}+r^{\ast }\right) ^{2}-f\left( x_{1}\right)
			^{2}}.
	\end{equation*}
\end{lemma}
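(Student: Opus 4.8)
The plan is to start from the formula $h=\int_{x_{1}}^{x_{1}+r^{\ast }}\frac{f\left( u\right) ^{2}}{\sqrt{\left( \lambda ^{\ast }\right) ^{2}-f\left( u\right) ^{2}}}\,du$, with $\lambda ^{\ast }=f\left( x_{1}+r^{\ast }\right) $, recorded just above, and to evaluate it via the substitution $v=f\left( u\right) ^{2}$. Since $F=-\ln f$ and $F^{\prime }<0$ we have $f^{\prime }=\left\vert F^{\prime }\right\vert f$ on $\left( 0,R\right) $, so $dv=2f\left( u\right) ^{2}\left\vert F^{\prime }\left( u\right) \right\vert \,du$; as $f$ is increasing on the positive axis this is a genuine change of variable, and using $f\left( x_{1}+r^{\ast }\right) ^{2}=\left( \lambda ^{\ast }\right) ^{2}$ it produces
\begin{equation*}
h=\int_{f\left( x_{1}\right) ^{2}}^{\left( \lambda ^{\ast }\right) ^{2}}\frac{1}{2\left\vert F^{\prime }\left( u\left( v\right) \right) \right\vert }\cdot \frac{dv}{\sqrt{\left( \lambda ^{\ast }\right) ^{2}-v}},
\end{equation*}
where $u\left( v\right) $ denotes the inverse of $v=f\left( u\right) ^{2}$, with values in $\left[ x_{1},x_{1}+r^{\ast }\right] \subset \left( 0,R\right) $ (recall $r^{\ast }\leq r$).

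For the \emph{exact} upper bound, note that $F^{\prime \prime }>0$ forces $\left\vert F^{\prime }\right\vert $ to be decreasing, so $\left\vert F^{\prime }\left( u\left( v\right) \right) \right\vert \geq \left\vert F^{\prime }\left( x_{1}+r^{\ast }\right) \right\vert $ over the whole range of integration. Pulling this constant out and using $\int_{f\left( x_{1}\right) ^{2}}^{\left( \lambda ^{\ast }\right) ^{2}}\frac{dv}{\sqrt{\left( \lambda ^{\ast }\right) ^{2}-v}}=2\sqrt{\left( \lambda ^{\ast }\right) ^{2}-f\left( x_{1}\right) ^{2}}$ gives at once $h\leq \frac{1}{\left\vert F^{\prime }\left( x_{1}+r^{\ast }\right) \right\vert }\sqrt{f\left( x_{1}+r^{\ast }\right) ^{2}-f\left( x_{1}\right) ^{2}}$.

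For the matching lower bound, put $a\equiv \max \left\{ x_{1},\frac{1}{2}\left( x_{1}+r^{\ast }\right) \right\} $ and keep only the part of the integral with $u\in \left[ a,x_{1}+r^{\ast }\right] $. On that subinterval $\frac{1}{2}\left( x_{1}+r^{\ast }\right) \leq u\leq x_{1}+r^{\ast }$, so Assumption (3) (the doubling of $\left\vert F^{\prime }\right\vert $) gives $\left\vert F^{\prime }\left( u\right) \right\vert \approx \left\vert F^{\prime }\left( x_{1}+r^{\ast }\right) \right\vert $ there, and the same substitution yields $h\gtrsim \frac{1}{\left\vert F^{\prime }\left( x_{1}+r^{\ast }\right) \right\vert }\sqrt{\left( \lambda ^{\ast }\right) ^{2}-f\left( a\right) ^{2}}$. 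It remains to see $\left( \lambda ^{\ast }\right) ^{2}-f\left( a\right) ^{2}\gtrsim \left( \lambda ^{\ast }\right) ^{2}-f\left( x_{1}\right) ^{2}$: this is an equality when $a=x_{1}$, while if $a=\frac{1}{2}\left( x_{1}+r^{\ast }\right) >x_{1}$ then Part (1) of Lemma \ref{consequences} gives $f\left( a\right) <2^{-\varepsilon }f\left( x_{1}+r^{\ast }\right) =2^{-\varepsilon }\lambda ^{\ast }$, whence $\left( \lambda ^{\ast }\right) ^{2}-f\left( a\right) ^{2}\geq \left( 1-2^{-2\varepsilon }\right) \left( \lambda ^{\ast }\right) ^{2}\gtrsim \left( \lambda ^{\ast }\right) ^{2}\geq \left( \lambda ^{\ast }\right) ^{2}-f\left( x_{1}\right) ^{2}$. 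Combined with the upper bound this gives the asserted equivalence.

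The only delicate point is the lower bound when $r^{\ast }$ is large relative to $x_{1}$, so that $\left\vert F^{\prime }\right\vert $ is \emph{not} comparable to $\left\vert F^{\prime }\left( x_{1}+r^{\ast }\right) \right\vert $ over all of $\left[ x_{1},x_{1}+r^{\ast }\right] $. The resolution, used above, is that the mass of $\int \frac{dv}{\sqrt{\left( \lambda ^{\ast }\right) ^{2}-v}}$ concentrates near $v=\left( \lambda ^{\ast }\right) ^{2}$, i.e.\ near the endpoint $u=x_{1}+r^{\ast }$, so discarding the part of the integral over $\left[ x_{1},\frac{1}{2}\left( x_{1}+r^{\ast }\right) \right] $ costs only a bounded factor, the loss being quantified by Part (1) of Lemma \ref{consequences}.
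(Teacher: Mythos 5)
Your proof is correct and follows essentially the same route as the paper's: the same rewriting of $f(u)^{2}\,du$ as $\tfrac{1}{2\left\vert F^{\prime }(u)\right\vert }\,d\bigl[f(u)^{2}\bigr]$, monotonicity of $1/\left\vert F^{\prime }\right\vert$ for the exact upper bound, and for the lower bound the same two cases (your $\max$ with $\tfrac{1}{2}(x_{1}+r^{\ast })$ versus the paper's split at $r^{\ast }\lessgtr x_{1}$ with cut point $x_{1}+\tfrac{r^{\ast }}{2}$), each handled via the doubling of $\left\vert F^{\prime }\right\vert$ and Part (1) of Lemma \ref{consequences}.
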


\begin{proof}
	Using the fact that $\frac{1}{-F^{\prime }\left( u\right) }=\frac{1}{%
		\left\vert F^{\prime }\left( u\right) \right\vert }$ is increasing, together
	with the equation \eqref{eqn for geo1} for the geodesic $\beta _{X,P}$, we
	have 
	\begin{align*}
		h\left( x_{1},r\right) =&\int_{x_{1}}^{x_{1}+r^{\ast }\left( x_{1},r\right) }%
		\frac{f\left( u\right) ^{2}}{\sqrt{\left( \lambda ^{\ast }\right)
				^{2}-f\left( u\right) ^{2}}}\,du\\
		=& \int_{x_{1}}^{x_{1}+r^{\ast }}\frac{\frac{%
				d}{du}\left[ f\left( u\right) ^{2}\right] }{\sqrt{\left( \lambda ^{\ast
				}\right) ^{2}-f\left( u\right) ^{2}}}\cdot \frac{1}{-2F^{\prime }\left(
			u\right) }\,du \\
		\leq & \frac{1}{\left\vert F^{\prime }\left( x_{1}+r^{\ast }\right)
			\right\vert }\int_{x_{1}}^{x_{1}+r^{\ast }}\frac{\frac{d}{du}\left[ f\left(
			u\right) ^{2}\right] }{2\sqrt{\left( \lambda ^{\ast }\right) ^{2}-f\left(
				u\right) ^{2}}}\,du \\
		=& \frac{1}{\left\vert F^{\prime }\left( x_{1}+r^{\ast }\right) \right\vert }%
		\sqrt{f\left( x_{1}+r^{\ast }\right) ^{2}-f\left( x_{1}\right) ^{2}},
	\end{align*}%
	where in the last line we used $\lambda ^{\ast }=f\left( x_{1}+r^{\ast
	}\right) $. To prove the reverse estimate, we consider two cases:
	
	\textbf{Case 1}: If $r^{\ast }<x_{1}$, then we use our assumption that $%
	\frac{1}{-F^{\prime }\left( u\right) }=\frac{1}{\left\vert F^{\prime }\left(
		u\right) \right\vert }$ has the doubling property to obtain 
	\begin{align*}
		h=\int_{x_{1}}^{x_{1}+r^{\ast }}\frac{f\left( u\right) ^{2}}{\sqrt{\left(
				\lambda ^{\ast }\right) ^{2}-f\left( u\right) ^{2}}}\,du=&
		\int_{x_{1}}^{x_{1}+r^{\ast }}\frac{\frac{d}{du}\left[ f\left( u\right) ^{2}%
			\right] }{\sqrt{\left( \lambda ^{\ast }\right) ^{2}-f\left( u\right) ^{2}}}%
		\cdot \frac{1}{-2F^{\prime }\left( u\right) }\,du \\
		\simeq & \frac{1}{\left\vert F^{\prime }\left( x_{1}+r^{\ast }\right)
			\right\vert }\int_{x_{1}}^{x_{1}+r^{\ast }}\frac{\frac{d}{du}\left[ f\left(
			u\right) ^{2}\right] }{2\sqrt{\left( \lambda ^{\ast }\right) ^{2}-f\left(
				u\right) ^{2}}}\,du \\
		=& \frac{1}{\left\vert F^{\prime }\left( x_{1}+r^{\ast }\right) \right\vert }%
		\sqrt{f\left( x_{1}+r^{\ast }\right) ^{2}-f\left( x_{1}\right) ^{2}}.
	\end{align*}
	
	\textbf{Case 2}: If $r^{\ast }\geq x_{1}$, we make a similar estimate by
	modifying the lower limit of integral, and using the fact that $f\left(
	u\right) $ increases: 
	\begin{align*}
		h\approx \int_{x_{1}+\frac{r^{\ast }}{2}}^{x_{1}+r^{\ast }}\frac{f\left(
			u\right) ^{2}}{\sqrt{\left( \lambda ^{\ast }\right) ^{2}-f\left( u\right)
				^{2}}}\,du=& \int_{x_{1}+\frac{r^{\ast }}{2}}^{x_{1}+r^{\ast }}\frac{\frac{d%
			}{du}\left[ f\left( u\right) ^{2}\right] }{\sqrt{\left( \lambda ^{\ast
				}\right) ^{2}-f\left( u\right) ^{2}}}\cdot \frac{1}{-2F^{\prime }\left(
			u\right) }\,du \\
		\approx & \frac{1}{\left\vert F^{\prime }\left( x_{1}+r^{\ast }\right)
			\right\vert }\int_{x_{1}+\frac{r^{\ast }}{2}}^{x_{1}+r^{\ast }}\frac{\frac{d%
			}{du}\left[ f\left( u\right) ^{2}\right] }{2\sqrt{\left( \lambda ^{\ast
				}\right) ^{2}-f\left( u\right) ^{2}}}\,du \\
		=& \frac{1}{\left\vert F^{\prime }\left( x_{1}+r^{\ast }\right) \right\vert }%
		\sqrt{f\left( x_{1}+r^{\ast }\right) ^{2}-f\left( x_{1}+\frac{r^{\ast }}{2}%
			\right) ^{2}}.
	\end{align*}%
	Finally we have 
	\begin{equation*}
		\sqrt{f\left( x_{1}+r^{\ast }\right) ^{2}-f\left( x_{1}+\frac{r^{\ast }}{2}%
			\right) ^{2}}\approx f\left( x_{1}+r^{\ast }\right) \approx \sqrt{f\left(
			x_{1}+r^{\ast }\right) ^{2}-f\left( x_{1}\right) ^{2}}
	\end{equation*}%
	by the assumption $r^{\ast }\geq x_{1}$ together with Part 1 of Lemma \ref%
	{consequences}.
	
	This completes the proof of Lemma \ref{height 2}.
\end{proof}

\begin{corollary}
	Combining Lemmas \ref{height 1} and \ref{height 2}, for $h=h\left(
	x_{1},r\right) $ and $r^{\ast }=r^{\ast }\left( x_{1},r\right) $, we have 
	\begin{equation*}
		f\left( x_{1}+r^{\ast }\right) \cdot \left( r-r^{\ast }\right) \leq h\leq 
		\frac{1}{\left\vert F^{\prime }\left( x_{1}+r^{\ast }\right) \right\vert }%
		\sqrt{f\left( x_{1}+r^{\ast }\right) ^{2}-f\left( x_{1}\right) ^{2}},
	\end{equation*}%
	and as a result, 
	\begin{equation}
	r-r^{\ast }\leq \frac{1}{\left\vert F^{\prime }\left( x_{1}+r^{\ast }\right)
		\right\vert }\cdot \frac{\sqrt{f\left( x_{1}+r^{\ast }\right) ^{2}-f\left(
			x_{1}\right) ^{2}}}{f\left( x_{1}+r^{\ast }\right) }\leq \frac{1}{\left\vert
		F^{\prime }\left( x_{1}+r^{\ast }\right) \right\vert }.
	\label{upper bound of r star}
	\end{equation}%
	From part (2) of Lemma \ref{consequences} we obtain%
	\begin{eqnarray}
	\left\vert F^{\prime }\left( x_{1}+r\right) \right\vert &\approx &\left\vert
	F^{\prime }\left( x_{1}+r^{\ast }\right) \right\vert ,  \label{new f equ} \\
	f\left( x_{1}+r\right) &\simeq &f\left( x_{1}+r^{\ast }\right) .  \notag
	\end{eqnarray}
\end{corollary}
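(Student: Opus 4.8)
The plan is essentially to stitch together the two preceding lemmas and then read off a geometric estimate for the horizontal displacement $r-r^{\ast}$. First, the displayed two-sided bound for $h=h(x_{1},r)$ requires no new computation: the left inequality $f(x_{1}+r^{\ast})(r-r^{\ast})\leq h$ is precisely the lower bound furnished by Lemma \ref{height 1}, while the right inequality $h\leq \frac{1}{|F^{\prime}(x_{1}+r^{\ast})|}\sqrt{f(x_{1}+r^{\ast})^{2}-f(x_{1})^{2}}$ is the exact upper bound stated in Lemma \ref{height 2}. Concatenating the two gives the first display.

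Next, to obtain the bound on $r-r^{\ast}$, I would divide the resulting chain $f(x_{1}+r^{\ast})(r-r^{\ast})\leq \frac{1}{|F^{\prime}(x_{1}+r^{\ast})|}\sqrt{f(x_{1}+r^{\ast})^{2}-f(x_{1})^{2}}$ through by the positive quantity $f(x_{1}+r^{\ast})$, obtaining
$$r-r^{\ast}\;\leq\;\frac{1}{|F^{\prime}(x_{1}+r^{\ast})|}\cdot\frac{\sqrt{f(x_{1}+r^{\ast})^{2}-f(x_{1})^{2}}}{f(x_{1}+r^{\ast})}\;\leq\;\frac{1}{|F^{\prime}(x_{1}+r^{\ast})|},$$
the last inequality because $f(x_{1})^{2}\geq 0$ forces the radical to be at most $f(x_{1}+r^{\ast})$, so the trailing fraction does not exceed $1$. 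I would also record in passing that Lemma \ref{height 1}, together with $h>0$ and $f>0$, already yields $r-r^{\ast}\geq 0$, hence $x_{1}+r^{\ast}\leq x_{1}+r$.

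Finally, for the comparability of $|F^{\prime}|$ and $f$ at the points $x_{1}+r$ and $x_{1}+r^{\ast}$, I would apply part (2) of Lemma \ref{consequences} with the lemma's point ``$x_{1}$'' taken to be $a:=x_{1}+r^{\ast}$ and its point ``$x_{2}$'' taken to be $b:=x_{1}+r$. Since $a\leq b$, the required lower bound $\max\{\varepsilon a,\,a-\frac{1}{|F^{\prime}(a)|}\}\leq b$ holds automatically, and the upper bound $b\leq a+\frac{1}{|F^{\prime}(a)|}$ is exactly the estimate $r-r^{\ast}\leq \frac{1}{|F^{\prime}(x_{1}+r^{\ast})|}$ established in the previous step. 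Lemma \ref{consequences}(2) then delivers $|F^{\prime}(x_{1}+r)|\approx|F^{\prime}(x_{1}+r^{\ast})|$ and $f(x_{1}+r)\simeq f(x_{1}+r^{\ast})$, completing the corollary.

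The argument is routine; the only point that demands attention --- and hence the ``main obstacle,'' minor as it is --- is that the steps must be carried out in this order, since the bound $r-r^{\ast}\leq \frac{1}{|F^{\prime}(x_{1}+r^{\ast})|}$ is precisely the hypothesis needed before Lemma \ref{consequences}(2) can be invoked. One should also tacitly assume $x_{1}+r<R$, so that all quantities lie in the interval on which the structure conditions of Definition \ref{structure conditions} are imposed.
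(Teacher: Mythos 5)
Your proposal is correct and follows exactly the route the paper intends (the corollary's statement is itself the proof sketch: concatenate Lemma \ref{height 1} and the exact upper bound of Lemma \ref{height 2}, divide by $f\left( x_{1}+r^{\ast }\right) >0$, bound the radical by $f\left( x_{1}+r^{\ast }\right) $, and then feed the resulting estimate $r-r^{\ast }\leq 1/\left\vert F^{\prime }\left( x_{1}+r^{\ast }\right) \right\vert $ into part (2) of Lemma \ref{consequences} at the points $x_{1}+r^{\ast }$ and $x_{1}+r$). Your observation that the steps must be taken in this order, since the displacement bound is the hypothesis needed for Lemma \ref{consequences}(2), is exactly the right point to flag.
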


We now split the proof of Proposition \ref{height} into two cases.

\subsubsection{Proof of part (2) of Proposition \protect\ref{height} for $%
	r\geq \frac{1}{\left\vert F^{\prime }\left( x_{1}\right) \right\vert }$}

By Lemmas \ref{height 1} and \ref{height 2}, we have 
\begin{equation}
r-r^{\ast }\left( x_{1},r\right) =r-r^{\ast }\approx \frac{1}{\left\vert
	F^{\prime }\left( x_{1}+r^{\ast }\right) \right\vert }\cdot \frac{\sqrt{%
		f\left( x_{1}+r^{\ast }\right) ^{2}-f\left( x_{1}\right) ^{2}}}{f\left(
	x_{1}+r^{\ast }\right) }.  \label{approx of r minus r star}
\end{equation}%
We consider two cases.

\textbf{Case A}: If $r^{\ast }>r_{1}\equiv \frac{1}{2\left\vert F^{\prime
	}\left( x_{1}\right) \right\vert }$, then we have 
\begin{align*}
	F\left( x_{1}\right) -F\left( x_{1}+r^{\ast }\right)
	=\int_{x_{1}}^{x_{1}+r^{\ast }}\left\vert F^{\prime }\left( x_{1}\right)
	\right\vert dx&\geq \int_{x_{1}}^{x_{1}+r_{1}}\left\vert F^{\prime }\left(
	x_{1}\right) \right\vert dx\\
	&\geq \left\vert F^{\prime }\left(
	x_{1}+r_{1}\right) \right\vert \cdot r_{1}\gtrsim 1.
\end{align*}%
Here we used the estimate $\left\vert F^{\prime }\left( x_{1}+r_{1}\right)
\right\vert \approx \left\vert F^{\prime }\left( x_{1}\right) \right\vert $
given by Part 2 of Lemma \ref{consequences}. This implies 
\begin{equation*}
	\ln \frac{f\left( x_{1}+r^{\ast }\right) }{f\left( x_{1}\right) }\gtrsim 1,
\end{equation*}%
and we have 
\begin{equation*}
	\frac{\sqrt{f\left( x_{1}+r^{\ast }\right) ^{2}-f\left( x_{1}\right) ^{2}}}{%
		f\left( x_{1}+r^{\ast }\right) }\approx 1.
\end{equation*}%
Plugging this into \eqref{approx of r minus r star}, we have $r-r^{\ast
}\approx \frac{1}{\left\vert F^{\prime }\left( x_{1}+r^{\ast }\right)
	\right\vert }$. The proof is completed using \eqref{new f equ} and Lemma \ref%
{height 1}.

\textbf{Case B}: If $r^{\ast }\leq r_{1}$, then we have $\left\vert
F^{\prime }\left( x_{1}+r^{\ast }\right) \right\vert \approx \left\vert
F^{\prime }\left( x_{1}\right) \right\vert $ and $r-r^{\ast }\geq \frac{1}{%
	2\left\vert F^{\prime }\left( x_{1}\right) \right\vert }$. Therefore we have 
\begin{equation*}
	r-r^{\ast }\gtrsim \frac{1}{\left\vert F^{\prime }\left( x_{1}+r^{\ast
		}\right) \right\vert }.
\end{equation*}%
Combining this with \eqref{upper bound of r star}, we obtain $r-r^{\ast
}\approx \frac{1}{\left\vert F^{\prime }\left( x_{1}+r^{\ast }\right)
	\right\vert }$ again, and the proof is completed as in the first case.

\subsubsection{Proof of part (3) of Proposition \protect\ref{height} for $%
	r\leq \frac{1}{\left\vert F^{\prime }\left( x_{1}\right) \right\vert }$}

In this case Lemma \ref{height 1} and Lemma \ref{height 2} give with $%
r^{\ast }=r^{\ast }\left( x_{1},r\right) $, 
\begin{align*}
	f\left( x_{1}+r^{\ast }\right) \cdot \left( r-r^{\ast }\right) \approx
	h\left( x_{1},r\right) & \approx \frac{1}{\left\vert F^{\prime }\left(
		x_{1}+r^{\ast }\right) \right\vert }\sqrt{f\left( x_{1}+r^{\ast }\right)
		^{2}-f\left( x_{1}\right) ^{2}} \\
	& \approx \frac{1}{\left\vert F^{\prime }\left( x_{1}\right) \right\vert }%
	\left( \int_{x_{1}}^{x_{1}+r^{\ast }}2f\left( u\right) ^{2}\left\vert
	F^{\prime }\left( u\right) \right\vert \,du\right) ^{\frac{1}{2}} \\
	& \approx \frac{1}{\left\vert F^{\prime }\left( x_{1}\right) \right\vert }%
	\left[ 2f\left( x_{1}+r^{\ast }\right) ^{2}\left\vert F^{\prime }\left(
	x_{1}\right) \right\vert \cdot r^{\ast }\right] ^{\frac{1}{2}} \\
	& \approx \frac{\sqrt{r^{\ast }}f\left( x_{1}+r^{\ast }\right) }{\sqrt{%
			\left\vert F^{\prime }\left( x_{1}\right) \right\vert }},
\end{align*}%
where we have used Part 2 of Lemma \ref{consequences} and the fact $r^{\ast
}=r^{\ast }\left( x_{1},r\right) <r\leq \frac{1}{\left\vert F^{\prime
	}\left( x_{1}\right) \right\vert }$. This implies 
\begin{equation*}
	\left[ \left\vert F^{\prime }\left( x_{1}\right) \right\vert \left(
	r-r^{\ast }\right) \right] ^{2}\approx \left\vert F^{\prime }\left(
	x_{1}\right) \right\vert r^{\ast }.
\end{equation*}%
Thus 
\begin{equation*}
	\left[ \left\vert F^{\prime }\left( x_{1}\right) \right\vert \left(
	r-r^{\ast }\right) \right] ^{2}+\left\vert F^{\prime }\left( x_{1}\right)
	\right\vert \left( r-r^{\ast }\right) |\approx \left\vert F^{\prime }\left(
	x_{1}\right) \right\vert r\leq 1.
\end{equation*}%
As a result, we have $\left\vert F^{\prime }\left( x_{1}\right) \right\vert
\left( r-r^{\ast }\right) \approx \left\vert F^{\prime }\left( x_{1}\right)
\right\vert r\;\Longrightarrow \;r-r^{\ast }\approx r$. This also gives the
estimate for $h$ by Lemma \ref{height 1} since we already have $f\left(
x_{1}+r^{\ast }\right) \approx f\left( x_{1}\right) $.

\subsection{Area of balls centered at an arbitrary point}

In the following proposition we obtain an estimate, similar to (\ref%
{ball-origin}), for areas of balls centered at arbitrary points.

\begin{proposition}
	\label{general-area} Let $P=\left( x_{1},x_{2}\right) \in \mathbb{R}^{2}$
	and $r>0$. Set 
	\begin{equation*}
		B_{+}\left( P,r\right) \equiv \left\{ \left( y_{1},y_{2}\right) \in B\left(
		P,r\right) :y_{1}>x_{1}+r^{\ast }\right\} .
	\end{equation*}%
	If $r\geq \frac{1}{\left\vert F^{\prime }\left( x_{1}\right) \right\vert }$
	then we recover (\ref{ball-origin})%
	\begin{equation*}
		\left\vert B\left( P,r\right) \right\vert \approx \frac{f\left(
			x_{1}+r\right) }{\left\vert F^{\prime }\left( x_{1}+r\right) \right\vert ^{2}%
		}\approx \left\vert B_{+}\left( P,r\right) \right\vert .
	\end{equation*}%
	On the other hand, if $r\leq \frac{1}{\left\vert F^{\prime }\left(
		x_{1}\right) \right\vert }$ we have 
	\begin{equation*}
		\left\vert B\left( P,r\right) \right\vert \approx r^{2}f(x_{1})\approx
		\left\vert B_{+}\left( P,r\right) \right\vert
	\end{equation*}
\end{proposition}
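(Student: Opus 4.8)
The plan is to reduce first to $P=(x_{1},0)$ with $x_{1}\ge 0$, using that $dt^{2}=dx^{2}+f(x)^{-2}dy^{2}$ is invariant under vertical translation; when $x_{1}=0$ the statement is exactly (\ref{ball-origin}). Two elementary observations are used constantly: $B(P,r)$ is symmetric about $\{y=0\}$ (since $f$ is even), and since $dt\ge|dx|$ it lies in the strip $\{x_{1}-r\le y_{1}\le x_{1}+r\}$, hence, by part (1) of Proposition \ref{height}, in the box $[x_{1}-r,x_{1}+r]\times[-h,h]$ with $h=h(x_{1},r)$. I will compare $B(P,r)$ throughout with its right cap $B_{+}(P,r)=B(P,r)\cap\{y_{1}>x_{1}+r^{\ast}\}$, where by part (1) of Proposition \ref{height} the boundary height is largest. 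It is convenient to dispose first, in either regime, of the harmless sub-case $x_{1}\le 2r$: then $B((0,0),\tfrac r2)\subset B(P,r)\subset B((0,0),\tfrac{3r}{2})$, and $|B((0,0),\tfrac r2)|\approx|B((0,0),\tfrac{3r}{2})|\approx f(x_{1}+r)/|F'(x_{1}+r)|^{2}$ by (\ref{ball-origin}) and the comparabilities of $f$ and $|F'|$ on $[\tfrac r2,3r]$ from Lemma \ref{consequences}; the appropriate rectangle argument below still yields the matching lower bound for $B_{+}(P,r)$. So we may assume $x_{1}>2r$, which keeps every abscissa in $B(P,r)$ at least $x_{1}/2>0$.

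In the small-ball regime $r\le 1/|F'(x_{1})|$ the box containment already gives the upper bound, since part (3) of Proposition \ref{height} gives $h\approx rf(x_{1})$: thus $|B_{+}(P,r)|\le|B(P,r)|\le 4rh\lesssim r^{2}f(x_{1})$. For the lower bound I would fit the explicit Euclidean rectangle $Q=[x_{1},x_{1}+\tfrac r4]\times[0,\tfrac r4 f(x_{1})]$ inside $B_{+}(P,r)$: because $r\le 1/|F'(x_{1})|$ and $|F'|$ is decreasing, $F(x_{1})-F(y_{1})=\int_{x_{1}}^{y_{1}}|F'|\le\tfrac14$ for $y_{1}\in[x_{1},x_{1}+\tfrac r4]$, so $f\approx f(x_{1})$ there, and hence any point of $Q$ is reached from $P$ by a horizontal then a vertical segment at cost $\le\tfrac r4+\tfrac{(r/4)f(x_{1})}{f(y_{1})}\le\tfrac r4+\tfrac{e^{1/4}}{4}r<r$; therefore $|B_{+}(P,r)|\ge|Q|\gtrsim r^{2}f(x_{1})$, and $|B(P,r)|\approx|B_{+}(P,r)|\approx r^{2}f(x_{1})$.

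In the large-ball regime $r\ge 1/|F'(x_{1})|$ the box bound $4rh$ is too lossy, so one must integrate against the boundary curve: if $\varphi$ is the upper boundary then $|B(P,r)|=2\int_{x_{1}-r}^{x_{1}+r}\varphi(y_{1})\,dy_{1}$, which I would split at $y_{1}=x_{1}+r^{\ast}$. On $[x_{1}+r^{\ast},x_{1}+r]$, part (1) of Proposition \ref{height} gives $\varphi\le h$ and part (2) gives $h\approx f(x_{1}+r)/|F'(x_{1}+r)|$ on an interval of length $r-r^{\ast}\approx 1/|F'(x_{1}+r)|$, so this piece contributes $\lesssim f(x_{1}+r)/|F'(x_{1}+r)|^{2}$. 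The matching lower bound is obtained from the same fold region: using $h\approx f(x_{1}+r^{\ast})(r-r^{\ast})$ (Lemma \ref{height 1}), $f(x_{1}+r^{\ast})\approx f(x_{1}+r)$ and $|F'(x_{1}+r^{\ast})|\approx|F'(x_{1}+r)|$ (see (\ref{new f equ})), and the cost estimate above, a Euclidean rectangle of dimensions $c(r-r^{\ast})\times c'h$ placed just to the right of $x_{1}+r^{\ast}$ (with $c+c'<1$) lies inside $B_{+}(P,r)$ and has area $\approx(r-r^{\ast})^{2}f(x_{1}+r)\approx f(x_{1}+r)/|F'(x_{1}+r)|^{2}$.

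The main obstacle is the remaining piece $\int_{x_{1}-r}^{x_{1}+r^{\ast}}\varphi$, where the trivial bound $\varphi\le h$ is useless because the $x$-interval is long. Here I would repeat, now with centre $P$ in place of the origin, the Region 2 change-of-variables computation of Section~\ref{Regions}: the local Jacobian formulas are unaffected by horizontal translation of the centre, so on the post-turn part one again obtains $\bigl|\tfrac{\partial(x,y)}{\partial(r,\lambda)}\bigr|\approx 1/|F'(f^{-1}(\lambda))|$ (the contribution of the derivative of the height-to-the-turning-point reappearing just as there), and integration then uses that the $A$-length from $P$ to the turning point is $\approx$ (turning abscissa) $-x_{1}$ by Lemma \ref{arc length} and Corollary \ref{similar r and x}. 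Equivalently, one estimates $\varphi(y_{1})$ on this range directly: the boundary point above $y_{1}$ lies on the return arc of a geodesic whose turning parameter $\lambda(y_{1})$ satisfies $r=\int_{x_{1}}^{y_{1}}\lambda(y_{1})/\sqrt{\lambda(y_{1})^{2}-f(u)^{2}}\,du$, and $\varphi(y_{1})$ is then comparable to the height from $P$ to that turning point, which by the argument of Lemma \ref{height 2} is $\approx|F'(f^{-1}(\lambda(y_{1})))|^{-1}\sqrt{\lambda(y_{1})^{2}-f(x_{1})^{2}}$; changing variables to $\lambda$ and invoking the structure conditions of Definition \ref{structure conditions} shows this piece too is $\lesssim f(x_{1}+r)/|F'(x_{1}+r)|^{2}$. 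Combining all pieces gives $|B(P,r)|\approx|B_{+}(P,r)|\approx f(x_{1}+r)/|F'(x_{1}+r)|^{2}$. The step I expect to require the most care is exactly this last estimate, which forces one to weigh the long, thin inner part of the ball against the comparatively fat fold.
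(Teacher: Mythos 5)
Your lower bounds are essentially the paper's: the paper also uses the taxicab path $(x_1,0)\to(x,0)\to(x,y)$ to show that for $x_1+r^\ast\le x\le x_1+\frac{r+r^\ast}{2}$ the boundary height is $\gtrsim f(x_1+r)(r-r^\ast)$ (resp.\ $\gtrsim f(x_1)r$ in the small-ball case), which is exactly your inscribed-rectangle argument. The small-ball regime and the upper bound via the box $[x_1-r,x_1+r]\times[-h,h]$ also match. The divergence is in the large-ball upper bound, and there your route is both the hard road and the incomplete one. The paper disposes of the entire ball --- long thin inner part included --- with one line: $B(P,r)\subset B(0,x_1+r)$ by the triangle inequality, so $|B(P,r)|\le|B(0,x_1+r)|\approx f(x_1+r)/|F'(x_1+r)|^2$ by the already-proved (\ref{ball-origin}). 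You use precisely this containment in your preliminary sub-case $x_1\le 2r$, but it needs no restriction on $x_1$ at all; had you noticed that, the whole Region-2-at-a-shifted-center program in your last paragraph would be unnecessary. (Incidentally, in that sub-case the inclusion $B((0,0),\tfrac r2)\subset B(P,r)$ requires $x_1\le\tfrac r2$, not $x_1\le 2r$, and the outer ball should be $B(0,x_1+r)\subset B(0,3r)$; neither slip is fatal since you fall back on the rectangle for the lower bound.)

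As written, the step ``the local Jacobian formulas are unaffected by horizontal translation of the centre'' is a genuine gap, not a routine verification. The Region 2 estimates in Section \ref{Regions} repeatedly use that the integrals defining the polar coordinates start at $0$: the reduction $\int_0^x\approx\int_{x/2}^x$ rests on Part (1) of Lemma \ref{consequences} applied at $x/2$ versus $x$, and the identification $R(\lambda)\approx f^{-1}(\lambda)$ comes from Lemma \ref{arc length} for geodesics launched from the origin. For a center at $x_1>0$ the corresponding integrals run from $x_1$ to $x$, and the arc length from $P$ to a turning point at abscissa $x_1+t$ is comparable to $t$ only when $t\gtrsim 1/|F'(x_1)|$ --- which fails exactly for the geodesics sweeping out the thin inner neck you are trying to control. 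So the off-center Jacobian computation would have to be re-derived from scratch, with the case analysis in $t$ versus $1/|F'(x_1)|$ built in, before your final integration in $\lambda$ can be justified. The containment trick makes all of this moot.
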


\begin{proof}
	Because of symmetry, it is enough to consider $x_{1}>0$ and $y_{1}=0$. So
	let $P_{1}=\left( x_{1},0\right) $ with $x_{1}>0$.
	
	\textbf{Case }$r\geq \frac{1}{\left\vert F^{\prime }\left( x_{1}\right)
		\right\vert }$. In this case we will compare the ball $B\left( P,r\right) $
	to the ball $B\left( 0,R\right) $ centered at the origin with radius $%
	R=x_{1}+r$. First we note that $B\left( P,r\right) \subset B\left(
	0,R\right) $ since if $\left( x,y\right) \in B\left( P,r\right) $, then 
	\begin{equation*}
		\func{dist}\left( \left( 0,0\right) ,\left( x,y\right) \right) \leq 
		\func{dist}\left( \left( 0,0\right) ,P\right) +\func{dist}\left(
		P,\left( x,y\right) \right) <x_{1}+r=R.
	\end{equation*}%
	Thus from (\ref{ball-origin}) we have%
	\begin{equation*}
		\left\vert B\left( P,r\right) \right\vert \leq \left\vert B\left( 0,R\right)
		\right\vert \approx \frac{f\left( R\right) }{\left\vert F^{\prime }\left(
			R\right) \right\vert ^{2}}=\frac{f\left( x_{1}+r\right) }{\left\vert
			F^{\prime }\left( x_{1}+r\right) \right\vert ^{2}},
	\end{equation*}%
	By parts (2) and (3) of Proposition \ref{height}, $h\approx \frac{f\left(
		x_{1}+r\right) }{\left\vert F^{\prime }\left( x_{1}+r\right) \right\vert }$
	and $r-r^{\ast }\approx \frac{1}{\left\vert F^{\prime }\left( x_{1}+r\right)
		\right\vert }$ when $r\geq \frac{1}{\left\vert F^{\prime }\left(
		x_{1}\right) \right\vert }$, and so we have%
	\begin{equation*}
		\left\vert B\left( P,r\right) \right\vert \lesssim h\left( x_{1},r\right) \
		\left( r-r^{\ast }\left( x_{1},r\right) \right) .
	\end{equation*}%
	Finally, we claim that%
	\begin{equation*}
		h\left( x_{1},r\right) \ \left( r-r^{\ast }\left( x_{1},r\right) \right)
		\lesssim \left\vert B\left( P,r\right) \right\vert .
	\end{equation*}%
	To see this we consider $x$ satisfying $x_{1}+r^{\ast }\leq x\leq x_{1}+%
	\frac{r+r^{\ast }}{2}$, where $x_{1}+\frac{r+r^{\ast }}{2}$ is the midpoint
	of the interval $\left[ x_{1}+r^{\ast },x_{1}+r\right] $ corresponding to
	the "thick" part of the ball $B\left( P,r\right) $. For such $x$ we let $y>0$
	be defined so that $\left( x,y\right) \in \partial B\left( P,r\right) $.
	Then using the taxicab path $\left( x_{1},0\right) \rightarrow \left(
	x,0\right) \rightarrow \left( x,y\right) $, we see that%
	\begin{equation}
	x-x_{1}+\frac{y}{f\left( x\right) }\geq \func{dist}\left( \left(
	x_{1},0\right) ,\left( x,y\right) \right) =r,  \label{distance}
	\end{equation}%
	implies%
	\begin{equation*}
		y\geq f\left( x\right) \left( r-x+x_{1}\right) \approx f\left(
		x_{1}+r\right) \left( r-r^{\ast }\right) ,
	\end{equation*}%
	where the final approximation follows from $r-r^{\ast }\approx \frac{1}{%
		\left\vert F^{\prime }\left( x_{1}+r\right) \right\vert }$ and part (2) of
	Lemma \ref{consequences} upon using $x_{1}+r^{\ast }\leq x\leq x_{1}+\frac{%
		r+r^{\ast }}{2}$. Thus, using parts (2) and (3) of Proposition \ref{height}
	again, we obtain%
	\begin{eqnarray*}
		\left\vert B\left( P,r\right) \right\vert &\gtrsim &\left[ f\left(
		x_{1}+r\right) \left( r-r^{\ast }\right) \right] \ \left( r-r^{\ast }\right)
		\\
		&\approx &\frac{f\left( x_{1}+r\right) }{\left\vert F^{\prime }\left(
			x_{1}+r\right) \right\vert }\left( r-r^{\ast }\right) \approx h\left(
		x_{1},r\right) \ \left( r-r^{\ast }\left( x_{1},r\right) \right) ,
	\end{eqnarray*}
	
	which proves our claim and concludes the proof that $\left\vert B\left(
	P,r\right) \right\vert \approx \frac{f\left( x_{1}+r\right) }{\left\vert
		F^{\prime }\left( x_{1}+r\right) \right\vert ^{2}}\approx \left\vert
	B_{+}\left( P,r\right) \right\vert $ when $r\geq \frac{1}{\left\vert
		F^{\prime }\left( x_{1}\right) \right\vert }$.
	
	\textbf{Case }$r<\frac{1}{\left\vert F^{\prime }\left( x_{1}\right)
		\right\vert }$. In this case parts (2) and (3) of Proposition \ref{height}
	show that $h\approx rf\left( x_{1}\right) $ and $r-r^{\ast }\approx r$, and
	part (1) shows that $h$ maximizes the `height' of the ball. Thus we
	immediately obtain the upper bound%
	\begin{equation*}
		\left\vert B\left( P,r\right) \right\vert \lesssim hr\lesssim f\left(
		x_{1}\right) r^{2}.
	\end{equation*}%
	To obtain the corresponding lower bound, we use notation as in the first
	case and note that (\ref{distance}) now implies%
	\begin{equation}
	y\geq f\left( x\right) \left( r-x+x_{1}\right) \approx f\left( x_{1}\right)
	r,  \label{low bound}
	\end{equation}%
	where the final approximation follows from part (2) of Proposition \ref%
	{height} and part (2) of Lemma \ref{consequences} upon using $x_{1}+r^{\ast
	}\leq x\leq x_{1}+\frac{r+r^{\ast }}{2}$. Thus%
	\begin{equation*}
		\left\vert B\left( P,r\right) \right\vert \gtrsim \left[ f\left(
		x_{1}\right) r\right] \ \left( r-r^{\ast }\right) \approx f\left(
		x_{1}\right) r^{2},
	\end{equation*}
	
	which concludes the proof that $\left\vert B\left( P,r\right) \right\vert
	\approx f\left( x_{1}\right) r^{2}\approx \left\vert B_{+}\left( P,r\right)
	\right\vert $ when $r<\frac{1}{\left\vert F^{\prime }\left( x_{1}\right)
		\right\vert }$.
\end{proof}

Using Proposition \ref{height} we obtain a useful corollary for the measure
of the \textquotedblleft thick\textquotedblright\ part of a ball. But first
we need to establish that $r^{\ast }\left( x_{1},r\right) $ is increasing in 
$r$ where 
\begin{equation*}
	T\left( x_{1},r\right) \equiv \left( x_{1}+r^{\ast }\left( x_{1},r\right)
	,h\left( x_{1},r\right) \right)
\end{equation*}%
is the turning point for the geodesic $\gamma _{r}$ that passes through $%
P=\left( x_{1},0\right) $ in the upward direction and has vertical slope at
the boundary of the ball $B\left( P,r\right) $.

\begin{lemma}
	Let $x_{1}>0$. Then $r^{\ast }\left( x_{1},r^{\prime }\right) <r^{\ast
	}\left( x_{1},r\right) $ if $0<r^{\prime }<r$.
\end{lemma}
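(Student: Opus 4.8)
The plan is to reduce the monotonicity of $r^{\ast}(x_{1},\cdot)$ to the monotonicity, in the turning parameter, of the $A$-arc length from the center $(x_{1},0)$ to the turning point of the distinguished geodesic $\beta_{X,P}$. Recall from Definition \ref{def r*} and the proof of Lemma \ref{height 1} that $\beta_{X,P}$ realizing $r^{\ast}=r^{\ast}(x_{1},r)$ has turning parameter $\lambda^{\ast}=f(x_{1}+r^{\ast})$, that its turning point has abscissa $x_{1}+r^{\ast}=f^{-1}(\lambda^{\ast})$, and that its $A$-length is $r$, i.e.
\[
r=R_{x_{1}}(\lambda^{\ast}),\qquad\text{where}\qquad R_{x_{1}}(\lambda)\equiv\int_{x_{1}}^{f^{-1}(\lambda)}\frac{\lambda}{\sqrt{\lambda^{2}-f(u)^{2}}}\,du .
\]
Hence it suffices to prove that $\lambda\mapsto R_{x_{1}}(\lambda)$ is \emph{strictly increasing} on $(f(x_{1}),\sup_{(0,R)}f)$. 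Granting this, if $0<r'<r$ then $R_{x_{1}}(\lambda^{\ast}(x_{1},r'))=r'<r=R_{x_{1}}(\lambda^{\ast}(x_{1},r))$ forces $f(x_{1}+r^{\ast}(x_{1},r'))=\lambda^{\ast}(x_{1},r')<\lambda^{\ast}(x_{1},r)=f(x_{1}+r^{\ast}(x_{1},r))$, and since $f$ is strictly increasing on $(0,R)$ (because $F'<0$), we conclude $r^{\ast}(x_{1},r')<r^{\ast}(x_{1},r)$, which is the assertion.

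To establish strict monotonicity of $R_{x_{1}}$ I would perform the change of variables $v=f(u)/\lambda$, legitimate since $f$ is strictly increasing. Using $f'=|F'|f$ (from $f=e^{-F}$ with $F'<0$) one gets $du=\lambda\,dv/f'(f^{-1}(\lambda v))=dv/\bigl(v\,|F'(f^{-1}(\lambda v))|\bigr)$, so that
\[
R_{x_{1}}(\lambda)=\int_{f(x_{1})/\lambda}^{1}\frac{dv}{v\sqrt{1-v^{2}}\;\bigl|F'\bigl(f^{-1}(\lambda v)\bigr)\bigr|},
\]
a convergent improper integral whose only singularity is the integrable factor $(1-v^{2})^{-1/2}$ at $v=1$, the integrand being bounded near the lower endpoint. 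In this form the $\lambda$-dependence decouples favorably: as $\lambda$ increases, the interval of integration $(f(x_{1})/\lambda,1)$ strictly enlarges, and for each fixed $v$ the value $f^{-1}(\lambda v)$ increases, so $|F'(f^{-1}(\lambda v))|=-F'(f^{-1}(\lambda v))$ decreases because $F''>0$ (structure condition (2) in Definition \ref{structure conditions}), whence the nonnegative integrand increases pointwise. Both effects strictly raise the integral, giving $R_{x_{1}}(\lambda_{1})<R_{x_{1}}(\lambda_{2})$ whenever $f(x_{1})<\lambda_{1}<\lambda_{2}$.

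The only place requiring a little care — and the main (if minor) obstacle — is the improper integral: one should carry out the change of variables and the two strict comparisons on truncated integrals $\int_{f(x_{1})/\lambda}^{1-\delta}$ and then let $\delta\downarrow 0$, using that the integrand remains positive and locally bounded away from $v=1$ so that both the enlargement of the domain and the pointwise increase of the integrand survive in the limit. Everything else follows from the negativity and convexity of $F$ together with the arc-length identity already recorded in the proof of Lemma \ref{height 1}; no new geometric input is needed.
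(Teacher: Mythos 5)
Your proof is correct, but it is genuinely different from the one in the paper. The paper argues qualitatively: using that $B(P,r')\subset B(P,r)$ forces $h(x_{1},r')<h(x_{1},r)$, and that two geodesics issuing from the same point cannot meet twice in the first quadrant, it concludes that $\gamma_{r'}$ lies above $\gamma_{r}$ just to the right of $P$ and therefore must turn back earlier. You instead reduce the claim to the strict monotonicity in $\lambda$ of the arc length to the turning point,
\begin{equation*}
R_{x_{1}}(\lambda)=\int_{x_{1}}^{f^{-1}(\lambda)}\frac{\lambda}{\sqrt{\lambda^{2}-f(u)^{2}}}\,du
=\int_{f(x_{1})/\lambda}^{1}\frac{dv}{v\sqrt{1-v^{2}}\,\left\vert F'\left(f^{-1}(\lambda v)\right)\right\vert},
\end{equation*}
and your change of variables and the two monotonicity effects (enlarging domain of integration; pointwise increase of the integrand via $F'<0$, $F''>0$) are both checked correctly, as is the convergence of the improper integral at $v=1$. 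The trade-off: the paper's argument is shorter and computation-free, but it leans on the unproved (though standard) non-intersection property of geodesics from a common point; your argument is self-contained and quantitative, using only the structure conditions, and it delivers a bonus — since $R_{x_{1}}$ is continuous and strictly increasing, it simultaneously substantiates the existence and uniqueness of the vertical-tangent point $P$ that Definition \ref{def r*} takes for granted. Your care with the truncated integrals $\int_{f(x_{1})/\lambda}^{1-\delta}$ before passing to the limit is appropriate and closes the only delicate point.
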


\begin{proof}
	Let $T\left( x_{1},r\right) \equiv \left( x_{1}+r^{\ast }\left(
	x_{1},r\right) ,h\left( x_{1},r\right) \right) $ be the turning point for
	the geodesic $\gamma _{r}$ that passes through $P=\left( x_{1},0\right) $
	and has vertical slope at the boundary of the ball $B\left( P,r\right) $. A
	key property of this geodesic is that it continues beyond the point $T\left(
	x_{1},r\right) $ by vertical reflection. Now we claim that this key property
	implies that when $0<r^{\prime }<r$, the geodesic $\gamma _{r^{\prime }}$
	cannot lie below $\gamma _{r}$ just to the right of $P$. Indeed, if it did,
	then since $B\left( P,r^{\prime }\right) \subset B\left( P,r\right) $
	implies $h\left( x_{1},r^{\prime }\right) <h\left( x_{1},r\right) $, the
	geodesic $\gamma _{r^{\prime }}$ would turn back and intersect $\gamma _{r}$
	in the first quadrant, contradicting the fact that geodesics cannot
	intersect twice in the first quadrant. Thus the geodesic $\gamma _{r^{\prime
	}}$ lies above $\gamma _{r}$ just to the right of $P$, and it is now evident
	that $\gamma _{r^{\prime }}$ must turn back `before' $\gamma _{r}$, i.e.
	that $r^{\ast }\left( x_{1},r^{\prime }\right) <r^{\ast }\left(
	x_{1},r\right) $.
\end{proof}

\begin{corollary}
	\label{thick_part}Let $x_{1}>0$. Denote 
	\begin{eqnarray*}
		B_{+}\left( P,r\right) &\equiv &\left\{ \left( y_{1},y_{2}\right) \in
		B\left( P,r\right) :y_{1}>x_{1}+r^{\ast }\right\} , \\
		B_{-}\left( P,r\right) &\equiv &\left\{ \left( y_{1},y_{2}\right) \in
		B\left( P,r\right) :y_{1}\leq x_{1}+r^{\ast }\right\} .
	\end{eqnarray*}%
	Then 
	\begin{equation*}
		\left\vert B_{+}\left( P,r\right) \right\vert \approx \left\vert B_{-}\left(
		P,r\right) \right\vert \approx \left\vert B\left( P,r\right) \right\vert .
	\end{equation*}
\end{corollary}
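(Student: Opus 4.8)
The plan is to reduce to $P=(x_1,0)$ with $x_1\ge 0$ using the evenness of $f$ (for $x_1=0$ the claim is immediate, since the $y$-axis reflection symmetry of the metric gives $|B_+(P,r)|=|B_-(P,r)|$), and then to prove \emph{only} the lower bound $|B_-(P,r)|\gtrsim |B(P,r)|$. Indeed $B_\pm(P,r)\subset B(P,r)$ gives the reverse inequalities for free, and $|B_+(P,r)|\approx |B(P,r)|$ is already part of Proposition \ref{general-area}; once $|B_-(P,r)|\approx|B(P,r)|$ is established the three quantities are comparable. Throughout I would use the ``taxicab'' containment
\[
\left\{(x,y):\ |x-x_1|<r,\ |y|<f(x)\,(r-|x-x_1|)\right\}\subset B(P,r),
\]
which holds because a horizontal segment is an $A$-geodesic of $A$-length equal to its Euclidean length, a vertical segment over $x$ has $A$-length $|y|/f(x)$, and $d(P,(x,y))\le|x-x_1|+|y|/f(x)$ by the triangle inequality. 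Intersecting this set with $\{x\le x_1+r^*\}$ puts it inside $B_-(P,r)$, so
\[
|B_-(P,r)|\ \ge\ 2\int_{x_1-r}^{\,x_1+r^*} f(x)\,(r-|x-x_1|)\,dx,
\]
and it remains to bound the right-hand side below by $|B(P,r)|\approx h(x_1,r)(r-r^*)\approx f(x_1+r^*)(r-r^*)^2$, where the first comparison comes from the proof of Proposition \ref{general-area} and the second from Lemma \ref{height 1}.

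I would then split along the dichotomy of Proposition \ref{height}. If $r\le 1/|F'(x_1)|$ (hence $r\lesssim x_1$ by Assumption (4)), Proposition \ref{height}(3) gives $r-r^*\approx r$ and $|B(P,r)|\approx r^2f(x_1)$; restricting the integral to $x\in[x_1-cr,x_1]$ for a small fixed $c$, where $r-|x-x_1|\gtrsim r$ and $f(x)\approx f(x_1)$ by Lemma \ref{consequences}(2), yields $|B_-(P,r)|\gtrsim r^2 f(x_1)\approx |B(P,r)|$. If $r\ge 1/|F'(x_1)|$, I would first record that Lemmas \ref{height 1} and \ref{height 2} force $r-r^*\approx h/f(x_1+r^*)\lesssim 1/|F'(x_1+r^*)|$, and that, combined with Assumption (4), this forces $r^*\gtrsim r$ except in the borderline range $r\approx 1/|F'(x_1)|$ where the previous case applies; then I would restrict the integral to the short interval $x\in[x_1+r^*-c(r-r^*),\,x_1+r^*]$. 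On this interval $x-x_1\in[0,r^*]$, so $r-|x-x_1|\ge r-r^*$, and $f(x)\approx f(x_1+r^*)$ by Lemma \ref{consequences}(2) (its hypotheses being checked via $r-r^*\lesssim 1/|F'(x_1+r^*)|$ and Assumption (4)), giving $|B_-(P,r)|\gtrsim (r-r^*)\,f(x_1+r^*)(r-r^*)\approx f(x_1+r^*)(r-r^*)^2\approx |B(P,r)|$.

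The main obstacle I anticipate is the bookkeeping in the regime $r\ge 1/|F'(x_1)|$: one must be sure that the interval $[x_1+r^*-c(r-r^*),\,x_1+r^*]$ is genuinely nondegenerate of length $\asymp r-r^*$ and lies to the right of $x_1$, which is exactly the assertion $r^*\gtrsim r$ there, and that $f$ does not vary by more than a bounded factor across it. Extracting $r^*\gtrsim r$ cleanly — by combining the sharp upper bound $h\le \frac{1}{|F'(x_1+r^*)|}\sqrt{f(x_1+r^*)^2-f(x_1)^2}$ of Lemma \ref{height 2} with Lemma \ref{height 1} and Assumption (4), and using the monotonicity of $r\mapsto r^*(x_1,r)$ proved just above to dispatch the overlapping borderline case — is the crux; the remaining steps are routine applications of Lemma \ref{consequences} and Proposition \ref{height}.
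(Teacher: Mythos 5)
Your reduction to the lower bound for $\left\vert B_{-}\right\vert$, the taxicab containment, and the treatment of the case $r\leq \frac{1}{\left\vert F^{\prime }\left( x_{1}\right) \right\vert }$ coincide with the paper's proof. In the case $r\geq \frac{1}{\left\vert F^{\prime }\left( x_{1}\right) \right\vert }$ the paper takes a different and cleaner route that never requires $r^{\ast }\gtrsim r$: it splits on whether $r^{\ast }\leq \frac{1}{\left\vert F^{\prime }\left( x_{1}\right) \right\vert }$ or $r^{\ast }\geq \frac{1}{\left\vert F^{\prime }\left( x_{1}\right) \right\vert }$, in the first subcase using the monotonicity of $r\mapsto r^{\ast }\left( x_{1},r\right) $ to see that $B_{-}\left( P,r\right) \supset B_{-}\left( P,r_{0}\right) $ with $r_{0}=\frac{1}{\left\vert F^{\prime }\left( x_{1}\right) \right\vert }$ and quoting the already proved case, and in the second using $B\left( P,r^{\ast }\right) \subset B_{-}\left( P,r\right) $ together with Proposition \ref{general-area} applied to $B\left( P,r^{\ast }\right) $ and (\ref{new f equ}).

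The step you call the crux is where your sketch has a genuine hole. Combining Lemmas \ref{height 1} and \ref{height 2} with Assumption (4) gives $r-r^{\ast }\leq \frac{1}{\left\vert F^{\prime }\left( x_{1}+r^{\ast }\right) \right\vert }\leq \frac{x_{1}+r^{\ast }}{\varepsilon }$, hence $r^{\ast }\geq \frac{\varepsilon r-x_{1}}{1+\varepsilon }$, which yields $r^{\ast }\gtrsim r$ only when $x_{1}\lesssim \varepsilon r$. The complementary regime is \emph{not} merely the borderline $r\approx \frac{1}{\left\vert F^{\prime }\left( x_{1}\right) \right\vert }$ but the entire range $\frac{1}{\left\vert F^{\prime }\left( x_{1}\right) \right\vert }\leq r\lesssim \frac{x_{1}}{\varepsilon }$, which for infinitely degenerate $F$ is huge (for $F=\frac{1}{x}$ it runs from $x_{1}^{2}$ up to $\approx x_{1}$), and monotonicity of $r^{\ast }$ anchored at the radius $\frac{1}{\left\vert F^{\prime }\left( x_{1}\right) \right\vert }$ cannot control $r^{\ast }$ for $r$ far above that radius. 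To close this you would need part (2) of Proposition \ref{height}: for $\frac{M}{\left\vert F^{\prime }\left( x_{1}\right) \right\vert }\leq r\lesssim x_{1}$ one has $r-r^{\ast }\approx \frac{1}{\left\vert F^{\prime }\left( x_{1}+r\right) \right\vert }\approx \frac{1}{\left\vert F^{\prime }\left( x_{1}\right) \right\vert }\leq \frac{r}{M}$ by the doubling of $\left\vert F^{\prime }\right\vert $, whence $r^{\ast }\geq \frac{r}{2}$ for $M$ large, while the remaining band $\frac{1}{\left\vert F^{\prime }\left( x_{1}\right) \right\vert }\leq r\leq \frac{M}{\left\vert F^{\prime }\left( x_{1}\right) \right\vert }$ is handled by a two\mbox{-}sided use of Lemma \ref{height 2} plus Lemma \ref{consequences}(2) to get $r^{\ast }\left( x_{1},\frac{1}{\left\vert F^{\prime }\left( x_{1}\right) \right\vert }\right) \gtrsim \frac{1}{\left\vert F^{\prime }\left( x_{1}\right) \right\vert }$, then monotonicity. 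Alternatively, and more simply, the crux is avoidable: for $c$ small depending only on $\varepsilon $, the estimates $r-\left\vert x-x_{1}\right\vert \geq r-r^{\ast }$ and $f\left( x\right) \gtrsim f\left( x_{1}+r^{\ast }\right) $ (via (\ref{upper bound of r star}), Assumption (4) and Lemma \ref{consequences}(2) applied at $x_{1}+r^{\ast }$) hold on all of $\left[ x_{1}+r^{\ast }-c\left( r-r^{\ast }\right) ,x_{1}+r^{\ast }\right] $ whether or not that interval crosses $x_{1}$, so your integral already gives $\left\vert B_{-}\left( P,r\right) \right\vert \gtrsim f\left( x_{1}+r^{\ast }\right) \left( r-r^{\ast }\right) ^{2}\approx h\left( r-r^{\ast }\right) \approx \left\vert B\left( P,r\right) \right\vert $ without knowing $r^{\ast }\gtrsim r$.
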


\begin{proof}
	\textbf{Case} $r<\frac{1}{\left\vert F^{\prime }\left( x_{1}\right)
		\right\vert }$. Recall from Assumption (4) that $\frac{1}{\left\vert
		F^{\prime }\left( x_{1}\right) \right\vert }\leq \frac{1}{\varepsilon }x_{1}$%
	, so that in this case we have $r<\frac{1}{\varepsilon }x_{1}$, and hence
	also that $x_{1}-\max \left\{ \varepsilon x_{1},x_{1}-\frac{r}{2}\right\}
	\approx r$. From Proposition \ref{general-area} we have%
	\begin{equation*}
		\left\vert B\left( P,r\right) \right\vert \approx r^{2}f\left( x_{1}\right) .
	\end{equation*}%
	From part (2) of Lemma \ref{consequences}, there is a positive constant $c$
	such that $f\left( x\right) \geq cf\left( x_{1}\right) $ for $\max \left\{
	\varepsilon x_{1},x_{1}-\frac{r}{2}\right\} \leq x\leq x_{1}$. It follows
	that $B_{-}\left( P,r\right) \supset \left( \max \left\{ \varepsilon
	x_{1},x_{1}-\frac{r}{2}\right\} ,x_{1}\right) \times \left( -\frac{c}{2}%
	f\left( x_{1}\right) r,\frac{c}{2}f\left( x_{1}\right) r\right) $ since 
	\begin{eqnarray*}
		d\left( \left( x_{1},0\right) ,\left( x,y\right) \right) &\leq &d\left(
		\left( x_{1},0\right) ,\left( x,0\right) \right) +d\left( \left( x,0\right)
		,\left( x,y\right) \right) \\
		&=&\left\vert x_{1}-x\right\vert +\frac{\left\vert y\right\vert }{f\left(
			x\right) }<\frac{r}{2}+\frac{r}{2}=r,
	\end{eqnarray*}%
	provided $\max \left\{ \varepsilon x_{1},x_{1}-\frac{r}{2}\right\} <x<x_{1}$
	and $-\frac{c}{2}f\left( x_{1}\right) r<y<\frac{c}{2}f\left( x_{1}\right) r$%
	. Thus we have%
	\begin{equation*}
		\left\vert B_{-}\left( P,r\right) \right\vert \geq cr^{2}f\left(
		x_{1}\right) .
	\end{equation*}
	
	\textbf{Case} $r\geq \frac{1}{\left\vert F^{\prime }\left( x_{1}\right)
		\right\vert }$. The bound $\left\vert B_{-}\left( P,r\right) \right\vert
	\leq \left\vert B\left( P,r\right) \right\vert \approx \left\vert
	B_{+}\left( P,r\right) \right\vert $ follows from Proposition \ref%
	{general-area}. We now consider two subcases in order to obtain the lower
	bound $\left\vert B_{-}\left( P,r\right) \right\vert \gtrsim \left\vert
	B\left( P,r\right) \right\vert $.
	
	\textbf{Subcase} $r\geq \frac{1}{\left\vert F^{\prime }\left( x_{1}\right)
		\right\vert }\geq r^{\ast }$. By (\ref{upper bound of r star}) and part (2)
	of Lemma \ref{consequences} we have%
	\begin{equation*}
		\left\vert F^{\prime }\left( x_{1}+r\right) \right\vert \approx \left\vert
		F^{\prime }\left( x_{1}+r^{\ast }\right) \right\vert \text{ and }f\left(
		x_{1}+r\right) \approx f\left( x_{1}+r^{\ast }\right) .
	\end{equation*}%
	Then by Proposition \ref{general-area}, followed by the above inequalities,
	and then another application of part (2) of Lemma \ref{consequences}, we
	obtain%
	\begin{equation*}
		\left\vert B\left( P,r\right) \right\vert \approx \frac{f\left(
			x_{1}+r\right) }{\left\vert F^{\prime }\left( x_{1}+r\right) \right\vert ^{2}%
		}\approx \frac{f\left( x_{1}+r^{\ast }\right) }{\left\vert F^{\prime }\left(
			x_{1}+r^{\ast }\right) \right\vert ^{2}}\approx \frac{f\left( x_{1}\right) }{%
			\left\vert F^{\prime }\left( x_{1}\right) \right\vert ^{2}}.
	\end{equation*}%
	On the other hand, with $r_{0}=\frac{1}{\left\vert F^{\prime }\left(
		x_{1}\right) \right\vert }$, we can apply the case already proved above,
	together with the fact that $m\left( x_{1},r\right) $ is increasing in $r$,
	to obtain that%
	\begin{eqnarray*}
		\left\vert B_{-}\left( P,r\right) \right\vert &\geq &\left\vert \left\{
		\left( y_{1},y_{2}\right) \in B\left( P,r_{0}\right) :y_{1}\leq
		x_{1}+r^{\ast }\right\} \right\vert \\
		&\geq &\left\vert \left\{ \left( y_{1},y_{2}\right) \in B\left(
		P,r_{0}\right) :y_{1}\leq x_{1}+\left( r_{0}\right) ^{\ast }\right\}
		\right\vert \approx \frac{f\left( x_{1}\right) }{\left\vert F^{\prime
			}\left( x_{1}\right) \right\vert ^{2}}.
	\end{eqnarray*}
	
	\textbf{Subcase} $r\geq r^{\ast }\geq \frac{1}{\left\vert F^{\prime }\left(
		x_{1}\right) \right\vert }$. Since $B\left( P,r^{\ast }\right) \subset
	B_{-}\left( P,r\right) $ we can apply Proposition \ref{general-area} to $%
	B\left( P,r^{\ast }\right) $ to obtain%
	\begin{equation*}
		\left\vert B_{-}\left( P,r\right) \right\vert \geq \left\vert B\left(
		P,r^{\ast }\right) \right\vert \approx \frac{f\left( x_{1}+r^{\ast }\right) 
		}{\left\vert F^{\prime }\left( x_{1}+r^{\ast }\right) \right\vert ^{2}}.
	\end{equation*}%
	Now we apply (\ref{new f equ}) and Proposition \ref{general-area} again to
	conclude that%
	\begin{equation*}
		\frac{f\left( x_{1}+r^{\ast }\right) }{\left\vert F^{\prime }\left(
			x_{1}+r^{\ast }\right) \right\vert ^{2}}\approx \frac{f\left( x_{1}+r\right) 
		}{\left\vert F^{\prime }\left( x_{1}+r\right) \right\vert ^{2}}\approx
		\left\vert B\left( P,r\right) \right\vert .
	\end{equation*}
\end{proof}

\section{Higher dimensional geometries}

First we consider the $3$-dimensional case.

\subsection{Geodesics and metric balls}

Let $\gamma (t)=(x_{1}(t),x_{2}(t),x_{3}(t))$ be a path. Then the arc length
element is given by 
\begin{equation*}
	ds=\sqrt{[x_{1}^{\prime }(t)]^{2}+[x_{2}^{\prime }(t)]^{2}+\frac{1}{%
			[f(x_{1})]^{2}}[x_{3}^{\prime }(t)]^{2}}dt.
\end{equation*}%
Thus we can factor the associated control space by 
\begin{equation*}
	\left( {\mathbb{R}}^{3},%
	\begin{bmatrix}
		1 & 0 & 0 \\ 
		0 & 1 & 0 \\ 
		0 & 0 & [f(x_{1})]^{-2}%
	\end{bmatrix}%
	\right) =\left( {\mathbb{R}}_{x_{1},x_{3}}^{2},%
	\begin{bmatrix}
		1 & 0 \\ 
		0 & [f(x_{1})]^{-2}%
	\end{bmatrix}%
	\right) \times {\mathbb{R}}_{x_{2}}\ .
\end{equation*}%
We begin with a lemma regarding paths in product spaces.

\begin{lemma}
	Let $(M_{1},g^{M_{1}})$ and $(M_{2},g^{M_{2}})$ be two Riemannian manifolds.
	Consider the Cartesian product $M_{1}\times M_{2}$ whose Riemannian metric
	is defined by 
	\begin{equation*}
		g_{(p,q)}((u_{1},u_{2}),(v_{1},v_{2}))=g_{p}^{M_{1}}(u_{1},v_{1})+g_{q}^{M_{2}}(u_{2},v_{2}).
	\end{equation*}%
	Here we have%
	\begin{equation*}
		\left( p,q\right) \in M_{1}\times M_{2}\text{ and }\left( u_{1},u_{2}\right)
		,\left( v_{1},v_{2}\right) \in T_{p}\left( M_{1}\right) \oplus T_{p}\left(
		M_{2}\right) \approx T_{\left( p,q\right) }\left( M_{1}\times M_{2}\right) .
	\end{equation*}%
	Given any $C^{1}$ path $\gamma :[a,b]\rightarrow M_{1}\times M_{2}$, we can
	write it in the form $(\gamma _{1}(t),\gamma _{2}(t))$, where $\gamma
	_{1}:[a,b]\rightarrow M_{1}$ and $\gamma _{2}:[a,b]\rightarrow M_{2}$ are $%
	C^{1}$ paths on $M_{1}$ and $M_{2}$, respectively. Then we have 
	\begin{equation*}
		\Vert \gamma \Vert \geq \sqrt{\Vert \gamma _{1}\Vert ^{2}+\Vert \gamma
			_{2}\Vert ^{2}}
	\end{equation*}%
	where $\Vert \gamma \Vert $, $\Vert \gamma _{1}\Vert $ and $\Vert \gamma
	_{2}\Vert $ represent the arc length of each path. In addition, equality
	occurs if and only if 
	\begin{equation}
	\frac{\Vert \gamma _{1}^{\prime }(t)\Vert _{g^{M_{1}}}}{\Vert \gamma
		_{1}\Vert }=\frac{\Vert \gamma _{2}^{\prime }(t)\Vert _{g^{M_{2}}}}{\Vert
		\gamma _{2}\Vert },\ \ \ \ \ a\leq t\leq b.  \label{proportional1}
	\end{equation}
\end{lemma}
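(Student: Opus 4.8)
The plan is to reduce the inequality to the pointwise Cauchy--Schwarz inequality in $\mathbb{R}^{2}$ followed by integration, and to extract the equality case from the equality case of Cauchy--Schwarz. First I would introduce the scalar speed functions
\begin{equation*}
p(t)\equiv \Vert \gamma _{1}^{\prime }(t)\Vert _{g^{M_{1}}},\qquad q(t)\equiv \Vert \gamma _{2}^{\prime }(t)\Vert _{g^{M_{2}}},
\end{equation*}
which are continuous and nonnegative on $[a,b]$ since $\gamma $ is $C^{1}$ and the metrics are smooth. By the definition of arc length in the product metric, $\Vert \gamma \Vert =\int_{a}^{b}\sqrt{p(t)^{2}+q(t)^{2}}\,dt$, while $\Vert \gamma _{1}\Vert =\int_{a}^{b}p$ and $\Vert \gamma _{2}\Vert =\int_{a}^{b}q$.

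Next, set $P\equiv \Vert \gamma _{1}\Vert $, $Q\equiv \Vert \gamma _{2}\Vert $ and $L\equiv \sqrt{P^{2}+Q^{2}}$. If $L=0$ the path $\gamma $ is constant and the claim is trivial, so assume $L>0$. Then
\begin{equation*}
L=\frac{P^{2}+Q^{2}}{L}=\int_{a}^{b}\left( \frac{P}{L}\,p(t)+\frac{Q}{L}\,q(t)\right) dt,
\end{equation*}
and pointwise the integrand is the Euclidean inner product of the unit vector $(P/L,Q/L)$ with $(p(t),q(t))$, so Cauchy--Schwarz in $\mathbb{R}^{2}$ gives
\begin{equation*}
\frac{P}{L}\,p(t)+\frac{Q}{L}\,q(t)\le \sqrt{p(t)^{2}+q(t)^{2}},\qquad a\le t\le b.
\end{equation*}
Integrating over $[a,b]$ yields $L\le \Vert \gamma \Vert $, which is the asserted inequality $\Vert \gamma \Vert \ge \sqrt{\Vert \gamma _{1}\Vert ^{2}+\Vert \gamma _{2}\Vert ^{2}}$.

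For the equality statement: equality in the integrated estimate holds if and only if equality holds in the pointwise Cauchy--Schwarz bound for every $t$ (here I use the continuity of $p$ and $q$ to upgrade ``almost every $t$'' to ``every $t$''), i.e.\ if and only if $(p(t),q(t))$ is a nonnegative scalar multiple of $(P,Q)$ for each $t$. This is the condition $p(t)\,Q=q(t)\,P$ for all $t$; dividing by $PQ$ (which is positive once one assumes, as the displayed formula implicitly does, that $\Vert \gamma _{1}\Vert ,\Vert \gamma _{2}\Vert >0$) is exactly
\begin{equation*}
\frac{\Vert \gamma _{1}^{\prime }(t)\Vert _{g^{M_{1}}}}{\Vert \gamma _{1}\Vert }=\frac{\Vert \gamma _{2}^{\prime }(t)\Vert _{g^{M_{2}}}}{\Vert \gamma _{2}\Vert },\qquad a\le t\le b.
\end{equation*}

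There is no deep obstacle here: the statement is the triangle inequality for the Euclidean norm applied to the vector-valued integral $t\mapsto (p(t),q(t))$, equivalently a special case of Minkowski's integral inequality, and strict convexity of the Euclidean norm forces the equality case. The only points needing a little care are (a) organizing the equality discussion so that the strict-convexity input is used correctly, which the formulation via ``inner product with the direction of the total displacement'' makes transparent, and (b) the degenerate cases in which one or both of $\gamma _{1},\gamma _{2}$ have zero length, where the displayed equality condition should be read as vacuous and the inequality is immediate. I expect the write-up to be short once these bookkeeping issues are fixed.
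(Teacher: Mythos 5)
Your proposal is correct and is essentially identical to the paper's argument: both prove the result by the pointwise Cauchy--Schwarz estimate $\frac{P}{L}p(t)+\frac{Q}{L}q(t)\le\sqrt{p(t)^{2}+q(t)^{2}}$ against the fixed direction $(\Vert\gamma_{1}\Vert,\Vert\gamma_{2}\Vert)/L$, followed by integration, with the equality case read off from parallelism of $(p(t),q(t))$ and $(P,Q)$. Your extra remarks on the degenerate cases $L=0$ and $PQ=0$ are a minor but welcome tightening of the paper's write-up.
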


\begin{proof}
	For simplicity we omit the subscripts of the norms $\Vert \gamma
	_{1}^{\prime }(t)\Vert _{g^{M_{1}}}$ and $\Vert \gamma _{2}^{\prime
	}(t)\Vert _{g^{M_{2}}}$ so that $\left\Vert \gamma _{j}\right\Vert
	=\int_{a}^{b}\sqrt{\left\Vert \gamma _{j}^{\prime }\left( t\right)
		\right\Vert ^{2}}dt$. Using that%
	\begin{eqnarray*}
		&&\frac{\left\Vert \gamma _{1}\right\Vert }{\sqrt{\left\Vert \gamma
				_{1}\right\Vert ^{2}+\left\Vert \gamma _{2}\right\Vert ^{2}}}\left\Vert
		\gamma _{1}^{\prime }\left( t\right) \right\Vert +\frac{\left\Vert \gamma
			_{2}\right\Vert }{\sqrt{\left\Vert \gamma _{1}\right\Vert ^{2}+\left\Vert
				\gamma _{2}\right\Vert ^{2}}}\left\Vert \gamma _{2}^{\prime }\left( t\right)
		\right\Vert \\
		&&\ \ \ \ \ \ \ \ \ \ \ \ \ \ \ \ \ \ \ \ \ \ \ \ \ \ \ \ \ \ \leq \sqrt{%
			\left\Vert \gamma _{1}^{\prime }\left( t\right) \right\Vert ^{2}+\left\Vert
			\gamma _{2}^{\prime }\left( t\right) \right\Vert ^{2}},
	\end{eqnarray*}%
	with equality if and only if 
	\begin{equation*}
		\left( 
		\begin{array}{c}
			\left\Vert \gamma _{1}^{\prime }\left( t\right) \right\Vert \\ 
			\left\Vert \gamma _{2}^{\prime }\left( t\right) \right\Vert%
		\end{array}%
		\right) \text{ is parallel to }\left( 
		\begin{array}{c}
			\left\Vert \gamma _{1}\right\Vert \\ 
			\left\Vert \gamma _{2}\right\Vert%
		\end{array}%
		\right) ,
	\end{equation*}%
	we obtain that 
	\begin{eqnarray*}
		\left\Vert \gamma \right\Vert &=&\int_{a}^{b}\sqrt{\left\Vert \gamma
			_{1}^{\prime }\left( t\right) \right\Vert ^{2}+\left\Vert \gamma
			_{2}^{\prime }\left( t\right) \right\Vert ^{2}}dt \\
		&\geq &\int_{a}^{b}\left( \frac{\left\Vert \gamma _{1}\right\Vert }{\sqrt{%
				\left\Vert \gamma _{1}\right\Vert ^{2}+\left\Vert \gamma _{2}\right\Vert ^{2}%
		}}\left\Vert \gamma _{1}^{\prime }\left( t\right) \right\Vert +\frac{%
			\left\Vert \gamma _{2}\right\Vert }{\sqrt{\left\Vert \gamma _{1}\right\Vert
				^{2}+\left\Vert \gamma _{2}\right\Vert ^{2}}}\left\Vert \gamma _{2}^{\prime
		}\left( t\right) \right\Vert \right) dt \\
		&=&\frac{\left\Vert \gamma _{1}\right\Vert ^{2}}{\sqrt{\left\Vert \gamma
				_{1}\right\Vert ^{2}+\left\Vert \gamma _{2}\right\Vert ^{2}}}+\frac{%
			\left\Vert \gamma _{2}\right\Vert ^{2}}{\sqrt{\left\Vert \gamma
				_{1}\right\Vert ^{2}+\left\Vert \gamma _{2}\right\Vert ^{2}}}=\sqrt{%
			\left\Vert \gamma _{1}\right\Vert ^{2}+\left\Vert \gamma _{2}\right\Vert ^{2}%
		},
	\end{eqnarray*}%
	with equality if and only if (\ref{proportional1}) holds.
\end{proof}

\begin{corollary}
	A $C^{1}$ path $\gamma =(\gamma _{1},\gamma _{2})$ is a geodesic of $%
	M_{1}\times M_{2}$ if and only if
	
	\begin{enumerate}
		\item $\gamma _{1}$ is a geodesic of $M_{1}$,
		
		\item $\gamma _{2}$ is a geodesic of $M_{2}$,
		
		\item and the speeds of $\gamma _{1}$ and $\gamma _{2}$ match, i.e. the
		identity $\frac{\Vert \gamma _{1}^{\prime }(t)\Vert _{g^{M_{1}}}}{\Vert
			\gamma _{1}\Vert }=\frac{\Vert \gamma _{2}^{\prime }(t)\Vert _{g^{M_{2}}}}{%
			\Vert \gamma _{2}\Vert }$ holds for all $t$.
	\end{enumerate}
\end{corollary}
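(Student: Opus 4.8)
The plan is to reduce the entire statement to the equality clause of the preceding lemma by first computing the control distance on the product explicitly. Recall that in this paper a geodesic is a globally length‑minimizing $C^{1}$ path, so write $d_{M_{j}}$ for the distance functions on $M_{1},M_{2}$ and $d$ for that on $M_{1}\times M_{2}$. I would first show the product‑distance formula
\[
d\big((p_{1},p_{2}),(q_{1},q_{2})\big)=\sqrt{d_{M_{1}}(p_{1},q_{1})^{2}+d_{M_{2}}(p_{2},q_{2})^{2}}\ .
\]
The lower bound $d\ge\sqrt{d_{M_{1}}^{2}+d_{M_{2}}^{2}}$ is immediate: for any $C^{1}$ path $\gamma=(\gamma_{1},\gamma_{2})$ joining the two points, the lemma gives $\Vert\gamma\Vert\ge\sqrt{\Vert\gamma_{1}\Vert^{2}+\Vert\gamma_{2}\Vert^{2}}\ge\sqrt{d_{M_{1}}^{2}+d_{M_{2}}^{2}}$, where the second step uses $\Vert\gamma_{j}\Vert\ge d_{M_{j}}$ and the fact that $(a,b)\mapsto\sqrt{a^{2}+b^{2}}$ is nondecreasing in each argument. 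For the reverse bound I take (near‑)minimizing paths $\sigma_{j}$ in $M_{j}$ joining $p_{j}$ to $q_{j}$, reparametrize each one by constant speed over a common interval $[a,b]$ so that $\Vert\sigma_{1}'(t)\Vert/\Vert\sigma_{1}\Vert=\tfrac{1}{b-a}=\Vert\sigma_{2}'(t)\Vert/\Vert\sigma_{2}\Vert$ holds identically, i.e. so that \eqref{proportional1} is satisfied, and invoke the equality case of the lemma to get $\Vert(\sigma_{1},\sigma_{2})\Vert=\sqrt{\Vert\sigma_{1}\Vert^{2}+\Vert\sigma_{2}\Vert^{2}}$, which can be made as close as we like to $\sqrt{d_{M_{1}}^{2}+d_{M_{2}}^{2}}$. (If one of the $\sigma_{j}$ is constant, take $\gamma=(\sigma_{1},\mathrm{const})$ directly; then $\Vert\gamma\Vert=\Vert\sigma_{1}\Vert$ and the term $\Vert\gamma_{2}\Vert$ as well as the ratio in \eqref{proportional1} are read as $0$, and the formula and all subsequent steps go through verbatim.)

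Given the formula, both implications fall out. For the \emph{if} direction: if $\gamma_{1},\gamma_{2}$ are geodesics, i.e. $\Vert\gamma_{j}\Vert=d_{M_{j}}$, and their speeds match in the sense of \eqref{proportional1}, then the equality clause of the lemma gives $\Vert\gamma\Vert=\sqrt{\Vert\gamma_{1}\Vert^{2}+\Vert\gamma_{2}\Vert^{2}}=\sqrt{d_{M_{1}}^{2}+d_{M_{2}}^{2}}=d$, so $\gamma$ is length‑minimizing, hence a geodesic. For the \emph{only if} direction: if $\gamma=(\gamma_{1},\gamma_{2})$ is a geodesic, then the chain $d=\Vert\gamma\Vert\ge\sqrt{\Vert\gamma_{1}\Vert^{2}+\Vert\gamma_{2}\Vert^{2}}\ge\sqrt{d_{M_{1}}^{2}+d_{M_{2}}^{2}}=d$ must collapse. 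The equality $\sqrt{\Vert\gamma_{1}\Vert^{2}+\Vert\gamma_{2}\Vert^{2}}=\sqrt{d_{M_{1}}^{2}+d_{M_{2}}^{2}}$ together with $\Vert\gamma_{j}\Vert\ge d_{M_{j}}$ and the strict monotonicity of $\sqrt{a^{2}+b^{2}}$ in each variable forces $\Vert\gamma_{1}\Vert=d_{M_{1}}$ and $\Vert\gamma_{2}\Vert=d_{M_{2}}$, which is (1) and (2); and the equality $\Vert\gamma\Vert=\sqrt{\Vert\gamma_{1}\Vert^{2}+\Vert\gamma_{2}\Vert^{2}}$ is, by the lemma's equality case, exactly the proportional‑speed identity \eqref{proportional1}, which is (3).

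I expect the only genuine subtlety to lie in the upper‑bound half of the product‑distance formula: one must make sure the reparametrizations of the $\sigma_{j}$ can be arranged so that \eqref{proportional1} holds on the nose (constant‑speed parametrizations over a shared interval do this, and a limiting argument removes the "near‑minimizing" approximation), and one must handle cleanly the degenerate configuration in which $p_{j}=q_{j}$ for some index, where the relevant length and ratio are interpreted as $0$. Everything else is a direct reading of the equality cases already isolated in the lemma, and the same argument localizes to give the corresponding statement for locally minimizing geodesics if that interpretation is preferred.
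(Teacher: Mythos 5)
Your proposal is correct. The paper states this corollary without proof, treating it as immediate from the preceding lemma, and your argument is exactly the intended filling-in: the lower bound $\Vert\gamma\Vert\geq\sqrt{\Vert\gamma_{1}\Vert^{2}+\Vert\gamma_{2}\Vert^{2}}$ together with its equality case (the proportional-speed condition), plus the product distance formula $d=\sqrt{d_{M_{1}}^{2}+d_{M_{2}}^{2}}$ — which is itself the paper's next corollary and which you correctly derive from the lemma via constant-speed reparametrization of (near-)minimizers — forces the chain of inequalities to collapse in the "only if" direction and certifies minimality in the "if" direction. Your handling of the degenerate case $\Vert\gamma_{j}\Vert=0$ and of the interpretation of "geodesic" as length-minimizing (consistent with the paper's variational usage) are both appropriate.
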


\begin{corollary}
	The distance between two points $(p_{1},q_{1}),(p_{2},q_{2})\in M_{1}\times
	M_{2}$ is given by 
	\begin{equation*}
		d_{g}((p_{1},q_{1}),(p_{2},q_{2}))=\sqrt{\left[ d_{g^{M_{1}}}(p_{1},p_{2})%
			\right] ^{2}+\left[ d_{g^{M_{2}}}(q_{1},q_{2})\right] ^{2}}.
	\end{equation*}
\end{corollary}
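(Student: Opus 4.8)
The plan is to establish the two inequalities separately, each by reduction to the lemma preceding this corollary. Throughout write $L_1 = d_{g^{M_1}}(p_1,p_2)$ and $L_2 = d_{g^{M_2}}(q_1,q_2)$.

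First I would prove the lower bound $d_g((p_1,q_1),(p_2,q_2)) \ge \sqrt{L_1^2 + L_2^2}$. Given any piecewise $C^1$ path $\gamma : [a,b] \to M_1 \times M_2$ from $(p_1,q_1)$ to $(p_2,q_2)$, decompose it as $\gamma = (\gamma_1, \gamma_2)$ as in the lemma; since $\gamma_1$ joins $p_1$ to $p_2$ in $M_1$ and $\gamma_2$ joins $q_1$ to $q_2$ in $M_2$, we have $\|\gamma_1\| \ge L_1$ and $\|\gamma_2\| \ge L_2$. The lemma then gives $\|\gamma\| \ge \sqrt{\|\gamma_1\|^2 + \|\gamma_2\|^2} \ge \sqrt{L_1^2 + L_2^2}$, and taking the infimum over all admissible $\gamma$ yields the claim.

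Next I would prove the matching upper bound. Fix $\varepsilon > 0$ and pick piecewise $C^1$ paths $\gamma_1$ from $p_1$ to $p_2$ and $\gamma_2$ from $q_1$ to $q_2$ with $\|\gamma_1\| < L_1 + \varepsilon$ and $\|\gamma_2\| < L_2 + \varepsilon$. After reparametrizing each $\gamma_j$ by a constant multiple of its arc length over the common interval $[0,1]$, the speed $\|\gamma_j'(t)\|$ is a.e.\ constant and equal to $\|\gamma_j\|$, so the product path $\gamma = (\gamma_1, \gamma_2)$ satisfies the equality condition (\ref{proportional1}) of the lemma (both ratios are identically $1$). Hence $\|\gamma\| = \sqrt{\|\gamma_1\|^2 + \|\gamma_2\|^2} < \sqrt{(L_1+\varepsilon)^2 + (L_2+\varepsilon)^2}$, so that $d_g((p_1,q_1),(p_2,q_2)) < \sqrt{(L_1+\varepsilon)^2 + (L_2+\varepsilon)^2}$; letting $\varepsilon \to 0^+$ finishes the proof.

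The step I expect to require the most care is the reparametrization in the upper bound: one must verify that passing to constant-speed parametrizations preserves the lengths $\|\gamma_j\|$ and keeps the paths piecewise $C^1$ (or at least rectifiable, which is all that the length computation in the lemma uses), and that rescaling both $\gamma_1$ and $\gamma_2$ to the same parameter interval indeed forces the two speed ratios in (\ref{proportional1}) to agree. Using $\varepsilon$-approximations rather than genuine minimizing geodesics avoids any completeness assumption on $M_1$ and $M_2$; if minimizing geodesics do exist, one may instead take $\varepsilon = 0$ and parametrize those proportionally to arc length, which simultaneously recovers the characterization of product geodesics from the previous corollary.
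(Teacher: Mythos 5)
Your proposal is correct and follows exactly the route the paper intends: the corollary is stated without proof as an immediate consequence of the preceding path-length lemma, with the lower bound coming from the inequality $\Vert \gamma \Vert \geq \sqrt{\Vert \gamma _{1}\Vert ^{2}+\Vert \gamma _{2}\Vert ^{2}}$ applied to an arbitrary competitor and the upper bound from the equality case (\ref{proportional1}) realized by constant-speed parametrizations of near-minimizing paths in each factor. Your attention to the reparametrization step and to avoiding a completeness assumption is appropriate and does not change the substance of the argument.
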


Thus we can write a typical geodesic in the form 
\begin{equation*}
	\left\{ 
	\begin{array}{l}
		x_{2}=C_{2}\pm k\int_{0}^{x_{1}}\frac{\lambda }{\sqrt{\lambda ^{2}-[f(u)]^{2}%
		}}\,du \\ 
		x_{3}=C_{3}\pm \int_{0}^{x_{1}}\frac{[f(u)]^{2}}{\sqrt{\lambda
				^{2}-[f(u)]^{2}}}\,du%
	\end{array}%
	\right. ,
\end{equation*}%
and a metric ball centered at $y=\left( y_{1},y_{2},y_{3}\right) $ with
radius $r>0$ is given by 
\begin{equation*}
	B\left( y,r\right) \equiv \left\{ \left( x_{1},x_{2},x_{3}\right) :\left(
	x_{1},x_{3}\right) \in B_{2D}\left( \left( y_{1},y_{3}\right) ,\sqrt{%
		r^{2}-\left\vert x_{2}-y_{2}\right\vert ^{2}}\right) \right\} ,
\end{equation*}%
where $B_{2D}\left( a,s\right) $ denotes the $2$-dimensional control ball
centered at $a$ in the plane with radius $s$ that was associated with $f$
above.

\subsection{Volumes of $n$-dimensional balls}

Recall that the Lebesgue measure of the \emph{two} dimensional ball $%
B_{2D}\left( x,r\right) $ satisfies%
\begin{equation*}
	\left\vert B_{2D}\left( x,r\right) \right\vert \approx \left\{ 
	\begin{array}{ccc}
		r^{2}f(x_{1}) & \text{ if } & r\leq \frac{1}{\left\vert F^{\prime }\left(
			x_{1}\right) \right\vert } \\ 
		\frac{f\left( x_{1}+r\right) }{\left\vert F^{\prime }\left( x_{1}+r\right)
			\right\vert ^{2}} & \text{ if } & r\geq \frac{1}{\left\vert F^{\prime
			}\left( x_{1}\right) \right\vert }%
	\end{array}%
	\right. .
\end{equation*}%
Recall also that in the two dimensional case, we had%
\begin{equation*}
	\left\vert B_{2D}\left( x,d\left( x,y\right) \right) \right\vert \approx
	h_{x,y}\widehat{d}\left( x,y\right) \approx h_{x,y}\min \left\{ d\left(
	x,y\right) ,\frac{1}{\left\vert F^{\prime }\left( x_{1}+d\left( x,y\right)
		\right) \right\vert }\right\} .
\end{equation*}%
In the three dimensional case, the quantities $h_{x,y}$ and $\widehat{d}%
\left( x,y\right) $ remain formally the same and as was done above, we can
write a typical geodesic in the form 
\begin{equation*}
	\left\{ 
	\begin{array}{l}
		x_{2}=C_{2}\pm k\int_{0}^{x_{1}}\frac{\lambda }{\sqrt{\lambda ^{2}-[f(u)]^{2}%
		}}\,du \\ 
		x_{3}=C_{3}\pm \int_{0}^{x_{1}}\frac{[f(u)]^{2}}{\sqrt{\lambda
				^{2}-[f(u)]^{2}}}\,du%
	\end{array}%
	\right. ,
\end{equation*}%
so that a metric ball centered at $y=\left( y_{1},y_{2},y_{3}\right) $ with
radius $r>0$ is given by 
\begin{equation*}
	B\left( y,r\right) \equiv \left\{ \left( x_{1},x_{2},x_{3}\right) :\left(
	x_{1},x_{3}\right) \in B_{2D}\left( \left( y_{1},y_{3}\right) ,\sqrt{%
		r^{2}-\left\vert x_{2}-y_{2}\right\vert ^{2}}\right) \right\} ,
\end{equation*}%
where $B_{2D}\left( a,s\right) $ denotes the $2$-dimensional control ball
centered at $a$ in the plane parallel to the $x_{1},x_{3}$-plane with radius 
$s$ that was associated with $f$ above.

In dimension $n\geq 4$, the same arguments show that a typical geodesic has
the form 
\begin{equation*}
	\left\{ 
	\begin{array}{l}
		\mathbf{x}_{2}=\mathbf{C}_{2}\pm \mathbf{k}\int_{0}^{x_{1}}\frac{\lambda }{%
			\sqrt{\lambda ^{2}-[f(u)]^{2}}}\,du \\ 
		x_{3}=C_{3}\pm \int_{0}^{x_{1}}\frac{[f(u)]^{2}}{\sqrt{\lambda
				^{2}-[f(u)]^{2}}}\,du%
	\end{array}%
	\right. ,
\end{equation*}%
where $\mathbf{x}_{2},\mathbf{C}_{2},\mathbf{k}\in \mathbb{R}^{n-2}$ are now 
$\left( n-2\right) $-dimensional vectors, so that a metric ball centered at 
\begin{equation*}
	y=\left( y_{1},\mathbf{y}_{2},y_{3}\right) \in \mathbb{R}\times \mathbb{R}%
	^{n-2}\times \mathbb{R}=\mathbb{R}^{n},
\end{equation*}%
with radius $r>0$ is given by 
\begin{equation*}
	B\left( y,r\right) \equiv \left\{ \left( x_{1},\mathbf{x}_{2},x_{3}\right)
	:\left( x_{1},x_{3}\right) \in B_{2D}\left( \left( y_{1},y_{3}\right) ,\sqrt{%
		r^{2}-\left\vert \mathbf{x}_{2}-\mathbf{y}_{2}\right\vert ^{2}}\right)
	\right\} ,
\end{equation*}%
where $B_{2D}\left( a,s\right) $ denotes the $2$-dimensional control ball
centered at $a$ in the plane parallel to the $x_{1},x_{3}$-plane with radius 
$s$ that was associated with $f$ above.

\begin{lemma}
	\label{new}The Lebesgue measure of the \emph{three} dimensional ball $%
	B_{3D}\left( x,r\right) $ satisfies%
	\begin{equation*}
		\left\vert B_{3D}\left( x,r\right) \right\vert \approx \left\{ 
		\begin{array}{ccc}
			r^{3}f(x_{1}) & \text{ if } & r\leq \frac{2}{\left\vert F^{\prime }\left(
				x_{1}\right) \right\vert } \\ 
			\frac{f\left( x_{1}+r\right) }{\left\vert F^{\prime }\left( x_{1}+r\right)
				\right\vert ^{3}}\sqrt{r\left\vert F^{\prime }\left( x_{1}+r\right)
				\right\vert } & \text{ if } & r\geq \frac{2}{\left\vert F^{\prime }\left(
				x_{1}\right) \right\vert }%
		\end{array}%
		\right. ,
	\end{equation*}%
	and that of the $n$-dimensional ball $B_{nD}\left( x,r\right) $ satisfies%
	\begin{equation*}
		\left\vert B_{nD}\left( x,r\right) \right\vert \approx \left\{ 
		\begin{array}{ccc}
			r^{n}f(x_{1}) & \text{ if } & r\leq \frac{2}{\left\vert F^{\prime }\left(
				x_{1}\right) \right\vert } \\ 
			\frac{f\left( x_{1}+r\right) }{\left\vert F^{\prime }\left( x_{1}+r\right)
				\right\vert ^{n}}\left( r\left\vert F^{\prime }\left( x_{1}+r\right)
			\right\vert \right) ^{\frac{n}{2}-1} & \text{ if } & r\geq \frac{2}{%
				\left\vert F^{\prime }\left( x_{1}\right) \right\vert }%
		\end{array}%
		\right. ,
	\end{equation*}
\end{lemma}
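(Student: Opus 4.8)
The plan is to collapse the $n$-dimensional volume to a one-dimensional integral of two-dimensional volumes using the product structure recorded above, and then feed in the two-dimensional estimate of Proposition \ref{general-area}. Since the inverse metric tensor depends only on $x_{1}$, the product decomposition gives, by translation invariance in the $\mathbf{x}_{2}$ and $x_{3}$ directions, that we may place the center at $(x_{1},0,0)$. Fubini together with polar coordinates in the $\mathbf{x}_{2}$-variable then yields
\begin{equation*}
\left\vert B_{nD}\left((x_{1},0,0),r\right)\right\vert =\omega_{n-3}\int_{0}^{r}\left\vert B_{2D}\left((x_{1},0),\sqrt{r^{2}-\rho^{2}}\right)\right\vert \rho^{n-3}\,d\rho ,
\end{equation*}
where $\omega_{n-3}$ is the surface measure of the unit sphere in $\mathbb{R}^{n-2}$ (equal to $2$ when $n=3$, in which case $\rho^{n-3}\equiv 1$). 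Into this one substitutes $\left\vert B_{2D}((x_{1},0),s)\right\vert \approx s^{2}f(x_{1})$ for $s\lesssim\frac{1}{\left\vert F^{\prime}(x_{1})\right\vert}$ and $\left\vert B_{2D}((x_{1},0),s)\right\vert \approx\frac{f(x_{1}+s)}{\left\vert F^{\prime}(x_{1}+s)\right\vert^{2}}$ for $s\gtrsim\frac{1}{\left\vert F^{\prime}(x_{1})\right\vert}$; by the doubling of $f$ and $\left\vert F^{\prime}\right\vert$ in Lemma \ref{consequences}(2) the threshold here may be read as $\frac{2}{\left\vert F^{\prime}(x_{1})\right\vert}$.

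When $r\le\frac{2}{\left\vert F^{\prime}(x_{1})\right\vert}$ the two-dimensional radius $\sqrt{r^{2}-\rho^{2}}$ never leaves the small regime, so the integrand is $\approx f(x_{1})(r^{2}-\rho^{2})\rho^{n-3}$ and the elementary identity $\int_{0}^{r}(r^{2}-\rho^{2})\rho^{n-3}\,d\rho=\frac{2}{n(n-2)}r^{n}$ gives $\left\vert B_{nD}\right\vert\approx r^{n}f(x_{1})$. When $r\ge\frac{2}{\left\vert F^{\prime}(x_{1})\right\vert}$, set $s=\sqrt{r^{2}-\rho^{2}}$ and $G=\left\vert F^{\prime}(x_{1}+r)\right\vert$. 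The key point is that the integral concentrates on $\rho\lesssim\sqrt{r/G}$, i.e.\ on the range where $s$ stays within $O(1/G)$ of $r$. Indeed $r-s=\rho^{2}/(r+s)$, so $\frac{\rho^{2}}{2r}\le r-s\le\frac{\rho^{2}}{r}$, and convexity of $F$ (whence $\left\vert F^{\prime}\right\vert$ is decreasing) gives $F(x_{1}+s)-F(x_{1}+r)\ge G(r-s)$, so that $f(x_{1}+s)\le f(x_{1}+r)e^{-G\rho^{2}/(2r)}$ while $\left\vert F^{\prime}(x_{1}+s)\right\vert\ge G$. Hence the part of the integrand coming from the large regime is dominated by $\frac{f(x_{1}+r)}{G^{2}}e^{-G\rho^{2}/(2r)}\rho^{n-3}$, and integrating the Gaussian,
\begin{equation*}
\frac{f(x_{1}+r)}{G^{2}}\int_{0}^{\infty}e^{-G\rho^{2}/(2r)}\rho^{n-3}\,d\rho\approx\frac{f(x_{1}+r)}{G^{2}}\left(\frac{r}{G}\right)^{\frac{n-2}{2}}=\frac{f(x_{1}+r)}{G^{n}}\left(rG\right)^{\frac{n}{2}-1},
\end{equation*}
which is the asserted upper bound. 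For the matching lower bound I restrict $\rho$ to $\left(\frac12\sqrt{r/G},\sqrt{r/G}\right)$, where $r-s\lesssim 1/G$ so Lemma \ref{consequences}(2) gives $f(x_{1}+s)\approx f(x_{1}+r)$ and $\left\vert F^{\prime}(x_{1}+s)\right\vert\approx G$; integrating $\rho^{n-3}$ over this window reproduces $\frac{f(x_{1}+r)}{G^{n}}\left(rG\right)^{\frac{n}{2}-1}$.

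What remains, and where the genuine work lies, is to show the leftover contribution — from the short $\rho$-interval near $\rho=r$ on which $s=\sqrt{r^{2}-\rho^{2}}$ has dropped into the small regime $s\lesssim\frac{1}{\left\vert F^{\prime}(x_{1})\right\vert}$ — does not exceed the main term. That interval has length $\approx\frac{1}{r\left\vert F^{\prime}(x_{1})\right\vert^{2}}$, and on it the integrand is $\approx f(x_{1})(r^{2}-\rho^{2})\rho^{n-3}\lesssim\frac{f(x_{1})}{\left\vert F^{\prime}(x_{1})\right\vert^{2}}\,r^{n-3}$, so its contribution is $\lesssim\frac{f(x_{1})\,r^{n-4}}{\left\vert F^{\prime}(x_{1})\right\vert^{4}}$. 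To bound this by $\frac{f(x_{1}+r)}{G^{n}}\left(rG\right)^{\frac{n}{2}-1}$ I would use $\left\vert F^{\prime}(x_{1}+r)\right\vert\le\left\vert F^{\prime}(x_{1})\right\vert$, the large-radius hypothesis $r\left\vert F^{\prime}(x_{1})\right\vert\ge 2$, and crucially the exponential smallness $f(x_{1})\le f(x_{1}+r)\,e^{-c\left\vert F^{\prime}(x_{1})\right\vert r}$ coming from $F(x_{1})-F(x_{1}+r)=\int_{x_{1}}^{x_{1}+r}\left\vert F^{\prime}\right\vert\gtrsim\left\vert F^{\prime}(x_{1})\right\vert r$; since $e^{-ct}t^{m}$ is bounded for $t\ge2$, this beats every power of $r\left\vert F^{\prime}\right\vert$ and the tail is absorbed. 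One should also check that $\sqrt{r/G}$ stays below the upper limit $\sqrt{r^{2}-\left\vert F^{\prime}(x_{1})\right\vert^{-2}}$ of the large-regime integral, again a consequence of the structure conditions in Definition \ref{structure conditions}. Combining the three pieces gives the displayed formula for $\left\vert B_{nD}(x,r)\right\vert$ for every $n\ge 3$, and the case $n=3$ is the stated three-dimensional formula.
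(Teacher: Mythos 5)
Your decomposition is exactly the paper's: Fubini in the $\mathbf{x}_{2}$-variable, the two-dimensional volume estimates of Proposition \ref{general-area}, and, in the large-radius case, the observation that the integral concentrates where $r-\sqrt{r^{2}-\rho^{2}}\lesssim 1/\left\vert F^{\prime}\left( x_{1}+r\right) \right\vert$. Your Gaussian computation is the paper's change of variable $w=r-\sqrt{r^{2}-R^{2}}$ followed by the one-variable equivalence (\ref{this equiv}) in disguise, and your lower-bound window corresponds to the paper's restriction to $w\leq \varepsilon /\left\vert F^{\prime}\left( z_{1}\right) \right\vert$ there. The small-radius case and the $n=3$ specialization are handled identically.

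The only genuine divergence is the outer annulus where $\sqrt{r^{2}-\rho ^{2}}$ drops into the small regime. The paper avoids it altogether: since $\left\vert B_{2D}\left( \cdot ,s\right) \right\vert$ is nondecreasing in $s$, the radial integrand is nonincreasing in $\rho$, so the integral over $\frac{r}{2}\leq \rho \leq r$ is dominated by the integral over $0\leq \rho \leq \frac{r}{2}$, on which $\sqrt{r^{2}-\rho ^{2}}\geq \frac{\sqrt{3}}{2}r\geq 1/\left\vert F^{\prime }\left( x_{1}\right) \right\vert$ always. You estimate the annulus directly, which is fine in principle, but the inequality you invoke to absorb it, namely $F\left( x_{1}\right) -F\left( x_{1}+r\right) =\int_{x_{1}}^{x_{1}+r}\left\vert F^{\prime }\right\vert \gtrsim r\left\vert F^{\prime }\left( x_{1}\right) \right\vert$, is false when $r\gg x_{1}$: $\left\vert F^{\prime }\right\vert$ is \emph{decreasing}, so the integral is only bounded below by $r\left\vert F^{\prime }\left( x_{1}+r\right) \right\vert$ (for $F_{\sigma }$ with $x_{1}\rightarrow 0$ and $r$ fixed, the left side is $\approx x_{1}^{-\sigma }$ while $r\left\vert F^{\prime }\left( x_{1}\right) \right\vert \approx rx_{1}^{-\sigma -1}$ is much larger). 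The step is salvageable: with $G=\left\vert F^{\prime }\left( x_{1}+r\right) \right\vert$ one has $f\left( x_{1}\right) \leq f\left( x_{1}+r\right) e^{-rG}$, and $rG\gtrsim 1$ follows from structure conditions (3) and (4) by treating $r\leq x_{1}$ and $r>x_{1}$ separately; since the required ratio of tail to main term is at most $\left( rG\right) ^{\frac{n}{2}-3}$ after discarding the favourable factor $\left( \left\vert F^{\prime }\left( x_{1}\right) \right\vert /G\right) ^{4}\geq 1$, the bound $e^{-rG}\lesssim \left( rG\right) ^{3-\frac{n}{2}}$ closes the argument. So your conclusion stands, but the cited justification must be replaced either by this corrected exponential bound or by the paper's monotonicity truncation.
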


\begin{proof}
	We estimate the measure $\left\vert B\left( x,r\right) \right\vert $ of an $%
	\emph{n}$-dimensional ball $B\left( x,r\right) =B_{nD}\left( x,r\right) $,
	where $x=\left( x_{1},\mathbf{x}_{2},x_{3}\right) \in \mathbb{R}\times 
	\mathbb{R}^{n-2}\times \mathbb{R}=\mathbb{R}^{n}$, and where we use boldface
	font for $\mathbf{x}_{2}$ to emphasize that it belongs to $\mathbb{R}^{n-2}$
	as opposed to $\mathbb{R}$. We consider two cases, where we may assume by
	symmetry that $\mathbf{x}_{2}=\mathbf{0}$ and $x_{3}=0$.
	
	\textbf{Case }$r<\frac{2}{\left\vert F^{\prime }\left( x_{1}\right)
		\right\vert }$: In this case we have $\sqrt{r^{2}-\left\vert \mathbf{y}%
		_{2}\right\vert ^{2}}\leq r<\frac{2}{\left\vert F^{\prime }\left(
		x_{1}\right) \right\vert }$ and 
	\begin{equation*}
		\left\vert B_{2D}\left( \left( x_{1},\mathbf{0},0\right) ,\sqrt{%
			r^{2}-\left\vert \mathbf{y}_{2}\right\vert ^{2}}\right) \right\vert \approx
		(r^{2}-\left\vert \mathbf{y}_{2}\right\vert ^{2})f(x_{1}),
	\end{equation*}%
	where for $r<\frac{1}{\left\vert F^{\prime }\left( x_{1}\right) \right\vert }
	$ we appeal to the second assertion in Proposition \ref{general-area}, while
	for $\frac{1}{\left\vert F^{\prime }\left( x_{1}\right) \right\vert }\leq r<%
	\frac{2}{\left\vert F^{\prime }\left( x_{1}\right) \right\vert }$ we appeal
	to the first assertion in Proposition \ref{general-area} and use the
	estimates for $f$ and $\left\vert F^{\prime }\right\vert $ in (2) of Lemma %
	\ref{consequences}. With $\mathrm{A}\left( a,b\right) \equiv \left\{ \mathbf{%
		y}_{2}\in \mathbb{R}^{n-2}:a\leq \left\vert \mathbf{y}_{2}\right\vert \leq
	b\right\} $ denoting the annulus centered at the origin in $\mathbb{R}^{n-2}$
	with radii $a<b$, the above gives 
	\begin{align*}
		\left\vert B\left( x,r\right) \right\vert =\int\limits_{\mathrm{A}\left(
			0,r\right) }\left\vert B_{2D}\left( \left( x_{1},\mathbf{0},0\right) ,\sqrt{%
			r^{2}-\left\vert \mathbf{y}_{2}\right\vert ^{2}}\right) \right\vert d\mathbf{%
			y}_{2}&\approx \int\limits_{\mathrm{A}\left( 0,r\right) }(r^{2}-\left\vert 
		\mathbf{y}_{2}\right\vert ^{2})f(x_{1})d\mathbf{y}_{2}\\
		&\approx r^{n}f(x_{1}).
	\end{align*}%
	\textbf{Case }$r\geq \frac{2}{\left\vert F^{\prime }\left( x_{1}\right)
		\right\vert }$: Again we have%
	\begin{equation*}
		\left\vert B\left( x,r\right) \right\vert =\int\limits_{B\left( 0,r\right)
		}\left\vert B_{2D}\left( \left( x_{1},\mathbf{0},0\right) ,\sqrt{%
			r^{2}-\left\vert \mathbf{y}_{2}\right\vert ^{2}}\right) \right\vert d\mathbf{%
			y}_{2}\ .
	\end{equation*}%
	Since the measure of the ball $B_{2D}\left( \left( x_{1},\mathbf{0},0\right)
	,R\right) $ is nondecreasing as a function of the radius $R$, we have%
	\begin{align*}
		\left\vert B\left( x,r\right) \right\vert &\approx \int\limits_{B\left( 0,%
			\frac{r}{2}\right) }\left\vert B_{2D}\left( \left( x_{1},\mathbf{0},0\right)
		,\sqrt{r^{2}-\left\vert \mathbf{y}_{2}\right\vert ^{2}}\right) \right\vert d%
		\mathbf{y}_{2}\\
		&=\int\limits_{B\left( 0,\frac{r}{2}\right) }\frac{f\left(
			x_{1}+\sqrt{r^{2}-\left\vert \mathbf{y}_{2}\right\vert ^{2}}\right) }{%
			\left\vert F^{\prime }\left( x_{1}+\sqrt{r^{2}-\left\vert \mathbf{y}%
				_{2}\right\vert ^{2}}\right) \right\vert ^{2}}d\mathbf{y}_{2}\ .
	\end{align*}%
	Using polar coordinates and our assumptions on $F^{\prime }$ we continue with%
	\begin{align*}
		\left\vert B\left( x,r\right) \right\vert &\approx \int\limits_{0}^{\frac{r}{2%
		}}\frac{f\left( x_{1}+\sqrt{r^{2}-R^{2}}\right) }{\left\vert F^{\prime
			}\left( x_{1}+\sqrt{r^{2}-R^{2}}\right) \right\vert ^{2}}R^{n-3}dR\\
		&\approx 
		\frac{1}{\left\vert F^{\prime }\left( x_{1}+r\right) \right\vert ^{2}}%
		\int\limits_{0}^{\frac{r}{2}}f\left( x_{1}+\sqrt{r^{2}-R^{2}}\right)
		R^{n-3}dR.
	\end{align*}%
	Now use the change of variable $w=r-\sqrt{r^{2}-R^{2}}\in \left( 0,\frac{2-%
		\sqrt{3}}{2}r\right) $ to write the last integral as%
	\begin{equation*}
		\int\limits_{0}^{\frac{r}{2}}f\left( x_{1}+\sqrt{r^{2}-R^{2}}\right)
		R^{n-3}dR\approx \int\limits_{0}^{\frac{2-\sqrt{3}}{2}r}f\left(
		x_{1}+r-w\right) \left( rw\right) ^{\frac{n}{2}-2}rdw,
	\end{equation*}%
	so that we obtain%
	\begin{equation*}
		\left\vert B\left( x,r\right) \right\vert \approx \frac{r^{\frac{n}{2}-1}}{%
			\left\vert F^{\prime }\left( x_{1}+r\right) \right\vert ^{2}}%
		\int\limits_{0}^{\frac{2-\sqrt{3}}{2}r}f\left( x_{1}+r-w\right) w^{\frac{n}{2%
			}-2}dw.
	\end{equation*}%
	Now we observe that the upper limit of the integral above satisfies%
	\begin{equation*}
		\frac{2-\sqrt{3}}{2}r\gtrsim \frac{1}{\left\vert F^{\prime }\left(
			x_{1}+r\right) \right\vert }.
	\end{equation*}%
	Indeed, if $r<x_{1}$, then our assumption on $r$ gives $\frac{2-\sqrt{3}}{2}%
	r\gtrsim \frac{1}{\left\vert F^{\prime }\left( x_{1}\right) \right\vert }%
	\approx \frac{1}{\left\vert F^{\prime }\left( x_{1}+r\right) \right\vert }$,
	while if $r\geq x_{1}$, then our assumption on $F^{\prime }$ gives $\frac{2-%
		\sqrt{3}}{2}r\gtrsim x_{1}+r\gtrsim \frac{1}{\left\vert F^{\prime }\left(
		x_{1}+r\right) \right\vert }$. Lemma \ref{new} now follows immediately from
	this equivalence:
	
	\qquad For $\beta >-1$, $0<\varepsilon <1$ and $\frac{\varepsilon }{%
		\left\vert F^{\prime }\left( z_{1}\right) \right\vert }<r<z_{1}$, we have%
	\begin{equation}
	\int\limits_{0}^{r}f\left( z_{1}-w\right) w^{\beta }dw\approx \frac{f\left(
		z_{1}\right) }{\left\vert F^{\prime }\left( z_{1}\right) \right\vert ^{\beta
			+1}}.  \label{this equiv}
	\end{equation}
	
	To see (\ref{this equiv}), we note that on the one hand,%
	\begin{equation*}
		\int\limits_{0}^{r}f\left( z_{1}-w\right) w^{\beta }dw\geq \int\limits_{0}^{%
			\frac{\varepsilon }{\left\vert F^{\prime }\left( z_{1}\right) \right\vert }%
		}f\left( z_{1}-w\right) w^{\beta }dw\approx \frac{f\left( z_{1}\right) }{%
			\left\vert F^{\prime }\left( z_{1}\right) \right\vert ^{\beta +1}}.
	\end{equation*}%
	On the other hand,%
	\begin{eqnarray*}
		\ln \frac{f\left( z_{1}-w\right) }{f\left( z_{1}\right) } &=&\int%
		\limits_{z_{1}-w}^{z_{1}}F^{\prime }\left( t\right) dt\leq -\left\vert
		F^{\prime }\left( z_{1}\right) \right\vert w \\
		&\Longrightarrow &f\left( z_{1}-w\right) \leq f\left( z_{1}\right)
		e^{-\left\vert F^{\prime }\left( z_{1}\right) \right\vert w},
	\end{eqnarray*}%
	which gives%
	\begin{equation*}
		\int\limits_{0}^{z_{1}}f\left( z_{1}-w\right) w^{\beta }dw\lesssim \frac{%
			f\left( z_{1}\right) }{\left\vert F^{\prime }\left( z_{1}\right) \right\vert
			^{\beta +1}}.
	\end{equation*}
\end{proof}

\chapter{Orlicz Norm Sobolev Inequalities}

In this second chapter of Part 3, we prove Orlicz Sobolev inequalities for infinitely degenerate geometries.
The key to these inequalities is a subrepresentation formula for a Lipschitz
function $w$ in terms of its control gradient that vanishes at the `end' of
a ball. The kernel $K\left( x,y\right) $ of this subrepresentation in the
infinitely degenerate setting is in general much smaller that the familiar $%
\frac{\limfunc{distance}}{\limfunc{volume}}=\frac{d\left( x,y\right) }{%
	\left\vert B\left( x,d\left( x,y\right) \right) \right\vert }$ kernel that
arises in the finite type case - see Remark \ref{dhat} below for more on
this. With this we then establish Orlicz Sobolev bump inequalities and the
more familiar $1-1$ Poincar\'{e} inequality.

\section{Subrepresentation inequalities}

We first consider the two dimensional case, and then generalize to higher
dimensions in the subsequent subsection.

\subsection{The $2$-dimensional case}

We will obtain a$\ $subrepresentation formula for the degenerate geometry by
applying the method of Lemma 79 in \cite{SaWh4}. For simplicity, we will
only consider $x$ with $x_{1}>0$; since our metric is symmetric about the $y$
axis it suffices to consider this case. For the general case, all objects
defined on the right half plane must be defined on the left half plane by
reflection about the $y$-axis.

Consider a sequence of control balls $\left\{ B\left( x,r_{k}\right)
\right\} _{k=1}^{\infty }$ centered at $x$ with radii $r_{k}\searrow 0$ such
that $r_{0}=r$ and%
\begin{equation*}
	\left\vert B\left( x,r_{k}\right) \setminus B\left( x,r_{k+1}\right)
	\right\vert \approx \left\vert B\left( x,r_{k+1}\right) \right\vert ,\ \ \ \
	\ k\geq 1,
\end{equation*}%
so that $B\left( x,r_{k}\right) $ is divided into two parts having
comparable area. We may in fact assume that%
\begin{equation}
r_{k+1}=\left\{ 
\begin{array}{lll}
r^{\ast }\left( x_{1},r_{k}\right) &  & \text{if }r_{k}\geq \frac{1}{%
	\left\vert F^{\prime }\left( x_{1}\right) \right\vert } \\ 
\frac{1}{2}r_{k} &  & \text{if }r_{k}<\frac{1}{\left\vert F^{\prime }\left(
	x_{1}\right) \right\vert }%
\end{array}%
\right.  \label{rkp1}
\end{equation}%
where $r^{\ast }$ is defined in Proposition \ref{height}. Indeed, if $%
r_{k}\geq \frac{1}{\left\vert F^{\prime }\left( x_{1}\right) \right\vert }$,
then by (1) in Proposition \ref{height} we have that%
\begin{equation*}
	r_{k}-r_{k+1}\approx \frac{1}{\left\vert F^{\prime }\left(
		x_{1}+r_{k}\right) \right\vert }
\end{equation*}%
and then by (2) in Lemma \ref{consequences} it follows that $f\left(
x_{1}+r_{k}\right) \approx f\left( x_{1}+r_{k+1}\right) $ and $\left\vert
F^{\prime }\left( x_{1}+r_{k}\right) \right\vert \approx \left\vert
F^{\prime }\left( x_{1}+r_{k+1}\right) \right\vert $, so by Corollary \ref%
{thick_part} and (1) in Proposition \ref{height} it follows that 
\begin{eqnarray*}
	\left\vert B\left( x,r_{k}\right) \right\vert &\approx &\left\vert \left\{
	B\left( x,r_{k}\right) \bigcap y_{1}>x_{1}+r_{k+1}\right\} \right\vert
	\approx \left( r_{k}-r_{k+1}\right) h\left( x_{1},x_{1}+r_{k}\right) \\
	&\approx &\frac{1}{\left\vert F^{\prime }\left( x_{1}+r_{k}\right)
		\right\vert }\frac{f\left( x_{1}+r_{k}\right) }{\left\vert F^{\prime }\left(
		x_{1}+r_{k}\right) \right\vert }\approx \frac{1}{\left\vert F^{\prime
		}\left( x_{1}+r_{k+1}\right) \right\vert }\frac{f\left( x_{1}+r_{k+1}\right) 
	}{\left\vert F^{\prime }\left( x_{1}+r_{k+1}\right) \right\vert } \\
	&\approx &\left( r_{k+1}-r_{k+2}\right) h\left( x_{1},x_{1}+r_{k+1}\right)\\
	&\approx& \left\vert \left\{ B\left( x,r_{k+1}\right) \bigcap
	y_{1}>x_{1}+r_{k+2}\right\} \right\vert 
	\approx \left\vert B\left( x,r_{k+1}\right) \right\vert .
\end{eqnarray*}%
On the other hand, if $r_{k}\leq \frac{1}{\left\vert F^{\prime }\left(
	x_{1}\right) \right\vert }$ then by (2) in Proposition \ref{height} $%
r_{k}-r_{k+1}\approx r_{k}$ and $h\left( x_{1},x_{1}+r_{k}\right) \approx
r_{k}f\left( x_{1}\right) $, hence by Corollary \ref{thick_part} 
\begin{equation*}
	\left\vert B\left( x,r_{k}\right) \right\vert \approx \left(
	r_{k}-r_{k+1}\right) h\left( x_{1},x_{1}+r_{k+1}\right) \approx
	r_{k}^{2}f\left( x_{1}\right) \approx r_{k+1}^{2}f\left( x_{1}\right)
	\approx \left\vert B\left( x,r_{k+1}\right) \right\vert .
\end{equation*}%
As a consequence we also have that%
\begin{align*}
	\left( r_{k}-r_{k+1}\right) h\left( x_{1},x_{1}+r_{k}\right) &\approx \left(
	r_{k+1}-r_{k+2}\right) h\left( x_{1},x_{1}+r_{k+1}\right)\\ 
	&\lesssim \left(
	r_{k+1}-r_{k+2}\right) h\left( x_{1},x_{1}+r_{k}\right)
\end{align*}%
so $r_{k}-r_{k+1}\leq C\left( r_{k+1}-r_{k+2}\right) \leq Cr_{k+1}$, which
yields%
\begin{equation}
\frac{1}{C+1}r_{k}\leq r_{k+1}.  \label{rkp12}
\end{equation}

Now for $x_{1},t>0$ define 
\begin{equation}
h^{\ast }\left( x_{1},t\right) =\int_{x_{1}}^{x_{1}+t}\frac{f^{2}\left(
	u\right) }{\sqrt{f^{2}\left( x_{1}+t\right) -f^{2}\left( u\right) }}du,
\label{def h*}
\end{equation}%
so that $h^{\ast }\left( x_{1},t\right) $ describes the `height' above $%
x_{2} $ at which the geodesic through $x=\left( x_{1},x_{2}\right) $ curls
back toward the $y$-axis at the point $\left( x_{1}+t,x_{2}+h^{\ast }\left(
x_{1},t\right) \right) $. Thus the graph of $y=h^{\ast }\left(
x_{1},t\right) $ is the curve separating the analogues of Region 1 and
Region 2 relative to the ball $B\left( x,r\right) $. See Figure \ref{graph-h}.

\begin{figure}
	\includegraphics{./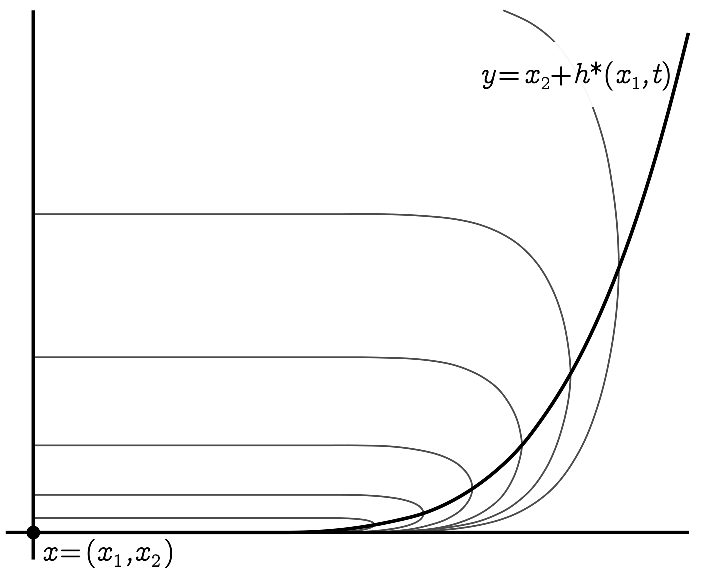}
	\caption{The graph of $h^{\ast }$ for $x_{1}>0$ small.}
	\label{graph-h}
\end{figure}

Then in the case $r_{k}\geq \frac{1}{\left\vert F^{\prime }\left(
	x_{1}\right) \right\vert }$, we have $h^{\ast }\left( x_{1},r_{k+1}\right)
=h\left( x_{1},r_{k}\right) $, $k\geq 0$, where $h\left( x_{1},r_{k}\right) $
is the height of $B\left( x,r_{k}\right) $ as defined in Proposition \ref%
{height}. In the opposite case $r_{k}<\frac{1}{\left\vert F^{\prime }\left(
	x_{1}\right) \right\vert }$, we have $r_{k+1}=\frac{1}{2}r_{k}$ instead, and
we will estimate differently.

For $k\geq 0$ define
\begin{align*}
	&E\left( x,r_{k}\right) \\
	&\equiv \left\{ 
	\begin{array}{l}
		\left\{ y:x_{1}+r_{k+1}\leq y_{1}<x_{1}+r_{k},~\left\vert y_{2}\right\vert
		<h^{\ast }\left( x_{1},y_{1}-x_{1}\right) \right\},  \text{ if }  r_{k}\geq 
		\frac{1}{\left\vert F^{\prime }\left( x_{1}\right) \right\vert } \\ 
		\left\{ y:x_{1}+r_{k+1}\leq y_{1}<x_{1}+r_{k},~\left\vert y_{2}\right\vert
		<h^{\ast }\left( x_{1},r_{k}^{\ast }\right) =h\left( x_{1},r_{k}\right)
		\right\},  \text{ if }  r_{k}<\frac{1}{\left\vert F^{\prime }\left(
			x_{1}\right) \right\vert }%
	\end{array}%
	\right. 
\end{align*}%
where we have written $r_{k}^{\ast }=r^{\ast }\left( x_{1},r_{k}\right) $
for convenience. We claim that 
\begin{equation}
\left\vert E\left( x,r_{k}\right) \right\vert \approx \left\vert E\left(
x,r_{k}\right) \bigcap B\left( x,r_{k}\right) \right\vert \approx \left\vert
B\left( x,r_{k}\right) \right\vert \text{ for all }k\geq 1.
\label{claim that}
\end{equation}%
Indeed, in the first case $r_{k}\geq \frac{1}{\left\vert F^{\prime }\left(
	x_{1}\right) \right\vert }$, the second set of inequalities follows
immediately by Corollary \ref{thick_part}, and since $E\left( x,r_{k}\right)
\subset B\left( x,r_{k-1}\right) $ we have that%
\begin{eqnarray*}
	\left\vert E\left( x,r_{k}\right) \bigcap B\left( x,r_{k}\right) \right\vert
	&\leq &\left\vert E\left( x,r_{k}\right) \right\vert \leq \left\vert B\left(
	x,r_{k-1}\right) \right\vert \\
	&\lesssim &\left\vert B\left( x,r_{k}\right) \right\vert \lesssim \left\vert
	E\left( x,r_{k}\right) \bigcap B\left( x,r_{k}\right) \right\vert ,
\end{eqnarray*}%
which establishes the first set of inequalities in (\ref{claim that}). In
the second case $r_{k}<\frac{1}{\left\vert F^{\prime }\left( x_{1}\right)
	\right\vert }$, we have 
\begin{equation*}
	\left\vert E\left( x,r_{k}\right) \right\vert =\frac{1}{2}r_{k}h^{\ast
	}\left( x_{1},r_{k}^{\ast }\right) \approx \left( r_{k}-r_{k}^{\ast }\right)
	h\left( x_{1},r_{k}^{\ast }\right) \approx \left\vert B\left( x,r_{k}\right)
	\right\vert ,
\end{equation*}%
and from (\ref{low bound}) with $\left( x,y\right) \in \partial B\left(
x,r_{k}\right) $, we have%
\begin{equation*}
	y\geq f\left( x\right) \left( r_{k}-x+x_{1}\right) \approx f\left(
	x_{1}\right) r_{k},
\end{equation*}%
for \emph{all} $x\in \left[ x_{1},x_{1}+r\right] $ since we are in the case $%
r_{k}<\frac{1}{\left\vert F^{\prime }\left( x_{1}\right) \right\vert }$. It
follows that%
\begin{equation*}
	E\left( x,r_{k}\right) \cap B\left( x,r_{k}\right) \supset \left[ x_{1}+%
	\frac{r_{k}}{2},x_{1}+\frac{3r_{k}}{4}\right] \times \left[ -cf\left(
	x_{1}\right) r_{k},cf\left( x_{1}\right) r_{k}\right]
\end{equation*}%
and hence that 
\begin{equation*}
	\left\vert E\left( x,r_{k}\right) \cap B\left( x,r_{k}\right) \right\vert
	\geq \frac{1}{2}cr_{k}f\left( x_{1}\right) r_{k}\approx \left\vert B\left(
	x,r_{k}\right) \right\vert \geq \left\vert E\left( x,r_{k}\right) \cap
	B\left( x,r_{k}\right) \right\vert .
\end{equation*}%
This completes the proof of (\ref{claim that}).

Now define $\Gamma \left( x,r\right) $ to be the set 
\begin{equation*}
	\Gamma \left( x,r\right) =\bigcup\limits_{k=1}^{\infty }\func{co}\left[
	E\left( x,r_{k}\right) \cup E\left( x,r_{k+1}\right) \right] ,
\end{equation*}%
where $\func{co}E$ denotes the convex hull of the set $E$. Set%
\begin{equation*}
	\mathbb{E}_{x,r_{1}}w\equiv \frac{1}{\left\vert E(x,r_{1})\right\vert }\int
	\int_{E(x,r_{1})}w.
\end{equation*}

\begin{lemma}
	\label{lemma-subrepresentation}With $\Gamma \left( x,r\right) $, $E\left(
	x,r_{1}\right) $ and $\mathbb{E}_{x,r_{1}}$ as above, and in particular with 
	$r_{0}=r$ and $r_{1}$ given by (\ref{rkp1}), we have the subrepresentation
	formula%
	\begin{equation}
	\left\vert w\left( x\right) -\mathbb{E}_{x,r_{1}}w\right\vert \leq
	C\int_{\Gamma \left( x,r\right) }\left\vert \nabla _{A}w\left( y\right)
	\right\vert \frac{\widehat{d}\left( x,y\right) }{\left\vert B\left(
		x,d\left( x,y\right) \right) \right\vert }dy,  \label{subrepresentation}
	\end{equation}%
	where $\nabla _{A}$ is as in (\ref{def A grad}) and 
	\begin{equation}
	\widehat{d}\left( x,y\right) \equiv \min \left\{ d\left( x,y\right) ,\frac{1%
	}{\left\vert F^{\prime }\left( x_{1}+d\left( x,y\right) \right) \right\vert }%
	\right\} .  \label{def d hat'}
	\end{equation}
\end{lemma}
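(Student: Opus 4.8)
The plan is to follow the scheme of Lemma 79 in \cite{SaWh4}, adapted to the geodesic geometry computed in the preceding sections. The basic idea is a telescoping argument along the annular decomposition $\{E(x,r_k)\}$. First I would write
\[
w(x)-\mathbb{E}_{x,r_1}w = \sum_{k=1}^{\infty}\left(\mathbb{E}_{x,r_{k+1}}w - \mathbb{E}_{x,r_k}w\right),
\]
where $\mathbb{E}_{x,r_k}w$ denotes the average of $w$ over $E(x,r_k)$; this is legitimate since, by \eqref{claim that}, $|E(x,r_k)|\approx|B(x,r_k)|$ shrinks to $0$ and $w$ is Lipschitz, so $\mathbb{E}_{x,r_k}w\to w(x)$. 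The first term of the sum is handled separately: $\mathbb{E}_{x,r_2}w-\mathbb{E}_{x,r_1}w$ is controlled by a gradient integral over $\operatorname{co}[E(x,r_1)\cup E(x,r_2)]$.

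Next, for each $k$ I would bound a single difference $|\mathbb{E}_{x,r_{k+1}}w-\mathbb{E}_{x,r_k}w|$. The standard device is to introduce the convex hull $\operatorname{co}[E(x,r_k)\cup E(x,r_{k+1})]$ and use that it is a set of measure $\approx|B(x,r_k)|$ in which one can join almost every pair of points $(z,z')\in E(x,r_k)\times E(x,r_{k+1})$ by a segment staying inside the hull. Averaging the fundamental theorem of calculus $w(z)-w(z') = \int_0^1 \nabla w\cdot(z-z')\,dt$ over both variables gives
\[
\left|\mathbb{E}_{x,r_{k+1}}w-\mathbb{E}_{x,r_k}w\right|\le \frac{C}{|B(x,r_k)|}\int_{\operatorname{co}[E(x,r_k)\cup E(x,r_{k+1})]} |\nabla w(y)|\,\operatorname{diam}_{\text{relevant direction}}(y)\,dy.
\]
Here is where the geometry enters. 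The set $\operatorname{co}[E(x,r_k)\cup E(x,r_{k+1})]$ has $y_1$-extent $\approx r_k-r_{k+2}\approx r_k-r_{k+1}$ and $y_2$-extent $\approx h(x_1,r_k)$. A displacement in the $y_1$-direction costs a factor $\approx r_k - r_{k+1}$, which by Proposition \ref{height} is $\approx \min\{r_k, 1/|F'(x_1+r_k)|\} = \widehat d(x,y)$ (up to the comparability of $d(x,y)$ with $r_k$ on this annulus). A displacement in the $y_2$-direction costs a factor $\approx h(x_1,r_k)$, but the $y_2$-component of $\nabla w$ that appears in $|\nabla w|$ (Euclidean) must be converted to $|\nabla_A w|$: since $\nabla_A=(\partial_{x_1}, f(x_1)\partial_{x_2})$, one has $|\partial_{x_2} w|= f(x_1)^{-1}|(\nabla_A w)_2|$, and $h(x_1,r_k)/f(x_1+r_k)\approx 1/|F'(x_1+r_k)|$ (Lemma \ref{height 1}), which is again $\widehat d(x,y)$ up to constants. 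Both directions therefore contribute a factor $\lesssim \widehat d(x,y)$ times $|\nabla_A w(y)|$. Combined with $|B(x,r_k)|\approx|B(x,d(x,y))|$ — valid since $d(x,y)\approx r_k$ on the annulus, using the doubling estimates in Lemma \ref{consequences}, Proposition \ref{general-area} and Corollary \ref{thick_part} — this yields the pointwise-in-$y$ kernel bound $\widehat d(x,y)/|B(x,d(x,y))|$.

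Finally I would sum over $k$. Because the sets $\operatorname{co}[E(x,r_k)\cup E(x,r_{k+1})]$ have bounded overlap (each $y$ lies in at most a fixed number of them, thanks to $\frac{1}{C+1}r_k\le r_{k+1}\le r_k$ from \eqref{rkp12} and \eqref{rkp1}) and their union is contained in $\Gamma(x,r)$, the sum of the annular estimates is dominated by a single integral over $\Gamma(x,r)$, giving \eqref{subrepresentation}. The main obstacle I anticipate is the bookkeeping in the single-annulus estimate: one must verify carefully that almost every segment joining $E(x,r_k)$ to $E(x,r_{k+1})$ stays inside the convex hull and that the ratio of the measure of the "good cone" of such segments to $|B(x,r_k)|$ is bounded below — exactly the kind of geometric fact that Corollary \ref{thick_part} (the thick/thin parts of a ball have comparable measure) is designed to supply — and that on each annulus the replacement $d(x,y)\leftrightarrow r_k$ inside $\widehat d$ and inside the volume is justified up to multiplicative constants. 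The structure conditions in Definition \ref{structure conditions}, especially the doubling of $|F'|$ and the bound $1/(-xF'(x))\le 1/\varepsilon$, are precisely what make all these comparabilities uniform in $k$.
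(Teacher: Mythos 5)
Your proposal follows essentially the same route as the paper's proof: the same telescoping over the ends $E(x,r_k)$, the same directional cost analysis (a factor $r_k-r_{k+1}$ for horizontal displacement and $h_k/f\approx r_k-r_{k+1}$ for vertical displacement after converting $\partial_{x_2}w$ to $\nabla_A w$), the same identifications $r_k-r_{k+1}\approx\widehat d(x,y)$ and $|E(x,r_k)|\approx|B(x,r_k)|\approx|B(x,d(x,y))|$ on each annulus, and the same summation using the disjointness of the $E(x,r_k)$. The only implementation difference is that the paper replaces your averaged straight segments by taxicab paths (first horizontal, then vertical) run through the product sets $E(x,r_{k+1})\times E(x,r_k)$, which together with Fubini eliminates the ``good cone of segments'' bookkeeping you flag as the main anticipated obstacle.
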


Note that when $f\left( r\right) =r^{N}$ is finite type, then $\widehat{d}%
\left( x,y\right) \approx d\left( x,y\right) $.

\begin{proof}
	Recall the sequence $\left\{ r_{k}\right\} _{k=1}^{\infty }$ of decreasing
	radii above. Then since $w$ is \textit{a priori} Lipschitz continuous, we
	can write, 
	\begin{align*}
		w\left( x\right) -&\mathbb{E}_{x,r_{1}}w \\
		&=\lim_{k\rightarrow \infty }\frac{1%
		}{\left\vert E\left( x,r_{k}\right) \right\vert }\int_{E\left(
			x,r_{k}\right) }w\left( y\right) dy-\frac{1}{\left\vert
			E(x,r_{1})\right\vert }\int_{E(x,r_{1})}w \\
		&=\sum_{k=1}^{\infty }\left\{ \frac{1}{\left\vert E\left( x,r_{k+1}\right)
			\right\vert }\int_{E\left( x,r_{k+1}\right) }w\left( y\right) dy-\frac{1}{%
			\left\vert E\left( x,r_{k}\right) \right\vert }\int_{E\left( x,r_{k}\right)
		}w\left( z\right) dz\right\} ,
	\end{align*}%
	and so we have%
	\begin{align*}
		\left\vert w\left( x\right) -\mathbb{E}_{x,r_{1}}w\right\vert &\\
		\lesssim&
		\sum_{k=1}^{\infty }\frac{1}{\left\vert B\left( x,r_{k}\right) \right\vert
			^{2}}\int_{E\left( x,r_{k+1}\right) \times E\left( x,r_{k}\right)
		}\left\vert w\left( y\right) -w\left( z\right) \right\vert dydz \\
		\lesssim &\sum_{k=1}^{\infty }\frac{1}{\left\vert B\left( x,r_{k}\right)
			\right\vert ^{2}}\int_{E\left( x,r_{k+1}\right) \times E\left(
			x,r_{k}\right) } \\
		&\ \ \ \ \ \ \ \ \ \ \times \left\{ \left\vert w\left( y_{1},y_{2}\right)
		-w\left( z_{1},y_{2}\right) \right\vert +\left\vert w\left(
		z_{1},y_{2}\right) -w\left( z_{1},z_{2}\right) \right\vert \right\} dydz \\
		\lesssim &\sum_{k=1}^{\infty }\frac{1}{\left\vert B\left( x,r_{k}\right)
			\right\vert ^{2}}\int_{E\left( x,r_{k+1}\right) \times E\left(
			x,r_{k}\right) }\int_{y_{1}}^{z_{1}}\left\vert w_{x}\left( s,y_{2}\right)
		\right\vert dsdy_{1}dy_{2}dz_{1}dz_{2} \\
		+&\sum_{k=1}^{\infty }\frac{1}{\left\vert B\left( x,r_{k}\right)
			\right\vert ^{2}}\int_{E\left( x,r_{k+1}\right) \times E\left(
			x,r_{k}\right) }\int_{y_{2}}^{z_{2}}\left\vert w_{y}\left( z_{1},t\right)
		\right\vert dtdy_{1}dy_{2}dz_{1}dz_{2}\ ,
	\end{align*}%
	which, with $H_{k}\left( x\right) \equiv E\left( x,r_{k+1}\right) \bigcup
	E\left( x,r_{k}\right) $, is dominated by%
	\begin{eqnarray*}
		&&\sum_{k=1}^{\infty }\frac{1}{\left\vert B\left( x,r_{k}\right) \right\vert
			^{2}}\left( \int_{H_{k}\left( x\right) }\left\vert \nabla _{A}w\left(
		s,y_{2}\right) \right\vert dsdy_{2}\right) \left\{ r_{k}-r_{k+1}\right\}
		\int_{H_{k}\left( x\right) }dz_{1}dz_{2} \\
		&&+\sum_{k=1}^{\infty }\frac{1}{\left\vert B\left( x,r_{k}\right)
			\right\vert ^{2}}\left( \int_{H_{k}\left( x\right) }\left\vert \nabla
		_{A}w\left( z_{1},t\right) \right\vert dz_{1}dt\right) \ \frac{h_{k}}{%
			f\left( x_{1}+r_{k+1}\right) }\ \int_{H_{k}\left( x\right) }dy_{1}dy_{2}\ ,
	\end{eqnarray*}%
	where for the last term we used that 
	\begin{eqnarray*}
		\left\vert w_{y}\left( z_{1},t\right) \right\vert &=&\frac{f(z_{1})}{f(z_{1})%
		}\left\vert w_{y}\left( z_{1},t\right) \right\vert \leq \frac{1}{f(z_{1})}%
		\left\vert \nabla _{A}w\left( z_{1},t\right) \right\vert \\
		&\leq &\frac{1}{f(x_{1}+r_{k+1})}\left\vert \nabla _{A}w\left(
		z_{1},t\right) \right\vert \quad \forall (z_{1},z_{2})\in E(x,r_{k}).
	\end{eqnarray*}%
	Next, recall from Lemma \ref{height 1} that $h_{k}\approx
	(r_{k}-r_{k+1})\cdot f(x_{1}+r_{k+1})$ by our choice of $r_{k+1}$ in (\ref%
	{rkp1}). Moreover, by the estimates above we have that $|H_{k}(x)|\approx
	|B(x,r_{k})|,$ and 
	\begin{eqnarray}
	\left\vert w\left( x\right) -\mathbb{E}_{x,r_{1}}w\right\vert &\lesssim
	&\sum_{k=1}^{\infty }\frac{r_{k}-r_{k+1}}{\left\vert B\left( x,r_{k}\right)
		\right\vert }\left( \int_{H_{k}\left( x\right) }\left\vert \nabla
	_{A}w\left( s,y_{2}\right) \right\vert dsdy_{2}\right)  \notag \\
	&\lesssim &\int_{\Gamma (x,r)}\left\vert \nabla _{A}w\left( y\right)
	\right\vert \left( \sum_{k=1}^{\infty }\frac{r_{k}-r_{k+1}}{\left\vert
		B\left( x,r_{k}\right) \right\vert }\mathbf{1}_{E\left( x,r_{k}\right)
	}\left( y\right) \right) dy.  \label{pre-subr}
	\end{eqnarray}%
	To make further estimates we need to consider two regions separately, namely;
	
	\begin{enumerate}
		\item[\textbf{case 1}] $d(x,y)\geq \frac{1}{|F^{\prime }(x_{1})|}$. In this
		case we have 
		\begin{equation*}
			r_{k}>d(x,y)\geq \frac{1}{|F^{\prime }(x_{1})|},
		\end{equation*}%
		which implies by Proposition \ref{height} and (\ref{new f equ}) 
		\begin{equation*}
			r_{k}-r_{k+1}\approx \frac{1}{|F^{\prime }(x_{1}+r_{k})|}\approx \frac{1}{%
				|F^{\prime }(x_{1}+r_{k+2})|}<\frac{1}{|F^{\prime }(x_{1}+d(x,y))|}.
		\end{equation*}%
		Therefore, we are left with%
		\begin{align}\label{case 1}
			&\left\vert w\left( x\right) -\mathbb{E}_{x,r_{1}}w\right\vert \\
			\lesssim
			&\int_{\Gamma (x,r)}\left\vert \nabla _{A}w\left( y\right) \right\vert \frac{%
				1}{|F^{\prime }(x_{1}+d(x,y))|}\sum_{k:r_{k+1}<d(x,y)<r_{k}}\frac{1}{%
				\left\vert B\left( x,r_{k}\right) \right\vert }dy   \notag\\
			\approx &\int_{\Gamma (x,r)}\left\vert \nabla _{A}w\left( y\right)
			\right\vert \frac{1}{|F^{\prime }(x_{1}+d(x,y))|}\frac{1}{|B(x,d(x,y))|}dy. 
			\notag
		\end{align}
		
		\item[\textbf{case 2}] $d(x,y)<\frac{1}{|F^{\prime }(x_{1})|}$. We can write 
		\begin{equation*}
			\sum_{k:r_{k+1}<d(x,y)<r_{k}}\frac{r_{k}-r_{k+1}}{\left\vert B\left(
				x,r_{k}\right) \right\vert }\leq \sum_{k:r_{k+1}<d(x,y)<r_{k}}\frac{r_{k}}{%
				\left\vert B\left( x,r_{k}\right) \right\vert }\lesssim \frac{d(x,y)}{%
				|B(x,d(x,y))|},
		\end{equation*}%
		which gives 
		\begin{equation}
		\left\vert w\left( x\right) -\mathbb{E}_{x,r_{1}}w\right\vert \lesssim
		\int_{\Gamma (x,r)}\left\vert \nabla _{A}w\left( y\right) \right\vert \frac{%
			d(x,y)}{|B(x,d(x,y))|}.  \label{case 2}
		\end{equation}
	\end{enumerate}
	
	To finish the proof we need to dominate the right hand sides of (\ref{case 1}%
	) and (\ref{case 2}) with%
	\begin{equation}
	\int_{\Gamma (x,r)}\left\vert \nabla _{A}w\left( y\right) \right\vert \frac{%
		\widehat{d}(x,y)}{|B(x,d(x,y))|}  \label{dom}
	\end{equation}
	where $\widehat{d}\left( x,y\right) =\min \left\{ d\left( x,y\right) ,\frac{1%
	}{\left\vert F^{\prime }\left( x_{1}+d\left( x,y\right) \right) \right\vert }%
	\right\} $ in cases $1$ and $2$ respectively.
	
	Suppose first that $d(x,y)\geq \frac{1}{|F^{\prime }(x_{1}+d(x,y))|}$. Since 
	$|F^{\prime }(x_{1})|$ is a decreasing function of $x_{1}$ we have $%
	d(x,y)\geq \frac{1}{|F^{\prime }(x_{1})|}$ and therefore we are in \textbf{%
		case 1} and (\ref{dom}) then follows from $\frac{1}{|F^{\prime
		}(x_{1}+d(x,y))|}=\widehat{d}(x,y)$.
	
	If the reverse inequality holds, namely $d(x,y)<\frac{1}{|F^{\prime
		}(x_{1}+d(x,y))|}$, we have to consider two subcases. First, if $d(x,y)\leq 
	\frac{1}{|F^{\prime }(x_{1})|}$, then we are in \textbf{case 2} and (\ref%
	{dom}) then follows from $d(x,y)=\widehat{d}(x,y)$. Finally, if 
	\begin{equation*}
		\frac{1}{|F^{\prime }(x_{1})|}\leq d(x,y)<\frac{1}{|F^{\prime
			}(x_{1}+d(x,y))|},
	\end{equation*}%
	we are back in \textbf{case 1,} but by Proposition \ref{height} we have 
	\begin{equation*}
		\frac{1}{|F^{\prime }(x_{1}+d(x,y))|}\approx d(x,y)-d(x,y)^{\ast }<d(x,y),
	\end{equation*}%
	and again (\ref{dom}) holds since $d(x,y)=\widehat{d}(x,y)$.
\end{proof}

As a simple corollary we obtain a connection between $\widehat{d}(x,y)$ and
the `width' of the thickest part of a ball of radius $d(x,y)$, namely $%
d(x,y)-d^{\ast }(x,y)$, where if $r=d(x,y)$ and $r^{\ast }$ is as defined at
the beginning of Subsection \ref{arbitrary balls} of Chapter 7, then we
define $d^{\ast }(x,y)$ by%
\begin{equation}
d^{\ast }(x,y)\equiv r^{\ast }.  \label{def d*}
\end{equation}%
Note that if $x$ and $r$ are fixed, then for every $y\in \partial B\left(
x,r\right) $ we have $d(x,y)-d^{\ast }(x,y)=r-r^{\ast }$.

\begin{corollary}
	\label{d_hat_geom} Let $d(x,y)>0$ be the distance between any two points $%
	x,y\in \Omega $ and let $d^{\ast }(x,y)$ be as in (\ref{def d*}), and $%
	\widehat{d}(x,y)$ be as defined in (\ref{def d hat'}) of Lemma \ref%
	{lemma-subrepresentation}. Then 
	\begin{equation*}
		\widehat{d}(x,y)\approx d(x,y)-d^{\ast }(x,y)
	\end{equation*}
\end{corollary}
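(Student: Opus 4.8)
The plan is to read the corollary straight off Proposition~\ref{height}. Fix $x=(x_{1},x_{2})$ and $y$ in $\Omega$; by symmetry of the metric about the $y$-axis we may take $x_{1}>0$, and since the metric is invariant under vertical translations the quantities $r\equiv d(x,y)$, $d^{\ast}(x,y)=r^{\ast}(x_{1},r)$, and $\widehat{d}(x,y)=\min\{r,\,1/|F'(x_{1}+r)|\}$ depend only on $x_{1}$ and $r$. The only extra ingredient beyond Proposition~\ref{height} is the elementary observation that $|F'|=-F'$ is decreasing on $(0,R)$, which is immediate from condition (2) of Definition~\ref{structure conditions} ($F'<0$, $F''>0$), together with the trivial bound $r^{\ast}>0$, hence $r-r^{\ast}<r$.

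First I would dispatch the case $r\le 1/|F'(x_{1})|$. Here part (3) of Proposition~\ref{height} gives $r-r^{\ast}\approx r$. Because $|F'|$ is decreasing and $x_{1}+r>x_{1}$, we have $1/|F'(x_{1}+r)|\ge 1/|F'(x_{1})|\ge r$, so the minimum defining $\widehat{d}$ is realized by its first entry: $\widehat{d}(x,y)=r\approx r-r^{\ast}=d(x,y)-d^{\ast}(x,y)$. Next, in the case $r\ge 1/|F'(x_{1})|$, part (2) of Proposition~\ref{height} gives $r-r^{\ast}\approx 1/|F'(x_{1}+r)|$, and combining this with $r-r^{\ast}<r$ shows $1/|F'(x_{1}+r)|\lesssim r$, so that $\widehat{d}(x,y)=\min\{r,\,1/|F'(x_{1}+r)|\}\approx 1/|F'(x_{1}+r)|\approx r-r^{\ast}=d(x,y)-d^{\ast}(x,y)$. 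Alternatively, one may invoke (\ref{upper bound of r star}) and (\ref{new f equ}) in place of the bound $r-r^{\ast}<r$ in this second case. Since $\mathcal{L}$ is elliptic away from the axis, in the remaining (finite-type/elliptic) regime both sides are comparable to $d(x,y)$ and there is nothing to prove.

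I do not expect any genuine obstacle: the content is entirely contained in Proposition~\ref{height}, and the only step that needs a moment's thought is deciding, in each of the two regimes, which of the two terms $r$ and $1/|F'(x_{1}+r)|$ achieves the minimum defining $\widehat{d}$ --- a point settled by monotonicity of $|F'|$ in the small-radius case and by the a priori inequality $r-r^{\ast}<r$ (fed through Proposition~\ref{height}(2)) in the large-radius case.
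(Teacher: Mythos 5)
Your proof is correct and follows essentially the same route as the paper: both arguments split on whether $r=d(x,y)$ is below or above $1/|F'(x_1)|$ and then read the conclusion off parts (2) and (3) of Proposition \ref{height}, using monotonicity of $|F'|$ to identify which term realizes the minimum in $\widehat{d}$. The only (cosmetic) difference is that in the large-radius case the paper splits into subcases according to which entry of the minimum is smaller, whereas you derive $1/|F'(x_1+r)|\lesssim r$ directly from $r-r^{\ast}<r$ and Proposition \ref{height}(2); the two are logically equivalent.
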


\begin{proof}
	As before, we consider two cases
	
	\begin{enumerate}
		\item[\textbf{case 1}] $d(x,y)\geq \frac{1}{|F^{\prime }(x_{1})|}$. In this
		case we have from Proposition \ref{height} 
		\begin{equation*}
			d(x,y)-d^{\ast }(x,y)\approx \frac{1}{|F^{\prime }(x_{1}+d(x,y))|}.
		\end{equation*}%
		If $d(x,y)\geq \frac{1}{|F^{\prime }(x_{1}+d(x,y))|}$, then $\widehat{d}%
		(x,y)=\frac{1}{|F^{\prime }(x_{1}+d(x,y))|}$ and the claim is proved. If, on
		the other hand, 
		\begin{equation*}
			d(x,y)\leq \frac{1}{|F^{\prime }(x_{1}+d(x,y))|},
		\end{equation*}%
		then $\widehat{d}(x,y)=d(x,y)$ and 
		\begin{equation*}
			d(x,y)>d(x,y)-d^{\ast }(x,y)\approx \frac{1}{|F^{\prime }(x_{1}+d(x,y))|}%
			\geq d(x,y),
		\end{equation*}%
		and the claim follows.
		
		\item[\textbf{case 2}] $d(x,y)<\frac{1}{|F^{\prime }(x_{1})|}$. From
		Proposition \ref{height} we have in this case 
		\begin{equation*}
			d(x,y)-d^{\ast }(x,y)\approx d(x,y).
		\end{equation*}%
		From the monotonicity of the function $F^{\prime }(x)$ we have 
		\begin{equation*}
			d(x,y)<\frac{1}{|F^{\prime }(x_{1})|}\leq \frac{1}{|F^{\prime
				}(x_{1}+d(x,y))|},
		\end{equation*}%
		and therefore $\widehat{d}(x,y)=d(x,y)\approx d(x,y)-d^{\ast }(x,y)$.
	\end{enumerate}
\end{proof}

\subsection{The higher dimensional case}

The subrepresentation inequality here is similar to Lemma \ref%
{lemma-subrepresentation} in two dimensions, with the main differences being
in the definition of the cusp-like region $\Gamma \left( x,r\right) $ in
higher dimensions. On the one hand, the shape of the higher dimensional
balls dictates the rough form of $\Gamma \left( x,r\right) $, but we will
also need to redefine the sequence of radii $\left\{ r_{k}\right\}
_{k=1}^{\infty }$ used in the definition of $\Gamma \left( x,r\right) $. We
begin by addressing the higher dimensional form, and later will turn to the
new sequences $\left\{ r_{k}\right\} _{k=1}^{\infty }$.

Recall that we denote points $x\in \mathbb{R}^{n}$ as 
\begin{equation*}
	x=\left( x_{1},\mathbf{x}_{2},x_{3}\right) \in \mathbb{R}\times \mathbb{R}%
	^{n-2}\times \mathbb{R\,}.
\end{equation*}%
Let $\left\vert B\left( x,d\left( x,y\right) \right) \right\vert $ denote
the $n$-dimensional Lebesgue measure of $B\left( x,d\left( x,y\right)
\right) $ where $d\left( x,y\right) $ is now the $n$-dimensional control
distance. We define the cusp-like region $\Gamma \left( x,r\right) $ and the
`ends' $E\left( x,r_{k}\right) $ of the balls $B\left( x,r_{k}\right) $ by%
\begin{eqnarray}
\Gamma \left( x,r\right) &=&\bigcup\limits_{k=1}^{\infty }E\left(
x,r_{k}\right) ;  \label{def Gamma and E high dim} \\
E\left( x,r_{k}\right) &\equiv &\left\{ y=\left( y_{1},\mathbf{y}%
_{2},y_{3}\right) :%
\begin{array}{c}
r_{k+1}\leq y_{1}-x_{1}<r_{k} \\ 
\left\vert \mathbf{y}_{2}-\mathbf{x}_{2}\right\vert <\sqrt{r_{k}^{2}-\left(
	y_{1}-x_{1}\right) ^{2}} \\ 
\left\vert y_{3}-x_{3}\right\vert <h^{\ast }\left( x_{1},y_{1}-x_{1}\right)%
\end{array}%
\right\} ,  \notag
\end{eqnarray}%
where we recall 
\begin{equation*}
	r_{k+1}=\left\{ 
	\begin{array}{lll}
		r^{\ast }\left( x_{1},r_{k}\right) &  & \text{if }r_{k}\geq \frac{1}{%
			\left\vert F^{\prime }\left( x_{1}\right) \right\vert } \\ 
		\frac{1}{2}r_{k} &  & \text{if }r_{k}<\frac{1}{\left\vert F^{\prime }\left(
			x_{1}\right) \right\vert }%
	\end{array}%
	\right. ,
\end{equation*}%
and where $r^{\ast }$ is defined in Definition \ref{def r*} right before
Proposition \ref{height}. We also define the modified `end' $\widetilde{E}$
by 
\begin{equation}
\widetilde{E}\left( x,r_{k}\right) \equiv \left\{ y=\left( y_{1},\mathbf{y}%
_{2},y_{3}\right) :%
\begin{array}{c}
r_{k+1}\leq y_{1}-x_{1}<r_{k} \\ 
\left\vert \mathbf{y}_{2}-\mathbf{x}_{2}\right\vert <\sqrt{%
	r_{k}^{2}-r_{k+1}^{2}} \\ 
\left\vert y_{3}-x_{3}\right\vert <h^{\ast }\left( x_{1},r_{k}\right)%
\end{array}%
\right. .  \label{def E tilda}
\end{equation}%
Note the estimate%
\begin{eqnarray}
\left\vert \mathbf{y}_{2}-\mathbf{x}_{2}\right\vert &<&\sqrt{%
	r_{k}^{2}-r_{k+1}^{2}}\approx \sqrt{r_{k}-r_{k+1}}\sqrt{r_{k}}
\label{the est} \\
\text{for }y &\in &\widetilde{E}\left( x,r_{k}\right) .  \notag
\end{eqnarray}%
We claim the following lemma.

\begin{lemma}
	\label{B and E}With notation as above we have%
	\begin{equation*}
		\left\vert \widetilde{E}\left( x,r_{k}\right) \right\vert \approx \left\vert
		E\left( x,r_{k}\right) \right\vert \approx \left\vert E\left( x,r_{k}\right)
		\cap B\left( x,r_{k}\right) \right\vert \approx \left\vert B\left(
		x,r_{k}\right) \right\vert .
	\end{equation*}
\end{lemma}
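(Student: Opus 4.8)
The plan is to reduce the $n$-dimensional estimate to the two-dimensional case already established, integrating out the $\mathbf{y}_2$ variables. Recall that $B(x,r_k)$ is the set of $(y_1,\mathbf{y}_2,y_3)$ with $(y_1,y_3)\in B_{2D}((x_1,x_3),\sqrt{r_k^2-|\mathbf{y}_2-\mathbf{x}_2|^2})$, and that by Lemma~\ref{new} together with the two-dimensional area formula in Proposition~\ref{general-area}, the two-dimensional slices have controlled area. First I would compute $|B(x,r_k)|$ and $|\widetilde{E}(x,r_k)|$ directly, and then compare $|E(x,r_k)|$ and $|E(x,r_k)\cap B(x,r_k)|$ to these, splitting as always into the two cases $r_k\geq \frac{1}{|F'(x_1)|}$ and $r_k<\frac{1}{|F'(x_1)|}$.

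In the easy case $r_k<\frac{1}{|F'(x_1)|}$, we have $r_{k+1}=\tfrac12 r_k$, so $h^\ast(x_1,r_k)\approx r_k f(x_1)$ and $\sqrt{r_k^2-r_{k+1}^2}\approx r_k$; hence $\widetilde{E}(x,r_k)$ is roughly a box of dimensions $r_k\times r_k^{n-2}\times (r_k f(x_1))$, giving $|\widetilde{E}(x,r_k)|\approx r_k^n f(x_1)\approx |B_{nD}(x,r_k)|$ by Lemma~\ref{new}. The set $E(x,r_k)$ has the same $y_1$- and $y_3$-extents but $\mathbf{y}_2$ ranges over a ball of radius $\sqrt{r_k^2-(y_1-x_1)^2}$, which for $y_1-x_1\in[r_{k+1},r_k)=[\tfrac12 r_k,r_k)$ is comparable to $r_k$; so $|E(x,r_k)|\approx |\widetilde{E}(x,r_k)|$ as well. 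For the intersection with $B(x,r_k)$, I would use the lower bound (\ref{low bound}): on $\partial B(x,r_k)$ the height in the $y_3$-direction is $\gtrsim f(x_1) r_k$ for all $y_1-x_1\in[x_1,x_1+r_k]$, and combined with the fact that the $\mathbf{y}_2$-ball inside $B(x,r_k)$ has radius $\sqrt{r_k^2-(y_1-x_1)^2}\approx r_k$ on the relevant $y_1$-range, one finds that $E(x,r_k)\cap B(x,r_k)$ contains a box of comparable dimensions, so all four quantities are comparable.

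The harder case is $r_k\geq\frac{1}{|F'(x_1)|}$, where $r_{k+1}=r^\ast(x_1,r_k)$. Here by Proposition~\ref{height}(2) we have $r_k-r_{k+1}\approx\frac{1}{|F'(x_1+r_k)|}$ and $h^\ast(x_1,r_k)=h(x_1,r_k)\approx\frac{f(x_1+r_k)}{|F'(x_1+r_k)|}$; using (\ref{the est}) the $\mathbf{y}_2$-extent of $\widetilde{E}$ is $\approx\sqrt{(r_k-r_{k+1})r_k}\approx\sqrt{r_k/|F'(x_1+r_k)|}$. Multiplying the three extents gives $|\widetilde{E}(x,r_k)|\approx\frac{1}{|F'(x_1+r_k)|}\cdot\big(\frac{r_k}{|F'(x_1+r_k)|}\big)^{(n-2)/2}\cdot\frac{f(x_1+r_k)}{|F'(x_1+r_k)|}=\frac{f(x_1+r_k)}{|F'(x_1+r_k)|^n}\big(r_k|F'(x_1+r_k)|\big)^{n/2-1}$, which matches $|B_{nD}(x,r_k)|$ from Lemma~\ref{new}. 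For $|E(x,r_k)|$ I would note that on $y_1-x_1\in[r_{k+1},r_k)$, which has length $r_k-r_{k+1}\ll r_k$ sitting near the outer edge of $[0,r_k]$, the radius $\sqrt{r_k^2-(y_1-x_1)^2}$ stays comparable to $\sqrt{r_k^2-r_{k+1}^2}\approx\sqrt{(r_k-r_{k+1})r_k}$ (here I would invoke (\ref{rkp12}), i.e. $r_{k+1}\geq\frac{1}{C+1}r_k$, to control the ratio from below and $r_{k+1}<r_k$ from above), and $h^\ast(x_1,y_1-x_1)$ is comparable to $h^\ast(x_1,r_k)$ using Part~(2) of Lemma~\ref{consequences} and the doubling in Assumption~(3); hence $|E(x,r_k)|\approx|\widetilde{E}(x,r_k)|$.

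The main obstacle I expect is the lower bound $|E(x,r_k)\cap B(x,r_k)|\gtrsim|B(x,r_k)|$ in this second case: one must exhibit a definite sub-box of $E(x,r_k)$ that actually lies inside $B(x,r_k)$. I would do this by restricting $y_1-x_1$ to the ``thick'' subinterval $[r_{k+1},\tfrac{r_k+r_{k+1}}{2}]$ (the midpoint region used already in Proposition~\ref{general-area} and Corollary~\ref{thick_part}), on which the taxicab estimate (\ref{distance}) shows the $(y_1,y_3)$-slice of $B(x,r_k)$ has height $\gtrsim f(x_1+r_k)(r_k-r_{k+1})\approx h^\ast(x_1,r_k)$, and the $\mathbf{y}_2$-ball inside $B(x,r_k)$ over such a slice has radius $\sqrt{r_k^2-(y_1-x_1)^2}\gtrsim\sqrt{(r_k-r_{k+1})r_k}$; since $E(x,r_k)$ over this subinterval is contained in exactly such a region up to the usual constants, the intersection contains a box of dimensions $(r_k-r_{k+1})\times\big(\sqrt{(r_k-r_{k+1})r_k}\big)^{n-2}\times h^\ast(x_1,r_k)$, whose measure is $\approx|B(x,r_k)|$. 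Sandwiching $|E(x,r_k)\cap B(x,r_k)|\leq|E(x,r_k)|\lesssim|B(x,r_{k-1})|\lesssim|B(x,r_k)|$ on the other side (using (\ref{rkp12}) to pass between consecutive radii) completes the chain of equivalences.
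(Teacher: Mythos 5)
Your proposal is correct and follows essentially the same route as the paper: compute $\left\vert \widetilde{E}\left( x,r_{k}\right) \right\vert$ as a product of the three extents, match it against Lemma \ref{new} via Proposition \ref{height} in the two cases $r_{k}\gtrless \frac{1}{\left\vert F^{\prime }\left( x_{1}\right) \right\vert }$, and lower-bound the intersection by the thick part of the ball near $y_{1}-x_{1}\approx r_{k}$. The only cosmetic difference is that the paper obtains the thick-part lower bound by integrating the two-dimensional slice measures in polar coordinates over $\mathbf{y}_{2}$ (via $E\cap B\supset B_{+}\left( x,r_{k}\right)$), whereas you exhibit an explicit comparable box; both yield the same estimate.
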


\begin{proof}
	Recall that by Lemma \ref{new} we have 
	\begin{equation}
	\left\vert B\left( x,r_{k}\right) \right\vert \approx \left\{ 
	\begin{array}{ccc}
	r_{k}^{n}f(x_{1}) & \text{ if } & r_{k}\leq \frac{2}{\left\vert F^{\prime
		}\left( x_{1}\right) \right\vert } \\ 
	\frac{f\left( x_{1}+r_{k}\right) }{\left\vert F^{\prime }\left(
		x_{1}+r_{k}\right) \right\vert ^{n}}\left( r_{k}\left\vert F^{\prime }\left(
	x_{1}+r_{k}\right) \right\vert \right) ^{\frac{n}{2}-1} & \text{ if } & 
	r_{k}\geq \frac{2}{\left\vert F^{\prime }\left( x_{1}\right) \right\vert }%
	\end{array}%
	\right. .  \label{measure_B_k}
	\end{equation}%
	To show 
	\begin{equation}
	\left\vert E\left( x,r_{k}\right) \right\vert \approx \left\vert B\left(
	x,r_{k}\right) \right\vert  \label{E_B_1}
	\end{equation}%
	we first note that $\left\vert E\left( x,r_{k}\right) \right\vert \approx
	\left\vert \widetilde{E}\left( x,r_{k}\right) \right\vert $. Integrating, we
	easily obtain 
	\begin{equation*}
		|\widetilde{E}\left( x,r_{k}\right) |\approx h^{\ast }\left(
		x_{1},r_{k}\right) r_{k}^{\frac{n-2}{2}}(r_{k}-r_{k+1})^{\frac{n}{2}}.
	\end{equation*}
	
	Now, in the first case $r_{k}\geq \frac{1}{\left\vert F^{\prime }\left(
		x_{1}\right) \right\vert }$ we have by Proposition \ref{height} $%
	r_{k}-r_{k+1}=r_{k}-r_{k}^{\ast }\approx \frac{1}{\left\vert F^{\prime
		}\left( x_{1}+r_{k}\right) \right\vert }$ and 
	\begin{equation*}
		h^{\ast }\left( x_{1},r_{k}\right) =h^{\ast }\left( x_{1},r_{k-1}^{\ast
		}\right) =h(x_{1},r_{k-1})\approx \frac{f\left( x_{1}+r_{k-1}\right) }{%
			\left\vert F^{\prime }\left( x_{1}+r_{k-1}\right) \right\vert }\approx \frac{%
			f\left( x_{1}+r_{k}\right) }{\left\vert F^{\prime }\left( x_{1}+r_{k}\right)
			\right\vert },
	\end{equation*}%
	where for the last set of inequalities we used $r_{k}=r_{k-1}^{\ast }$ and
	the estimate (\ref{new f equ}). This gives 
	\begin{equation*}
		|\widetilde{E}\left( x,r_{k}\right) |\approx r_{k}^{\frac{n}{2}-1}\frac{%
			f\left( x_{1}+r_{k}\right) }{\left\vert F^{\prime }\left( x_{1}+r_{k}\right)
			\right\vert ^{\frac{n}{2}+1}},
	\end{equation*}%
	which is the second estimate in (\ref{measure_B_k}) provided we also have $%
	r_{k}\geq \frac{2}{\left\vert F^{\prime }\left( x_{1}\right) \right\vert }$.
	Moreover, when $\frac{1}{\left\vert F^{\prime }\left( x_{1}\right)
		\right\vert }\leq r_{k}\leq \frac{2}{\left\vert F^{\prime }\left(
		x_{1}\right) \right\vert }$ the two estimates in (\ref{measure_B_k})
	coincide, so we conclude (\ref{E_B_1}) for $r_{k}\geq \frac{1}{\left\vert
		F^{\prime }\left( x_{1}\right) \right\vert }$.
	
	In the second case $r_{k}<\frac{1}{\left\vert F^{\prime }\left( x_{1}\right)
		\right\vert }$ we have $r_{k}-r_{k+1}=\frac{r_{k}}{2}$ and using part (3) of
	Proposition \ref{height} 
	\begin{equation*}
		h^{\ast }\left( x_{1},r_{k}\right) \approx h^{\ast }\left(
		x_{1},r_{k+1}\right) =h(x_{1},r_{k})\approx r_{k}f(x_{1}),
	\end{equation*}%
	which gives 
	\begin{equation*}
		|\widetilde{E}\left( x,r_{k}\right) |\approx r_{k}^{n}f(x_{1}).
	\end{equation*}%
	This concludes the proof of (\ref{E_B_1}).
	
	We are thus left to show 
	\begin{equation}
	\left\vert E\left( x,r_{k}\right) \cap B\left( x,r_{k}\right) \right\vert
	\approx \left\vert B\left( x,r_{k}\right) \right\vert .  \label{E_B_2}
	\end{equation}%
	In the case $r_{k}\geq \frac{1}{\left\vert F^{\prime }\left( x_{1}\right)
		\right\vert }$ we have 
	\begin{equation*}
		E\left( x,r_{k}\right) \cap B\left( x,r_{k}\right) \supset B_{+}\left(
		x,r_{k}\right) \equiv \left\{ \left( y_{1},\mathbf{y}_{2},y_{3}\right) \in
		B\left( x,r_{k}\right) :y_{1}>x_{1}+r_{k}^{\ast }\right\}
	\end{equation*}%
	and therefore 
	\begin{align*}
		&\left\vert E\left( x,r_{k}\right) \cap B\left( x,r_{k}\right) \right\vert
		\geq \left\vert B_{+}\left( x,r_{k}\right) \right\vert\\ 
		&\quad=\int\limits_{|%
			\mathbf{y}_{2}|\leq r_{k}}\left\vert {B_{2D}}\left( \left( x_{1},\mathbf{0}%
		,0\right) ,\sqrt{r_{k}^{2}-\left\vert \mathbf{y}_{2}\right\vert ^{2}}%
		\right)\cap \{y_{1}-x_{1}>r_{k}^{*}\} \right\vert d\mathbf{y}_{2} \\
		&\quad\approx \int\limits_{|\mathbf{y}_{2}|\leq r_{k}} \left(\sqrt{%
			r_{k}^{2}-\left\vert \mathbf{y}_{2}\right\vert ^{2}}-r_{k}^{\ast
		}\right)_{+}\cdot h\left(x_{1},\sqrt{r_{k}^{2}-\left\vert \mathbf{y}%
			_{2}\right\vert ^{2}}\right)d\mathbf{y}_{2} \\
		&\quad\approx \int\limits_{|\mathbf{y}_{2}|\leq r_{k}} \left(\sqrt{%
			r^{2}-\left\vert \mathbf{y}_{2}\right\vert ^{2}}-r_{k}^{\ast
		}\right)_{+}\cdot \frac{f\left(x_{1}+\sqrt{r_{k}^{2}-\left\vert \mathbf{y}%
				_{2}\right\vert ^{2}}\right)}{\left\vert F^{\prime }\left(x_{1}+\sqrt{%
				r^{2}-\left\vert \mathbf{y}_{2}\right\vert ^{2}}\right)\right\vert}d\mathbf{y%
		}_{2} \\
		&\quad=\int\limits_{|\mathbf{y}_{2}|^{2}\leq r_{k}^{2}-{r_{k}^{\ast }}^{2}} \left(%
		\sqrt{r_{k}^{2}-\left\vert \mathbf{y}_{2}\right\vert ^{2}}-r_{k}^{\ast
		}\right)\cdot \frac{f\left(x_{1}+\sqrt{r^{2}-\left\vert \mathbf{y}%
				_{2}\right\vert ^{2}}\right)}{\left\vert F^{\prime }\left(x_{1}+\sqrt{%
				r_{k}^{2}-\left\vert \mathbf{y}_{2}\right\vert ^{2}}\right)\right\vert}d%
		\mathbf{y}_{2} \\
		&\quad\approx \frac{f(x_{1}+r_{k})}{|F^{\prime }(x_{1}+r_{k})|}\int\limits_{|%
			\mathbf{y}_{2}|^{2}\leq r_{k}^{2}-{r_{k}^{\ast }}^{2}} \left(\sqrt{%
			r_{k}^{2}-\left\vert \mathbf{y}_{2}\right\vert ^{2}}-r_{k}^{\ast }\right)d%
		\mathbf{y}_{2} ,
	\end{align*}%
	where for the last equality we used (\ref{new f equ}). Passing to the polar
	coordinates, $\rho= \left\vert \mathbf{y}_{2}\right\vert$, we have 
	\begin{align*}
		\int\limits_{|\mathbf{y}_{2}|^{2}\leq r_{k}^{2}-{r_{k}^{\ast }}^{2}} \left(%
		\sqrt{r_{k}^{2}-\left\vert \mathbf{y}_{2}\right\vert ^{2}}-r_{k}^{\ast
		}\right)d\mathbf{y}_{2} &\approx \int\limits_{0}^{\sqrt{r_{k}^{2}-{%
					r_{k}^{\ast}}^{2}}} \left(\sqrt{r_{k}^{2}-\rho^{2}}-r_{k}^{\ast
		}\right)\rho^{n-3}d\rho \\
		&\geq \int\limits_{0}^{\sqrt{r_{k}^{2}-\frac{1}{4}(r_{k}+r_{k}^{\ast})^{2}}}
		\left(\sqrt{r_{k}^{2}-\rho^{2}}-r_{k}^{\ast }\right)\rho^{n-3}d\rho \\
		&\geq \frac{1}{2}(r_{k}-r_{k}^{\ast})\int\limits_{0}^{\sqrt{r_{k}^{2}-\frac{1%
				}{4}(r_{k}+r_{k}^{\ast})^{2}}} \rho^{n-3}d\rho\\
		&\approx (r_{k}-r_{k}^{\ast})^{%
			\frac{n}{2}}r^{\frac{n}{2}-1}\\
		&\approx (r_{k}-r_{k}^{\ast})^{\frac{n}{2}%
		}r_{k}^{\frac{n}{2}-1}.
	\end{align*}
	Using part (2) of Proposition \ref{height} we have $r_{k}-r_{k}^{\ast}%
	\approx 1/|F^{\prime }(x_{1}+r_{k})|$ and therefore 
	\begin{equation*}
		\left\vert E\left( x,r_{k}\right) \cap B\left( x,r_{k}\right)
		\right\vert\gtrsim \frac{f(x_{1}+r_{k})}{|F^{\prime }(x_{1}+r_{k})|^{\frac{n%
				}{2}+1}}r^{\frac{n}{2}-1}\approx \left\vert B\left( x,r_{k}\right)
		\right\vert.
	\end{equation*}
	
	Finally, in the case $r_{k}<\frac{1}{\left\vert F^{\prime }\left(
		x_{1}\right) \right\vert }$, proceeding exactly as in the proof of (\ref%
	{claim that}) in the $2$-dimensional case, we can show 
	\begin{equation*}
		E\left( x,r_{k}\right) \cap B\left( x,r_{k}\right) \supset \left[ x_{1}+%
		\frac{r_{k}}{2},x_{1}+\frac{3r_{k}}{4}\right] \times \left\{ |\mathbf{y}%
		_{2}|\leq \frac{r_{k}}{2}\right\} \times \left[ -cf\left( x_{1}\right)
		r_{k},cf\left( x_{1}\right) r_{k}\right]
	\end{equation*}%
	and thus 
	\begin{equation*}
		\left\vert E\left( x,r_{k}\right) \cap B\left( x,r_{k}\right) \right\vert
		\gtrsim r_{k}^{n}f(x_{1})\approx \left\vert B\left( x,r_{k}\right)
		\right\vert .
	\end{equation*}%
	This concludes the proof of (\ref{E_B_2}), and therefore the proof of Lemma %
	\ref{B and E}.
\end{proof}

\subsubsection{The difficulty with the standard sequence of radii}

Recall that we began the proof of Lemma \ref{lemma-subrepresentation} in two
dimensions by subtracting consecutive averages of $w$ over the ends $E\left(
x,r_{k}\right) $ and $E\left( x,r_{k+1}\right) $ to obtain%
\begin{align*}
	w\left( x\right) -&\mathbb{E}_{x,r_{1}}w\\ 
	&=\lim_{k\rightarrow \infty }\frac{1%
	}{\left\vert E\left( x,r_{k}\right) \right\vert }\int_{E\left(
		x,r_{k}\right) }w\left( y\right) dy-\mathbb{E}_{x,r_{1}}w \\
	&=\sum_{k=1}^{\infty }\left\{ \frac{1}{\left\vert E\left( x,r_{k+1}\right)
		\right\vert }\int_{E\left( x,r_{k+1}\right) }w\left( y\right) dy-\frac{1}{%
		\left\vert E\left( x,r_{k}\right) \right\vert }\int_{E\left( x,r_{k}\right)
	}w\left( z\right) dz\right\} .
\end{align*}%
If we simply proceed in this way in higher dimensions we will obtain, just
as in the two dimensional proof, that%
\begin{align*}
	&\left\vert w\left( x\right) -\mathbb{E}_{x,r_{1}}w\right\vert \\
	&\quad\lesssim
	\sum_{k=1}^{\infty }\frac{1}{\left\vert B\left( x,r_{k}\right) \right\vert
		^{2}}\int\limits_{E\left( x,r_{k+1}\right) \times E\left( x,r_{k}\right)
	}\left\vert w\left( y\right) -w\left( z\right) \right\vert dydz \\
	&\quad\lesssim \sum_{k=1}^{\infty }\frac{1}{\left\vert B\left( x,r_{k}\right)
		\right\vert ^{2}}\int\limits_{E\left( x,r_{k+1}\right) \times E\left(
		x,r_{k}\right) }\left\vert w\left( y_{1},\mathbf{y}_{2},y_{3}\right)
	-w\left( z_{1},\mathbf{y}_{2},y_{3}\right) \right\vert dydz \\
	&\quad\quad+\sum_{k=1}^{\infty }\frac{1}{\left\vert B\left( x,r_{k}\right)
		\right\vert ^{2}}\int\limits_{E\left( x,r_{k+1}\right) \times E\left(
		x,r_{k}\right) }\left\vert w\left( z_{1},\mathbf{y}_{2},y_{3}\right)
	-w\left( z_{1},\mathbf{z}_{2},y_{3}\right) \right\vert dydz \\
	&\quad\quad+\sum_{k=1}^{\infty }\frac{1}{\left\vert B\left( x,r_{k}\right)
		\right\vert ^{2}}\int\limits_{E\left( x,r_{k+1}\right) \times E\left(
		x,r_{k}\right) }\left\vert w\left( z_{1},\mathbf{z}_{2},y_{3}\right)
	-w\left( z_{1},\mathbf{z}_{2},z_{3}\right) \right\vert dydz \\
	&\quad\equiv I+II+III,
\end{align*}%
but where now 
\begin{equation*}
	I\lesssim \sum_{k=1}^{\infty }\frac{1}{\left\vert B\left( x,r_{k}\right)
		\right\vert ^{2}}\int\limits_{E\left( x,r_{k+1}\right) \times E\left(
		x,r_{k}\right) }\int_{y_{1}}^{z_{1}}\left\vert w_{x_{1}}\left( s,\mathbf{y}%
	_{2},y_{3}\right) \right\vert dsdy_{1}d\mathbf{y}_{2}dy_{3}dz_{1}d\mathbf{z}%
	_{2}dz_{3}\ ,
\end{equation*}%
and 
\begin{align*}
	II \lesssim &\sum_{k=1}^{\infty }\frac{1}{\left\vert B\left( x,r_{k}\right)
		\right\vert ^{2}}\int\limits_{E\left( x,r_{k+1}\right) \times E\left(
		x,r_{k}\right) }\int_{0}^{1}\left\vert \left( \mathbf{z}_{2}-\mathbf{y}%
	_{2}\right) \cdot \nabla _{\mathbf{x}_{2}}w\left( z_{1},t\mathbf{z}%
	_{2}+\left( 1-t\right) \mathbf{y}_{2},y_{3}\right) \right\vert \\
	&\ \ \ \ \ \ \ \ \ \ \ \ \ \ \ \ \ \ \ \ \ \ \ \ \ \ \ \ \ \ \ \ \ \ \ \ \
	\ \ \ \ \ \ \ \ \ \ \ \ \ \ \ \ \ \ \ \ \times dtdy_{1}d\mathbf{y}%
	_{2}dy_{3}dz_{1}d\mathbf{z}_{2}dz_{3}\ ,
\end{align*}%
and 
\begin{equation*}
	III\lesssim \sum_{k=1}^{\infty }\frac{1}{\left\vert B\left( x,r_{k}\right)
		\right\vert ^{2}}\int\limits_{E\left( x,r_{k+1}\right) \times E\left(
		x,r_{k}\right) }\int_{y_{3}}^{z_{3}}\left\vert w_{x_{3}}\left( z_{1},\mathbf{%
		z}_{2},u\right) \right\vert dudy_{1}d\mathbf{y}_{2}dy_{3}dz_{1}d\mathbf{z}%
	_{2}dz_{3}\ .
\end{equation*}%
Thus with $\bigtriangleup r_{k}\equiv r_{k}-r_{k+1}$, we have%
\begin{align*}
	I&\lesssim \sum_{k=1}^{\infty }\frac{\bigtriangleup r_{k}}{\left\vert B\left(
		x,r_{k}\right) \right\vert }\int_{H\left( x,r_{k}\right) }\left\vert
	w_{x_{1}}\left( s,\mathbf{y}_{2},y_{3}\right) \right\vert dsd\mathbf{y}%
	_{2}dy_{3}\\
	&\lesssim \sum_{k=1}^{\infty }\frac{\bigtriangleup r_{k}}{%
		\left\vert B\left( x,r_{k}\right) \right\vert }\int_{H\left( x,r_{k}\right)
	}\left\vert \nabla _{A}w\right\vert ,
\end{align*}%
and an easy computation also shows that%
\begin{equation*}
	III\lesssim \sum_{k=1}^{\infty }\frac{\bigtriangleup r_{k}}{\left\vert
		B\left( x,r_{k}\right) \right\vert }\int_{H\left( x,r_{k}\right) }\left\vert
	\nabla _{A}w\right\vert ,
\end{equation*}%
which for terms $I$ and $III$ delivers the good estimate (\ref{pre-subr}) in
the $2$-dimensional proof above. But upon using the inequality $\left\vert 
\mathbf{y}_{2}-\mathbf{x}_{2}\right\vert \lesssim \sqrt{r_{k}-r_{k+1}}\sqrt{%
	r_{k}}$ from (\ref{the est}), the corresponding estimate for $II$ is%
\begin{equation*}
	II\lesssim \sum_{k=1}^{\infty }\frac{\sqrt{r_{k}\bigtriangleup r_{k}}}{%
		\left\vert B\left( x,r_{k}\right) \right\vert }\int_{H\left( x,r_{k}\right)
	}\left\vert \nabla _{A}w\right\vert ,
\end{equation*}%
which is much too large as $\sqrt{r_{k}\bigtriangleup r_{k}}\gg
\bigtriangleup r_{k}$ when $\bigtriangleup r_{k}\ll r_{k}$.

This suggests that we hold the variable $\mathbf{y}_{2}$ fixed, and take the
difference of lower dimensional averages, and then average over $\mathbf{y}%
_{2}$. But this will require additional information on the regularity of the
sequence of radii $\left\{ r_{k}\right\} _{k=1}^{\infty }$, something we
cannot easily derive from the current definition of $r_{k}$. So we now turn
to redefining the sequence of radii to be used in our subrepresentation
inequalities.

\subsubsection{Geometric estimates}

We will estimate the differences of the quantities 
\begin{equation}
q_{k}\equiv \sqrt{r_{k}^{2}-r_{k+1}^{2}}  \label{def q_k}
\end{equation}%
appearing as the widths of the modified ends $\widetilde{E}\left(
x_{1},r_{k}\right) $ defined in (\ref{def E tilda}). First recall that there
are positive constants $c,C$ such that%
\begin{equation*}
	c\leq \left( r_{k}-r_{k+1}\right) \left\vert F^{\prime }\left(
	x_{1}+r_{k}\right) \right\vert \leq C,\ \ \ \ \ \text{for }r_{k}\geq \frac{2%
	}{\left\vert F^{\prime }\left( x_{1}\right) \right\vert }.
\end{equation*}%
In view of this, let us redefine, for each $\gamma >0$, the sequence $%
\left\{ r_{k}\right\} _{k=0}^{\infty }$ recursively by demanding that the
first inequality above be an equality.

\begin{definition}
	For $\gamma >0$ set 
	\begin{equation}
	r_{k+1}^{\gamma }\equiv \left\{ 
	\begin{array}{ccc}
	r_{k}^{\gamma }-\frac{\gamma }{\left\vert F^{\prime }\left(
		x_{1}+r_{k}\right) \right\vert } & \text{ if } & r_{k}^{\gamma }\geq \frac{%
		\gamma }{\left\vert F^{\prime }\left( x_{1}\right) \right\vert } \\ 
	\frac{1}{2}r_{k}^{\gamma } & \text{ if } & r_{k}^{\gamma }<\frac{\gamma }{%
		\left\vert F^{\prime }\left( x_{1}\right) \right\vert }%
	\end{array}%
	\right. ,\ \ \ \ \ k\geq 0.  \label{redef r_k}
	\end{equation}
\end{definition}

We will typically suppress the superscript $\gamma $ and continue to write $%
r_{k}$ in place of $r_{k}^{\gamma }$ when $\gamma $ is understood. With this
revised definition of the sequence of radii, and the corresponding balls and
ends, we retain the two-dimensional volume estimates and the
subrepresentation inequality in Lemma \ref{lemma-subrepresentation}, with
perhaps larger constants of comparability. These details are easily verified
and left for the reader.

Now, continuing to suppress $\gamma $, let 
\begin{equation*}
	\bigtriangleup r_{k}\equiv r_{k}-r_{k+1}\text{ and }\bigtriangleup
	^{2}r_{k}\equiv \bigtriangleup r_{k}-\bigtriangleup r_{k+1}\text{ and }%
	\bigtriangleup q_{k}\equiv q_{k}-q_{k+1}
\end{equation*}%
denote the first and second order differences of the sequences $\left\{
r_{k}\right\} _{k=0}^{\infty }$ and $\left\{ q_{k}\right\} _{k=0}^{\infty }$%
. The point of the new definitions of the sequence $\left\{ r_{k}^{\gamma
}\right\} _{k=0}^{\infty }$ is to obtain a good estimate on its second order
differences $\bigtriangleup ^{2}r_{k}^{\gamma }$, and hence also on the
first order differences of $\left\{ q_{k}^{\gamma }\right\} _{k=0}^{\infty }$%
.

\begin{lemma}
	\label{control q}With $\gamma >0$ and the sequence $\left\{ r_{k}^{\gamma
	}\right\} _{k=0}^{\infty }$ defined as in (\ref{redef r_k}), and $\left\{
	q_{k}^{\gamma }\right\} _{k=0}^{\infty }$ defined as in (\ref{def q_k}), we
	have the following estimates:%
	\begin{eqnarray*}
		\bigtriangleup r_{k}^{\gamma } &=&\left\{ 
		\begin{array}{ccc}
			\frac{\gamma }{\left\vert F^{\prime }\left( x_{1}+r_{k}\right) \right\vert }
			& \text{ if } & r_{k}\geq \frac{\gamma }{\left\vert F^{\prime }\left(
				x_{1}\right) \right\vert } \\ 
			\frac{1}{2}r_{k} & \text{ if } & r_{k}<\frac{\gamma }{\left\vert F^{\prime
				}\left( x_{1}\right) \right\vert }%
		\end{array}%
		\right. , \\
		\bigtriangleup ^{2}r_{k}^{\gamma } &\approx &\frac{\left[ \bigtriangleup
			r_{k}\right] ^{2}}{r_{k}}, \\
		q_{k}^{\gamma } &\approx &\sqrt{r_{k}\bigtriangleup r_{k}}, \\
		\bigtriangleup q_{k}^{\gamma } &\lesssim &\bigtriangleup r_{k},
	\end{eqnarray*}%
	where the implied constants of comparability depend on $\gamma >0$.
\end{lemma}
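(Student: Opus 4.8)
The plan is to verify each of the four estimates in turn, working directly from the recursive definition \eqref{redef r_k} and the structure conditions in Definition \ref{structure conditions}. The first estimate for $\bigtriangleup r_k^\gamma$ is immediate: it is just a restatement of \eqref{redef r_k}, since $\bigtriangleup r_k = r_k - r_{k+1}$ equals $\frac{\gamma}{|F'(x_1+r_k)|}$ in the first regime and $\frac12 r_k$ in the second. No work is needed beyond unwinding notation.

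For the second order difference $\bigtriangleup^2 r_k^\gamma \approx \frac{[\bigtriangleup r_k]^2}{r_k}$, I would focus on the regime $r_k \geq \frac{\gamma}{|F'(x_1)|}$, since in the other regime the sequence is geometric and the estimate is elementary (there $\bigtriangleup r_k = \frac12 r_k$, $\bigtriangleup r_{k+1} = \frac14 r_k$, so $\bigtriangleup^2 r_k = \frac14 r_k \approx \bigtriangleup r_k \approx \frac{[\bigtriangleup r_k]^2}{r_k}$ since $\bigtriangleup r_k \approx r_k$). In the main regime, write
\begin{equation*}
\bigtriangleup^2 r_k = \frac{\gamma}{|F'(x_1+r_k)|} - \frac{\gamma}{|F'(x_1+r_{k+1})|} = \gamma\,\frac{|F'(x_1+r_{k+1})| - |F'(x_1+r_k)|}{|F'(x_1+r_k)|\,|F'(x_1+r_{k+1})|}.
\end{equation*}
Since $F'' > 0$ by condition (2), $|F'| = -F'$ is decreasing, so the numerator is $\int_{x_1+r_{k+1}}^{x_1+r_k} F''(t)\,dt = \int_{x_1+r_{k+1}}^{x_1+r_k}\frac{F''(t)}{-F'(t)}(-F'(t))\,dt$. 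Using condition (5), $\frac{F''(t)}{-F'(t)} \approx \frac1t \approx \frac{1}{x_1+r_k}$ across the short interval (whose length $\bigtriangleup r_k = \frac{\gamma}{|F'(x_1+r_k)|}$ is comparable to $x_1+r_k$ by condition (4) and part (2) of Lemma \ref{consequences}), and using condition (3) that $|F'|$ is doubling so $|F'(x_1+r_{k+1})| \approx |F'(x_1+r_k)|$, the numerator is $\approx \frac{1}{x_1+r_k}|F'(x_1+r_k)|\bigtriangleup r_k$. Dividing by $|F'(x_1+r_k)|^2$ gives $\bigtriangleup^2 r_k \approx \frac{\bigtriangleup r_k}{x_1+r_k}\cdot\frac{1}{|F'(x_1+r_k)|} = \frac{[\bigtriangleup r_k]^2}{x_1+r_k}$, and finally $x_1 + r_k \approx r_k$ by condition (4) (which forces $\frac{1}{|F'(x_1)|} \geq \varepsilon x_1$, hence once $r_k \geq \frac{\gamma}{|F'(x_1)|} \gtrsim x_1$ we have $r_k \approx x_1 + r_k$). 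The one subtlety is the transition index where the recursion switches regimes; here I would just note that at the single transitional step the two one-sided estimates match up to constants, so the comparability persists with a possibly larger constant depending on $\gamma$.

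For $q_k^\gamma \approx \sqrt{r_k \bigtriangleup r_k}$, factor $q_k = \sqrt{r_k^2 - r_{k+1}^2} = \sqrt{(r_k - r_{k+1})(r_k + r_{k+1})} = \sqrt{\bigtriangleup r_k}\sqrt{r_k + r_{k+1}}$, and observe $r_k + r_{k+1} \approx r_k$ since $\frac12 r_k \leq r_{k+1} < r_k$ — the lower bound coming from \eqref{rkp12}-type reasoning, i.e. from the fact that $\bigtriangleup r_k \leq c\, r_k$ (in the first regime $\bigtriangleup r_k = \frac{\gamma}{|F'(x_1+r_k)|} \leq \frac1\varepsilon(x_1+r_k) \lesssim r_k$ by condition (4), and in the second $\bigtriangleup r_k = \frac12 r_k$). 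Finally, for $\bigtriangleup q_k^\gamma \lesssim \bigtriangleup r_k$, the cleanest route is a discrete mean value / telescoping argument: write $q_k - q_{k+1}$ using $q_k \approx \sqrt{r_k}\sqrt{\bigtriangleup r_k}$ and estimate the difference by the triangle inequality, controlling $\sqrt{\bigtriangleup r_k} - \sqrt{\bigtriangleup r_{k+1}} = \frac{\bigtriangleup^2 r_k}{\sqrt{\bigtriangleup r_k}+\sqrt{\bigtriangleup r_{k+1}}}$ using the second estimate $\bigtriangleup^2 r_k \approx \frac{[\bigtriangleup r_k]^2}{r_k}$, so that $\sqrt{r_k}(\sqrt{\bigtriangleup r_k}-\sqrt{\bigtriangleup r_{k+1}}) \lesssim \sqrt{r_k}\cdot\frac{[\bigtriangleup r_k]^2/r_k}{\sqrt{\bigtriangleup r_k}} = \frac{[\bigtriangleup r_k]^{3/2}}{\sqrt{r_k}} \leq \bigtriangleup r_k$ (using $\bigtriangleup r_k \lesssim r_k$), and controlling $(\sqrt{r_k}-\sqrt{r_{k+1}})\sqrt{\bigtriangleup r_{k+1}} = \frac{\bigtriangleup r_k}{\sqrt{r_k}+\sqrt{r_{k+1}}}\sqrt{\bigtriangleup r_{k+1}} \lesssim \frac{\bigtriangleup r_k}{\sqrt{r_k}}\sqrt{\bigtriangleup r_k} \lesssim \bigtriangleup r_k$.

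The main obstacle I anticipate is bookkeeping the regime transition and making the ``$\approx$'' constants uniform: the recursion \eqref{redef r_k} changes character at the index where $r_k$ drops below $\frac{\gamma}{|F'(x_1)|}$, and one must check that each of the four estimates survives across that single step with constants depending only on $\gamma$ (and the structure constants $C, \varepsilon$). This is routine but needs care — essentially one shows the two one-sided formulas agree up to bounded factors at the transition, using part (2) of Lemma \ref{consequences} to compare $|F'|$ and $f$ at the neighbouring scales. Everything else is a direct consequence of conditions (2)–(5) via the now-standard manipulations $|F'(x)| \approx |F'(r)|$ on comparable scales and $\frac{F''}{-F'} \approx \frac1x$.
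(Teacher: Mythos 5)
Your proof follows essentially the same route as the paper's: the first-difference formula is read off from (\ref{redef r_k}); the second difference is handled by the mean value theorem applied to $F^{\prime}$ together with structure condition (5) and the doubling of $\left\vert F^{\prime}\right\vert$; $q_{k}$ is factored as $\sqrt{\bigtriangleup r_{k}}\sqrt{r_{k}+r_{k+1}}$; and $\bigtriangleup q_{k}$ is split into exactly the paper's two terms $I=\left(\sqrt{\bigtriangleup r_{k}}-\sqrt{\bigtriangleup r_{k+1}}\right)\sqrt{r_{k}+r_{k+1}}$ and $II=\sqrt{\bigtriangleup r_{k+1}}\left(\sqrt{r_{k}+r_{k+1}}-\sqrt{r_{k+1}+r_{k+2}}\right)$, each estimated by rationalizing and invoking $\bigtriangleup r_{k}\lesssim r_{k}$. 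One caution: your step asserting $x_{1}+r_{k}\approx r_{k}$ because ``$\frac{\gamma}{\left\vert F^{\prime}(x_{1})\right\vert}\gtrsim x_{1}$'' reverses structure condition (4), which gives $\frac{1}{\left\vert F^{\prime}(x_{1})\right\vert}\leq\frac{x_{1}}{\varepsilon}$ and not the converse, so in general the two-sided second-difference estimate comes out with $x_{1}+r_{k}$ rather than $r_{k}$ in the denominator; the paper itself only records the one-sided bound $\bigtriangleup^{2}r_{k}\lesssim\left[\bigtriangleup r_{k}\right]^{2}/r_{k+1}$ at that point, and since only that direction is used in estimating $I$, nothing downstream is affected.
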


\begin{proof}
	We suppress the superscript $\gamma $ and prove only the case where $%
	r_{k}\geq \frac{\gamma }{\left\vert F^{\prime }\left( x_{1}\right)
		\right\vert }$, since in the opposite case where $r_{k+1}=\frac{1}{2}r_{k}$
	and $\bigtriangleup r_{k}=\frac{1}{2}r_{k}$, the estimates then follow
	immediately. We begin with%
	\begin{eqnarray*}
		\bigtriangleup ^{2}r_{k} &=&\bigtriangleup r_{k}-\bigtriangleup r_{k+1}=%
		\frac{\gamma }{\left\vert F^{\prime }\left( x_{1}+r_{k}\right) \right\vert }-%
		\frac{\gamma }{\left\vert F^{\prime }\left( x_{1}+r_{k+1}\right) \right\vert 
		} \\
		&=&\gamma \frac{F^{\prime }\left( x_{1}+r_{k}\right) -F^{\prime }\left(
			x_{1}+r_{k+1}\right) }{\left\vert F^{\prime }\left( x_{1}+r_{k}\right)
			\right\vert \left\vert F^{\prime }\left( x_{1}+r_{k+1}\right) \right\vert }
		\\
		&=&\gamma \frac{F^{\prime \prime }\left( x_{1}+\left( 1-\theta \right)
			r_{k}+\theta r_{k+1}\right) \ \left( r_{k}-r_{k+1}\right) }{\left\vert
			F^{\prime }\left( x_{1}+r_{k}\right) \right\vert \left\vert F^{\prime
			}\left( x_{1}+r_{k+1}\right) \right\vert } \\
		&\approx &\frac{\frac{\left\vert F^{\prime }\left( x_{1}+\left( 1-\theta
				\right) r_{k}+\theta r_{k+1}\right) \right\vert }{x_{1}+\left( 1-\theta
				\right) r_{k}+\theta r_{k+1}}\left( r_{k}-r_{k+1}\right) }{\left\vert
			F^{\prime }\left( x_{1}+r_{k}\right) \right\vert \left\vert F^{\prime
			}\left( x_{1}+r_{k+1}\right) \right\vert }\lesssim \frac{\left[
			\bigtriangleup r_{k}\right] ^{2}}{r_{k+1}},
	\end{eqnarray*}%
	where the last two line follows from our assumptions on $F$ and (\ref{redef
		r_k}). Next we have%
	\begin{equation*}
		q_{k}=\sqrt{r_{k}-r_{k+1}}\sqrt{r_{k}+r_{k+1}}\approx \sqrt{%
			r_{k}\bigtriangleup r_{k}},
	\end{equation*}%
	and finally we have%
	\begin{eqnarray*}
		\bigtriangleup q_{k} &=&\sqrt{\bigtriangleup r_{k}}\sqrt{r_{k}+r_{k+1}}-%
		\sqrt{\bigtriangleup r_{k+1}}\sqrt{r_{k+1}+r_{k+2}} \\
		&=&\left( \sqrt{\bigtriangleup r_{k}}-\sqrt{\bigtriangleup r_{k+1}}\right) 
		\sqrt{r_{k}+r_{k+1}} \\
		&&+\sqrt{\bigtriangleup r_{k+1}}\left( \sqrt{r_{k}+r_{k+1}}-\sqrt{%
			r_{k+1}+r_{k+2}}\right) \\
		&\equiv &I+II.
	\end{eqnarray*}%
	Now%
	\begin{eqnarray*}
		I &\approx &\left( \sqrt{\bigtriangleup r_{k}}-\sqrt{\bigtriangleup r_{k+1}}%
		\right) \sqrt{r_{k}}\approx \left( \frac{\bigtriangleup r_{k}-\bigtriangleup
			r_{k+1}}{\sqrt{\bigtriangleup r_{k}}+\sqrt{\bigtriangleup r_{k+1}}}\right) 
		\sqrt{r_{k}} \\
		&\approx &\left( \bigtriangleup ^{2}r_{k}\right) \sqrt{\frac{r_{k}}{%
				\bigtriangleup r_{k}}}\lesssim \frac{\left[ \bigtriangleup r_{k}\right] ^{2}%
		}{r_{k+1}}\sqrt{\frac{r_{k}}{\bigtriangleup r_{k}}}=\frac{r_{k}}{r_{k+1}}%
		\sqrt{\frac{\bigtriangleup r_{k}}{r_{k}}}\bigtriangleup r_{k}\lesssim
		\bigtriangleup r_{k}
	\end{eqnarray*}%
	and 
	\begin{eqnarray*}
		II &=&\sqrt{\bigtriangleup r_{k+1}}\frac{\left( r_{k}+r_{k+1}\right) -\left(
			r_{k+1}+r_{k+2}\right) }{\sqrt{r_{k}+r_{k+1}}+\sqrt{r_{k+1}+r_{k+2}}} \\
		&\approx &\sqrt{\frac{\bigtriangleup r_{k+1}}{r_{k}}}\bigtriangleup
		r_{k}\lesssim \bigtriangleup r_{k}\ .
	\end{eqnarray*}
\end{proof}

\subsubsection{Statements of subrepresentation inequalities}

Set%
\begin{equation*}
	\mathbb{E}_{x,r_{1}}w\equiv \frac{1}{\left\vert E(x,r_{1})\right\vert }%
	\int_{E(x,r_{1})}w.
\end{equation*}

\begin{lemma}[$n$D subrepresentation]
	\label{lemma-subrepresentation'}With $\Gamma \left( x,r\right) $ as above,
	and $r_{0}=r$ and $r_{1}$ given by (\ref{rkp1}), we have the
	subrepresentation formula%
	\begin{equation*}
		\left\vert w\left( x\right) -\mathbb{E}_{x,r_{1}}w\right\vert \leq
		C\int_{\Gamma \left( x,r\right) }\left\vert \nabla _{A}w\left( y\right)
		\right\vert \frac{\widehat{d}\left( x,y\right) }{\left\vert B\left(
			x,d\left( x,y\right) \right) \right\vert }dy,
	\end{equation*}%
	where $\nabla _{A}$ is as in (\ref{def A grad}) and 
	\begin{equation*}
		\widehat{d}\left( x,y\right) \equiv \min \left\{ d\left( x,y\right) ,\frac{1%
		}{\left\vert F^{\prime }\left( x_{1}+d\left( x,y\right) \right) \right\vert }%
		\right\} .
	\end{equation*}
\end{lemma}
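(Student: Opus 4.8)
The plan is to carry out the two-dimensional argument of Lemma~\ref{lemma-subrepresentation} fibre-by-fibre in the variable $\mathbf{x}_2$, and to pay for the averaging in the $\mathbf{x}_2$-direction using the second-order regularity of the radii recorded in Lemma~\ref{control q}; this is the ``hold $\mathbf{y}_2$ fixed, difference lower-dimensional averages, then average over $\mathbf{y}_2$'' strategy flagged above. I work throughout with the modified ends $\widetilde{E}(x,r_k)=I_k\times D_k\times J_k$ of \eqref{def E tilda}, where $I_k=[x_1+r_{k+1},x_1+r_k)$ has length $\bigtriangleup r_k$, $D_k=\{\mathbf{v}\in\mathbb{R}^{n-2}:|\mathbf{v}-\mathbf{x}_2|<q_k\}$ with $q_k=\sqrt{r_k^2-r_{k+1}^2}$, and $J_k=(x_3-h^{\ast}(x_1,r_k),x_3+h^{\ast}(x_1,r_k))$, and with $\Gamma(x,r)=\bigcup_{k\ge1}\widetilde{E}(x,r_k)$, a cusp-like set of bounded overlap comparable to \eqref{def Gamma and E high dim}. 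By Lemma~\ref{B and E}, $|\widetilde{E}(x,r_k)|\approx|I_k|\,|J_k|\,|D_k|\approx|B(x,r_k)|$, where the middle equivalence is checked in the two regimes $r_k\gtrless 2/|F'(x_1)|$ directly from Lemma~\ref{new} and Proposition~\ref{height}; passing from $E$ to $\widetilde{E}$ in $\Gamma$ and in $\mathbb{E}_{x,r_1}w$ changes only constants. Since $w$ is Lipschitz and $\limfunc{diam}\widetilde{E}(x,r_k)\to0$, I telescope $w(x)-\mathbb{E}_{x,r_1}w=\sum_{k\ge1}(\mathbb{E}_{\widetilde{E}(x,r_{k+1})}w-\mathbb{E}_{\widetilde{E}(x,r_k)}w)$, and writing $\bar{w}_k(\mathbf{v})$ for the two-dimensional average of $w(\cdot,\mathbf{v},\cdot)$ over $I_k\times J_k$, I split each term as $(A_k)+(B_k)$ with $(A_k)=\frac{1}{|D_{k+1}|}\int_{D_{k+1}}(\bar{w}_{k+1}(\mathbf{v})-\bar{w}_k(\mathbf{v}))\,d\mathbf{v}$ and $(B_k)=\frac{1}{|D_{k+1}|}\int_{D_{k+1}}\bar{w}_k\,d\mathbf{v}-\frac{1}{|D_k|}\int_{D_k}\bar{w}_k\,d\mathbf{v}$.

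For $(A_k)$ the fibre $\mathbf{v}$ is held fixed, so $\bar{w}_{k+1}(\mathbf{v})-\bar{w}_k(\mathbf{v})$ is a difference of averages of one fibre over consecutive radial bands; the per-scale estimate from the proof of Lemma~\ref{lemma-subrepresentation} applies to it verbatim (only $w_{x_1}$ and $f^{-1}w_{x_3}$ enter, and both are dominated by $|\nabla_A w|$), giving $|\bar{w}_{k+1}(\mathbf{v})-\bar{w}_k(\mathbf{v})|\lesssim\frac{\bigtriangleup r_k}{|I_k||J_k|}\int_{(I_k\cup I_{k+1})\times(J_k\cup J_{k+1})}|\nabla_A w(\cdot,\mathbf{v},\cdot)|$, and the corresponding fundamental-theorem path stays in $\widetilde{E}(x,r_k)\cup\widetilde{E}(x,r_{k+1})$. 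Averaging over $\mathbf{v}\in D_{k+1}$ and using $|D_{k+1}|\approx|D_k|$ and $|I_k||J_k||D_k|\approx|B(x,r_k)|$ gives $|(A_k)|\lesssim\frac{\bigtriangleup r_k}{|B(x,r_k)|}\int_{\widetilde{E}(x,r_k)\cup\widetilde{E}(x,r_{k+1})}|\nabla_A w|$; summing over $k$ and running the same case analysis as in the proof of Lemma~\ref{lemma-subrepresentation} (for $y\in\widetilde{E}(x,r_k)$ one has $d(x,y)\approx r_k$, $\bigtriangleup r_k\approx\widehat{d}(x,y)$, and bounded overlap) bounds $\sum_k|(A_k)|$ by $C\int_{\Gamma(x,r)}|\nabla_A w(y)|\,\widehat{d}(x,y)/|B(x,d(x,y))|\,dy$.

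The term $(B_k)$ is the main obstacle: it is precisely the $\mathbf{x}_2$-averaging which, treated naively, produces the fatal factor $q_k=\sqrt{r_k\bigtriangleup r_k}\gg\bigtriangleup r_k$. Since $D_{k+1}\subset D_k$ are concentric $(n-2)$-balls of comparable radii, I rewrite $(B_k)=\frac{|D_k\setminus D_{k+1}|}{|D_k|}\bigl[(\bar{w}_k)_{D_{k+1}}-(\bar{w}_k)_{D_k\setminus D_{k+1}}\bigr]$. The $L^1$ Poincaré inequality on $D_k$ bounds the bracket by $Cq_k\,\frac{1}{|D_k|}\int_{D_k}|\nabla_{\mathbf{v}}\bar{w}_k(\mathbf{v})|\,d\mathbf{v}$, while Lemma~\ref{control q} gives $q_{k+1}\approx q_k$, hence $|D_k\setminus D_{k+1}|\approx q_k^{\,n-3}\bigtriangleup q_k$, together with $\bigtriangleup q_k\lesssim\bigtriangleup r_k$, so that $\frac{|D_k\setminus D_{k+1}|}{|D_k|}\lesssim\frac{\bigtriangleup q_k}{q_k}\lesssim\frac{\bigtriangleup r_k}{q_k}$; the two powers of $q_k$ cancel and $|(B_k)|\lesssim\bigtriangleup r_k\,\frac{1}{|D_k|}\int_{D_k}|\nabla_{\mathbf{v}}\bar{w}_k(\mathbf{v})|\,d\mathbf{v}$. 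As $\nabla_{\mathbf{v}}\bar{w}_k(\mathbf{v})$ is an average of $\nabla_{\mathbf{x}_2}w(\cdot,\mathbf{v},\cdot)$ over $I_k\times J_k$ and $|\nabla_{\mathbf{x}_2}w|\le|\nabla_A w|$, this yields $|(B_k)|\lesssim\frac{\bigtriangleup r_k}{|B(x,r_k)|}\int_{\widetilde{E}(x,r_k)}|\nabla_A w|$, and summing over $k$ exactly as for $(A_k)$ finishes the proof.

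The only nontrivial points are thus: the cancellation of $q_k$ in $(B_k)$, which relies entirely on the estimate $\bigtriangleup q_k\lesssim\bigtriangleup r_k$ of Lemma~\ref{control q} — the very reason for the revised choice \eqref{redef r_k} of radii together with structure condition~(5) — and which is the essential difficulty that the present organization is designed to overcome; and the bookkeeping $|I_k||J_k||D_k|\approx|B(x,r_k)|$ together with the bounded-overlap and kernel-reconstitution steps, all of which parallel the two-dimensional proof.
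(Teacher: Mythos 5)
Your overall architecture is the same as the paper's: telescope over the modified ends, split each increment into a fixed-fibre two-dimensional difference (your $(A_{k})$, the paper's $II_{k}$) plus a change of the $\mathbf{x}_{2}$-cross-section (your $(B_{k})$, the paper's term $I$), and pay for the latter with $\bigtriangleup q_{k}\lesssim \bigtriangleup r_{k}$ from Lemma \ref{control q}. The $(A_{k})$ part, the volume bookkeeping, and the kernel reconstitution are all fine. The gap is in $(B_{k})$. First note that your rewriting is the identity $(B_{k})=(\bar{w}_{k})_{D_{k+1}}-(\bar{w}_{k})_{D_{k}}$, so nothing is gained by passing through the annulus unless the bracket $\left[(\bar{w}_{k})_{D_{k+1}}-(\bar{w}_{k})_{D_{k}\setminus D_{k+1}}\right]$ is \emph{independently} bounded by $Cq_{k}\frac{1}{|D_{k}|}\int_{D_{k}}|\nabla_{\mathbf{v}}\bar{w}_{k}|$. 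That bound does \emph{not} follow from the $L^{1}$ Poincar\'{e} inequality on $D_{k}$: one of the two averages is taken over the annulus $D_{k}\setminus D_{k+1}$, whose relative measure is $\approx \bigtriangleup q_{k}/q_{k}\rightarrow 0$, and an average of $|\bar{w}_{k}-(\bar{w}_{k})_{D_{k}}|$ over a set of vanishing relative measure is not controlled by the Poincar\'{e} inequality on $D_{k}$. The crude route $|(\bar{w}_{k})_{A}-(\bar{w}_{k})_{D_{k}}|\leq \frac{|D_{k}|}{|A|}\cdot\frac{1}{|D_{k}|}\int_{D_{k}}|\bar{w}_{k}-(\bar{w}_{k})_{D_{k}}|$ costs a factor $q_{k}/\bigtriangleup q_{k}$, and the sharper Riesz-potential/chaining estimate still costs a factor $\ln (q_{k}/\bigtriangleup q_{k})\approx \ln \left( r_{k}\left\vert F^{\prime }\left( x_{1}+r_{k}\right) \right\vert \right) $, which is unbounded as $r_{k}\rightarrow 0$ in the infinitely degenerate regime and would degrade the kernel from $\widehat{d}/|B|$ to $\widehat{d}\,\ln (d/\widehat{d})/|B|$. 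So the step as written is not a valid derivation of the stated lemma, even though its conclusion happens to be true.

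The way to get the clean factor $\bigtriangleup q_{k}$ — and what the paper does — is to compare $(\bar{w}_{k})_{D_{k+1}}$ with $(\bar{w}_{k})_{D_{k}}$ directly via the dilation $\mathbf{v}\mapsto \mathbf{x}_{2}+\frac{q_{k+1}}{q_{k}}\left( \mathbf{v}-\mathbf{x}_{2}\right) $, which maps $D_{k}$ onto $D_{k+1}$ and displaces each point by at most $\left( 1-\frac{q_{k+1}}{q_{k}}\right) q_{k}=\bigtriangleup q_{k}$; the fundamental theorem of calculus along these radial segments, followed by the change of variables $\mathbf{z}=\mathbf{x}_{2}+t\left( \mathbf{v}-\mathbf{x}_{2}\right) $ with $t\in \left[ q_{k+1}/q_{k},1\right] $ bounded below, yields
\begin{equation*}
\left\vert (\bar{w}_{k})_{D_{k+1}}-(\bar{w}_{k})_{D_{k}}\right\vert \lesssim
\bigtriangleup q_{k}\,\frac{1}{\left\vert D_{k}\right\vert }
\int_{D_{k}}\left\vert \nabla _{\mathbf{v}}\bar{w}_{k}\right\vert
\lesssim \bigtriangleup r_{k}\,\frac{1}{\left\vert D_{k}\right\vert }
\int_{D_{k}}\left\vert \nabla _{\mathbf{v}}\bar{w}_{k}\right\vert
\end{equation*}
with no logarithmic loss. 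Replacing your Poincar\'{e} step by this dilation estimate (and then running the argument with the actual ends $E\left( x,r_{k}\right) $ rather than dismissing the passage from $\widetilde{E}$ to $E$ as a change of constants, since the two versions integrate over different regions) closes the gap and reproduces the paper's proof.
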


We next claim that the subrepresentation inequality continues to hold if we
use the \emph{modified} `end' $\widetilde{E}\left( x,r_{k}\right) $ in (\ref%
{def E tilda}) to define a \emph{modified} cusp-like region 
\begin{equation}
\widetilde{\Gamma }\left( x,r\right) =\bigcup\limits_{k=1}^{\infty }%
\widetilde{E}\left( x,r_{k}\right) .  \label{def Gamma tilda}
\end{equation}%
Indeed, Lemma \ref{lemma-subrepresentation'} extends to higher dimensions
with $\widetilde{\Gamma }\left( x,r\right) $ in place of $\Gamma \left(
x,r\right) $ in the subrepresentation formula, and with the average%
\begin{equation*}
	\mathbb{\tilde{E}}_{x,r_{1}}w\equiv \frac{1}{\left\vert \widetilde{E}%
		(x,r_{1})\right\vert }\int_{\widetilde{E}(x,r_{1})}w
\end{equation*}%
in place of $\mathbb{E}_{x,r_{1}}w$.

\begin{lemma}[$n$D subrepresentation]
	\label{lemma-subrepresentation''}With notation as above, and $r_{0}=r$, $%
	r_{1}$ given by (\ref{rkp1}), and $\widetilde{\Gamma }$ by (\ref{def Gamma
		tilda}), we have the subrepresentation formula%
	\begin{equation*}
		\left\vert w\left( x\right) -\mathbb{\tilde{E}}_{x,r_{1}}w\right\vert \leq
		C\int_{\widetilde{\Gamma }\left( x,r\right) }\left\vert \nabla _{A}w\left(
		y\right) \right\vert \frac{\widehat{d}\left( x,y\right) }{\left\vert B\left(
			x,d\left( x,y\right) \right) \right\vert }dy,
	\end{equation*}%
	where $\nabla _{A}$ is as in (\ref{def A grad}) and 
	\begin{equation*}
		\widehat{d}\left( x,y\right) \equiv \min \left\{ d\left( x,y\right) ,\frac{1%
		}{\left\vert F^{\prime }\left( x_{1}+d\left( x,y\right) \right) \right\vert }%
		\right\} .
	\end{equation*}
\end{lemma}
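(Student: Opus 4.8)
The plan is to show that Lemma \ref{lemma-subrepresentation''} follows from Lemma \ref{lemma-subrepresentation'} by running the \emph{same} telescoping argument but now with the modified ends $\widetilde{E}\left(x,r_{k}\right)$ in place of $E\left(x,r_{k}\right)$, where the crucial new input is the regularity of the sequence $\left\{r_{k}^{\gamma}\right\}$ provided by Lemma \ref{control q}. Specifically, I would first record that by Lemma \ref{B and E} we have $\left\vert\widetilde{E}\left(x,r_{k}\right)\right\vert\approx\left\vert B\left(x,r_{k}\right)\right\vert$, and by Lemma \ref{control q} the widths in the $\mathbf{x}_{2}$-direction satisfy $q_{k}\approx\sqrt{r_{k}\bigtriangleup r_{k}}$ with the key second-difference bound $\bigtriangleup q_{k}\lesssim\bigtriangleup r_{k}$. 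This last estimate is what repairs the failed estimate for term $II$ in the naive approach described in the excerpt.

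The main steps, in order: (1) Write $w\left(x\right)-\mathbb{\tilde{E}}_{x,r_{1}}w=\sum_{k\geq 1}\left\{\mathbb{\tilde{E}}_{x,r_{k+1}}w-\mathbb{\tilde{E}}_{x,r_{k}}w\right\}$ using Lipschitz continuity and that $\left\vert\widetilde{E}\left(x,r_{k}\right)\right\vert\to 0$ with averages converging to $w\left(x\right)$. (2) Dominate each difference by a double integral over $\widetilde{E}\left(x,r_{k+1}\right)\times\widetilde{E}\left(x,r_{k}\right)$ of $\left\vert w\left(y\right)-w\left(z\right)\right\vert$, and split the path from $y$ to $z$ into three axis-parallel segments: first move in $x_{1}$, then in $\mathbf{x}_{2}$, then in $x_{3}$, exactly as in the excerpt's display producing $I+II+III$. (3) For $I$ and $III$ the estimates $\left\vert y_{1}-z_{1}\right\vert\leq\bigtriangleup r_{k}$ and the bound on the $x_{3}$-extent $h^{\ast}\approx(r_{k}-r_{k+1})f\left(x_{1}+r_{k+1}\right)$ (combined with $\left\vert w_{x_{3}}\right\vert\leq f^{-1}\left\vert\nabla_{A}w\right\vert$) give the desired $\sum_{k}\frac{\bigtriangleup r_{k}}{\left\vert B\left(x,r_{k}\right)\right\vert}\int_{H_{k}}\left\vert\nabla_{A}w\right\vert$ form, word for word as in the two-dimensional proof. (4) For $II$, the new idea is that the $\mathbf{x}_{2}$-cross-section of $\widetilde{E}$ is a \emph{fixed} ball (independent of $y_{1}$), of radius $q_{k}$, so that when we hold $\mathbf{y}_{2}$ fixed and telescope the lower-dimensional averages, the error in matching the cross-sections of $\widetilde{E}\left(x,r_{k+1}\right)$ and $\widetilde{E}\left(x,r_{k}\right)$ is governed by $\bigtriangleup q_{k}\lesssim\bigtriangleup r_{k}$ rather than by $q_{k}\approx\sqrt{r_{k}\bigtriangleup r_{k}}$; carrying this through yields $II\lesssim\sum_{k}\frac{\bigtriangleup r_{k}}{\left\vert B\left(x,r_{k}\right)\right\vert}\int_{H_{k}}\left\vert\nabla_{A}w\right\vert$ as well. (5) Assemble the three pieces into the analogue of (\ref{pre-subr}), namely $\left\vert w\left(x\right)-\mathbb{\tilde{E}}_{x,r_{1}}w\right\vert\lesssim\int_{\widetilde{\Gamma}\left(x,r\right)}\left\vert\nabla_{A}w\left(y\right)\right\vert\left(\sum_{k}\frac{\bigtriangleup r_{k}}{\left\vert B\left(x,r_{k}\right)\right\vert}\mathbf{1}_{\widetilde{E}\left(x,r_{k}\right)}\left(y\right)\right)dy$, and then repeat verbatim the two-case analysis ($d(x,y)\geq\frac{1}{\left\vert F^{\prime}(x_{1})\right\vert}$ versus $<$) from the proof of Lemma \ref{lemma-subrepresentation} to replace the kernel by $\frac{\widehat{d}\left(x,y\right)}{\left\vert B\left(x,d\left(x,y\right)\right)\right\vert}$. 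Since $\widetilde{E}\left(x,r_{k}\right)$ and $E\left(x,r_{k}\right)$ have comparable measures and occupy comparable radial shells, the summation $\sum_{k:r_{k+1}<d(x,y)<r_{k}}$ telescopes in exactly the same way, using $\left\vert B\left(x,r_{k}\right)\right\vert\approx\left\vert B\left(x,d(x,y)\right)\right\vert$ and $\bigtriangleup r_{k}\lesssim\widehat{d}\left(x,y\right)$ (equivalently $r_{k}\lesssim\widehat{d}(x,y)$ when $d(x,y)$ is small, from (\ref{rkp12})).

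The step I expect to be the main obstacle is step (4), the clean bookkeeping of term $II$: one must genuinely exploit that the $\mathbf{x}_{2}$-slices of the modified ends are honest $(n-2)$-dimensional balls centered at $\mathbf{x}_{2}$ with radius $q_{k}$, so that holding $\mathbf{y}_{2}$ fixed and differencing the slice-averages of $w$ in the $x_{1}$ and $x_{3}$ variables produces no $\mathbf{x}_{2}$-displacement at all at stage $k$, and the only $\mathbf{x}_{2}$-displacement that survives between consecutive stages is of size $\bigtriangleup q_{k}$. Making this precise requires a careful change-of-variables/Fubini argument comparing $\widetilde{E}\left(x,r_{k}\right)$ and $\widetilde{E}\left(x,r_{k+1}\right)$ fibrewise over the common base ball $\left\{\left\vert\mathbf{y}_{2}-\mathbf{x}_{2}\right\vert<q_{k+1}\right\}$, together with a standard telescoping estimate for the extra annular region $q_{k+1}\leq\left\vert\mathbf{y}_{2}-\mathbf{x}_{2}\right\vert<q_{k}$ whose measure is $\lesssim q_{k}^{n-3}\bigtriangleup q_{k}\lesssim q_{k}^{n-3}\bigtriangleup r_{k}$ and is absorbed once divided by $\left\vert B\left(x,r_{k}\right)\right\vert\approx h^{\ast}\left(x_{1},r_{k}\right)q_{k}^{n-2}\bigtriangleup r_{k}$. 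I would present the $I$ and $III$ estimates and the final two-case kernel reduction only briefly, since they are identical to the two-dimensional proof, and devote the bulk of the write-up to step (4) and to invoking Lemma \ref{control q} at the right moment.
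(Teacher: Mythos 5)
Your proposal is correct and follows essentially the same route as the paper's proof: telescope over the modified ends $\widetilde{E}\left(x,r_{k}\right)$, reduce the $\mathbf{x}_{2}$-width mismatch to the estimate $\bigtriangleup q_{k}\lesssim \bigtriangleup r_{k}$ from Lemma \ref{control q}, telescope the remaining two-dimensional slice averages fibrewise over the common base ball $Q\left(\mathbf{x}_{2},q_{k+1}\right)$, and finish with the two-case kernel analysis from Lemma \ref{lemma-subrepresentation}. The only cosmetic difference is that the paper adjusts the widths by the dilation $\mathbf{y}_{2}\mapsto \mathbf{x}_{2}+\frac{q_{k+1}}{q_{k}}\left(\mathbf{y}_{2}-\mathbf{x}_{2}\right)$ (yielding the estimate (\ref{this est})) rather than by restricting to the common base and handling the annular shell separately, but both devices rest on the same bound $\bigtriangleup q_{k}\lesssim \bigtriangleup r_{k}$.
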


Combining Lemma \ref{lemma-subrepresentation''} with Lemma \ref{new}, we
obtain that in dimension $n\geq 3$ we have the estimate 
\begin{align}
	& \label{K est} \\
	&K_{B\left( 0,r_{0}\right) }(x,y) =\frac{\widehat{d}\left( x,y\right) }{%
		\left\vert B\left( x,d\left( x,y\right) \right) \right\vert }\mathbf{1}_{%
		\widetilde{\Gamma }\left( x,r_{0}\right) }\left( y\right)  \notag \\
	&\approx %
	\begin{cases}
		\begin{split}
			\frac{1}{r^{n-1}f(x_{1})}\mathbf{1}_{\widetilde{\Gamma }\left(
				x,r_{0}\right) }\left( y\right) ,\quad 0& <r=y_{1}-x_{1}<\frac{2}{|F^{\prime
				}(x_{1})|} \\
			\frac{\left\vert F^{\prime }\left( x_{1}+r\right) \right\vert ^{n-1}}{%
				f(x_{1}+r)\lambda \left( x_{1},r\right) ^{n-2}}\mathbf{1}_{\widetilde{\Gamma 
				}\left( x,r_{0}\right) }\left( y\right) ,\quad R& \geq r=y_{1}-x_{1}\geq 
			\frac{2}{|F^{\prime }(x_{1})|}
		\end{split}%
	\end{cases}%
	,  \notag
\end{align}%
where we have defined%
\begin{equation}
\lambda \left( x_{1},r\right) \equiv \sqrt{r\left\vert F^{\prime }\left(
	x_{1}+r\right) \right\vert }.  \label{def lambda}
\end{equation}

The proofs of Lemmas \ref{lemma-subrepresentation'} and \ref%
{lemma-subrepresentation''} are both similar to that of the two dimensional
analogue, Lemma \ref{lemma-subrepresentation} above. The main difference
lies in the fact, already noted above, that we can no longer simply subtract
the averages of $w$ over the ends $\widetilde{E}\left( x,r_{k}\right) $ and $%
\widetilde{E}\left( x,r_{k+1}\right) $ since the diameter of these ends in
the $\mathbf{x}_{2}$ direction is comparable to $\sqrt{r_{k}\left(
	r_{k}-r_{k+1}\right) }$, a quantity much larger than $r_{k}-r_{k+1}$ when $%
r_{k}-r_{k+1}\ll r_{k}$.

There is one more estimate we give. Define the half metric ball 
\begin{equation*}
	HB(0,r)=B(0,r)\cap \{(x,y)\in \mathbb{R}^{2}:x>0\}.
\end{equation*}
We show that for $x\in HB\left( 0,r\right) $ and $0<x_{1}\leq r^{\ast
}=r^{\ast }\left( 0,r\right) $, the rectangle $\widetilde{E}\left(
x,r-r^{\ast }\right) $ has volume comparable to that of $\widetilde{E}\left(
0,r\right) $, and hence the averages of $w$ over the modified ends $%
\widetilde{E}\left( x,r-r^{\ast }\right) $ and $\widetilde{E}\left(
0,r\right) $ have controlled difference. We will not use this estimate in
this paper, but it is natural and may prove useful elsewhere. More precisely
we have the following lemma.

\begin{lemma}
	Suppose that $0<r<R$, $x\in HB\left( 0,r\right) $, and $0<x_{1}\leq r^{\ast
	} $. Then we have
	
	\begin{enumerate}
		\item $\left\vert \widetilde{E}\left( x,r-r^{\ast }\right) \right\vert
		\approx \left\vert \widetilde{E}\left( x,r-r^{\ast }\right) \cap HB\left(
		0,r\right) \right\vert \approx \left\vert \widetilde{E}\left( 0,r\right)
		\right\vert $,
		
		\item $\left\vert \frac{1}{\left\vert \widetilde{E}\left( x,r-r^{\ast
			}\right) \right\vert }\int\limits_{\widetilde{E}\left( x,r-r^{\ast }\right) \cap
			HB\left( 0,r\right) }w-\frac{1}{\left\vert \widetilde{E}\left( 0,r\right)
			\right\vert }\int_{\widetilde{E}\left( 0,r\right) }w\right\vert \lesssim
		r\int_{HB(0,r)}|\nabla _{A}w|dx$.
	\end{enumerate}
\end{lemma}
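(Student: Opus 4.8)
The plan is to dispatch part~(1) by a direct geometric computation assembled from the ball-volume estimates already established, and then part~(2) by a telescoping argument modelled on the proof of the subrepresentation inequality, Lemma~\ref{lemma-subrepresentation}, using that the crude right-hand factor $r$ (as opposed to the sharp $\widehat{d}/|B|$ kernel) leaves generous slack. For part~(1), first record the two structural facts that pin down all the relevant scales: $r^{\ast }=r^{\ast }(0,r)$ satisfies $r^{\ast }\approx r$ by the lower bound (\ref{rkp12}), and $r-r^{\ast }\approx \frac{1}{|F^{\prime }(r)|}$ by part~(2) of Proposition~\ref{height} (applicable because a ball centred at the origin is always in the large-radius regime, $|F^{\prime }(0^{+})|=\infty $ by conditions~(1),(4)). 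Reading $\widetilde{E}(0,r)$ off its definition, it is, up to constants, the rectangle $[r^{\ast },r)\times (-h^{\ast }(0,r),h^{\ast }(0,r))$; computing $h^{\ast }(0,r)$ from (\ref{def h*}) exactly as $y$ was estimated before Remark~\ref{upper bound} gives $h^{\ast }(0,r)\approx \frac{f(r)}{|F^{\prime }(r)|}$, whence $|\widetilde{E}(0,r)|\approx \frac{f(r)}{|F^{\prime }(r)|^{2}}\approx |B(0,r)|$ by (\ref{ball-origin}). For $\widetilde{E}(x,r-r^{\ast })$, since $x_{1}\leq r^{\ast }\approx r$ and $|F^{\prime }|$ decreases we have $\frac{1}{|F^{\prime }(x_{1})|}\leq \frac{1}{|F^{\prime }(r)|}\approx r-r^{\ast }$, so the radius $r-r^{\ast }$ sits in (or on the boundary of) the large-radius regime for the centre $x$; its width and height are then estimated with parts~(2)--(3) of Proposition~\ref{height}, Proposition~\ref{general-area} and part~(2) of Lemma~\ref{consequences} (the latter to slide between the comparable scales $x_{1}$, $x_{1}+r^{\ast }$, $x_{1}+r-r^{\ast }$, $r$), again yielding measure $\approx |B(0,r)|$. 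The middle equivalence $|\widetilde{E}(x,r-r^{\ast })\cap HB(0,r)|\approx |\widetilde{E}(x,r-r^{\ast })|$ is then the standard ``a fixed fraction of an end lies in its ball'' statement, proved as in (\ref{claim that}) and Corollary~\ref{thick_part} by exhibiting an explicit sub-rectangle of $\widetilde{E}(x,r-r^{\ast })$ contained in $B(0,r)$ via the taxicab bound $d(0,y)\leq d(0,x)+d(x,y)\leq x_{1}+(r-r^{\ast })<r$ (taking $x_{2}=0$ by vertical translation invariance).

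For part~(2), part~(1) lets us replace the two normalizing constants by $|\widetilde{E}(x,r-r^{\ast })\cap HB|$ and $|\widetilde{E}(0,r)\cap HB|$, both comparable to $V:=|B(0,r)|$, so the left side is at most $V^{-2}\iint |w(y)-w(z)|\,dy\,dz$ over $\bigl(\widetilde{E}(x,r-r^{\ast })\cap HB\bigr)\times \bigl(\widetilde{E}(0,r)\cap HB\bigr)$. To bound $|w(y)-w(z)|$ I would telescope exactly as in the derivation of (\ref{pre-subr}): insert a chain of ends running inward from $\widetilde{E}(0,r)$ and then over to $\widetilde{E}(x,r-r^{\ast })$, so that at each step the two integration points differ by only $r_{k}-r_{k+1}$ in the $y_{1}$-variable and by a comparable amount in the $y_{2}$-variable; express each step's contribution as a line integral of $w_{x_{1}}$ (dominated by $|\nabla _{A}w|$) or of $w_{x_{2}}$ (dominated by $\frac{1}{f(y_{1})}|\nabla _{A}w|$, with $f(y_{1})\approx f(r)$ on the relevant range, which stays on the scale $r$), and sum the geometric-type series of (\ref{pre-subr}) as in case~2 of Lemma~\ref{lemma-subrepresentation}. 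The resulting kernel is then crushed by $\widehat{d}(x,y)\leq d(x,y)\leq 2r$ and $|B(x,d(x,y))|\gtrsim V$ on the relevant set, which produces the asserted bound $\lesssim r\int_{HB(0,r)}|\nabla _{A}w|$ once all comparability constants are absorbed into the factor $r$.

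The delicate point is the bookkeeping in part~(1): one must follow the radius $r-r^{\ast }$ through the large/small dichotomy in the recursion (\ref{rkp1}) and in Propositions~\ref{height}--\ref{general-area}, and verify that $h^{\ast }(x_{1},r-r^{\ast })$ and the thick-part width of the $x$-centred end are comparable to the corresponding quantities for $\widetilde{E}(0,r)$ uniformly over the admissible $x$ — and it is exactly this uniform comparability that consumes both hypotheses, $x\in HB(0,r)$ and $0<x_{1}\leq r^{\ast }$. In part~(2) the only step needing care is to route the telescoping so that every $y_{2}$-direction segment is traversed at an $x$-coordinate of size $\approx r$, so that the weight $\frac{1}{f}$ incurred in replacing $\partial _{y}$ by $\nabla _{A}$ never exceeds $\frac{C}{f(r)}$ and is therefore swallowed by $|\widetilde{E}|\approx \frac{f(r)}{|F^{\prime }(r)|^{2}}$ together with the factor $r$; once part~(1) is in hand, this is routine.
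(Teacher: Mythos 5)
The paper itself gives no proof of this lemma (it is dismissed as ``a straightforward exercise'' and, as the authors note, is not used elsewhere), so there is no argument to compare your route against; I can only assess the plan on its own terms. Your computation of $\left\vert \widetilde{E}\left( 0,r\right) \right\vert \approx f\left( r\right) /\left\vert F^{\prime }\left( r\right) \right\vert ^{2}\approx \left\vert B\left( 0,r\right) \right\vert $ is fine, and the overall architecture (a volume comparison followed by a telescoping estimate in the style of Proposition \ref{1 1 Poin}, with vertical moves confined to the thick part) is the right sort of strategy for part (2).

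The genuine gap is in part (1), at precisely the step you flag as delicate and then assert goes through. You propose to compare the height and width of $\widetilde{E}\left( x,r-r^{\ast }\right) $ with those of $\widetilde{E}\left( 0,r\right) $ by ``sliding'' between the scales $x_{1}+\left( r-r^{\ast }\right) $ and $r$ via part (2) of Lemma \ref{consequences}. But that lemma only compares $f$ and $\left\vert F^{\prime }\right\vert $ at points separated by at most $1/\left\vert F^{\prime }\right\vert $, whereas here the separation is $r-\left( x_{1}+r-r^{\ast }\right) =r^{\ast }-x_{1}$, which ranges over essentially the whole interval $\left( 0,r^{\ast }\right) $ — i.e.\ over the full scale $r$, since $r^{\ast }=r^{\ast }\left( 0,r\right) \approx r$ — as $x_{1}$ runs through the hypothesis range $0<x_{1}\leq r^{\ast }$. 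When $x_{1}\ll r^{\ast }$, the end $\widetilde{E}\left( x,r-r^{\ast }\right) $, read off from (\ref{def E tilda}), is located near abscissa $x_{1}+\left( r-r^{\ast }\right) \approx 1/\left\vert F^{\prime }\left( r\right) \right\vert \ll r$ and has height $h^{\ast }\left( x_{1},r-r^{\ast }\right) \approx f\left( x_{1}+r-r^{\ast }\right) /\left\vert F^{\prime }\left( x_{1}+r-r^{\ast }\right) \right\vert $; for the model geometry $F_{\sigma }$ with, say, $x_{1}=r^{1+\sigma }$, the ratio $f\left( x_{1}+r-r^{\ast }\right) /f\left( r\right) $ tends to $0$ faster than any power of $r$, so this height is in no way comparable to $h^{\ast }\left( 0,r\right) \approx f\left( r\right) /\left\vert F^{\prime }\left( r\right) \right\vert $. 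Thus the uniform comparability you need cannot be extracted from the cited lemmas, and under the literal reading of $\widetilde{E}$ it fails. To make the argument work one must re-anchor the end of the $x$-centred ball so that it always reaches out to abscissa $\approx r$ — for instance use the radius $r-x_{1}$ in place of $r-r^{\ast }$, in the spirit of the translated end $\overleftrightarrow{E}\left( 0,r\right) $ introduced immediately after the lemma for the complementary range $r^{\ast }<x_{1}\leq r$ — or else restrict to $r^{\ast }-x_{1}\lesssim 1/\left\vert F^{\prime }\left( r\right) \right\vert $. Part (2) of your proposal inherits the same defect, since the telescoping bound begins by normalizing both averages by quantities assumed comparable to $\left\vert B\left( 0,r\right) \right\vert $ on the strength of part (1).
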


\begin{proof}
	This is a straightforward exercise.
\end{proof}

As a consequence of this lemma, we obtain the average control%
\begin{equation}
\left\vert \frac{1}{\left\vert \widetilde{E}\left( x,r-r^{\ast }\right)
	\right\vert }\int_{\widetilde{E}\left( x,r-r^{\ast }\right) \cap B_{+}\left(
	0,r\right) }w-\frac{1}{\left\vert \widetilde{E}\left( 0,r\right) \right\vert 
}\int_{\widetilde{E}\left( 0,r\right) }w\right\vert \lesssim
r\int_{B(0,r)}|\nabla _{A}w|dx.  \label{avg control}
\end{equation}%
For the case when $r^{\ast }<x_{1}\leq r$, we simply use instead of the end $%
\widetilde{E}\left( 0,r\right) $, the substitute%
\begin{equation*}
	\overleftrightarrow{E}\left( 0,r\right) \equiv \left\{ y=\left( y_{1},%
	\mathbf{y}_{2},y_{3}\right) :%
	\begin{array}{c}
		r-3\left( r-r^{\ast }\right) \leq y_{1}-x_{1}<r-2\left( r-r^{\ast }\right)
		\\ 
		\left\vert \mathbf{y}_{2}-\mathbf{x}_{2}\right\vert <\sqrt{r_{k}^{2}-\left(
			y_{1}-x_{1}\right) ^{2}} \\ 
		\left\vert y_{3}-x_{3}\right\vert <h^{\ast }\left( x_{1},y_{1}-x_{1}\right)%
	\end{array}%
	\right\} ,
\end{equation*}%
which looks like $\widetilde{E}\left( 0,r\right) $ translated a distance $%
2\left( r-r^{\ast }\right) $ to the left. This gives the average control (%
\ref{avg control}) in the case $r^{\ast }<x_{1}\leq r$ as well. With these
considerations we have obtained the following corollary.

\begin{corollary}
	With notation as above, and $r_{0}=r$, $r_{1}$ given by (\ref{rkp1}),
	suppose that $\mathbb{\tilde{E}}_{0,r}w=0$. Then we have the
	subrepresentation formula%
	\begin{equation*}
		\left\vert w\left( x\right) \right\vert \leq C\int_{\widetilde{\Gamma }%
			\left( x,r\right) }\left\vert \nabla _{A}w\left( y\right) \right\vert \frac{%
			\widehat{d}\left( x,y\right) }{\left\vert B\left( x,d\left( x,y\right)
			\right) \right\vert }dy,
	\end{equation*}%
	where $\nabla _{A}$ is as in (\ref{def A grad}) and 
	\begin{equation*}
		\widehat{d}\left( x,y\right) \equiv \min \left\{ d\left( x,y\right) ,\frac{1%
		}{\left\vert F^{\prime }\left( x_{1}+d\left( x,y\right) \right) \right\vert }%
		\right\} .
	\end{equation*}
\end{corollary}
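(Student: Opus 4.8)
The plan is to deduce the corollary from the $n$-dimensional subrepresentation inequality in Lemma~\ref{lemma-subrepresentation''} by one further comparison of averages, exploiting the hypothesis $\mathbb{\tilde{E}}_{0,r}w=0$. By the triangle inequality,
\[
\left\vert w(x)\right\vert \leq \left\vert w(x)-\mathbb{\tilde{E}}_{x,r_{1}}w\right\vert +\left\vert \mathbb{\tilde{E}}_{x,r_{1}}w-\mathbb{\tilde{E}}_{0,r}w\right\vert ,
\]
and the second average vanishes by assumption. Lemma~\ref{lemma-subrepresentation''} bounds the first summand by $C\int_{\widetilde{\Gamma}(x,r)}\left\vert \nabla _{A}w(y)\right\vert \frac{\widehat{d}(x,y)}{\left\vert B(x,d(x,y))\right\vert }dy$, so everything reduces to estimating the difference of the two outermost averages $\mathbb{\tilde{E}}_{x,r_{1}}w$ and $\mathbb{\tilde{E}}_{0,r}w$.

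For that difference I would use the Lemma just above the statement (on comparability of $\widetilde{E}(x,r-r^{\ast })$ with $\widetilde{E}(0,r)$) together with the average control $(\ref{avg control})$ deduced from it. When $0<x_{1}\leq r^{\ast }=r^{\ast }(0,r)$, the outermost end $\widetilde{E}(x,r_{1})$ of $\widetilde{\Gamma}(x,r)$ and the rectangle $\widetilde{E}(x,r-r^{\ast })$ appearing there play interchangeable roles: by part (1) of that Lemma, Corollary~\ref{thick_part} and Proposition~\ref{height}, both have measure comparable to $\left\vert B(0,r)\right\vert $ and comparable overlap with $B(0,r)$, so part (2), i.e.\ $(\ref{avg control})$, yields $\left\vert \mathbb{\tilde{E}}_{x,r_{1}}w-\mathbb{\tilde{E}}_{0,r}w\right\vert \lesssim r\int_{B(0,r)}\left\vert \nabla _{A}w\right\vert $. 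When $r^{\ast }<x_{1}\leq r$ one instead compares with the shifted end $\overleftrightarrow{E}(0,r)$, as indicated in the text, and obtains the same bound.

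The genuinely delicate step is then to absorb $r\int_{B(0,r)}\left\vert \nabla _{A}w\right\vert $ into the target integral $\int_{\widetilde{\Gamma}(x,r)}\left\vert \nabla _{A}w(y)\right\vert \frac{\widehat{d}(x,y)}{\left\vert B(x,d(x,y))\right\vert }dy$: the ball $B(0,r)$ is far fatter than the cusp $\widetilde{\Gamma}(x,r)$, so the extra term cannot be matched pointwise against the kernel. I would handle this exactly as in the proof of Lemma~\ref{lemma-subrepresentation}, by inserting a finite chain of ends $\widetilde{E}(x,r_{1})=\widetilde{E}_{0},\widetilde{E}_{1},\dots ,\widetilde{E}_{m}=\widetilde{E}(0,r)$ with consecutive members of comparable measure and controlled overlap — whose existence rests on the regularity of the radii $\{r_{k}^{\gamma }\}$ from Lemma~\ref{control q}, the volume estimates of Lemma~\ref{new}, and the monotonicity of $r\mapsto r^{\ast }(x_{1},r)$ — so that $\mathbb{\tilde{E}}_{x,r_{1}}w-\mathbb{\tilde{E}}_{0,r}w$ telescopes into a sum of consecutive-average differences, each dominated by $(\mathrm{diam}\,\widetilde{E}_{j})\,\left\vert \widetilde{E}_{j}\right\vert ^{-1}\int_{\widetilde{E}_{j}}\left\vert \nabla _{A}w\right\vert $. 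Since the measures $\left\vert \widetilde{E}_{j}\right\vert $ decay geometrically and the diameters are comparable to $\widehat{d}$ on $\widetilde{E}_{j}$ by $(\ref{K est})$, this sum is controlled by $C\int \left\vert \nabla _{A}w(y)\right\vert \frac{\widehat{d}(x,y)}{\left\vert B(x,d(x,y))\right\vert }dy$ over a slightly enlarged cusp-like region containing $\widetilde{\Gamma}(x,r)$, which is the asserted estimate. Constructing this chain and verifying the overlap and measure comparabilities is the main obstacle; the remainder is bookkeeping with geometric facts already established in Chapter~7 and the present chapter, and since the corollary is not used elsewhere in the monograph, a sketch along these lines is all that is needed.
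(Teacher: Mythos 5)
Your skeleton coincides with the paper's: the triangle inequality $\left\vert w(x)\right\vert \leq \left\vert w(x)-\mathbb{\tilde{E}}_{x,r_{1}}w\right\vert +\left\vert \mathbb{\tilde{E}}_{x,r_{1}}w-\mathbb{\tilde{E}}_{0,r}w\right\vert$, Lemma \ref{lemma-subrepresentation''} for the first term, and the average control (\ref{avg control}) (with the shifted end $\overleftrightarrow{E}\left( 0,r\right)$ when $r^{\ast }<x_{1}\leq r$) for the second. That is the entirety of the paper's argument, which is simply announced with ``with these considerations we have obtained the following corollary.''

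Where you go beyond the paper is the final absorption step, and there your diagnosis is sound but your repair is not. You are right that the leftover term $r\int_{B(0,r)}\left\vert \nabla _{A}w\right\vert$ cannot be matched against the kernel on the cusp: $B(0,r)$ is not contained in $\widetilde{\Gamma }\left( x,r\right)$, so gradient mass supported off the cusp never appears on the stated right-hand side, and no constant fixes that. But the chaining you propose does not close this gap: any chain of ends interpolating between $\widetilde{E}\left( x,r_{1}\right)$ and $\widetilde{E}\left( 0,r\right)$ consists of sets whose measures are all comparable to $\left\vert B\left( 0,r\right) \right\vert$ (by the lemma preceding the corollary together with Lemma \ref{B and E}), so there is no geometric decay to sum, and the chain sweeps out essentially the whole ball rather than a subset of $\widetilde{\Gamma }\left( x,r\right)$ --- as you yourself concede, you only recover the estimate over an enlarged region, which is not the stated conclusion. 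The honest output of this route is $\left\vert w\left( x\right) \right\vert \lesssim \int_{\widetilde{\Gamma }\left( x,r\right) }\left\vert \nabla _{A}w\right\vert \frac{\widehat{d}\left( x,y\right) }{\left\vert B\left( x,d\left( x,y\right) \right) \right\vert }dy+r\int_{B(0,r)}\left\vert \nabla _{A}w\right\vert dy$, and that is how the corollary should be read (the extra term is harmless in the intended applications, since by the straight-across estimate (\ref{straight}) the kernel integrates in $x$ to about $r$, so both terms contribute comparably after integration). In short: up to and including the average control your proof is the paper's; the additional absorption you attempt is not performed in the paper and, as formulated, does not go through.
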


\subsubsection{Proofs of the subrepresentation inequalities}

We begin with a preliminary estimate on difference of averages that will set
the stage for the proofs of the subrepresentation inequalities. Recall that
the modified end $\widetilde{E}\left( x,r_{k}\right) $ defined in (\ref{def
	E tilda}) is a product set consisting of those $y=\left( y_{1},\mathbf{y}%
_{2},y_{3}\right) \in \mathbb{R}\times \mathbb{R}^{n-2}\times \mathbb{R}=%
\mathbb{R}^{n}$ belonging to%
\begin{equation*}
	\widetilde{E}\left( x,r_{k}\right) =\left[ x_{1}+r_{k+1},x_{1}+r_{k}\right)
	\times Q\left( \mathbf{x}_{2},q_{k}\right) \times \left( x_{3}-h^{\ast
	}\left( x_{1},r_{k}\right) ,x_{3}+h^{\ast }\left( x_{1},r_{k}\right) \right)
	,
\end{equation*}%
where $Q\left( \mathbf{z},q\right) $ denotes the $\left( n-2\right) $%
-dimensional Euclidean ball centered at $\mathbf{z}\in \mathbb{R}^{n-2}$
with radius $q$. With Lemma \ref{control q} in hand, we now dilate the
modified end $\widetilde{E}\left( x,r_{k}\right) $ in the $\mathbf{x}_{2}$
variable so that it has the same `thickness' as $\widetilde{E}\left(
x,r_{k+1}\right) $, namely $q_{k+1}$. More precisely, we define%
\begin{equation}
\widehat{E}\left( x,r_{k}\right) \equiv \left[ x_{1}+r_{k+1},x_{1}+r_{k}%
\right) \times Q\left( \mathbf{x}_{2},q_{k+1}\right) \times \left(
x_{3}-h^{\ast }\left( x_{1},r_{k}\right) ,x_{3}+h^{\ast }\left(
x_{1},r_{k}\right) \right) .  \label{def E hat}
\end{equation}%
Note that the only difference between $\widehat{E}\left( x,r_{k}\right) $
and $\widetilde{E}\left( x,r_{k}\right) $ is the change of width from $q_{k}$
to $q_{k+1}$. Then we observe that the dilation 
\begin{equation*}
	\mathbf{y}_{2}\rightarrow \mathbf{y}_{2}^{\prime }=\mathbf{x}_{2}+\frac{%
		q_{k+1}}{q_{k}}\left( \mathbf{y}_{2}-\mathbf{x}_{2}\right) ,
\end{equation*}%
with $y_{1}$ and $y_{3}$ kept fixed, maps $\widetilde{E}\left(
x,r_{k}\right) $ one-to-one onto $\widehat{E}\left( x,r_{k}\right) $, and
satisfies%
\begin{align*}
	&\frac{1}{\left\vert \widetilde{E}\left( x,r_{k}\right) \right\vert }\int\limits_{%
		\widetilde{E}\left( x,r_{k}\right) }w\left( y\right) dy-\frac{1}{\left\vert 
		\widehat{E}\left( x,r_{k}\right) \right\vert }\int\limits_{\widehat{E}\left(
		x,r_{k}\right) }w\left( y\right) dy \\
	&=\frac{1}{\bigtriangleup r_{k}}\int\limits_{r_{k+1}}^{r_{k}}\frac{1}{2h^{\ast
		}\left( x_{1},r_{k}\right) }\\
	&\qquad\qquad\qquad\times\int\limits_{-h^{\ast }\left( x_{1},r_{k}\right)
	}^{h^{\ast }\left( x_{1},r_{k}\right) }\left[ \frac{1}{\left\vert Q\left( 
		\mathbf{x}_{2},q_{k}\right) \right\vert }\int\limits_{\mathbf{y}_{2}\in Q\left( 
		\mathbf{x}_{2},q_{k}\right) }w\left( y_{1},\mathbf{y}_{2},y_{3}\right) d%
	\mathbf{y}_{2}\right] dy_{3}dy_{1} \\
	&-\frac{1}{\bigtriangleup r_{k}}\int\limits_{r_{k+1}}^{r_{k}}\frac{1}{2h^{\ast
		}\left( x_{1},r_{k}\right) }\\
	&\qquad\qquad\qquad\times\int\limits_{-h^{\ast }\left( x_{1},r_{k}\right)
	}^{h^{\ast }\left( x_{1},r_{k}\right) }\left[ \frac{1}{\left\vert Q\left( 
		\mathbf{x}_{2},q_{k+1}\right) \right\vert }\int\limits_{\mathbf{y}_{2}^{\prime }\in
		Q\left( \mathbf{x}_{2},q_{k+1}\right) }w\left( y_{1},\mathbf{y}_{2}^{\prime
	},y_{3}\right) d\mathbf{y}_{2}^{\prime }\right] dy_{3}dy_{1},
\end{align*}%
where the difference of averages in square brackets over $Q\left( \mathbf{x}%
_{2},q_{k}\right) $ and $Q\left( \mathbf{x}_{2},q_{k+1}\right) $ satisfies%
\begin{align*}
	&\frac{1}{\left\vert Q\left( \mathbf{x}_{2},q_{k}\right) \right\vert }\int_{%
		\mathbf{y}_{2}\in B\left( \mathbf{x}_{2},q_{k}\right) }w\left( y_{1},\mathbf{%
		y}_{2},y_{3}\right) d\mathbf{y}_{2}\\
	&\ \ \ \ \ \ \ \ \ \ \ \ \ \ \ \ \ \ \ \ \ \ \ \ \ -\frac{1}{\left\vert Q\left( \mathbf{x}%
		_{2},q_{k+1}\right) \right\vert }\int_{\mathbf{y}_{2}^{\prime }\in Q\left( 
		\mathbf{x}_{2},q_{k+1}\right) }w\left( y_{1},\mathbf{y}_{2}^{\prime
	},y_{3}\right) d\mathbf{y}_{2}^{\prime } \\
	&=\frac{1}{\left\vert Q\left( \mathbf{x}_{2},q_{k}\right) \right\vert }%
	\int_{\mathbf{y}_{2}^{\prime }\in Q\left( \mathbf{x}_{2},q_{k+1}\right)
	}w\left( y_{1},\mathbf{x}_{2}+\frac{q_{k}}{q_{k+1}}\left( \mathbf{y}%
	_{2}^{\prime }-\mathbf{x}_{2}\right) ,y_{3}\right) \frac{\partial \mathbf{y}%
		_{2}}{\partial \mathbf{y}_{2}^{\prime }}d\mathbf{y}_{2}^{\prime } \\
	&\ \ \ \ \ \ \ \ \ \ \ \ \ \ \ \ \ \ \ \ \ \ \ \ \ -\frac{1}{\left\vert
		Q\left( \mathbf{x}_{2},q_{k+1}\right) \right\vert }\int_{\mathbf{y}%
		_{2}^{\prime }\in Q\left( \mathbf{x}_{2},q_{k+1}\right) }w\left( y_{1},%
	\mathbf{y}_{2}^{\prime },y_{3}\right) d\mathbf{y}_{2}^{\prime } \\
	&=\frac{1}{\left\vert Q\left( \mathbf{x}_{2},q_{k+1}\right) \right\vert }%
	\int\limits_{Q\left( \mathbf{x}_{2},q_{k+1}\right) }\left[w\left( y_{1},\mathbf{x}%
	_{2}+\frac{q_{k+1}}{q_{k}}\left( \mathbf{y}_{2}^{\prime }-\mathbf{x}%
	_{2}\right) ,y_{3}\right) -w\left( y_{1},\mathbf{y}_{2}^{\prime
	},y_{3}\right) \right] d\mathbf{y}_{2}^{\prime }.
\end{align*}%
Thus we have the following estimate for the difference of averages%
\begin{eqnarray*}
	\widetilde{\mathbb{E}}_{x,r_{k}} &\equiv &\frac{1}{\left\vert \widetilde{E}%
		\left( x,r_{k}\right) \right\vert }\int_{\widetilde{E}\left( x,r_{k}\right)
	}w\left( y\right) dy, \\
	\widehat{\mathbb{E}}_{x,r_{k}} &\equiv &\frac{1}{\left\vert \widehat{E}%
		\left( x,r_{k}\right) \right\vert }\int_{\widehat{E}\left( x,r_{k}\right)
	}w\left( y\right) dy
\end{eqnarray*}%
over the modified ends $\widehat{E}\left( x,r_{k}\right) $ and $E\left(
x,r_{k}\right) $:%
\begin{align*}
	&\left\vert \widetilde{\mathbb{E}}_{x,r_{k}}-\widehat{\mathbb{E}}%
	_{x,r_{k}}\right\vert \leq \frac{1}{\bigtriangleup r_{k}}%
	\int_{r_{k+1}}^{r_{k}}\frac{1}{2h^{\ast }\left( x_{1},r_{k}\right) }%
	\int_{-h^{\ast }\left( x_{1},r_{k}\right) }^{h^{\ast }\left(
		x_{1},r_{k}\right) }\frac{1}{\left\vert Q\left( \mathbf{x}%
		_{2},q_{k+1}\right) \right\vert } \\
	&\ \ \ \ \ \times \int_{Q\left( \mathbf{x}_{2},q_{k+1}\right)}\left\vert w\left( y_{1},\mathbf{x}_{2}+\frac{q_{k+1}}{%
		q_{k}}\left( \mathbf{y}_{2}^{\prime }-\mathbf{x}_{2}\right) ,y_{3}\right)
	-w\left( y_{1},\mathbf{y}_{2}^{\prime },y_{3}\right) \right\vert d\mathbf{y}%
	_{2}^{\prime }dy_{1}dy_{3} \\
	&\leq \frac{1}{\bigtriangleup r_{k}}\int_{r_{k+1}}^{r_{k}}\frac{1}{2h^{\ast
		}\left( x_{1},r_{k}\right) }\int_{-h^{\ast }\left( x_{1},r_{k}\right)
	}^{h^{\ast }\left( x_{1},r_{k}\right) }\frac{1}{\left\vert Q\left( \mathbf{x}%
		_{2},q_{k+1}\right) \right\vert }\\
	&\ \ \ \ \ \times \int_{Q\left( \mathbf{x}_{2},q_{k+1}\right)} \left\vert \nabla _{\mathbf{x}_{2}}w\left( y_{1},\mathbf{z}%
	_{2},y_{3}\right) \right\vert d\mathbf{z}_{2}dy_{1}dy_{3}\left( 1-\frac{q_{k+1}}{%
		q_{k}}\right) \left\vert \mathbf{y}_{2}^{\prime }-\mathbf{x}_{2}\right\vert
	\\
	&\leq \bigtriangleup q_{k}\frac{1}{\left\vert \widetilde{E}\left(
		x,r_{k}\right) \right\vert }\int_{\widetilde{E}\left( x,r_{k}\right)
	}\left\vert \nabla _{A}w\left( y\right) \right\vert dy\lesssim \frac{%
		\bigtriangleup r_{k}}{\left\vert \widetilde{E}\left( x,r_{k}\right)
		\right\vert }\int_{\widetilde{E}\left( x,r_{k}\right) }\left\vert \nabla
	_{A}w\left( y\right) \right\vert dy.
\end{align*}%
This estimate,%
\begin{equation}
\left\vert \widetilde{\mathbb{E}}_{x,r_{k}}-\widehat{\mathbb{E}}%
_{x,r_{k}}\right\vert \lesssim \frac{\bigtriangleup r_{k}}{\left\vert 
	\widetilde{E}\left( x,r_{k}\right) \right\vert }\int_{\widetilde{E}\left(
	x,r_{k}\right) }\left\vert \nabla _{A}w\left( y\right) \right\vert dy,
\label{this est}
\end{equation}%
for the difference of averages has the same form as the corresponding
estimates for the summands in terms $I$ and $III$ in the previous
subsubsection, and thus we can replace the average of $w$ over $\widetilde{E}%
\left( x,r_{k}\right) $ by its average over $\widehat{E}\left(
x,r_{k}\right) $ whenever we wish.

We can now prove the subrepresentation formula in Lemma \ref%
{lemma-subrepresentation''}. The proof of Lemma \ref%
{lemma-subrepresentation'} is similar and left for the reader.

\begin{proof}[Proof of Lemma \protect\ref{lemma-subrepresentation''}]
	We have%
	\begin{align*}
		&w\left( x\right) -\widetilde{\mathbb{E}}_{x,r_{1}}w =\lim_{k\rightarrow
			\infty }\frac{1}{\left\vert \widetilde{E}\left( x,r_{k}\right) \right\vert }%
		\int_{\widetilde{E}\left( x,r_{k}\right) }w\left( y\right) dy-\widetilde{%
			\mathbb{E}}_{x,r_{1}}w \\
		&=\sum_{k=1}^{\infty }\left\{ \frac{1}{\left\vert \widetilde{E}\left(
			x,r_{k+1}\right) \right\vert }\int_{\widetilde{E}\left( x,r_{k+1}\right)
		}w\left( y\right) dy-\frac{1}{\left\vert \widetilde{E}\left(
			x,r_{k+1}\right) \right\vert }\int_{\widetilde{E}\left( x,r_{k+1}\right)
		}w\left( z\right) dz\right\} ,
	\end{align*}%
	and so%
	\begin{align*}
		w\left( x\right) -\widetilde{\mathbb{E}}_{x,r_{1}}w&\\
		=&\sum_{k=1}^{\infty }%
		\frac{1}{\left\vert \widetilde{E}\left( x,r_{k}\right) \right\vert }\int_{%
			\widetilde{E}\left( x,r_{k}\right) }w\left( y\right) dy-\frac{1}{\left\vert 
			\widehat{E}\left( x,r_{k}\right) \right\vert }\int_{\widehat{E}\left(
			x,r_{k}\right) }w\left( z\right) dz \\
		&+\sum_{k=1}^{\infty }\frac{1}{\left\vert \widehat{E}\left( x,r_{k}\right)
			\right\vert }\int_{\widehat{E}\left( x,r_{k}\right) }w\left( y\right) dy-%
		\frac{1}{\left\vert \widetilde{E}\left( x,r_{k+1}\right) \right\vert }%
		\int_{E\left( x,r_{k+1}\right) }w\left( z\right) dz \\
		\equiv &\ I+\sum_{k=1}^{\infty }II_{k},
	\end{align*}%
	where 
	\begin{equation*}
		\left\vert I\right\vert \lesssim \sum_{k=1}^{\infty }\frac{\bigtriangleup
			r_{k}}{\left\vert \widetilde{E}\left( x,r_{k}\right) \right\vert }\int_{%
			\widetilde{E}\left( x,r_{k}\right) }\left\vert \nabla _{A}w\left( y\right)
		\right\vert dy,
	\end{equation*}%
	and%
	\begin{align*}
		II_{k} =&\frac{1}{\bigtriangleup r_{k}}\int_{r_{k+1}}^{r_{k}}\frac{1}{%
			2h^{\ast }\left( x_{1},r_{k}\right) }\int_{-h^{\ast }\left(
			x_{1},r_{k}\right) }^{h^{\ast }\left( x_{1},r_{k}\right) }\frac{1}{%
			\left\vert Q\left( \mathbf{x}_{2},q_{k+1}\right) \right\vert }\int_{Q\left( 
			\mathbf{x}_{2},q_{k+1}\right) }w \\
		&-\frac{1}{\bigtriangleup r_{k+1}}\int_{r_{k+2}}^{r_{k+1}}\frac{1}{2h^{\ast
			}\left( x_{1},r_{k+1}\right) }\int_{-h^{\ast }\left( x_{1},r_{k+1}\right)
		}^{h^{\ast }\left( x_{1},r_{k+1}\right) }\frac{1}{\left\vert Q\left( \mathbf{%
				x}_{2},q_{k+1}\right) \right\vert }\int_{Q\left( \mathbf{x}%
			_{2},q_{k+1}\right) }w \\
		=&\frac{1}{\left\vert Q\left( \mathbf{x}_{2},q_{k+1}\right) \right\vert }%
		\int_{Q\left( \mathbf{x}_{2},q_{k+1}\right) } \left\{ \frac{1}{\bigtriangleup r_{k}}\int_{r_{k+1}}^{r_{k}}\frac{1%
		}{2h^{\ast }\left( x_{1},r_{k}\right) }\int_{-h^{\ast }\left(
			x_{1},r_{k}\right) }^{h^{\ast }\left( x_{1},r_{k}\right) }w\right.\\
		&-\left.\frac{1}{%
			\bigtriangleup r_{k+1}}\int_{r_{k+2}}^{r_{k+1}}\frac{1}{2h^{\ast }\left(
			x_{1},r_{k+1}\right) }\int_{-h^{\ast }\left( x_{1},r_{k+1}\right) }^{h^{\ast
			}\left( x_{1},r_{k+1}\right) }w\right\} .
	\end{align*}%
	Now the difference of the $2$-dimensional integrals in braces has the
	variable $\mathbf{y}_{2}$ fixed, and using the two dimensional proof above,
	the modulus of this difference is easily seen to be controlled by%
	\begin{equation*}
		\left[ \bigtriangleup r_{k}\right] \frac{1}{\bigtriangleup r_{k}}%
		\int_{r_{k+2}}^{r_{k}}\frac{1}{2h^{\ast }\left( x_{1},r_{k}\right) }%
		\int_{-h^{\ast }\left( x_{1},r_{k}\right) }^{h^{\ast }\left(
			x_{1},r_{k}\right) }\left\vert \nabla _{A}w\right\vert .
	\end{equation*}%
	Then averaging over $\mathbf{y}_{2}\in B\left( \mathbf{x}_{2},q_{k+1}\right) 
	$, we obtain the bound%
	\begin{eqnarray*}
		\left\vert II_{k}\right\vert &\lesssim &\left[ \bigtriangleup r_{k}\right] 
		\frac{1}{\left\vert B\left( \mathbf{x}_{2},q_{k+1}\right) \right\vert }%
		\int_{B\left( \mathbf{x}_{2},q_{k+1}\right) }\frac{1}{\bigtriangleup r_{k}}%
		\int_{r_{k+2}}^{r_{k}}\frac{1}{2h^{\ast }\left( x_{1},r_{k}\right) }%
		\int_{-h^{\ast }\left( x_{1},r_{k}\right) }^{h^{\ast }\left(
			x_{1},r_{k}\right) }\left\vert \nabla _{A}w\right\vert \\
		&\lesssim &\left[ \bigtriangleup r_{k}\right] \frac{1}{\left\vert \widetilde{%
				H}\left( x,r_{k}\right) \right\vert }\int_{\widetilde{H}\left(
			x,r_{k}\right) }\left\vert \nabla _{A}w\right\vert ,
	\end{eqnarray*}%
	where $\widetilde{H}_{k}\left( x,r_{k}\right) \equiv \widetilde{E}\left(
	x,r_{k+1}\right) \bigcup \widetilde{E}\left( x,r_{k}\right) $. Altogether
	then we have%
	\begin{align*}
		\left\vert w\left( x\right) -\widetilde{\mathbb{E}}_{x,r_{1}}w\right\vert&\\
		\lesssim& \sum_{k=1}^{\infty }\left\{ \frac{\bigtriangleup r_{k}}{\left\vert 
			\widetilde{E}\left( x,r_{k}\right) \right\vert }\int_{\widetilde{E}\left(
			x,r_{k}\right) }\left\vert \nabla _{A}w\right\vert +\frac{\bigtriangleup
			r_{k}}{\left\vert \widetilde{H}\left( x_{1},r_{k}\right) \right\vert }\int_{%
			\widetilde{H}\left( x_{1},r_{k}\right) }\left\vert \nabla _{A}w\right\vert
		\right\} .
	\end{align*}%
	Moreover, by estimates above we have that $|\widetilde{H}(x,r_{k})|\approx
	|B(x,r_{k})|,$ and%
	\begin{eqnarray*}
		\left\vert w\left( x\right) -\widetilde{\mathbb{E}}_{x,r_{1}}w\right\vert
		&\lesssim &\sum_{k=1}^{\infty }\frac{r_{k}-r_{k+1}}{\left\vert B\left(
			x,r_{k}\right) \right\vert }\left( \int_{\widetilde{H}\left(
			x_{1},r_{k}\right) }\left\vert \nabla _{A}w\left( y\right) \right\vert
		dy\right) \\
		&\lesssim &\int_{\widetilde{\Gamma }(x,r)}\left\vert \nabla _{A}w\left(
		y\right) \right\vert \left( \sum_{k=1}^{\infty }\frac{r_{k}-r_{k+1}}{%
			\left\vert B\left( x,r_{k}\right) \right\vert }\mathbf{1}_{E\left(
			x,r_{k}\right) }\left( y\right) \right) dy.
	\end{eqnarray*}%
	At this point, we have obtained the higher dimensional analogue of the
	two-dimensional inequality (\ref{pre-subr}), just as in Case $1$ and Case $2$
	of the $2$-dimensional proof of Lemma \ref{lemma-subrepresentation}, and the
	proof now proceeds exactly as in the $2$-dimensional case there.
\end{proof}

\section{$\left( 1,1\right) $-Sobolev and $\left( 1,1\right) $-Poincar\'{e}
	inequalities}

We will give statements and proofs only in dimension $n=2$, since these results are not
actually used in this paper, but might be interesting on their own. First we establish a simple ``straight-across'' estimate. Define 
\begin{equation}
K_{r}\left( x,y\right) \equiv \frac{\widehat{d}\left( x,y\right) }{%
	\left\vert B\left( x,d\left( x,y\right) \right) \right\vert }\mathbf{1}%
_{\Gamma \left( x,r\right) }\left( y\right) ,  \label{kernel_est}
\end{equation}%
and 
\begin{equation*}
	\Gamma \left( x,r\right) =\left\{ y\in B\left( x,r\right) :x_{1}\leq
	y_{1}\leq x_{1}+r,\ \left\vert y_{2}-x_{2}\right\vert <h^{\ast }\left(
	x_{1},y_{1}-x_{1}\right) \right\} ,
\end{equation*}
and for $y\in \Gamma \left( x,r\right) $ let $h_{x,y}=h^{\ast }\left(
x_{1},y_{1}-x_{1}\right) $. First, recall from Proposition \ref{general-area}
that we have an estimate 
\begin{equation*}
	\left\vert B\left( x,d\left( x,y\right) \right) \right\vert \approx
	h_{x,y}\left( d(x,y)-d^{\ast }(x,y)\right)
\end{equation*}%
and by Corollary \ref{d_hat_geom} we have $\left\vert B\left( x,d\left(
x,y\right) \right) \right\vert \approx h_{x,y}\widehat{d}(x,y)$. Thus, 
\begin{equation*}
	K_{r}\left( x,y\right) \approx \frac{1}{h_{x,y}}\mathbf{1}_{\left\{ \left(
		x,y\right) :x_{1}\leq y_{1}\leq x_{1}+r,\ \left\vert y_{2}-x_{2}\right\vert
		<h_{x,y}\right\} }\left( x,y\right) .
\end{equation*}

Now denote the dual cone $\Gamma ^{\ast }\left( y,r\right) $ by%
\begin{equation*}
	\Gamma ^{\ast }\left( y,r\right) \equiv \left\{ x\in B\left( y,r\right)
	:y\in \Gamma \left( x,r\right) \right\} .
\end{equation*}%
Then we have%
\begin{eqnarray}
\Gamma ^{\ast }\left( y,r\right) &=&\left\{ x\in B\left( y,r\right)
:x_{1}\leq y_{1}\leq x_{1}+r,\ \left\vert y_{2}-x_{2}\right\vert
<h_{x,y}\right\}  \label{Gammastar} \\
&=&\left\{ x\in B\left( y,r\right) :y_{1}-r\leq x_{1}\leq y_{1},\ \left\vert
x_{2}-y_{2}\right\vert <h_{x,y}\right\} ,  \notag
\end{eqnarray}%
and consequently we get the `straight across' estimate in $n=2$ dimensions,%
\begin{equation}
\int K_{r}\left( x,y\right) ~dx\approx \int_{y_{1}-r}^{y_{1}}\left\{
\int_{y_{2}-h_{x,y}}^{y_{2}+h_{x,y}}\frac{1}{h_{x,y}}dx_{2}\right\}
dx_{1}\approx \int_{x_{1}}^{x_{1}+r}dy_{1}=r\ .  \label{straight}
\end{equation}

\begin{lemma}
	\label{1-1 Sob}For $w\in Lip_{0}\left( B\left( x_{0},r\right) \right) $ and $%
	\nabla _{A}$ a degenerate gradient as above, we have%
	\begin{equation*}
		\int_{B\left( x_{0},r\right) }\left\vert w\left( x\right) \right\vert dx\leq
		Cr\int_{B\left( x_{0},r\right) }\left\vert \nabla _{A}w\left( y\right)
		\right\vert dy.
	\end{equation*}
\end{lemma}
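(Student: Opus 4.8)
The plan is to prove the $(1,1)$-Sobolev inequality by combining the two-dimensional subrepresentation formula of Lemma \ref{lemma-subrepresentation} with the `straight across' estimate \eqref{straight} via a Schur-type test. The key observation is that since $w \in Lip_0(B(x_0,r))$, we may exploit the vanishing of $w$ at the boundary, rather than merely controlling $w$ by its average. First I would reduce to the situation covered by the subrepresentation lemma. Given $x \in B(x_0,r)$, apply Lemma \ref{lemma-subrepresentation} with a radius comparable to $r$ (so that the cusp-like region $\Gamma(x,\cdot)$ terminates near $\partial B(x_0,r)$); since $w$ has compact support inside $B(x_0,r)$, the average $\mathbb{E}_{x,r_1}w$ appearing on the left can be made to vanish, or can itself be controlled by a similar integral of $|\nabla_A w|$. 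Thus we obtain the pointwise bound
\begin{equation*}
\left\vert w(x)\right\vert \leq C\int_{B(x_0,r)} \left\vert \nabla_A w(y)\right\vert K_r(x,y)\,dy,
\end{equation*}
with $K_r(x,y)$ the kernel in \eqref{kernel_est}.

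Next I would integrate this inequality in $x$ over $B(x_0,r)$ and apply Tonelli's theorem to exchange the order of integration:
\begin{equation*}
\int_{B(x_0,r)} \left\vert w(x)\right\vert dx \leq C\int_{B(x_0,r)} \left\vert \nabla_A w(y)\right\vert \left( \int_{B(x_0,r)} K_r(x,y)\,dx\right) dy.
\end{equation*}
The inner integral $\int K_r(x,y)\,dx$ is exactly what the `straight across' computation in \eqref{straight} estimates, using the geometric identifications $|B(x,d(x,y))| \approx h_{x,y}\widehat{d}(x,y)$ (Corollary \ref{d_hat_geom} together with Proposition \ref{general-area}), the description of the dual cone $\Gamma^{\ast}(y,r)$ in \eqref{Gammastar}, and then carrying out the elementary $dx_2$ then $dx_1$ integration, which yields a bound of the form $Cr$. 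Substituting this bound for the inner integral gives exactly the desired inequality
\begin{equation*}
\int_{B(x_0,r)} \left\vert w(x)\right\vert dx \leq Cr\int_{B(x_0,r)} \left\vert \nabla_A w(y)\right\vert dy.
\end{equation*}

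The main obstacle I anticipate is the bookkeeping needed to ensure that the average term $\mathbb{E}_{x,r_1}w$ (or, in the $\widetilde{\Gamma}$ version, $\widetilde{\mathbb{E}}_{x,r_1}w$) does not spoil the argument. One clean route is to use the corollary stated earlier in which $\widetilde{\mathbb{E}}_{0,r}w = 0$ forces $|w(x)| \leq C\int_{\widetilde\Gamma(x,r)} |\nabla_A w|\,\widehat d/|B|$; since $w \in Lip_0(B(x_0,r))$ vanishes identically near the boundary, one can place the reference end $E(x,r_1)$ in the region where $w \equiv 0$, killing the average. Alternatively, estimate the average $\mathbb{E}_{x,r_1}w$ separately: it is an average of $|w|$ over a fixed-proportion piece of $B(x_0,r)$, and iterating the subrepresentation once more or just using the boundary vanishing handles it, contributing only another constant multiple of $r\int |\nabla_A w|$. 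A secondary technical point is verifying the symmetry-reflection reduction (we only proved things for $x_1 > 0$), but since the metric is symmetric about the $y$-axis and the kernel estimates are reflection-symmetric, this is routine. With those points dispatched, the Schur test against \eqref{straight} closes the proof immediately.
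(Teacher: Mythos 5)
Your proposal is correct and follows essentially the same route as the paper: enlarge the radius to a fixed multiple of $r$ so that the reference end $E(x,r_{1})$ lies outside $B(x_{0},r)$ where $w$ vanishes (killing the average term in Lemma \ref{lemma-subrepresentation}), then integrate the resulting pointwise bound in $x$, apply Tonelli, and invoke the straight-across estimate (\ref{straight}) to bound $\int K_{R}(x,y)\,dx$ by $Cr$. This is precisely the paper's argument.
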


\begin{proof}
	If $x\in B\left( x_{0},r\right) $, then $w$ satisfies the hypothesis of
	Lemma \ref{lemma-subrepresentation} in $B\left( x,2\left( C+1\right)
	^{2}r\right) $ for the constant $C$ as in (\ref{rkp12}). Indeed, let $r_{k}$
	be defined by (\ref{rkp1}) for $x=y$ and $r_{0}=2\left( C+1\right) ^{2}r$,
	then 
	\begin{equation*}
		r_{2}\geq \frac{1}{\left( C+1\right) ^{2}}r_{0}=2r.
	\end{equation*}%
	Hence, since 
	\begin{equation*}
		E\left( x,r_{1}\right) \equiv \left\{ y:x_{1}+r_{2}\leq
		y_{1}<x_{1}+r_{1},~\left\vert y_{2}-x_{2}\right\vert <h^{\ast }\left(
		x_{1},z_{1}-x_{1}\right) \right\} ,
	\end{equation*}%
	we have that $E\left( x,r_{1}\right) \bigcap B\left( x_{0},r\right)
	=\emptyset $ so $\int_{E(x,r_{1})}w=0$ so we may apply Lemma \ref%
	{lemma-subrepresentation} in $B\left( x_{0},2\left( \left( C+1\right)
	^{2}+1\right) r\right) $ for all $x\in B\left( x_{0},r\right) $.
	
	Let $R=2\left( \left( C+1\right) ^{2}+1\right) r$. Using the
	subrepresentation inequality and (\ref{straight}) we have%
	\begin{eqnarray*}
		\int \left\vert w\left( x\right) \right\vert ~dx &\leq &\int \int_{\Gamma
			\left( x,R\right) }\frac{\widehat{d}\left( x,y\right) }{\left\vert B\left(
			x,d\left( x,y\right) \right) \right\vert }\left\vert \nabla _{A}w\left(
		y\right) \right\vert ~dydx \\
		&=&\int \int K_{R}\left( x,y\right) \ \left\vert \nabla _{A}w\left( y\right)
		\right\vert ~dydx \\
		&=&\int \left\{ \int K_{R}\left( x,y\right) dx\right\} \ \left\vert \nabla
		_{A}w\left( y\right) \right\vert ~dy \\
		&\approx &\int R\left\vert \nabla _{A}w\left( y\right) \right\vert
		~dy\approx r\int \left\vert \nabla _{A}w\left( y\right) \right\vert ~dy.
	\end{eqnarray*}
\end{proof}

\begin{remark}
	\label{dhat}The larger kernel $\widetilde{K}_{r}\left( x,y\right) \equiv 
	\mathbf{1}_{\Gamma \left( x,r\right) }\left( y\right) \frac{d\left(
		x,y\right) }{\left\vert B\left( x,d\left( x,y\right) \right) \right\vert },$
	with $\widehat{d}$ replaced by $d$, does \textbf{not} in general yield the $%
	\left( 1,1\right) $ Sobolev inequality. More precisely, the inequality%
	\begin{equation}
	\int \int \widetilde{K}_{r}\left( x,y\right) \ \left\vert \nabla _{A}w\left(
	y\right) \right\vert ~dydx\lesssim r\int \left\vert \nabla _{A}w\left(
	y\right) \right\vert ~dy,\ \ \ \ \ 0<r\ll 1,  \label{ineq fails}
	\end{equation}%
	fails in the case 
	\begin{equation*}
		F(x)=\frac{1}{x},\;x>0.
	\end{equation*}%
	To see this take $y_{2}=0$. We now make estimates on the integral%
	\begin{equation}
	\int \widetilde{K}_{r}\left( x,y\right) ~dx\approx
	\int_{y_{1}-r}^{y_{1}}\left\{ \int_{y_{2}-h_{x,y}}^{y_{2}+h_{x,y}}\frac{1}{%
		h_{x,y}}\frac{d\left( x,y\right) }{\widehat{d}\left( x,y\right) }%
	dx_{2}\right\} dx_{1},  \label{kernel_int}
	\end{equation}%
	where $\widehat{d}\left( x,y\right) =\min \left\{ d\left( x,y\right) ,\frac{1%
	}{\left\vert F^{\prime }\left( x_{1}+d\left( x,y\right) \right) \right\vert }%
	\right\} $. Consider the region where 
	\begin{equation}
	d(x,y)\geq \frac{1}{|F^{\prime }(x_{1}+d(x,y))|}=\left( x_{1}+d(x,y)\right)
	^{2}.  \label{region_tmp}
	\end{equation}%
	In this region we have 
	\begin{equation*}
		\frac{d(x,y)}{\widehat{d}\left( x,y\right) }=d(x,y)|F^{\prime
		}(x_{1}+d(x,y))|=\frac{d(x,y)}{\left( x_{1}+d(x,y)\right) ^{2}}.
	\end{equation*}%
	Moreover, since $d(x,y)\leq r$, we have 
	\begin{equation*}
		\frac{d(x,y)}{\widehat{d}(x,y)}\geq \frac{r}{\left( x_{1}+r\right) ^{2}}.
	\end{equation*}%
	On the other hand, we have $d(x,y)\geq y_{1}-x_{1}$ and $d(x,y)\ll 1$, so
	the condition in (\ref{region_tmp}) is guaranteed by $y_{1}-x_{1}\geq \left(
	x_{1}+y_{1}-x_{1}\right) ^{2}$, i.e. $x_{1}\leq y_{1}-y_{1}^{2}$. We then
	have the following estimate for (\ref{kernel_int}): 
	\begin{equation*}
		\int \widetilde{K}_{r}(x,y)dx\gtrsim \int_{y_{1}-r}^{y_{1}-y_{1}^{2}}\frac{r%
		}{\left( x_{1}+r\right) ^{2}}dx_{1}=\frac{r(r-y_{1}^{2})}{%
			y_{1}(y_{1}-y_{1}^{2}+r)}.
	\end{equation*}%
	Therefore, if $y_{1}\leq r$, we have 
	\begin{equation*}
		\int \widetilde{K}_{r}(x,y)dx\gtrsim 1,
	\end{equation*}%
	and (\ref{ineq fails}) fails for small $r>0$.
\end{remark}

Now we turn to establishing the $\left( 1,1\right) $-\emph{Poincar\'{e}}
inequality. For this we will need the following extension of Lemma 79 in 
\cite{RSaW}. Recall the half metric ball 
\begin{equation*}
	HB(0,r)=B(0,r)\cap \{(x,y)\in \mathbb{R}^{2}:x>0\}.
\end{equation*}

\begin{proposition}
	\label{1 1 Poin} Let the balls $B(0,r)$ and the degenerate gradient $\nabla
	_{A}$ be as above. There exists a constant $C$ such that the Poincar\'{e}
	Inequality 
	\begin{equation*}
		\iint_{HB(0,r)}\left\vert w-\bar{w}\right\vert dxdy\leq
		Cr\iint_{HB(0,r)}|\nabla _{A}w|dxdy
	\end{equation*}%
	holds for any Lipschitz function $w$ and sufficiently small $r>0$. Here $%
	\bar{w}$ is the average defined by 
	\begin{equation*}
		\bar{w}=\frac{1}{|HB(0,r)|}\iint_{HB(0,r)}wdxdy.
	\end{equation*}
\end{proposition}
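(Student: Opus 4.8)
The plan is to obtain the Poincar\'{e} inequality from the subrepresentation formula of Lemma \ref{lemma-subrepresentation} in essentially the same way that the $(1,1)$-Sobolev inequality Lemma \ref{1-1 Sob} was obtained, the only genuinely new feature being that $w$ need not vanish on $\partial HB(0,r)$, so one must anchor everything to a single \emph{fixed} reference average instead of to the moving averages $\mathbb{E}_{x,r_{1}}w$. First I would make the routine reduction: since $\bar{w}$ is the mean of $w$ over $HB(0,r)$, for any constant $c$ we have $\iint_{HB(0,r)}|w-\bar{w}|\le \iint_{HB(0,r)}|w-c|+|HB(0,r)|\,|\bar{w}-c|\le 2\iint_{HB(0,r)}|w-c|$; taking $c=\mathbb{\tilde{E}}_{0,r}w$ and replacing $w$ by $w-c$, it suffices to prove $\iint_{HB(0,r)}|w|\le Cr\iint_{HB(0,r)}|\nabla_{A}w|$ under the normalization $\mathbb{\tilde{E}}_{0,r}w=0$.

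Next, for each $x\in HB(0,r)$ I would combine the subrepresentation formula of Lemma \ref{lemma-subrepresentation} (in its $2$-dimensional form), which controls $|w(x)-\mathbb{E}_{x,\cdot}w|$ by an integral of $|\nabla_{A}w|$ against the kernel $\widehat{d}(x,y)/|B(x,d(x,y))|$ over a cusp $\Gamma(x,\cdot)$, with the average-control estimate (\ref{avg control}) and the lemma preceding it, which compare this local end-average to the fixed reference average $\mathbb{\tilde{E}}_{0,r}w$ --- using the reference end $\widetilde{E}(0,r)$ when $x_{1}\le r^{*}(0,r)$ and its leftward translate $\overleftrightarrow{E}(0,r)$ when $r^{*}(0,r)<x_{1}\le r$. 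This is precisely the content of the corollary stated just before the subsubsection ``Proofs of the subrepresentation inequalities'', specialized to $n=2$: with the above normalization, $|w(x)|\le C\int_{\Gamma(x,r)}|\nabla_{A}w(y)|\,K_{r}(x,y)\,dy$ with $K_{r}$ as in (\ref{kernel_est}), up to an additive correction of the form $Cr\iint_{HB(0,r)}|\nabla_{A}w|$ which is harmless once $r$ is small enough that $|HB(0,r)|\le 1$. One should also record that, for $x\in HB(0,r)$, the cusp $\Gamma(x,r)$ lies inside $HB(0,r)$ (or at worst a fixed bounded dilate): this is where Corollary \ref{thick_part} and Proposition \ref{general-area} enter, together with the fact that the far end of every cusp is taken at $\widetilde{E}(0,r)$, sitting near the tip of $B(0,r)$, and that the cusps are thin.

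Finally I would integrate the pointwise bound over $x\in HB(0,r)$, interchange the order of integration, and invoke the ``straight across'' estimate (\ref{straight}):
\[
\iint_{HB(0,r)}|w(x)|\,dx\le C\int |\nabla_{A}w(y)|\Big(\int_{x\in HB(0,r)}K_{r}(x,y)\,dx\Big)dy\le Cr\iint_{HB(0,r)}|\nabla_{A}w(y)|\,dy,
\]
because $\int_{x}K_{r}(x,y)\,dx\approx r$ by (\ref{straight}) and $K_{r}(x,y)$ vanishes unless $y\in\Gamma(x,r)\subset HB(0,r)$; combining this with the reduction of the first paragraph yields the claimed inequality.

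I expect the main obstacle to be the geometric bookkeeping in the second paragraph rather than any single analytic estimate. Because Lebesgue measure is \emph{not} doubling on control balls in the infinitely degenerate regime, there is no room to enlarge $HB(0,r)$ on the right-hand side, so one must verify carefully that the cusp regions $\Gamma(x,\cdot)$ and all the reference ends used in the average-control step stay inside $HB(0,r)$, and that the cusp from $x$ has radius comparable to the distance from $x$ to $\widetilde{E}(0,r)$; this is exactly the asymmetry of $HB(0,r)$ that forces the two separate choices $\widetilde{E}(0,r)$ and $\overleftrightarrow{E}(0,r)$, and it is the ``straightforward exercise'' in the lemma preceding the proposition that has to be made precise.
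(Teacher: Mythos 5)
Your route is genuinely different from the paper's, and it contains a real gap. For the record, the paper does not use the subrepresentation formula here at all: it bounds $\iint_{HB}|w-\bar{w}|$ by $\frac{1}{|HB|}\iint_{HB\times HB}|w(P_{1})-w(P_{2})|$, invokes Lemma \ref{division of regions} with $\Omega_{1}=B_{+}$ and $\Omega_{2}=B_{-}$ (comparable in measure by Corollary \ref{thick_part}) to reduce everything to the cross term over $B_{+}\times B_{-}$, and then estimates $|w(P_{1})-w(P_{2})|$ by integrating $|\nabla_{A}w|$ along a vertical-then-horizontal taxicab path lying in the half ball; the factor $r$ comes from $h(r)/f(x_{1})\lesssim r$ on the thick part $B_{+}$, where $f(x_{1})\approx f(r)$ by Proposition \ref{height} and Lemma \ref{consequences}. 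No kernel, no reference end, no average control.

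The gap in your argument is the containment $\Gamma(x,r)\subset HB(0,r)$ asserted in your final display, which is false. The cusp $\Gamma(x,r)$ reaches first coordinates up to $x_{1}+r_{1}$ with $r_{1}$ given by (\ref{rkp1}) (so $r_{1}=r^{\ast}(x_{1},r)\approx r$ or $r_{1}=r/2$), hence for $x$ near the right tip of $HB(0,r)$ the cusp overshoots $y_{1}=r$ and leaves $B(0,r)$ entirely. Since, as you yourself observe, Lebesgue measure is not doubling on control balls, enlarging the domain on the right-hand side is not an admissible repair --- that would only yield the weaker statement of Proposition \ref{1 1 Poin'}. To keep the cusps inside $HB(0,r)$ one must shrink their radii to roughly $r-d(0,x)$, but then the local anchor ends vary with $x$ and must each be compared to the single reference end $\widetilde{E}(0,r)$ (or its translate $\overleftrightarrow{E}(0,r)$); that comparison is precisely the average-control estimate (\ref{avg control}), whose proof the paper omits as ``a straightforward exercise'' and which it explicitly says it will not use. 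Unwinding that comparison means connecting ends by paths and estimating $|\nabla_{A}w|$ along them --- which is the paper's direct argument in disguise. So the step you defer as ``geometric bookkeeping'' is in fact the entire substance of the proof, and as written the proposal does not establish the inequality with $HB(0,r)$ on both sides.
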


\subsection{Proof of Poincar\'{e}}

The left hand side can be estimated by 
\begin{align*}
	\iint_{HB(0,r)}&\left\vert w-\bar{w}\right\vert dxdy\\
	&=\iint_{HB(0,r)}\left\vert w(x_{1},y_{1})-\frac{1}{|HB(0,r)|}%
	\iint_{HB(0,r)}w(x_{2},y_{2})dx_{2}dy_{2}\right\vert dx_{1}dy_{1} \\
	& \leq \frac{1}{|HB(0,r)|}\int_{HB(0,r)\times HB(0,r)}\left\vert
	w(x_{1},y_{1})-w(x_{2},y_{2})\right\vert dx_{1}dy_{1}dx_{2}dy_{2}
\end{align*}%
The idea now is to estimate the difference $\left\vert
w(x_{1},y_{1})-w(x_{2},y_{2})\right\vert $ by the integral of $\nabla w$
along some path. Because the half metric ball is somewhat complicated
geometrically, we can simplify the argument by applying the following lemma,
sacrificing only the best constant $C$ in the Poincar\'{e} inequality.

\begin{lemma}
	\label{division of regions} Let $(X,\mu )$ be a measure space. If $\Omega
	\subset X$ is the disjoint union of $2$ measurable subsets $\Omega =\Omega
	_{1}\cup \Omega _{2}$ so that the measure of the subsets are comparable 
	\begin{equation*}
		\frac{1}{C_{1}}\leq \frac{\mu (\Omega _{1})}{\mu (\Omega _{2})}\leq C_{1}
	\end{equation*}%
	Then there exists a constant $C=C(C_{1})$, such that 
	\begin{equation}
	\iint_{\Omega \times \Omega }|w(x)-w(y)|d\mu (x)d\mu (y)\leq C\iint_{\Omega
		_{1}\times \Omega _{2}}|w(x)-w(y)|d\mu (x)d\mu (y).  \label{seperate}
	\end{equation}%
	for any measurable function $w$ defined on $\Omega $.
\end{lemma}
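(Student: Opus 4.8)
The plan is to bound the left-hand side of \eqref{seperate} by decomposing the product $\Omega \times \Omega$ into the four pieces $\Omega_i \times \Omega_j$, $i,j \in \{1,2\}$, and treating the two ``diagonal'' pieces $\Omega_1 \times \Omega_1$ and $\Omega_2 \times \Omega_2$ by inserting an intermediate variable from the opposite set. The cross terms $\Omega_1 \times \Omega_2$ and $\Omega_2 \times \Omega_1$ are already of the desired form (by symmetry of the integrand), so only the diagonal pieces need work. Since everything is symmetric in the roles of $\Omega_1$ and $\Omega_2$, it suffices to estimate $\iint_{\Omega_1 \times \Omega_1}|w(x)-w(y)|\,d\mu(x)d\mu(y)$.

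First I would write, for $x,y \in \Omega_1$ and any $z \in \Omega_2$, the triangle inequality $|w(x)-w(y)| \le |w(x)-w(z)| + |w(z)-w(y)|$, and then average this over $z \in \Omega_2$ against $\frac{1}{\mu(\Omega_2)}\,d\mu(z)$. This gives
\begin{equation*}
|w(x)-w(y)| \le \frac{1}{\mu(\Omega_2)}\int_{\Omega_2}\bigl(|w(x)-w(z)| + |w(z)-w(y)|\bigr)\,d\mu(z).
\end{equation*}
Integrating in $x$ and $y$ over $\Omega_1$ and using Tonelli, the first term contributes
\begin{equation*}
\frac{\mu(\Omega_1)}{\mu(\Omega_2)}\iint_{\Omega_1 \times \Omega_2}|w(x)-w(z)|\,d\mu(x)d\mu(z) \le C_1 \iint_{\Omega_1 \times \Omega_2}|w(x)-w(z)|\,d\mu(x)d\mu(z),
\end{equation*}
and the second term is identical after relabelling, again picking up a factor $\frac{\mu(\Omega_1)}{\mu(\Omega_2)} \le C_1$. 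Thus $\iint_{\Omega_1 \times \Omega_1}|w(x)-w(y)|\,d\mu(x)d\mu(y) \le 2C_1 \iint_{\Omega_1 \times \Omega_2}|w(x)-w(z)|\,d\mu(x)d\mu(z)$, and the same bound holds for $\Omega_2 \times \Omega_2$. Summing the four pieces and using that the $\Omega_1 \times \Omega_2$ and $\Omega_2 \times \Omega_1$ integrals coincide, we obtain \eqref{seperate} with $C = 2 + 4C_1$ (or any such explicit constant depending only on $C_1$).

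There is no real obstacle here; the only point requiring a word of care is that the averaging argument needs $\mu(\Omega_2) > 0$ (and $\mu(\Omega_1) > 0$), which is guaranteed by the comparability hypothesis $\frac{1}{C_1} \le \mu(\Omega_1)/\mu(\Omega_2) \le C_1$ once one knows the measures are finite and not both zero — and the application to $HB(0,r)$ has both subsets of positive finite measure. One should also note that all integrals are automatically finite when $w$ is, say, bounded on $\Omega$ (as in the Lipschitz application), and the inequality is trivially true if the right-hand side is infinite; so no integrability subtleties arise.
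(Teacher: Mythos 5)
Your proof is correct and follows essentially the same route as the paper: the cross terms are already of the desired form, and each diagonal term $\iint_{\Omega_i\times\Omega_i}$ is bounded by averaging the triangle inequality over an intermediate point $z$ in the opposite set, yielding the factor $2\mu(\Omega_i)/\mu(\Omega_j)\leq 2C_{1}$. Your remarks on positivity of the measures and on the trivial case of an infinite right-hand side are sensible additions but not needed for the application.
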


\begin{proof}
	Define 
	\begin{equation*}
		S_{i,j}=\iint_{\Omega _{i}\times \Omega _{j}}|w(x)-w(y)|d\mu (x)d\mu
		(y),\quad i,j=1,2.
	\end{equation*}%
	Since $\Omega =\Omega _{1}\cup \Omega _{2}$, we can rewrite inequality %
	\eqref{seperate} as 
	\begin{equation*}
		S_{1,1}+2S_{1,2}+S_{2,2}\leq CS_{1,2}.
	\end{equation*}%
	Now, we compute 
	\begin{align*}
		S_{1,1}& =\frac{1}{\mu (\Omega _{2})}\iiint_{\Omega _{1}\times \Omega
			_{1}\times \Omega _{2}}\left\vert [w(x)-w(z)]+[w(z)-w(y)]\right\vert d\mu
		(x)d\mu (y)d\mu (z) \\
		& \leq \frac{1}{\mu (\Omega _{2})}\iiint_{\Omega _{1}\times \Omega
			_{1}\times \Omega _{2}}\left\vert w(x)-w(z)\right\vert d\mu (x)d\mu (y)d\mu
		(z) \\
		& \qquad +\frac{1}{\mu (\Omega _{2})}\iiint_{\Omega _{1}\times \Omega
			_{1}\times \Omega _{2}}\left\vert w(y)-w(z)\right\vert d\mu (x)d\mu (y)d\mu
		(z) \\
		& =\frac{2\mu (\Omega _{1})}{\mu (\Omega _{2})}\iint_{\Omega _{1}\times
			\Omega _{2}}\left\vert w(x)-w(z)\right\vert d\mu (x)d\mu (z)=\frac{2\mu
			(\Omega _{1})}{\mu (\Omega _{2})}S_{1,2}\ ,
	\end{align*}%
	and similarly $S_{2,2}\leq \frac{2\mu (\Omega _{2})}{\mu (\Omega _{1})}%
	S_{1,2}$.
\end{proof}

We will apply this lemma with%
\begin{eqnarray*}
	\Omega _{1} &=&B_{+}=\left\{ \left( x,y\right) \in HB\left( 0,r_{0}\right)
	:r^{\ast }\leq x\leq r\right\} , \\
	\Omega _{2} &=&B_{-}=\left\{ \left( x,y\right) \in HB\left( 0,r_{0}\right)
	:0\leq x\leq r^{\ast }\right\} ,
\end{eqnarray*}%
where $r^{\ast }$, $B_{+}$ and $B_{-}$ are as in Lemma \ref{thick_part}
above. Then from Lemma \ref{thick_part} we have%
\begin{equation*}
	\left\vert \Omega _{1}\right\vert \approx \left\vert \Omega _{2}\right\vert
	\approx \left\vert B\left( 0,r_{0}\right) \right\vert .
\end{equation*}%
By Lemma \ref{division of regions}, the proof of Proposition \ref{1 1 Poin}
reduces to the following inequality: 
\begin{align}\label{toprove12}
	I&=\iint_{\Omega _{1}\times \Omega
		_{2}}|w(x_{1},y_{1})-w(x_{2},y_{2})|dx_{1}dy_{1}dx_{2}dy_{2}\\
	&\leq
	C|HB(0,r_{0})|r_{0}\iint_{HB(0,r_{0})}|\nabla _{A}w(x,y)|dxdy.\notag
\end{align}

Let $P_{1}=(x_{1},y_{1})\in \Omega _{1}$ and $P_{2}=(x_{2},y_{2})\in \Omega
_{2}$. We can connect $P_{1}$ and $P_{2}$ by first traveling vertically and
then horizontally. This integral path is completely contained in the half
metric ball. This immediately gives an inequality 
\begin{equation*}
	|w(x_{1},y_{1})-w(x_{2},y_{2})|\leq \left\vert
	\int_{y_{1}}^{y_{2}}w_{y}(x_{1},y)dy\right\vert +\left\vert
	\int_{x_{1}}^{x_{2}}w_{x}(x,y_{2})dx\right\vert .
\end{equation*}%
As a result, we have 
\begin{align*}
	I&=\iint_{\Omega _{1}\times \Omega
		_{2}}|w(x_{1},y_{1})-w(x_{2},y_{2})|dx_{1}dy_{1}dx_{2}dy_{2}\\
	& \leq
	\iint_{\Omega _{1}\times \Omega _{2}}\left\vert
	\int_{y_{1}}^{y_{2}}w_{y}(x_{1},y)dy\right\vert dx_{1}dy_{1}dx_{2}dy_{2} \\
	& \quad +\iint_{\Omega _{1}\times \Omega _{2}}\left\vert
	\int_{x_{1}}^{x_{2}}w_{x}(x,y_{2})dx\right\vert dx_{1}dy_{1}dx_{2}dy_{2} \\
	& =I_{1}+I_{2}
\end{align*}

We first estimate the integral 
\begin{align*}
	I_{1}&=\iint_{\Omega _{1}\times \Omega
		_{2}}|w(x_{1},y_{1})-w(x_{2},y_{2})|dx_{1}dy_{1}dx_{2}dy_{2}\\
	&\leq
	\iint_{\Omega _{1}\times \Omega _{2}}\left\vert
	\int_{y_{1}}^{y_{2}}w_{y}(x_{1},y)dy\right\vert dx_{1}dy_{1}dx_{2}dy_{2}
\end{align*}%
where $\Omega _{1}=B_{+}$ and $\Omega _{2}=B_{-}$. We have 
\begin{align*}
	I_{1}& \leq
	\int_{B_{-}}\int_{B_{+}}\!\!%
	\int_{y_{1}}^{y_{2}}|w_{y}(x_{1},y)|dydx_{1}dy_{1}dx_{2}dy_{2}\\
	&\leq
	\int_{B_{-}}\int_{B_{+}}\!\!\int_{y_{1}}^{y_{2}}\frac{1}{f(x_{1})}|\nabla
	_{A}w(x_{1},y)|dydx_{1}dy_{1}dx_{2}dy_{2} \\
	& \leq \int_{B_{-}}\int_{B_{+}}\frac{h(r)}{f(x_{1})}|\nabla
	_{A}w(x_{1},y)|dydx_{1}dx_{2}dy_{2},
\end{align*}%
where $h(r)\lesssim rf(r)$ is the \textquotedblleft maximal
height\textquotedblright\ given in Proposition \ref{height}. Moreover, for $%
x_{1}\in B_{+}$ we have $|r-x_{1}|\leq 1/|F^{\prime }(r)|$ and therefore $%
f(x_{1})\approx f(r)$. This gives 
\begin{equation*}
	\frac{h(r)}{f(x_{1})}\leq r,
\end{equation*}%
and substituting this into the above we get 
\begin{equation*}
	I_{1}\leq Cr|B_{-}|\int_{B_{+}}|\nabla _{A}w(x,y)|dxdy\leq
	Cr|B|\int_{B}|\nabla _{A}w(x,y)|dxdy.
\end{equation*}%
To estimate 
\begin{equation*}
	I_{2}=\iint_{\Omega _{1}\times \Omega _{2}}\left\vert
	\int_{x_{1}}^{x_{2}}w_{x}(x,y_{2})dx\right\vert dx_{1}dy_{1}dx_{2}dy_{2},
\end{equation*}%
we note that $|w_{x}(x,y_{2})|\leq |\nabla _{A}w(x,y_{2})|$, and therefore 
\begin{align*}
	I_{2}& \leq \iint_{B_{+}\times B_{-}}\left[ \int_{(x,y_{2})\in
		HB(0,r)}|\nabla _{A}w(x,y_{2})|dx\right] dx_{1}dy_{1}dx_{2}dy_{2} \\
	& \leq Cr|B_{+}|\int_{B}|\nabla _{A}w(x,y_{2})|dxdy_{2}\leq
	Cr|B|\int_{B}|\nabla _{A}w(x,y)|dxdy.
\end{align*}

This finishes the proof of inequality \eqref{toprove12}, and hence finishes
the proof of the Poincar\'{e} inequality in Proposition \ref{1 1 Poin}.

We now wish to extend this Poincar\'{e} inequality to hold for the full ball 
$B\left( 0,r\right) $ in Proposition \ref{1 1 Poin}. We cannot simply use
geodesics that connect the left end $E_{\func{left}}\left( 0,r\right) $
of the ball to the right end $E_{\func{right}}\left( 0,r\right) $ of the
ball and that also stay entirely within the ball $B\left( 0,r\right) $. The
problem is that the thin `neck' of the ball near the origin is too thin to
support such geodesics without compensating with a huge Jacobian. Instead we
will enlarge the ball $B\left( 0,r\right) $ enough so that the enlarged ball
contains the \emph{rectangle} $\left( -r,r\right) \times \left( -h\left(
r\right) ,h\left( r\right) \right) $. This can be achieved with the ball of
doubled radius as we now show.

\begin{lemma}
	\label{inclusion} For $0<r<\frac{R}{2}$, we have the inclusions, 
	\begin{equation*}
		B\left( 0,r\right) \subset \left( -r,r\right) \times \left( -h\left(
		r\right) ,h\left( r\right) \right) \subset \left( -r,r\right) \times \left(
		-2h\left( r\right) ,2h\left( r\right) \right) \subset B\left( 0,2r\right) .
	\end{equation*}
\end{lemma}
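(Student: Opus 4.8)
The plan is to prove the three inclusions in turn. The middle inclusion merely enlarges the vertical interval, so it is trivial, and all the content lies in the outer two.

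\emph{First inclusion.} Since the quadratic form $dt^{2}=dx^{2}+f(x)^{-2}dy^{2}$ is invariant under $x\mapsto-x$ and under $y\mapsto-y$, the ball $B(0,r)$ is symmetric about both axes; writing the upper half of $\partial B(0,r)$ as a graph $y=\varphi_{r}(x)$ over $|x|<r$ (notation as in Proposition \ref{height}, with $x_{1}=0$), and using that $d(0,(x,\cdot))$ is strictly increasing in $|y|$ for fixed $x$, one has $B(0,r)=\{(x,y):|x|<r,\ |y|<\varphi_{r}(|x|)\}$. The arc length formula $d(0,(x,y))=\int_{0}^{|x|}\frac{\lambda}{\sqrt{\lambda^{2}-f(u)^{2}}}\,du\ge|x|$ gives $|x|<r$, and part (1) of Proposition \ref{height} with $x_{1}=0$ gives $\varphi_{r}(|x|)\le\varphi_{r}(r^{\ast})=h(r)$. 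Hence $B(0,r)\subset(-r,r)\times(-h(r),h(r))$.

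\emph{Third inclusion.} By the analogous description $B(0,2r)=\{(x,y):|y|<\varphi_{2r}(|x|)\}$ it suffices to show $\varphi_{2r}(x)\ge 2h(r)$ for all $x\in[0,r]$, since then $|x|<r$ and $|y|<2h(r)$ force $|y|<\varphi_{2r}(|x|)$. I would argue in three steps. First, $\varphi_{2r}(0)=2h(r)$: letting $\lambda_{r}$ be determined by $R(\lambda_{r})=r$ (notation of Definition \ref{definition of L and Y}), the geodesic $\gamma_{0,\lambda_{r}}$ reaches its turning point $(r^{\ast},h(r))=(X(\lambda_{r}),Y(\lambda_{r}))$ at arc length $r$, and by the reflection built into the two–piece geodesic formula (\ref{two-piece geodesic}) it returns to the $y$–axis at $(0,2Y(\lambda_{r}))=(0,2h(r))$ with total arc length $2R(\lambda_{r})=2r$; hence $d(0,(0,2h(r)))\le2r$, and monotonicity in $|y|$ forces equality, i.e. $(0,2h(r))\in\partial B(0,2r)$. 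Second, $\varphi_{2r}$ is non-decreasing on $[0,(2r)^{\ast}]$: this is visible from the computation in the proof of part (1) of Proposition \ref{height}, where on the Region~$2$ portion $x\le(2r)^{\ast}$ the boundary rises from $\varphi_{2r}(0)$ to its maximum $h(2r)$ at $x=(2r)^{\ast}$, while $\varphi_{2r}'(x)=-\sqrt{\lambda(x)^{2}-f(x)^{2}}\le0$ only for $x\ge(2r)^{\ast}$. Third, $(2r)^{\ast}\ge r$: by Lemma \ref{arc length} the length accumulated between $x=X(\lambda_{2r})$ and the turning point is at most $\tfrac{1}{|F'(X(\lambda_{2r}))|}$, whence $(2r)^{\ast}=X(\lambda_{2r})\ge 2r-\tfrac{1}{|F'((2r)^{\ast})|}$, and because $\tfrac{1}{|F'(x)|}$ is small relative to $x$ near the origin (Assumption (4), and much more so in the infinitely degenerate case) this yields $(2r)^{\ast}\ge r$. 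Combining the three steps, $\varphi_{2r}$ is non-decreasing on $[0,(2r)^{\ast}]\supseteq[0,r]$ with $\varphi_{2r}(0)=2h(r)$, so $\varphi_{2r}\ge2h(r)$ on $[0,r]$, which is exactly what was needed.

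The crux is the third step, equivalently the bound $d(0,(r,2h(r)))\le2r$. The naive broken path — along $\gamma_{0,\lambda_{r}}$ to the turning point $(r^{\ast},h(r))$, then vertically up to $(r^{\ast},2h(r))$, then horizontally to $(r,2h(r))$ — only gives $d(0,(r,2h(r)))\le 2r-r^{\ast}+\tfrac{h(r)}{f(r^{\ast})}$, and via Remark \ref{upper bound} the needed inequality $\tfrac{h(r)}{f(r^{\ast})}\le r^{\ast}$ reduces to $r^{\ast}|F'(r^{\ast})|\ge1$; this holds with a large margin in the infinitely degenerate regime but is precisely the place where the structure conditions on $F$ must be used. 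If one is content with $B(0,2r)$ replaced by $B(0,Cr)$ for a constant $C$ depending only on the structural constants, this last step is immediate from Assumption (4) and no delicate estimate is required.
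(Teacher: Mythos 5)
Your reduction of the third inclusion to the statement that $\varphi_{2r}\geq 2h(r)$ on $[0,r]$, together with the identification $\varphi_{2r}(0)=2h(r)$ via the returning geodesic of total arc length $2r$, is a faithful (and much more explicit) version of the paper's two-sentence argument, and your first two steps are fine. The gap is Step 3. Assumption (4) of Definition \ref{structure conditions} says $\frac{1}{-xF^{\prime}(x)}\leq\frac{1}{\varepsilon}$ with $0<\varepsilon<1$, i.e. $\frac{1}{|F^{\prime}(x)|}\leq\frac{x}{\varepsilon}$ — an upper bound by something \emph{larger} than $x$, not smaller. Feeding this into $2r-(2r)^{\ast}\leq\frac{1}{|F^{\prime}((2r)^{\ast})|}$ gives only $(2r)^{\ast}\geq\frac{2\varepsilon}{1+\varepsilon}\,r$, which is strictly less than $r$ for $\varepsilon<1$. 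Likewise the "crux" inequality you isolate, $r^{\ast}|F^{\prime}(r^{\ast})|\geq 1$, is exactly the reverse of what the structure conditions provide ($x|F^{\prime}(x)|\geq\varepsilon$ with $\varepsilon<1$). This is not a repairable bookkeeping issue: for the finite-type geometries $f(x)=x^{\alpha}$, which Definition \ref{structure conditions} explicitly admits, one has $R(\lambda)=C_{\alpha}X(\lambda)$ with $C_{\alpha}=\int_{0}^{1}(1-v^{2\alpha})^{-1/2}dv\sim\sqrt{\pi/2\alpha}$, so $(2r)^{\ast}=2r/C_{\alpha}<r$ once $\alpha$ is small. In that regime the point $x=r$ lies in Region 1 of $B(0,2r)$, and a direct computation of the Region 1 geodesic of arc length $2r$ through abscissa $r$ gives $\varphi_{2r}(r)\rightarrow\sqrt{3}\,r^{1+\alpha}$ as $\alpha\rightarrow 0$, while $2h(r)\rightarrow 2r^{1+\alpha}$; since $\sqrt{3}<2$, the inclusion $(-r,r)\times(-2h(r),2h(r))\subset B(0,2r)$ in fact \emph{fails} for small $\alpha$. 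So the step cannot be closed under the stated hypotheses, and the $2h(r)$-rectangle version of the lemma is itself suspect in this generality.

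Your closing instinct is the right one, and it is also what the paper actually does: the paper's proof only asserts (and Proposition \ref{1 1 Poin'} only uses) the inclusion $(-r,r)\times(-h(r),h(r))\subset B(0,2r)$, i.e. the rectangle of height $h(r)$, not $2h(r)$. That weaker inclusion is robust and needs none of the delicate Step 3: given $(x,y)$ with $|x|<r$ and $|y|<h(r)$, travel along $\gamma_{0,\lambda_{r}}$ (or its mirror image) until height $|y|$ is first attained, at some abscissa $x_{y}\in[0,r^{\ast}]$ and arc length $<r$, and then horizontally to $(x,y)$ at further cost $|x-x_{y}|<r$, for a total distance $<2r$. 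You should either prove only this statement (which is what is needed downstream), or accept $B(0,Cr)$ on the right as you suggest; but you should not claim that the structure conditions deliver $(2r)^{\ast}\geq r$ or $r^{\ast}|F^{\prime}(r^{\ast})|\geq 1$ — they do so only in the genuinely infinitely degenerate regime where $x|F^{\prime}(x)|\rightarrow\infty$.
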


\begin{proof}
	The inclusion $B\left( 0,r\right) \subset \left( -r,r\right) \times \left(
	-h\left( r\right) ,h\left( r\right) \right) $ is immediate. Now consider the
	geodesic $\gamma \left( t\right) $ from $\gamma \left( 0\right) =\left(
	0,0\right) $ to the point $\gamma \left( r\right) =\left( r^{\ast },h\right)
	=\left( r^{\ast }\left( r\right) ,h\left( r\right) \right) $ on the boundary
	of the ball $B\left( 0,r\right) $ where $\gamma $ has a vertical tangent. If
	we continue this geodesic for a further time $r$, then by symmetry we curl
	back and return to the $y$-axis at the point $\gamma \left( 2r\right)
	=\left( 0,2h\left( r\right) \right) $. It is now clear by a further symmetry
	that $\left( -r,r\right) \times \left( -h\left( r\right) ,h\left( r\right)
	\right) \subset B\left( 0,2r\right) $.
\end{proof}

Now we can extend Proposition \ref{1 1 Poin} to the full ball.

\begin{proposition}
	\label{1 1 Poin'} Let the balls $B(0,r)$ and the degenerate gradient $\nabla
	_{A}$ be as above. There exists a constant $C$ such that the Poincar\'{e}
	Inequality 
	\begin{equation*}
		\int_{B(0,r)}\left\vert w(x)-\bar{w}\right\vert dx\leq
		Cr\int_{B(0,2r)}|\nabla _{A}w|dx
	\end{equation*}%
	holds for any Lipschitz function $w$ and sufficiently small $r>0$. Here $%
	\bar{w}$ is the average defined by 
	\begin{equation*}
		\bar{w}=\frac{1}{|B(0,r)|}\int_{B(0,r)}wdx.
	\end{equation*}
\end{proposition}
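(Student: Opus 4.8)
The plan is to mimic the proof of Proposition~\ref{1 1 Poin}, but run it directly on the full ball: reduce to a double integral over $B(0,r)\times B(0,r)$, split $B(0,r)$ into a ``thick'' piece and the rest via Lemma~\ref{division of regions}, and join points of the two pieces by vertical-then-horizontal paths that stay inside the rectangle $R:=(-r,r)\times(-h(r),h(r))$ supplied by Lemma~\ref{inclusion}. The whole reason for the splitting is to force the \emph{vertical} segments of these paths to live only where $x$ is comparable to $r$, so that the weight $1/f(x)$ produced when $|w_y|$ is converted into $|\nabla_A w|$ stays comparable to $1/f(r)$; in the thin ``neck'' near the origin the paths move only horizontally, where $|w_x|\le|\nabla_A w|$ carries no bad weight.

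Concretely, first I would write
\begin{equation*}
\int_{B(0,r)}|w-\bar w|\,dx\le\frac{1}{|B(0,r)|}\iint_{B(0,r)\times B(0,r)}|w(P)-w(Q)|\,dP\,dQ.
\end{equation*}
Set $\Omega_1=B_+(0,r)=\{(x,y)\in B(0,r):x\ge r^*\}$ and $\Omega_2=B(0,r)\setminus\Omega_1$, with $r^*=r^*(0,r)$ as in Definition~\ref{def r*}. Corollary~\ref{thick_part} (which applies to the origin-centred ball, since $|F'(0^+)|=\infty$ places every radius in the regime $r\ge 1/|F'(x_1)|$) gives $|\Omega_1|\approx|\Omega_2|\approx|B(0,r)|$, so Lemma~\ref{division of regions} reduces the task to bounding $\iint_{\Omega_1\times\Omega_2}|w(P)-w(Q)|$. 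For $P=(x_1,y_1)\in\Omega_1$ (so $r^*\le x_1\le r$) and $Q=(x_2,y_2)\in\Omega_2$, join them by $P\to(x_1,y_2)\to Q$; both segments lie in $R\subset B(0,2r)$ by Lemma~\ref{inclusion}, so since $w$ is Lipschitz,
\begin{equation*}
|w(P)-w(Q)|\le\Big|\int_{y_1}^{y_2}w_y(x_1,t)\,dt\Big|+\Big|\int_{x_1}^{x_2}w_x(s,y_2)\,ds\Big|.
\end{equation*}

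For the horizontal term I would use $|w_x(s,y_2)|\le|\nabla_A w(s,y_2)|$ and, since $|x_1|,|x_2|<r$, dominate the inner integral by $\int_{-r}^{r}|\nabla_A w(s,y_2)|\,ds$, which depends only on $y_2$; integrating out $P\in\Omega_1$, then $x_2$, then $y_2\in(-h(r),h(r))$ gives $\lesssim r\,|\Omega_1|\int_R|\nabla_A w|\lesssim r\,|B(0,r)|\int_{B(0,2r)}|\nabla_A w|$. For the vertical term I would write $|w_y(x_1,t)|=f(x_1)^{-1}|f(x_1)w_y(x_1,t)|\le f(x_1)^{-1}|\nabla_A w(x_1,t)|$ and invoke the key fact that $f(x)\approx f(r)$ for $x\in[r^*,r]$; then $\big|\int_{y_1}^{y_2}w_y(x_1,t)\,dt\big|\lesssim f(r)^{-1}\int_{-h(r)}^{h(r)}|\nabla_A w(x_1,t)|\,dt$, and integrating out the remaining variables and using $h(r)/f(r)\lesssim r$ (from $h(r)\lesssim rf(r)$, Proposition~\ref{height}) yields again $\lesssim r\,|B(0,r)|\int_{B(0,2r)}|\nabla_A w|$. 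Adding the two bounds gives $\iint_{\Omega_1\times\Omega_2}|w(P)-w(Q)|\lesssim r\,|B(0,r)|\int_{B(0,2r)}|\nabla_A w|$, which when combined with the two displayed reductions is exactly the claim.

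The main obstacle is the geometric bookkeeping behind the vertical estimate: one needs simultaneously that the thick piece $\Omega_1$ carries a fixed fraction of $|B(0,r)|$ and that $f$ (equivalently $F$) is essentially constant on the $x$-range $[r^*,r]$ of that piece. The first is Corollary~\ref{thick_part}. For the second I would use that $r^*\gtrsim r$ (which follows from the arc-length comparison in Lemma~\ref{arc length}, giving $r<(1+1/\varepsilon)r^*$) together with the doubling of $|F'|$ in Lemma~\ref{consequences}(3) and Assumption~(4), to see that $\ln\big(f(r)/f(r^*)\big)=\int_{r^*}^{r}|F'|$ is bounded by a structural constant; hence $f(x)\approx f(r)$ for $x\in[r^*,r]$. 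A cleaner variant, if one prefers to avoid $r^*$, is to take instead $\Omega_1=\{(x,y)\in B(0,r):r/2\le x\le r\}$, whose measure is $\approx|B(0,r)|$ by the area formula~\eqref{radial integration} and on which $f\approx f(r)$ follows at once from Lemma~\ref{consequences}(3) and $-rF'(r)\ge\varepsilon$; the remainder of the argument is unchanged. Finally, note that the half-ball Poincar\'e inequality (Proposition~\ref{1 1 Poin}) is not strictly needed here—the argument above is a direct analogue of its proof—though one could alternatively deduce the full-ball inequality from it by controlling the difference of the two half-ball averages via the same vertical-then-horizontal paths.
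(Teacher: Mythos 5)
Your overall architecture is sound and is in essence the paper's own: reduce to a double integral over $B\times B$, split off the thick part of the ball so that vertical displacements are only ever paid for at $x$-coordinates where $f$ is comparable to $f(r)$, and route the vertical-then-horizontal connecting paths through the rectangle $(-r,r)\times(-h(r),h(r))\subset B(0,2r)$ supplied by Lemma \ref{inclusion}. Your single application of Lemma \ref{division of regions} with $\Omega_1=B_+(0,r)$ and $\Omega_2=B(0,r)\setminus\Omega_1$ is a mild streamlining of the paper's decomposition into three terms $I_1+I_2+I_3$ (two of which are delegated to the half-ball Proposition \ref{1 1 Poin}), and your horizontal and vertical estimates are the right ones.

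However, your justification of the one fact that makes the vertical estimate work, namely $f(x)\approx f(r)$ for $x\in[r^{\ast},r]$, does not hold up, and in a way that matters precisely in the infinitely degenerate regime. From $r^{\ast}\gtrsim r$ and doubling of $|F'|$ you only get $\ln\bigl(f(r)/f(r^{\ast})\bigr)=\int_{r^{\ast}}^{r}|F'|\lesssim |F'(r)|\,(r-r^{\ast})\lesssim r\,|F'(r)|$, and Assumption (4) bounds $r|F'(r)|$ from \emph{below} by $\varepsilon$, not from above; for $F_{\sigma}(r)=r^{-\sigma}$ one has $r|F_{\sigma}'(r)|=\sigma r^{-\sigma}\rightarrow\infty$. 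What you actually need is the much stronger statement $r-r^{\ast}\lesssim 1/|F'(r)|$, which is (\ref{upper bound of r star}) (equivalently part (2) of Proposition \ref{height} with $x_{1}=0$); combined with part (2) of Lemma \ref{consequences} it gives $f\approx f(r)$ on $[r^{\ast},r]$, exactly as in the paper's proof of Proposition \ref{1 1 Poin}. For the same reason your ``cleaner variant'' with $\Omega_1=\{r/2\le x\le r\}$ is not cleaner but false: $f$ is \emph{not} comparable to $f(r)$ on $[r/2,r]$, since $f(r/2)/f(r)=e^{-(F(r/2)-F(r))}$ with $F(r/2)-F(r)\gtrsim r|F'(r)|\rightarrow\infty$, so the vertical weight $h(r)/f(x_{1})$ blows up relative to $r$ for $x_{1}$ near $r/2$. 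Lemma \ref{consequences}(3) is a doubling statement for $|F'|$, not for $f$; the function $f$ is only comparable over intervals of length $\sim 1/|F'|$, which is $o(r)$ here. With the thick zone defined by $r^{\ast}$ and the comparability justified via (\ref{upper bound of r star}), your argument goes through.
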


\begin{proof}
	Following Proposition \ref{1 1 Poin} we will denote the right half of the
	metric ball $B(0,r)$, by $HB(0,r)$. Recall that we have $HB(0,r)=B_{-}\cup
	B_{+}$ where 
	\begin{eqnarray*}
		B_{+} &=&\left\{ \left( x_{1},x_{2}\right) \in HB\left( 0,r_{0}\right)
		:r^{\ast }\leq x_{1}\leq r\right\} , \\
		B_{-} &=&\left\{ \left( x_{1},x_{2}\right) \in HB\left( 0,r_{0}\right)
		:0\leq x_{1}\leq r^{\ast }\right\} .
	\end{eqnarray*}%
	Similarly, we will denote the left half by $BH(0,r)$ and write $%
	BH(0,r)=B^{-}\cup B^{+}$ where 
	\begin{eqnarray*}
		B^{+} &=&\left\{ \left( x_{1},x_{2}\right) \in BH\left( 0,r_{0}\right)
		:-r\leq x_{1}\leq -r^{\ast }\right\} , \\
		B^{-} &=&\left\{ \left( x_{1},x_{2}\right) \in BH\left( 0,r_{0}\right)
		:-r^{\ast }\leq x_{1}\leq 0\right\} .
	\end{eqnarray*}%
	Now using Lemma \ref{division of regions} with $\Omega _{1}=BH(0,r)$ and $%
	\Omega _{2}=HB(0,r)$ we have 
	\begin{align*}
		\int_{B(0,r)}\left\vert w(x)-\bar{w}\right\vert dx&\leq \frac{1}{|B|}%
		\iint_{B\times B}\left\vert w(x)-w(y)\right\vert dxdy\\
		&\leq \frac{C}{|B|}%
		\iint_{BH\times HB}\left\vert w(x)-w(y)\right\vert dxdy.
	\end{align*}%
	Moreover, since we have $|B_{-}|=|B^{-}|\approx |B_{+}|=|B^{+}|\approx |B|$,
	proceeding the same way as in the proof of Lemma \ref{division of regions}
	we can show 
	\begin{align*}
		\iint_{BH\times HB}&\left\vert w(x)-w(y)\right\vert dxdy
		\leq C\left( \iint_{B^{-}\times B^{+}}\left\vert w(x)-w(y)\right\vert
		dxdy\right.\\
		&+\left.\iint_{B_{-}\times B_{+}}\left\vert w(x)-w(y)\right\vert
		dxdy+\iint_{B^{+}\times B_{+}}\left\vert w(x)-w(y)\right\vert dxdy\right) \\
		& \equiv I_{1}+I_{2}+I_{3}.
	\end{align*}%
	The estimate for $I_{2}$ follows from the proof of Proposition \ref{1 1 Poin}%
	, and the estimate for $I_{1}$ is shown in exactly the same way. We thus
	have 
	\begin{equation}
	I_{1}+I_{2}\leq Cr\int_{B(0,r)}|\nabla _{A}w|dx.  \label{I1_I2}
	\end{equation}%
	To estimate $I_{3}$ we connect two points $(x_{1},x_{2})\in B^{+}$ and $%
	(y_{1},y_{2})\in B_{+}$ by a curve consisting of one vertical and one
	horizontal segment, which according to Lemma \ref{inclusion} lies entirely
	in the ball $B(0,2r)$ 
	\begin{equation*}
		\left\vert w(x_{1},x_{2})-w(y_{1},y_{2})\right\vert \leq \left\vert
		\int_{x_{2}}^{y_{2}}w_{t}(x_{1},t)dt\right\vert +\left\vert
		\int_{x_{1}}^{y_{1}}w_{s}(s,y_{2})ds\right\vert .
	\end{equation*}%
	Next, proceeding as in the proof of Proposition \ref{1 1 Poin} we obtain 
	\begin{align*}
		& \iint_{B^{+}\times B_{+}}\left\vert
		\int_{x_{2}}^{y_{2}}w_{t}(x_{1},t)dt\right\vert dx_{1}dx_{2}dy_{1}dy_{2} \\
		& \leq |B_{+}|\int_{B^{+}}\frac{h(r)}{f(x_{1})}|\nabla
		_{A}w(x_{1},t)|dx_{1}dt\leq Cr|B|\int_{B}|\nabla _{A}w|dx,
	\end{align*}%
	where in the last inequality we used the fact that $x_{1}\in B^{+}$ and $%
	y_{1}\in B_{+}$, and therefore 
	\begin{equation*}
		\frac{h(r)}{f(x_{1})}\approx \frac{h(r)}{f(y_{1})}\approx \frac{rf(r)}{f(r)}%
		=Cr.
	\end{equation*}%
	Finally, for the integral along the horizontal segment we have 
	\begin{align*}
		\iint_{B^{+}\times B_{+}}&\left\vert
		\int_{x_{1}}^{y_{1}}w_{s}(s,y_{2})ds\right\vert dx_{1}dx_{2}dy_{1}dy_{2}\\
		&\leq
		Cr\int_{B^{+}}\int_{-h(r)}^{h(r)}\int_{-r}^{r}\left\vert
		w_{s}(s,y_{2})\right\vert dsdy_{2}dx_{1}dy_{2}\leq Cr|B|\int_{2B}|\nabla
		_{A}w|dx,
	\end{align*}%
	which gives 
	\begin{equation*}
		I_{3}\leq Cr\int_{B(0,2r)}|\nabla _{A}w|dx.
	\end{equation*}%
	Combining with (\ref{I1_I2}) finishes the proof.
\end{proof}

\subsection{Higher dimensional inequalities}

First, we state the following n-dimensional analog of Lemma \ref{inclusion}

\begin{lemma}
	\label{inclusion'} Define the set $Q(r)$ as follows 
	\begin{equation*}
		Q(r)\equiv\{(y_{1},\mathbf{y}_{2},y_{3}):\ |y_{1}|< \sqrt{r^{2}-|\mathbf{y}%
			_{2}|^{2}}, |y_{3}|<2h(\sqrt{r^{2}-|\mathbf{y}_{2}|^{2}}), |\mathbf{y}%
		_{2}|<r\}.
	\end{equation*}
	Then we have the following inclusion 
	\begin{equation*}
		B(0,r)\subset Q(r)\subset B(0,2r).
	\end{equation*}
\end{lemma}

The cross section $\mathbf{y}_{2}=const$ of the set $Q(r)$ is a rectangle 
\begin{equation*}
	(-\sqrt{r^{2}-|\mathbf{y}_{2}|^{2}},\sqrt{r^{2}-|\mathbf{y}_{2}|^{2}})\times
	(-2h(\sqrt{r^{2}-|\mathbf{y}_{2}|^{2}}),2h(\sqrt{r^{2}-|\mathbf{y}_{2}|^{2}}%
	))
\end{equation*}
in the $(x_{1},x_{3})$ plane.

Now, we first define the ``ends'' of n-dimensional balls, similar to the
sets $B_{+}$ and $B^{+}$ in 2 dimensions. As before we let $HB(0,r)=\{x\in
B(0,r):\ x_{1}>0\}$, $BH(0,r)=\{x\in B(0,r):\ x_{1}<0\}$ and

\begin{eqnarray*}
	B_{+}&=&\left\{ \left( x_{1},\mathbf{x}_{2},x_{3}\right) \in HB\left(
	0,r_{0}\right) :r^{\ast }\leq x_{1}\leq r\right\} , \\
	B_{-}&=&\left\{ \left( x_{1},\mathbf{x}_{2},x_{3}\right)\in HB\left(
	0,r_{0}\right) :0\leq x_{1}\leq r^{\ast }\right\}.
\end{eqnarray*}
Similarly, we will denote the left half by $BH(0,r)$ and write $%
BH(0,r)=B^{-}\cup B^{+}$ where 
\begin{eqnarray*}
	B^{+}&=&\left\{ \left( x_{1},\mathbf{x}_{2},x_{3}\right)\in BH\left(
	0,r_{0}\right) :-r\leq x_{1}\leq -r^{\ast }\right\} , \\
	B^{-}&=&\left\{ \left( x_{1},\mathbf{x}_{2},x_{3}\right)\in BH\left(
	0,r_{0}\right). :-r^{\ast }\leq x_{1}\leq 0\right\}.
\end{eqnarray*}

We need the following result analogous to Corollary \ref{thick_part} in two
dimensions

\begin{lemma}
	For the sets $B_{+}$, $B^{+}$, $B_{-}$, and $B^{-}$ defined above, we have 
	\begin{equation*}
		|B^{-}|=|B_{-}|\approx |B_{+}|=|B^{+}|\approx |B|.
	\end{equation*}
\end{lemma}

\begin{proof}
	The equalities are obvious from the symmetry, so we only need to show the
	approximate equalities. For convenience we will work with the right half of
	the ball and first prove that $|B_{+}|\approx |B|$. We have 
	\begin{align*}
		|B_{+}|& =\int\limits_{|\mathbf{y}_{2}|\leq r}\left\vert {B_{2D}}\left(
		\left( 0,\mathbf{0},0\right) ,\sqrt{r^{2}-\left\vert \mathbf{y}%
			_{2}\right\vert ^{2}}\right) \cap \{y_{1}>r^{\ast }\}\right\vert d\mathbf{y}%
		_{2}\\
		&\approx \int\limits_{|\mathbf{y}_{2}|\leq r}\left( \sqrt{%
			r^{2}-\left\vert \mathbf{y}_{2}\right\vert ^{2}}-r^{\ast }\right) _{+}\cdot
		h\left( \sqrt{r^{2}-\left\vert \mathbf{y}_{2}\right\vert ^{2}}\right) d%
		\mathbf{y}_{2} \\
		& \approx \int\limits_{|\mathbf{y}_{2}|\leq r}\left( \sqrt{r^{2}-\left\vert 
			\mathbf{y}_{2}\right\vert ^{2}}-r^{\ast }\right) _{+}\cdot \frac{f\left( 
			\sqrt{r^{2}-\left\vert \mathbf{y}_{2}\right\vert ^{2}}\right) }{\left\vert
			F^{\prime }\left( \sqrt{r^{2}-\left\vert \mathbf{y}_{2}\right\vert ^{2}}%
			\right) \right\vert }d\mathbf{y}_{2}\\
		&=\int\limits_{|\mathbf{y}_{2}|^{2}\leq
			r^{2}-{r^{\ast }}^{2}}\left( \sqrt{r^{2}-\left\vert \mathbf{y}%
			_{2}\right\vert ^{2}}-r^{\ast }\right) \cdot \frac{f\left( \sqrt{%
				r^{2}-\left\vert \mathbf{y}_{2}\right\vert ^{2}}\right) }{\left\vert
			F^{\prime }\left( \sqrt{r^{2}-\left\vert \mathbf{y}_{2}\right\vert ^{2}}%
			\right) \right\vert }d\mathbf{y}_{2} \\
		& \approx \frac{f(r)}{|F^{\prime }(r)|}\int\limits_{|\mathbf{y}_{2}|^{2}\leq
			r^{2}-{r^{\ast }}^{2}}\left( \sqrt{r^{2}-\left\vert \mathbf{y}%
			_{2}\right\vert ^{2}}-r^{\ast }\right) d\mathbf{y}_{2}
	\end{align*}%
	where for the last equality we used (\ref{new f equ}). Since we trivially
	have an upper bound $|B_{+}|\leq |B|$, we only need to obtain the lower
	bound. Passing to the polar coordinates, $\rho =\left\vert \mathbf{y}%
	_{2}\right\vert $, we have 
	\begin{align*}
		\int\limits_{|\mathbf{y}_{2}|^{2}\leq r^{2}-{r^{\ast }}^{2}}\left( \sqrt{%
			r^{2}-\left\vert \mathbf{y}_{2}\right\vert ^{2}}-r^{\ast }\right) d\mathbf{y}%
		_{2}& \approx \int\limits_{0}^{\sqrt{r^{2}-{r^{\ast }}^{2}}}\left( \sqrt{%
			r^{2}-\rho ^{2}}-r^{\ast }\right) \rho ^{n-3}d\rho \\
		& \geq \int\limits_{0}^{\sqrt{r^{2}-\frac{1}{4}(r+r^{\ast })^{2}}}\left( 
		\sqrt{r^{2}-\rho ^{2}}-r^{\ast }\right) \rho ^{n-3}d\rho \\
		& \geq \frac{1}{2}(r-r^{\ast })\int\limits_{0}^{\sqrt{r^{2}-\frac{1}{4}%
				(r+r^{\ast })^{2}}}\rho ^{n-3}d\rho \\
		&\approx (r-r^{\ast })^{\frac{n}{2}}r^{\frac{n}{2}-1}.
	\end{align*}%
	Using part (2) of Proposition \ref{height} we have $r-r^{\ast }\approx
	1/|F^{\prime }(r)|$ and therefore 
	\begin{equation*}
		|B_{+}|\gtrsim \frac{f(r)}{\left\vert F^{\prime }\left( r\right) \right\vert
			^{\frac{n}{2}+1}}r^{\frac{n}{2}-1}.
	\end{equation*}%
	Combining with the upper bound and Lemma \ref{new} we conclude 
	\begin{equation*}
		|B_{+}|\approx |B|.
	\end{equation*}%
	Now, for the lower bound $|B_{-}|\gtrsim |B|$ we note that $B(0,r^{\ast
	})\subset B_{-}(0,r)\cup B^{-}(0,r)$ and by Lemma \ref{new} 
	\begin{equation*}
		|B(0,r^{\ast })|\approx \frac{f(r^{\ast })}{\left\vert F^{\prime }\left(
			r^{\ast }\right) \right\vert ^{\frac{n}{2}+1}}{r^{\ast }}^{\frac{n}{2}-1}.
	\end{equation*}%
	Thus the only thing left to show is that $r^{\ast }\approx r$ for $x_{1}=0$.
	Indeed, combining the estimates from lemmas \ref{height 1} and \ref{height 2}
	with $x_{1}=0$ we get 
	\begin{equation*}
		f(r^{\ast })(r-r^{\ast })\leq h(r)\leq \frac{f(r^{\ast })}{\left\vert
			F^{\prime }\left( r^{\ast }\right) \right\vert },
	\end{equation*}%
	and thus 
	\begin{equation*}
		r^{\ast }+\frac{1}{\left\vert F^{\prime }\left( r^{\ast }\right) \right\vert 
		}\geq r.
	\end{equation*}%
	Using assumption (4) on the geometry $F$ this gives 
	\begin{equation*}
		r^{\ast }\left( 1+\frac{1}{\varepsilon }\right) \geq r,
	\end{equation*}%
	which together with the trivial bound $r^{\ast }\leq r$ concludes the proof.
\end{proof}

We are now ready to prove the n-dimensional $(1,1)$-Poincar\'{e} inequality.

\begin{proposition}
	\label{1 1 Poin''} Let the balls $B(0,r)$ and the degenerate gradient $%
	\nabla _{A}$ be as above. There exists a constant $C$ such that the Poincar\'{e} Inequality 
	\begin{equation*}
		\int_{B(0,r)}\left\vert w(x)-\bar{w}\right\vert dx\leq
		Cr\int_{B(0,2r)}|\nabla _{A}w|dx
	\end{equation*}%
	holds for any Lipschitz function $w$ and sufficiently small $r>0$. Here $%
	\bar{w}$ is the average defined by 
	\begin{equation*}
		\bar{w}=\frac{1}{|B(0,r)|}\int_{B(0,r)}wdx.
	\end{equation*}
\end{proposition}

\begin{proof}
	First note that using Lemma \ref{division of regions} and the symmetry of
	the problem it is sufficient to show 
	\begin{align*}
		I_{1}+I_{2}&\equiv \frac{1}{|B|}\left( \iint_{B_{-}\times B_{+}}\left\vert
		w(x)-w(y)\right\vert dxdy+\iint_{B^{+}\times B_{+}}\left\vert
		w(x)-w(y)\right\vert dxdy\right) \\
		&\leq Cr\int_{2B}|\nabla _{A}w|dx.
	\end{align*}%
	To estimate $I_{2}$ we connect two points $(x_{1},\mathbf{x}_{2},x_{3})\in
	B^{+}$ and $(y_{1},\mathbf{y}_{2},y_{3})\in B_{+}$, by straight segments
	connecting the following pairs of points 
	\begin{align*}
		(x_{1},\mathbf{x}_{2},x_{3})\ \ & \text{to}\ \ (x_{1},\mathbf{x}_{2},y_{3})
		\\
		(x_{1},\mathbf{x}_{2},y_{3})\ \ & \text{to}\ \ (x_{1},\mathbf{y}_{2},y_{3})
		\\
		(x_{1},\mathbf{y}_{2},y_{3})\ \ & \text{to}\ \ (y_{1},\mathbf{y}_{2},y_{3}).
	\end{align*}%
	Note that the curve described above lies entirely in in $Q(r)$ defined in
	Lemma \ref{inclusion'}. Moreover, since the first part of the path lies
	entirely in $B^{+}$ we have 
	\begin{align*}
		& \iint_{B^{+}\times B_{+}}\left\vert w(x_{1},\mathbf{x}_{2},x_{3})-w(x_{1},%
		\mathbf{x}_{2},y_{3})\right\vert dx_{1}d\mathbf{x}_{2}dx_{3}dy_{1}d\mathbf{y}%
		_{2}dy_{3} \\
		& =\iint_{B^{+}\times B_{+}}\left\vert \int_{x_{3}}^{y_{3}}w_{t}(x_{1},%
		\mathbf{x}_{2},t)dt\right\vert dx_{1}d\mathbf{x}_{2}dx_{3}dy_{1}d\mathbf{y}%
		_{2}dy_{3} \\
		& \leq |B_{+}|\int_{B^{+}}\frac{h(r)}{f(x_{1})}|\nabla _{A}w(x_{1},\mathbf{x}%
		_{2},t)|dx_{1}d\mathbf{x}_{2}dt\leq Cr|B|\int_{B}|\nabla _{A}w|dx,
	\end{align*}%
	where just as in the 2-dimensional case we used 
	\begin{equation*}
		\frac{h(r)}{f(x_{1})}\approx \frac{rf(r)}{f(r)}=Cr,
	\end{equation*}%
	since $-r\leq x_{1}\leq -r^{\ast }$ in $B_{+}$. For the other two parts of
	the path, the estimates are the same as in the proof of the classical $(1,1)$
	Poincar\'{e} in dimensions $n-2$ and $1$ respectively, so we obtain 
	\begin{align*}
		\iint_{B^{+}\times B_{+}}&\left\vert w(x_{1},\mathbf{x}_{2},y_{3})-w(x_{1},%
		\mathbf{y}_{2},y_{3})\right\vert dx_{1}d\mathbf{x}_{2}dx_{3}dy_{1}d\mathbf{y}%
		_{2}dy_{3}\\
		& \leq Cr|B|\int_{2B}|\nabla _{\mathbf{x}_{2}}w|dx\leq
		Cr|B|\int_{2B}|\nabla _{A}w|dx \\
		\iint_{B^{+}\times B_{+}}&\left\vert w(x_{1},\mathbf{y}_{2},y_{3})-w(y_{1},%
		\mathbf{y}_{2},y_{3})\right\vert dx_{1}d\mathbf{x}_{2}dx_{3}dy_{1}d\mathbf{y}%
		_{2}dy_{3}\\
		& \leq Cr|B|\int_{2B}|\partial _{x_{1}}w|dx\leq
		Cr|B|\int_{2B}|\nabla _{A}w|dx.
	\end{align*}%
	This concludes the estimate for $I_{2}$. To estimate $I_{1}$ we similarly
	connect points in $B_{-}$ and $B_{+}$ by first moving in the first and
	second variables to reach the set $B_{+}$ and then going \textquotedblleft
	vertically\textquotedblright\ to connect $x_{3}$ and $y_{3}$. The proof is
	similar to the 2-dimensional case and is left to the reader.
\end{proof}

\section{Orlicz Sobolev inequalities for bump functions $\Phi _{N}$\label%
	{Orlicz bump}}

Recall that the relevant bump functions $\Phi _{N}$ are given in (\ref{def
	Phi N ext}) by%
\begin{equation*}
	\Phi _{N}\left( t\right) \equiv \left\{ 
	\begin{array}{ccc}
		t(\ln t)^{N} & \text{ if } & t\geq E=E_{N}=e^{2N} \\ 
		\left( \ln E\right) ^{N}t & \text{ if } & 0\leq t\leq E=E_{N}=e^{2N}%
	\end{array}%
	\right. .
\end{equation*}
Next, define the positive operator $T_{B\left( 0,r_{0}\right) }:L^{1}\left(
\mu _{r_{0}}\right) \rightarrow L^{\Phi }\left( \mu _{r_{0}}\right) $ by 
\begin{equation*}
	T_{B\left( 0,r_{0}\right) }g(x)\equiv \int_{B(0,r_{0})}K_{B\left(
		0,r_{0}\right) }\left( x,y\right) g(y)dy
\end{equation*}%
where $d\mu _{r_{0}}\equiv \frac{dxdy}{|B(0,r_{0})|}$, with kernel $%
K_{B\left( 0,r_{0}\right) }$ defined by%
\begin{equation*}
	K_{B\left( 0,r_{0}\right) }\left( x,y\right) =\frac{\widehat{d}\left(
		x,y\right) }{\left\vert B\left( x,d\left( x,y\right) \right) \right\vert }%
	\mathbf{1}_{\widetilde{\Gamma }\left( r_{0}\right) }\left( x,y\right) ,
\end{equation*}%
where $\widetilde{\Gamma }$ is given by (\ref{def Gamma tilda}), and 
\begin{equation}
\widehat{d}\left( x,y\right) \equiv \min \left\{ d\left( x,y\right) ,\frac{1%
}{\left\vert F^{\prime }\left( x_{1}+d\left( x,y\right) \right) \right\vert }%
\right\} .  \label{def d hat}
\end{equation}

We will prove the strong form of the norm inequality%
\begin{equation}
\left\Vert T_{B\left( 0,r_{0}\right) }g\right\Vert _{L^{\Phi }\left( \mu
	_{r_{0}}\right) }\leq C\varphi \left( r_{0}\right) \ \left\Vert g\right\Vert
_{L^{1}\left( \mu _{r_{0}}\right) },  \label{Phi bump norm}
\end{equation}%
which in turn implies the norm inequality\ 
\begin{equation}
\left\Vert w\right\Vert _{L^{\Phi }\left( \mu _{r_{0}}\right) }\leq C\varphi
\left( r_{0}\right) \ \left\Vert \nabla _{A}w\right\Vert _{L^{1}\left( \mu
	_{r_{0}}\right) },\ \ \ \ \ w\in \left( W_{A}^{1,1}\right) _{0}\left(
B\left( 0,r_{0}\right) \right)  \label{Phi N norm}
\end{equation}%
by the subrepresentation inequality from Lemma \ref%
{lemma-subrepresentation''} with $g=\nabla _{A}w$. Indeed, we even have a
version of (\ref{Phi N norm}) for each of the half balls $HB\left(
0,r_{0}\right) =H_{\func{right}}B\left( 0,r_{0}\right) $ and $H_{\func{%
		left}}B\left( 0,r_{0}\right) $.

\begin{definition}
	Define $\left( W_{A}^{1,1}\right) _{0}\left( HB\left( 0,r_{0}\right) \right) 
	$ to be the $W_{A}^{1,1}$-closure of those Lipschitz functions $w$ in $%
	HB\left( 0,r_{0}\right) $ that vanish in a Euclidean neighbourhood of the
	compact set $\left\{ x\in \partial B\left( 0,r_{0}\right) :x_{1}\geq
	0\right\} $, and similarly for $\left( W_{A}^{1,1}\right) _{0}\left( H_{%
		\func{left}}B\left( 0,r_{0}\right) \right) $.
\end{definition}

\begin{lemma}
	\label{norm half}Assume that the strong form of the norm inequality (\ref%
	{Phi bump norm}) holds. Then the standard form of the norm inequality (\ref%
	{Phi N norm}) holds, and moreover, we also have the halfball inequalities%
	\begin{equation}
	\left\Vert w\right\Vert _{L^{\Phi }\left( HB_{\func{right}/\func{left}%
		}\left( 0,r_{0}\right) ,\mu _{r_{0}}\right) }\leq C\varphi \left(
	r_{0}\right) \ \left\Vert \nabla _{A}w\right\Vert _{L^{1}\left( HB_{\func{%
				right}/\func{left}}\left( 0,r_{0}\right) ,\mu _{r_{0}}\right) },\label{Phi N norm half}
	\end{equation}%
	$w\in \left( W_{A}^{1,1}\right) _{0}\left( HB_{\func{right}/\func{left}%
	}\left( 0,r_{0}\right) \right)$, where we take the same choice of $H_{\func{right}}B\left( 0,r_{0}\right) $
	or $H_{\func{left}}B\left( 0,r_{0}\right) $ on both sides of the
	inequality (\ref{Phi N norm half}).
\end{lemma}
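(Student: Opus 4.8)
The plan is to deduce the standard norm inequality \eqref{Phi N norm} and the halfball inequalities \eqref{Phi N norm half} from the strong operator bound \eqref{Phi bump norm}, using the subrepresentation inequality of Lemma \ref{lemma-subrepresentation''} as the bridge. First I would handle \eqref{Phi N norm}: given $w\in\left(W_{A}^{1,1}\right)_{0}\left(B\left(0,r_{0}\right)\right)$, approximate $w$ by Lipschitz functions vanishing near $\partial B\left(0,r_{0}\right)$, so that the averages $\mathbb{\tilde{E}}_{x,r_{1}}w$ over the ``ends'' $\widetilde{E}\left(x,r_{1}\right)$ vanish identically (since $\widetilde{E}\left(x,r_{1}\right)$ is adjacent to the boundary of the ball). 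Then Lemma \ref{lemma-subrepresentation''} gives the pointwise bound $\left|w\left(x\right)\right|\leq C\,T_{B\left(0,r_{0}\right)}\left(\left|\nabla_{A}w\right|\right)\left(x\right)$, and applying the monotonicity of the Orlicz norm and \eqref{Phi bump norm} with $g=\left|\nabla_{A}w\right|$ yields
\begin{equation*}
\left\Vert w\right\Vert_{L^{\Phi}\left(\mu_{r_{0}}\right)}\leq C\left\Vert T_{B\left(0,r_{0}\right)}\left(\left|\nabla_{A}w\right|\right)\right\Vert_{L^{\Phi}\left(\mu_{r_{0}}\right)}\leq C\varphi\left(r_{0}\right)\left\Vert\nabla_{A}w\right\Vert_{L^{1}\left(\mu_{r_{0}}\right)}.
\end{equation*}

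For the halfball inequalities, the key observation is that the cusp-like region $\widetilde{\Gamma}\left(x,r\right)=\bigcup_{k\geq1}\widetilde{E}\left(x,r_{k}\right)$ attached to a point $x$ with $x_{1}>0$ lies entirely in the right half $HB_{\mathrm{right}}\left(0,r_{0}\right)$ once $x$ is in that halfball, because each end $\widetilde{E}\left(x,r_{k}\right)$ is built from geodesics emanating from $x$ into the region $y_{1}>x_{1}>0$ — it never crosses the $y$-axis. Consequently, when $w\in\left(W_{A}^{1,1}\right)_{0}\left(HB_{\mathrm{right}}\left(0,r_{0}\right)\right)$ vanishes near $\left\{x\in\partial B\left(0,r_{0}\right):x_{1}\geq0\right\}$, the subrepresentation formula of Corollary following Lemma \ref{lemma-subrepresentation''} (the $\mathbb{\tilde{E}}_{0,r}w=0$ version, or simply the fact that $\widetilde{E}\left(x,r_{1}\right)\cap HB_{\mathrm{right}}\left(0,r_{0}\right)$ touches the boundary) gives $\left|w\left(x\right)\right|\leq C\int_{\widetilde{\Gamma}\left(x,r_{0}\right)}\left|\nabla_{A}w\left(y\right)\right|\frac{\widehat{d}\left(x,y\right)}{\left|B\left(x,d\left(x,y\right)\right)\right|}dy$ with the integration automatically confined to the right halfball. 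Restricting the operator $T_{B\left(0,r_{0}\right)}$ to $HB_{\mathrm{right}}\left(0,r_{0}\right)\times HB_{\mathrm{right}}\left(0,r_{0}\right)$ only decreases the kernel, so \eqref{Phi bump norm} applied to $g=\left|\nabla_{A}w\right|\mathbf{1}_{HB_{\mathrm{right}}}$ yields \eqref{Phi N norm half}; the left halfball case is identical by reflecting about the $y$-axis, using the evenness of $f$.

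The main obstacle — more a bookkeeping point than a genuine difficulty — is verifying that the boundary-vanishing hypothesis on $w$ really does force $\mathbb{\tilde{E}}_{x,r_{1}}w=0$ for \emph{every} $x$ in the (half)ball, i.e. that for each such $x$ the first end $\widetilde{E}\left(x,r_{1}\right)$ (with $r_{0}=r_{0}$, the ball's own radius) lies outside the support of $w$. For the full ball $B\left(0,r_{0}\right)$ this is not automatic for $x$ near the center, which is exactly the subtlety already encountered in the proof of Lemma \ref{1-1 Sob}: one must instead apply the subrepresentation at a slightly enlarged scale $R\approx2\left(\left(C+1\right)^{2}+1\right)r_{0}$, so that $r_{2}\geq 2r_{0}$ and hence $\widetilde{E}\left(x,r_{1}\right)\cap B\left(0,r_{0}\right)=\emptyset$; since the superradius $\varphi$ is nondecreasing and $\varphi\left(R\right)/R\approx\varphi\left(r_{0}\right)/r_{0}$ by the structure conditions (together with the renormalization Corollary \ref{renorm}), this only changes constants and does not affect the form of the inequality. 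I would carry out the full-ball case first at the enlarged scale, then observe that for the halfball version the enlargement is harmless because enlarging the ball and then intersecting with $\left\{x_{1}>0\right\}$ still contains the relevant cusps, and conclude by the reflection argument for the left halfball.
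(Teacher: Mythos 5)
Your argument is correct and follows essentially the same route as the paper: enlarge the ball by a bounded factor so that, for every $x$ in the original (half)ball, the first end of the cusp lies outside $\limfunc{supp}w$ and the subrepresentation formula of Lemma \ref{lemma-subrepresentation''} gives a pointwise bound by $T$ applied to $\left\vert \nabla _{A}w\right\vert $ (restricted to the halfball, since the cusp at $x$ with $x_{1}>0$ points into $\left\{ y_{1}>x_{1}\right\} $), then invoke (\ref{Phi bump norm}) and absorb the change of scale into the constant. The only cosmetic difference is the choice of enlarged radius — the paper takes $r_{-1}$ with $\left( r_{-1}\right) ^{\ast }=r_{0}$ rather than the fixed multiple from Lemma \ref{1-1 Sob} — and the comparability $\varphi \left( r_{-1}\right) \approx \varphi \left( r_{0}\right) $ follows directly from the structure conditions (Corollary \ref{renorm} is not really needed here).
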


\begin{proof}
	Given a radius $r_{0}>0$, choose $r_{-1}>r_{0}$ so that $r_{0}=\left(
	r_{-1}\right) ^{\ast }$. Then extend $w\in W_{0}^{1,1}\left( B\left(
	0,r_{0}\right) \right) $ to $w\in W_{0}^{1,1}\left( B\left( 0,r_{-1}\right)
	\right) $ by defining $w$ to vanish outside $B\left( 0,r_{0}\right) $. Now
	for each $x$ in the smaller halfball $HB\left( 0,r_{0}\right) $, we have 
	\begin{equation*}
		E\left( x,s\right) \cap HB\left( 0,r_{-1}\right) \subset E\left(
		0,r_{-1}\right) \text{ and }\left\vert E\left( x,s\right) \cap HB\left(
		0,r_{-1}\right) \right\vert \approx \left\vert E\left( 0,r_{-1}\right)
		\right\vert
	\end{equation*}%
	for an appropriate $s>0$. Now we apply Lemma \ref{lemma-subrepresentation''}
	in the larger halfball $HB\left( 0,r_{-1}\right) $, together with the fact
	that $w$ vanishes on the end $E\left( x,s\right) $, to conclude that 
	\begin{equation*}
		\left\vert w\left( x\right) \right\vert =\left\vert w\left( x\right)
		-E\left( x,s\right) \right\vert \lesssim T_{B\left( 0,r_{-1}\right) }\left( 
		\mathbf{1}_{HB\left( 0,r_{-1}\right) }\left\vert \nabla _{A}w\right\vert
		\right) \left( x\right) ,
	\end{equation*}%
	for $x\in HB\left( 0,r_{-1}\right) $. Thus from (\ref{Phi bump norm}) we
	obtain%
	\begin{eqnarray*}
		\left\Vert w\right\Vert _{L^{\Phi }\left( HB\left( 0,r_{-1}\right) ,\mu
			_{r_{-1}}\right) } &\leq &\left\Vert T_{B\left( 0,r_{-1}\right) }\left( 
		\mathbf{1}_{HB\left( 0,r_{-1}\right) }\left\vert \nabla _{A}w\right\vert
		\right) \right\Vert _{L^{\Phi }\left( HB\left( 0,r_{-1}\right) ,\mu
			_{r_{-1}}\right) } \\
		&\leq &C\varphi \left( r_{-1}\right) \ \left\Vert \left\vert \nabla
		_{A}w\right\vert \right\Vert _{L^{1}\left( HB\left( 0,r_{-1}\right) ,\mu
			_{r_{-1}}\right) }.
	\end{eqnarray*}%
	This gives the case of \ref{Phi N norm} for the halfball $HB\left(
	0,r_{0}\right) $ upon noting that both $w$ and $\left\vert \nabla
	_{A}w\right\vert $ vanish outside $B\left( 0,r_{0}\right) $, and $%
	r_{0}=\left( r_{-1}\right) ^{\ast }$, and so easy estimates show that we
	actually have $\varphi \left( r_{-1}\right) \approx \varphi \left(
	r_{0}\right) $ and both%
	\begin{eqnarray*}
		\left\Vert w\right\Vert _{L^{\Phi }\left( HB\left( 0,r_{-1}\right) ,\mu
			_{r_{-1}}\right) } &\approx &\left\Vert w\right\Vert _{L^{\Phi }\left(
			HB\left( 0,r_{0}\right) ,\mu _{r_{0}}\right) }\ , \\
		\left\Vert \left\vert \nabla _{A}w\right\vert \right\Vert _{L^{1}\left(
			HB\left( 0,r_{-1}\right) ,\mu _{r_{-1}}\right) } &\approx &\left\Vert
		\left\vert \nabla _{A}w\right\vert \right\Vert _{L^{1}\left( HB\left(
			0,r_{0}\right) ,\mu _{r_{0}}\right) }\ .
	\end{eqnarray*}
\end{proof}

We begin by proving that the bound (\ref{Phi bump norm}) holds if the
following endpoint inequality holds:%
\begin{equation}
\Phi ^{-1}\left( \sup_{y\in B}\int_{B}\Phi \left( K(x,y)|B|\alpha \right)
d\mu (x)\right) \leq C\alpha \varphi \left( r\right) \ .
\label{endpoint' inhom}
\end{equation}%
for all $\alpha >0$.

\begin{lemma}
	The endpoint inequality (\ref{endpoint' inhom}) implies the norm inequality (%
	\ref{Phi N norm}).
\end{lemma}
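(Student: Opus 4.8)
The claim is that the pointwise/"endpoint" inequality
\begin{equation*}
\Phi ^{-1}\left( \sup_{y\in B}\int_{B}\Phi \left( K(x,y)\,|B|\,\alpha \right) d\mu (x)\right) \leq C\alpha \varphi \left( r\right) ,\qquad \alpha >0,
\end{equation*}
implies the Orlicz--$L^{1}$ norm inequality \eqref{Phi N norm}. The strategy is the standard "testing on the kernel of a positive operator" argument, adapted to Orlicz targets: first reduce \eqref{Phi N norm} via the subrepresentation inequality (Lemma \ref{lemma-subrepresentation''}) to the strong norm bound \eqref{Phi bump norm} for the positive integral operator $T_{B}$ with kernel $K=K_{B(0,r_{0})}$, and then prove \eqref{Phi bump norm} from the endpoint hypothesis by a duality/convexity computation. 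So the real content to be proved here is: the endpoint inequality $\Rightarrow$ \eqref{Phi bump norm} $\Rightarrow$ \eqref{Phi N norm}; the second implication is exactly the subrepresentation inequality with $g=\nabla _{A}w$, which is already available.

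For the first implication I would argue as follows. By homogeneity of the statement in $g$ it suffices to take $\Vert g\Vert_{L^{1}(\mu)}=1$, i.e. $\int_{B}|g|\,d\mu = 1$, and to show $\Vert T_{B}g\Vert_{L^{\Phi}(\mu)}\le C\varphi(r)$; equivalently, $\int_{B}\Phi\!\left(\tfrac{T_{B}g(x)}{C\varphi(r)}\right)d\mu(x)\le 1$. Write $T_{B}g(x)=\int_{B}K(x,y)g(y)\,dy=\int_{B}K(x,y)|B|\,g(y)\,d\mu(y)=\int_{B}\big(K(x,y)|B|\big)\,d\nu(y)$, where $d\nu(y)=|g(y)|\,d\mu(y)$ is a probability measure on $B$ (after absorbing the sign of $g$, which only helps since $K\ge 0$; more precisely $|T_Bg(x)|\le \int_B K(x,y)|B||g(y)|d\mu(y)$). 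Since $\Phi$ is convex, Jensen's inequality against the probability measure $d\nu$ gives
\begin{equation*}
\Phi\!\left(\frac{T_{B}g(x)}{\alpha_0}\right)\ \le\ \int_{B}\Phi\!\left(\frac{K(x,y)\,|B|}{\alpha_0}\right)d\nu(y)
\end{equation*}
for any scaling constant $\alpha_0>0$. Now integrate $d\mu(x)$, apply Tonelli to swap the order of integration, and bound the inner $x$-integral by the supremum over $y$:
\begin{equation*}
\int_{B}\Phi\!\left(\frac{T_{B}g(x)}{\alpha_0}\right)d\mu(x)\ \le\ \int_{B}\left(\sup_{y\in B}\int_{B}\Phi\!\left(\frac{K(x,y)|B|}{\alpha_0}\right)d\mu(x)\right)d\nu(y)\ =\ \sup_{y\in B}\int_{B}\Phi\!\left(K(x,y)|B|\cdot\tfrac{1}{\alpha_0}\right)d\mu(x),
\end{equation*}
using $\nu(B)=1$. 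The endpoint inequality with $\alpha=1/\alpha_0$ says precisely that the quantity inside $\Phi^{-1}$ is $\le \Phi\!\big(\tfrac{C\varphi(r)}{\alpha_0}\big)$, i.e. the right-hand side is $\le \Phi\!\big(\tfrac{C\varphi(r)}{\alpha_0}\big)$. Choosing $\alpha_0=C\varphi(r)$ makes this $\le \Phi(1)$. If $\Phi(1)\le 1$ (which holds for $\Phi=\Phi_N$ since $E_N=e^{2N}>1$ forces $\Phi_N$ to be linear near $1$ with $\Phi_N(1)=(\ln E_N)^N\cdot 1$; one just rescales $C$ by the harmless constant $\Phi_N(1)$, or equivalently enlarges $\alpha_0$ by that factor, using monotonicity of $\Phi^{-1}$) we conclude $\int_B\Phi(T_Bg/(C'\varphi(r)))d\mu\le 1$, hence $\Vert T_Bg\Vert_{L^\Phi(\mu)}\le C'\varphi(r)$, which is \eqref{Phi bump norm}. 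Then \eqref{Phi N norm} follows by the subrepresentation inequality of Lemma \ref{lemma-subrepresentation''}, applied with $g=\nabla_A w$ and noting $\mathbb{\tilde E}_{x,r_0}w$ drops out because $w\in (W_A^{1,1})_0(B(0,r_0))$ vanishes near the boundary so its average over the relevant end is $0$ (cf. the Corollary following Lemma \ref{lemma-subrepresentation''}).

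**Main obstacle.** The calculation itself is routine once Jensen and Tonelli are in place; the one point requiring care is the bookkeeping of the normalizing constants — keeping track of $|B|$ versus $d\mu=dx/|B|$, making sure the probability measure $d\nu$ is correctly normalized, and handling the mild nuisance that $\Phi_N(1)\ne 1$ (so one cannot literally set $\alpha_0=\varphi(r)$ but must allow an extra multiplicative constant, using the monotonicity and the submultiplicativity/linearity of $\Phi_N$ near the origin established earlier in the excerpt). A secondary point is to confirm that the hypothesis \eqref{Phi bump norm} is indeed what is being assumed in the ambient lemma — it is, as stated in Lemma \ref{norm half} — so that invoking the subrepresentation inequality to pass to \eqref{Phi N norm} is legitimate and not circular. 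None of these is a genuine difficulty; the substance of the matter has been pushed into the endpoint inequality \eqref{endpoint' inhom}, whose verification (the geometric heart, using the kernel estimate \eqref{K est} and the ball-volume formulas of Lemma \ref{new}) is a separate step not required for the present implication.
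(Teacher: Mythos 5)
Your argument is correct and is essentially the paper's own proof: Jensen's inequality against the probability measure $\frac{|g|\,d\mu}{\Vert g\Vert_{L^{1}(\mu)}}$, Tonelli, domination by the supremum over $y$, and then the endpoint inequality with $\alpha=\frac{1}{C_{1}\varphi(r_{0})}$ for $C_{1}$ large enough that the resulting bound $\Phi(C/C_{1})\leq 1$ (your handling of $\Phi_{N}(1)\neq 1$ is the same constant adjustment), followed by the subrepresentation inequality to pass from the operator bound to the Sobolev form. No gaps.
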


\begin{proof}
	If (\ref{endpoint' inhom}) holds, then Jensen's inequality applied to the
	convex function $\Phi $ gives 
	\begin{align*}
		\int_{B}&\Phi \left( \frac{T_{B\left( 0,r_{0}\right) }g}{C_{1}\varphi \left(
			r_{0}\right) \ \left\Vert g\right\Vert _{L^{1}\left( \mu \right) }}\right)
		d\mu (x) \\
		&\quad\lesssim \int_{B}\Phi \left( \int_{B}K(x,y)\ |B|\ \frac{1}{%
			C_{1}\varphi \left( r_{0}\right) }\frac{g\left( y\right) d\mu \left(
			y\right) }{||g||_{L^{1}(\mu )}}\right) d\mu (x) \\
		&\quad\leq \int_{B}\int_{B}\Phi \left( K(x,y)\ |B|\ \frac{1}{C_{1}\varphi \left(
			r_{0}\right) }\right) \frac{g\left( y\right) d\mu \left( y\right) }{%
			||g||_{L^{1}(\mu )}}d\mu (x) \\
		&\quad\leq \int_{B}\left\{ \sup_{y\in B}\int_{B}\Phi \left( K(x,y)\ |B|\ \frac{1%
		}{C_{1}\varphi \left( r_{0}\right) }\right) d\mu (x)\right\} \frac{g\left(
			y\right) d\mu \left( y\right) }{||g||_{L^{1}(\mu )}} \\
		&\quad\leq \Phi \left( \frac{C}{C_{1}}\right) \int_{B}\frac{g\left( y\right)
			d\mu \left( y\right) }{||g||_{L^{1}(\mu )}}\leq 1,
	\end{align*}%
	for $C_{1}$ sufficiently large, and where we used (\ref{endpoint' inhom})
	with $\alpha =\frac{1}{C_{1}\varphi (r_{0})}\equiv \frac{\Phi ^{-1}(1)}{%
		C\varphi (r_{0})}$. We conclude from the definition of $L^{\Phi }\left( \mu
	_{r_{0}}\right) $ that%
	\begin{equation*}
		\left\Vert T_{B\left( 0,r_{0}\right) }g\right\Vert _{L^{\Phi }\left( \mu
			_{r_{0}}\right) }\leq C_{1}\varphi \left( r_{0}\right) \ \left\Vert
		g\right\Vert _{L^{1}\left( \mu _{r_{0}}\right) }.
	\end{equation*}
\end{proof}

\begin{proposition}
	\label{sob_nd} Let $n\geq 2$. Assume that for some $C>0$ the function 
	\begin{equation}
	\varphi (r)\equiv C|F^{\prime }\left( r\right) |^{N}r^{N+1}
	\label{phi_incr N}
	\end{equation}%
	satisfies $\lim_{r\rightarrow 0}\varphi (r)=0$. Assume in addition that
	geometry $F$ satisfies 
	\begin{equation}
	F^{\prime \prime }(r)\leq \left( 1+\frac{1-\varepsilon }{N}\right) \frac{%
		|F^{\prime }(r)|}{r},\quad r\in (0,r_{0}),\quad \varepsilon >0.
	\label{geom_extra_cond}
	\end{equation}%
	Then:
	
	\begin{enumerate}
		\item the $\left( \Phi ,\varphi \right) $-Sobolev inequality (\ref{Phi bump
			norm}) holds with geometry $F$, with $\varphi $ as in (\ref{phi_incr N}),
		and with $\Phi $ as in (\ref{def Phi N ext}), $N>1$,
		
		\item and if $\varphi _{\max }\left( r\right) \equiv \sup_{0<s<r_{0}}\varphi
		(s)<\infty $ is a finite constant function, then the $\left( \Phi ,\varphi
		_{\max }\right) $-Sobolev inequality (\ref{Phi bump norm}) holds with
		geometry $F$, with $\varphi $ as in (\ref{phi_incr N}), and with $\Phi $ as
		in (\ref{def Phi N ext}), $N>1$,
		
		\item in particular, if for some $\varepsilon >0$ we have 
		\begin{equation}
		\left\vert F^{\prime }\left( r\right) \right\vert \leq C\left( \frac{1}{r}%
		\right) ^{1+\frac{1-\varepsilon }{N}},  \label{F_prime_bound N}
		\end{equation}%
		then the $\left( \Phi ,\varphi _{\max }\right) $-Sobolev inequality (\ref%
		{Phi bump norm}) holds with geometry $F$ and $\varphi _{\max }(r)\equiv C$.
	\end{enumerate}
\end{proposition}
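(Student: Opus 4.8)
The plan is to reduce everything to the endpoint inequality \eqref{endpoint' inhom}, namely
\[
\Phi^{-1}\!\left( \sup_{y\in B}\int_{B}\Phi\!\left(K_{B\left(0,r_{0}\right)}(x,y)\,|B|\,\alpha\right)d\mu(x)\right)\le C\alpha\,\varphi(r),\qquad\alpha>0,
\]
since by the two lemmas immediately preceding the proposition this endpoint bound yields the strong norm inequality \eqref{Phi bump norm}, which is exactly the $\left(\Phi,\varphi\right)$-Sobolev inequality claimed. Thus item (1) follows once I verify the endpoint bound for $\Phi=\Phi_N$ and $\varphi(r)=C|F'(r)|^N r^{N+1}$; item (2) is then automatic because replacing $\varphi$ by a larger (constant) function only weakens the inequality, as $\|T g\|_{L^\Phi}\le C\varphi(r)\|g\|_{L^1}\le C\varphi_{\max}\|g\|_{L^1}$; and item (3) is the special case where the hypothesis $|F'(r)|\le C(1/r)^{1+(1-\varepsilon)/N}$ forces $\varphi(r)=C|F'(r)|^Nr^{N+1}\le C r^{N+1-N-(1-\varepsilon)}=Cr^{\varepsilon}\to 0$, so $\varphi_{\max}$ is a finite constant and the extra geometric condition \eqref{geom_extra_cond} is implied by differentiating the bound on $|F'|$ (or follows from structure condition (5) together with the size bound). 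So the whole proposition rests on establishing the endpoint inequality.

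To prove the endpoint inequality I would fix $y=\left(y_1,\mathbf{y}_2,y_3\right)$ with $y_1>0$ (symmetry handles the rest) and estimate $\int_B \Phi_N\!\left(K(x,y)|B|\alpha\right)d\mu(x)$ using the explicit kernel estimate \eqref{K est} from Lemma \ref{lemma-subrepresentation''} combined with Lemma \ref{new}. Writing $r=y_1-x_1$ for points $x$ in the dual cusp $\widetilde{\Gamma}^{\,*}(y,r_0)$, the kernel behaves like $r^{1-n}f(x_1)^{-1}$ for small $r$ and like $|F'(x_1+r)|^{n-1}f(x_1+r)^{-1}\lambda(x_1,r)^{2-n}$ for $r\gtrsim 1/|F'(x_1)|$, with $\lambda(x_1,r)=\sqrt{r|F'(x_1+r)|}$. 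The "straight across" bookkeeping (the higher-dimensional analogue of \eqref{straight}) organizes the $x$-integral as an integral in $r=y_1-x_1$ over $\left(0,r_0\right)$ against the measure coming from the ends $\widetilde{E}$, whose slices in the $\mathbf{x}_2$ and $x_3$ directions have total size $\approx |B(x,r)|/\bigtriangleup r$; the key point is that $K(x,y)|B(x,d(x,y))|\approx \widehat{d}(x,y)\approx \bigtriangleup r$, so on the thick part of each end the product $K|B|$ collapses to something comparable to $\widehat d(x,y)=\min\{r,1/|F'(x_1+r)|\}$. After this reduction, $\int_B\Phi_N(K|B|\alpha)\,d\mu$ becomes (up to constants) a one-dimensional integral
\[
\int_0^{r_0}\Phi_N\!\Bigl(\alpha\,\widehat{d}(y_1-\cdot,\,y)\,\tfrac{|B|}{|B(x,r)|}\Bigr)\,\frac{dr}{\bigtriangleup r}\cdot(\text{slice measure}),
\]
and I would plug in $\Phi_N(t)=t(\ln t)^N$ for large $t$ and the asymptotics of $|B|$, $\widehat d$, $\bigtriangleup r$, $\lambda$ from Lemma \ref{new}, Proposition \ref{height}, Corollary \ref{d_hat_geom}, and Lemma \ref{control q}. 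The geometric hypothesis \eqref{geom_extra_cond}, $F''(r)\le\left(1+\frac{1-\varepsilon}{N}\right)|F'(r)|/r$, is precisely what is needed to bound the logarithmic factor $(\ln t)^N$: it controls how fast $\ln(1/f)=F$ grows relative to $\ln(1/r)$, ensuring that $\bigl(\ln K|B|\bigr)^N$ does not overwhelm the gain from integrating $\widehat d$. Carrying out the integral and applying $\Phi_N^{-1}$ (whose asymptotic $\Phi_N^{-1}(s)\approx s/(\ln s)^N$ I would use, paralleling estimate \eqref{use}) should produce exactly the bound $C\alpha|F'(r)|^Nr^{N+1}=C\alpha\varphi(r)$.

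The main obstacle, as I see it, is the bookkeeping in the regime $r\gtrsim 1/|F'(x_1)|$ where the kernel involves the factor $\lambda(x_1,r)^{2-n}=\bigl(r|F'(x_1+r)|\bigr)^{(2-n)/2}$ and the ends $\widetilde E(x,r_k)$ have "wide" cross-sections of size $q_k\approx\sqrt{r_k\bigtriangleup r_k}$ in the $\mathbf{x}_2$-variable. One has to track the dimension-dependent powers carefully through the change to polar coordinates in $\mathbf{x}_2\in\mathbb{R}^{n-2}$, verify that the slice measures assemble correctly into $|B(x,r)|$ as given by Lemma \ref{new}, and then check that the power of $|F'|$ and $r$ that emerges after integrating $\Phi_N(\cdot)$ and applying $\Phi_N^{-1}$ is exactly $N$ and $N+1$ respectively — any slippage here breaks the sharpness. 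A secondary subtlety is confirming that condition \eqref{geom_extra_cond} is the exact threshold: one should see the exponent $1+\frac{1-\varepsilon}{N}$ appear naturally as the borderline case where the integral $\int_0^{r_0}\widehat d(x,y)\,\bigl(\ln \tfrac{1}{\widehat d}\bigr)^N\,(\ldots)\,dr$ converges with the right rate, and the $\varepsilon>0$ provides the needed room. I would handle the two regimes $0<r<2/|F'(x_1)|$ and $r\ge 2/|F'(x_1)|$ separately, as in the proof of Lemma \ref{new}, and then combine.
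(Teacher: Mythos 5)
Your overall skeleton matches the paper's: reduce to the endpoint inequality \eqref{endpoint' inhom}, convert the $n$-dimensional integral over the dual cusp into a one-dimensional integral in $r=y_{1}-x_{1}$ using $|\widetilde{E}(x,r_{k})|\approx|B(x,r_{k})|$, split into the regimes $r<2/|F'(x_{1})|$ and $r\geq 2/|F'(x_{1})|$, and observe that items (2) and (3) follow from item (1) by monotonicity of the inequality in $\varphi$ together with $\varphi(r)\lesssim r^{\varepsilon}$ under \eqref{F_prime_bound N}. However, two concrete mechanisms at the core of the argument are missing. First, you propose to close the endpoint inequality by applying $\Phi_{N}^{-1}$ directly, using $\Phi_{N}^{-1}(s)\approx s/(\ln s)^{N}$; but the integrand is $\Phi_{N}(K|B|\alpha)$, which is nonlinear in $\alpha$, and you have not explained how the final bound becomes \emph{linear} in $\alpha$. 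The paper handles this by choosing $\omega(r)=\frac{1}{t_{N}|F'(r)|}$, using submultiplicativity to factor
\begin{equation*}
\int_{B}\Phi\left(K|B|\alpha\right)d\mu\leq \Phi\left(\alpha\,\omega(r)\right)\int_{B}\Phi\left(\tfrac{K|B|}{\omega(r)}\right)d\mu ,
\end{equation*}
proving the $\alpha$-free estimate \eqref{will prove}, and then recombining via the monotonicity of $\Phi(t)/t$ (which requires $\varphi(r)|F'(r)|\gg 1$, a consequence of structure condition (4)). Without some such device your application of $\Phi_{N}^{-1}$ does not obviously produce $C\alpha\varphi(r)$.

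Second, your explanation of how \eqref{geom_extra_cond} enters is only an assertion. The actual pressure point is the integral
\begin{equation*}
I=\int_{0}^{r_{0}}\bigl(F(x_{1})-F(r_{0})\bigr)^{N}\,dx_{1},
\end{equation*}
which arises because $\ln\bigl(f(r_{0})/f(x_{1})\bigr)=F(x_{1})-F(r_{0})$ appears inside the $N$-th power of the logarithm in $\Phi_{N}$. The paper bounds $I^{1/N}$ by $\int_{0}^{r_{0}}|F'(s)|\,s^{1/N}\,ds$ via Minkowski's integral inequality, and then integrates by parts; it is exactly here that \eqref{geom_extra_cond} is used, to absorb the term $\int_{0}^{r_{0}}F''(s)\,s^{1+1/N}\,ds$ back into the left-hand side with coefficient $1+\frac{1-\varepsilon}{N}<1+\frac{1}{N}$, leaving the margin $\varepsilon/N$ and yielding $I\leq C|F'(r_{0})|^{N}r_{0}^{N+1}=C\varphi(r_{0})$ — which is precisely where the sharp form \eqref{phi_incr N} of the superradius comes from. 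Your proposal gestures at ``controlling the logarithmic factor'' but does not identify this integral or the absorption argument, and in the second regime it also omits the extra term $y_{1}\bigl(\ln\tfrac{|F'(y_{1})|}{|F'(r_{0})|}\bigr)^{N}$, which the paper disposes of by a separate maximization using structure condition (5). These are the steps on which the sharpness of the exponents $N$ and $N+1$ actually rests, so they need to be supplied before the strategy constitutes a proof.
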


\begin{proof}
	For Part (1) it suffices to prove the endpoint inequality 
	\begin{equation}
	\Phi ^{-1}\left( \sup_{y\in B}\int_{B}\Phi \left( K(x,y)|B|\alpha \right)
	d\mu (x)\right) \leq C\alpha \varphi (r(B))\ ,\ \ \ \ \ \alpha >0.
	\label{endpoint'}
	\end{equation}%
	for the balls and kernel associated with our geometry $F$, the Orlicz bump $%
	\Phi $, and the function $\varphi \left( r\right) $ satisfying (\ref%
	{phi_incr N}). Fix parameters $N>1$ and $t_{N}>1$. Now we consider the
	specific function $\omega \left( r\left( B\right) \right) $ given by%
	\begin{equation*}
		\omega \left( r\left( B\right) \right) =\frac{1}{t_{N}\left\vert F^{\prime
			}\left( r\left( B\right) \right) \right\vert }.
	\end{equation*}%
	Using the submultiplicativity of $\Phi $ we have%
	\begin{eqnarray*}
		\int_{B}\Phi \left( K(x,y)|B|\alpha \right) d\mu (x) &=&\int_{B}\Phi \left( 
		\frac{K(x,y)|B|}{\omega \left( r\left( B\right) \right) }\alpha \omega
		\left( r\left( B\right) \right) \right) d\mu (x) \\
		&\leq &\Phi \left( \alpha \omega \left( r\left( B\right) \right) \right)
		\int_{B}\Phi \left( \frac{K(x,y)|B|}{\omega \left( r\left( B\right) \right) }%
		\right) d\mu (x)
	\end{eqnarray*}%
	and we will now prove%
	\begin{equation}
	\int_{B}\Phi \left( \frac{K(x,y)|B|}{\omega \left( r\left( B\right) \right) }%
	\right) d\mu (x)\leq C_{N}\varphi (r(B))\left\vert F^{\prime }\left( r\left(
	B\right) \right) \right\vert ,  \label{will prove}
	\end{equation}%
	for all small balls $B$ of radius $r\left( B\right) $ centered at the
	origin. Altogether this will give us%
	\begin{equation*}
		\int_{B}\Phi \left( K(x,y)|B|\alpha \right) d\mu (x)\leq C_{N}\varphi
		(r(B))\left\vert F^{\prime }\left( r\left( B\right) \right) \right\vert \Phi
		\left( \frac{\alpha }{t_{N}\left\vert F^{\prime }\left( r\left( B\right)
			\right) \right\vert }\right) .
	\end{equation*}
	
	Now we note that $x\Phi \left( y\right) =xy\frac{\Phi \left( y\right) }{y}%
	\leq xy\frac{\Phi \left( xy\right) }{xy}=\Phi \left( xy\right) $ for $x\geq
	1 $ since $\frac{\Phi \left( t\right) }{t}$ is monotone increasing. But from
	(\ref{phi_incr N}) and assumptions $(1)$ and $(4)$ on the geometry $F$ we
	have $\varphi (r)\left\vert F^{\prime }\left( r\right) \right\vert \gg 1$
	and so%
	\begin{align*}
		\int_{B}\Phi \left( K(x,y)|B|\alpha \right) d\mu (x)&\leq \Phi \left(
		C_{N}\varphi (r(B))\left\vert F^{\prime }\left( r\left( B\right) \right)
		\right\vert \alpha \frac{1}{t_{N}\left\vert F^{\prime }\left( r\left(
			B\right) \right) \right\vert }\right) \\
		&=\Phi \left( \frac{C_{N}}{t_{N}}\alpha
		\varphi (r(B))\right) ,
	\end{align*}%
	which is (\ref{endpoint'}) with $C=\frac{C_{N}}{t_{N}}$. Thus it remains to
	prove (\ref{will prove}).
	
	So we now take $B=B\left( 0,r_{0}\right) $ with $r_{0}\ll 1$ so that $\omega
	\left( r\left( B\right) \right) =\omega \left( r_{0}\right) $. First, recall 
	\begin{equation*}
		\left\vert B\left( 0,r_{0}\right) \right\vert \approx \frac{f(r_{0})}{%
			|F^{\prime }(r_{0})|^{n}}\lambda \left( 0,r_{0}\right) ^{n-2},
	\end{equation*}%
	where we recall from (\ref{def lambda}) that%
	\begin{equation*}
		\lambda \left( x_{1},r\right) \equiv \sqrt{r\left\vert F^{\prime }\left(
			x_{1}+r\right) \right\vert },
	\end{equation*}%
	and we now denote the size of the kernel $K_{B\left( 0,r_{0}\right) }(x,y)$
	as $\frac{1}{s_{y_{1}-x_{1}}}$ where 
	\begin{equation*}
		\frac{1}{s_{y_{1}-x_{1}}}\equiv 
		\left\{
		\begin{array}{cc}
			\frac{1}{r^{n-1}f\left( x_{1}\right) },& 0 <r=y_{1}-x_{1}<\frac{2}{%
				|F^{\prime }(x_{1})|} \\
			\frac{|F^{\prime }(y_{1})|^{n}}{f(y_{1})\left( r\left\vert F^{\prime }\left(
				y_{1}\right) \right\vert \right) ^{\frac{n-2}{2}}},&0
			<r=y_{1}-x_{1}\geq \frac{2}{|F^{\prime }(x_{1})|}
		\end{array}
		\right.	.
	\end{equation*}%
	We are writing the size of $\frac{1}{K_{B\left( 0,r_{0}\right) }(x,y)}$ as $%
	s_{y_{1}-x_{1}}=s_{r}$ since\ the quantity $s_{r}$ can be, roughly speaking,
	thought of a cross sectional volume analogous to the height $h_{r}$ in the
	two dimensional case.
	
	Next, write $\Phi (t)$ as 
	\begin{equation}
	\Phi (t)=t\Psi (t),\ \ \ \ \ \text{for }t>0,  \label{def psiD N}
	\end{equation}%
	where for $t\geq E$, 
	\begin{eqnarray*}
		t\Psi (t) &=&\Phi (t)=t\left( \ln t\right) ^{N} \\
		&\Longrightarrow &\Psi (t)=\left( \ln t\right) ^{N},
	\end{eqnarray*}%
	and for $t<E$,%
	\begin{eqnarray*}
		t\Psi (t) &=&\Phi (t)=t\left( \ln E\right) ^{N} \\
		&\Longrightarrow &\Psi (t)=\left( \ln E\right) ^{N}.
	\end{eqnarray*}
	
	Now temporarily fix $y=\left( y_{1},\mathbf{y}_{2},y_{3}\right) \in
	B_{+}\left( 0,r_{0}\right) \equiv \left\{ x\in B\left( 0,r_{0}\right)
	:x_{1}>0\right\} $. Using the definition of $\widetilde{\Gamma }\left(
	x,r_{0}\right) $ in (\ref{def Gamma tilda}) above, we have for $0<a<b<r_{0}$
	that%
	\begin{align*}
		&\mathcal{I}_{a,b}\left( y\right) \equiv \int_{\left\{ x\in B_{+}\left(
			0,r_{0}\right) :a\leq y_{1}-x_{1}\leq b\right\} \cap \widetilde{\Gamma }%
			^{\ast }\left( y,r_{0}\right) }\Phi \left( K_{B\left( 0,r_{0}\right) }\left(
		x,y\right) \frac{\left\vert B\left( 0,r_{0}\right) \right\vert }{\omega
			\left( r_{0}\right) }\right) \frac{dx}{\left\vert B\left( 0,r_{0}\right)
			\right\vert } \\
		&=\sum_{k:\ a\leq r_{k+1}<r_{k}\leq b}\int_{y_{1}-r_{k}}^{y_{1}-r_{k+1}}\\ 
		&\ \times\left[ \int_{\left\vert \mathbf{x}_{2}-\mathbf{y}_{2}\right\vert \leq \sqrt{%
				r_{k}^{2}-r_{k+1}^{2}}}\left[ \int_{y_{3}-h^{\ast }\left(
			x_{1},r_{k}\right) }^{y_{3}+h^{\ast }\left( x_{1},r_{k}\right) }\Phi \left( 
		\frac{1}{s_{y_{1}-x_{1}}}\frac{%
			\left\vert B\left( 0,r_{0}\right) \right\vert }{\omega \left( r_{0}\right) }%
		\right) dx_{3}\right] d\mathbf{x}_{2}\right] \frac{dx_{1}}{\left\vert
			B\left( 0,r_{0}\right) \right\vert } \\
		&=\sum_{k:\ a\leq r_{k+1}<r_{k}\leq b}\int_{y_{1}-r_{k}}^{y_{1}-r_{k+1}} \\
		&\ \times\left[ 4\left( \sqrt{r_{k}^{2}-r_{k+1}^{2}}\right) ^{n-2}h^{\ast }\left(
		x_{1},r_{k}\right) \right] \Phi \left( \frac{1}{s_{y_{1}-x_{1}}}\frac{%
			\left\vert B\left( 0,r_{0}\right) \right\vert }{\omega \left( r_{0}\right) }%
		\right) \frac{dx_{1}}{\left\vert B\left( 0,r_{0}\right) \right\vert } \\
		&\approx \int_{y_{1}-b}^{y_{1}-a}s_{y_{1}-x_{1}}\Phi \left( \frac{1}{%
			s_{y_{1}-x_{1}}}\frac{\left\vert B\left( 0,r_{0}\right) \right\vert }{\omega
			\left( r_{0}\right) }\right) \frac{dx_{1}}{\left\vert B\left( 0,r_{0}\right)
			\right\vert } \\
		&=\int_{y_{1}-b}^{y_{1}-a}s_{y_{1}-x_{1}}\left( \frac{1}{s_{y_{1}-x_{1}}}%
		\frac{\left\vert B\left( 0,r_{0}\right) \right\vert }{\omega \left(
			r_{0}\right) }\right) \Psi \left( \frac{1}{s_{y_{1}-x_{1}}}\frac{\left\vert
			B\left( 0,r_{0}\right) \right\vert }{\omega \left( r_{0}\right) }\right) 
		\frac{dx_{1}}{\left\vert B\left( 0,r_{0}\right) \right\vert },
	\end{align*}%
	where the approximation in the fourth line above comes from the estimates
	from Lemma \ref{new} and Lemma \ref{B and E} 
	\begin{align*}
		\left( \sqrt{r_{k}^{2}-r_{k+1}^{2}}\right) ^{n-2}h^{\ast }\left(
		x_{1},r_{k}\right) \left( r_{k}-r_{k+1}\right) &=\left\vert \widetilde{E}%
		\left( x,r_{k}\right) \right\vert \approx \left\vert B\left( x,r_{k}\right)
		\right\vert \\
		&\approx s_{r_{k}}\left( r_{k}-r_{k+1}\right) ,
	\end{align*}%
	and
	\begin{equation*}
		s_{r_{k}} \approx s_{y_{1}-x_{1}},\ \ \ \ \ \text{for }r_{k+1}\leq
		y_{1}-x_{1}<r_{k}\ .
	\end{equation*}
	Thus we have%
	\begin{eqnarray*}
		\mathcal{I}_{a,b}\left( y\right) &\approx &\frac{1}{\omega \left(
			r_{0}\right) }\int_{y_{1}-b}^{y_{1}-a}\Psi \left( \frac{1}{s_{y_{1}-x_{1}}}%
		\frac{\left\vert B\left( 0,r_{0}\right) \right\vert }{\omega \left(
			r_{0}\right) }\right) dx_{1} \\
		&=&\frac{1}{\omega \left( r_{0}\right) }\int_{a}^{b}\Psi \left( \frac{1}{%
			s_{y_{1}-x_{1}}}\frac{\left\vert B\left( 0,r_{0}\right) \right\vert }{\omega
			\left( r_{0}\right) }\right) dr.
	\end{eqnarray*}%
	and so 
	\begin{eqnarray*}
		&&\int_{B_{+}\left( 0,r_{0}\right) }\Phi \left( K_{B\left( 0,r_{0}\right)
		}\left( x,y\right) \frac{\left\vert B\left( 0,r_{0}\right) \right\vert }{%
			\omega \left( r_{0}\right) }\right) \frac{dx}{\left\vert B\left(
			0,r_{0}\right) \right\vert } \\
		&=&\mathcal{I}_{0,y_{1}}\left( y\right) \\
		&=&\frac{2}{\omega \left( r_{0}\right) }\int_{0}^{y_{1}}\Psi \left( \frac{1}{%
			s_{r}}\frac{\left\vert B\left( 0,r_{0}\right) \right\vert }{\omega \left(
			r_{0}\right) }\right) dr\ .
	\end{eqnarray*}
	
	It thus suffices to prove%
	\begin{equation}
	\mathcal{I}_{0,y_{1}}=\frac{1}{\omega \left( r_{0}\right) }%
	\int_{0}^{y_{1}}\Psi \left( \frac{1}{s_{r}}\frac{\left\vert B\left(
		0,r_{0}\right) \right\vert }{\omega \left( r_{0}\right) }\right) dr\leq
	C_{N}\varphi \left( r_{0}\right) \left\vert F^{\prime }\left( r_{0}\right)
	\right\vert \ ,  \label{the integral'Dn}
	\end{equation}%
	where $\left\vert B\left( 0,r_{0}\right) \right\vert $ is now the Lebesgue
	measure of the $n$-dimensional ball $B\left( 0,r_{0}\right) =B_{nD}\left(
	0,r_{0}\right) $.
	
	To prove this we divide the interval $\left( 0,y_{1}\right) $ of integration
	in $r$ into three regions as before:
	
	(\textbf{1}): the small region $\mathcal{S}$ where $\frac{|B(0,r_{0})|}{%
		s_{r}\varphi \left( r_{0}\right) }\leq E$,
	
	(\textbf{2}): the big region $\mathcal{R}_{1}$ that is disjoint from $%
	\mathcal{S}$ and where $r=y_{1}-x_{1}<\frac{2}{\left\vert F^{\prime }\left(
		x_{1}\right) \right\vert }$ and
	
	(\textbf{3}): the big region $\mathcal{R}_{2}$ that is disjoint from $%
	\mathcal{S}$ and where $r=y_{1}-x_{1}\geq \frac{2}{\left\vert F^{\prime
		}\left( x_{1}\right) \right\vert }$.
	
	In the small region $\mathcal{S}$ we use that $\Phi $ is linear on $\left[
	0,E\right] $\ to obtain that the integral in the right hand side of (\ref%
	{the integral'Dn}), when restricted to those $r\in \left( 0,r_{0}\right) $
	for which $\frac{|B(0,r_{0})|}{s_{r}\omega \left( r_{0}\right) }\leq E$, is
	bounded by 
	\begin{eqnarray*}
		&&\frac{1}{\omega \left( r_{0}\right) }\int_{0}^{r_{0}}\Psi \left( \frac{%
			\left\vert B\left( 0,r_{0}\right) \right\vert }{s_{r}\omega \left(
			r_{0}\right) }\right) dr \\
		&=&\frac{1}{\omega \left( r_{0}\right) }\int_{0}^{r_{0}}\frac{\Phi (E)}{E}dr=%
		\frac{1}{\omega \left( r_{0}\right) }\left( \ln E\right) ^{N}r_{0} \\
		&\leq &C\ t_{N}\ r_{0}\left\vert F^{\prime }\left( r_{0}\right) \right\vert
		\leq C_{N}\varphi (r_{0})\left\vert F^{\prime }\left( r_{0}\right)
		\right\vert \ ,
	\end{eqnarray*}%
	since $\omega \left( r_{0}\right) =\frac{1}{t_{N}\left\vert F^{\prime
		}\left( r_{0}\right) \right\vert }$, and for the last inequality we used $%
	r_{0}\leq \varphi (r_{0})$ which follows from (\ref{phi_incr N}) and
	assumtion $(1)$ on the geometry.
	
	We now turn to the first big region $\mathcal{R}_{1}$ where we have $%
	s_{y_{1}-x_{1}}\approx r^{n-1}f(x_{1})$. We have using the definition of $%
	\Psi $ (\ref{def psiD N}) 
	\begin{equation*}
		\int_{\mathcal{R}_{1}}\Phi \left( K_{B\left( 0,r_{0}\right) }\left(
		x,y\right) \frac{\left\vert B\left( 0,r_{0}\right) \right\vert }{\omega
			\left( r_{0}\right) }\right) \frac{dy}{\left\vert B\left( 0,r_{0}\right)
			\right\vert }\lesssim \frac{1}{\omega \left( r_{0}\right) }%
		\int\limits_{0}^{y_{1}}\ln \left( \frac{c(r_{0})}{f(y_{1}-r)r^{n-1}}\right) ^{N}dr,
	\end{equation*}%
	where we have used the notation 
	\begin{equation}  \label{c_def_new}
	c(r_{0})\equiv \frac{\left\vert B\left( 0,r_{0}\right) \right\vert }{\omega
		\left( r_{0}\right) }\approx \frac{f(r_{0})r_{0}^{\frac{n}{2}-1}}{|F^{\prime
		}(r_{0})|^{\frac{n}{2}}}\lesssim f(r_{0})r_{0}^{n-1},
	\end{equation}%
	where we used Lemma \ref{new} and for the last inequality, property $(4)$ of
	geometry $F$. Using this, we have 
	\begin{align*}
		\int_{0}^{y_{1}}&\ln \left( \frac{c(r_{0})}{f(y_{1}-r)r^{n-1}}\right)
		^{N}dr\leq \int_{0}^{r_{0}}\ln\left( \frac{f(r_{0})r_{0}^{n-1}}{%
			f(y_{1}-r)r^{n-1}}\right) \\
		&\leq Cr_{0}+
		\int_{0}^{r_{0}}\left(F(x_{1})-F(r_{0})\right)^{N}dx_{1}
		+\int_{0}^{r_{0}}%
		\left( \ln \frac{r_{0}}{r}\right) ^{N}dr.
	\end{align*}%
	For the second integral we have 
	\begin{equation*}
		\int_{0}^{r_{0}}\left( \ln \frac{r_{0}}{r}\right) ^{N}dr\lesssim r\left( \ln 
		\frac{r_{0}}{r}\right) ^{N}{\Big |_{r=0}^{r_{0}}=Cr_{0},}
	\end{equation*}
	which can be absorbed into the first term. To estimate the first integral we
	write 
	\begin{align*}
		I\equiv\int_{0}^{r_{0}}\left(F(x_{1})-F(r_{0})\right)^{N}dx_{1}&=%
		\int_{0}^{r_{0}}\left(\int_{x_{1}}^{r_{0}}-F^{\prime }(s)ds\right)^{N}dx_{1}
		\\
		&=\int_{0}^{r_{0}}\left(\int_{0}^{r_{0}}\chi_{[x_{1},r_{0}]}(s)(-F^{\prime
		}(s))ds\right)^{N}dx_{1}
	\end{align*}
	and we will use Minkowski integral inequality to estimate this. More
	precisely, we have 
	\begin{align*}
		&\left(\int_{0}^{r_{0}}\left(\int_{0}^{r_{0}}\chi_{[x_{1},r_{0}]}(s)(-F^{%
			\prime }(s))ds\right)^{N}dx_{1}\right)^{\frac{1}{N}} \\
		&\qquad\qquad\qquad\leq
		\int_{0}^{r_{0}}\left(\int_{0}^{r_{0}}\left(\chi_{[x_{1},r_{0}]}(s)(-F^{%
			\prime }(s))\right)^{N}dx_{1}\right)^{\frac{1}{N}}ds \\
		&\qquad\qquad\qquad=\int_{0}^{r_{0}}(-F^{\prime
		}(s))\left(\int_{0}^{r_{0}}\chi_{[x_{1},r_{0}]}(s)dx_{1}\right)^{\frac{1}{N}%
		}ds \\
		&\qquad\qquad\qquad=\int_{0}^{r_{0}}|F'(s)|s^{\frac{1}{N}}ds.
	\end{align*}
	Finally, integrating by parts and using (\ref{geom_extra_cond}) we obtain 
	\begin{align*}
		\int_{0}^{r_{0}}|F^{\prime }(s)|s^{\frac{1}{N}}ds&=|F^{\prime }(s)|s^{\frac{1}{N}+1}\frac{1}{\frac{1}{N}+1}\Big|_{r=0}^{r_{0}}+\frac{1}{\frac{1}{N}+1}\int F^{\prime \prime}(s)s^{\frac{1}{N}+1}ds\\
		\left(\frac{1}{N}+1\right)\int_{0}^{r_{0}}|F^{\prime }(s)|s^{\frac{1}{N}}ds&=|F^{\prime }(s)|s^{\frac{1}{N}+1}\Big|_{r=0}^{r_{0}}
		+\int F^{\prime \prime}(s)s^{\frac{1}{N}+1}ds\\
		&\leq |F^{\prime }(s)|s^{\frac{1}{N}+1}\Big|_{r=0}^{r_{0}}+\left(\frac{1-\varepsilon}{N}+1\right)\int_{0}^{r_{0}}|F'(s)|s^{\frac{1}{N}}ds\\
		\int_{0}^{r_{0}}|F^{\prime }(s)|s^{\frac{1}{N}}ds&\leq \frac{N}{\varepsilon}|F^{\prime }(s)|s^{\frac{1}{N}+1}\Big|_{r=0}^{r_{0}}=\frac{N}{\varepsilon}|F^{\prime }(r_{0})|r_{0}^{\frac{1}{N}+1},
	\end{align*}
	where in the last inequality we used (\ref{phi_incr N}). This implies 
	\begin{equation*}
		I\leq C|F^{\prime }(r_{0})|^{N}r_{0}^{N+1},
	\end{equation*}
	and therefore using $r_{0}\leq C|F^{\prime }(r_{0})|^{N}r_{0}^{N+1}$ which
	follows from the assumption $(4)$ on the geometry, we conclude (\ref{the
		integral'Dn}) for region $\mathcal{R}_{1}$.
	
	We now turn to second region $\mathcal{R}_{2}$, where $r=y_{1}-x_{1}\geq 
	\frac{2}{\left\vert F^{\prime }\left( x_{1}\right) \right\vert }$, and so $%
	s_{y_{1}-x_{1}}\approx \frac{f(y_{1})r^{\frac{n}{2}-1}}{|F^{\prime
		}(y_{1})|^{\frac{n}{2}+1}}$. The integral to be estimated becomes 
	\begin{equation*}
		I_{\mathcal{R}_{2}}\equiv \frac{1}{\omega \left( r_{0}\right) }%
		\int_{0}^{y_{1}}\ln \left( \frac{c(r_{0})|F^{\prime }(y_{1})|^{\frac{n}{2}+1}%
		}{f(y_{1})r^{\frac{n}{2}-1}}\right) ^{N}dr,
	\end{equation*}%
	where $c(r_{0})$ as in (\ref{c_def_new}). Similar to the estimate for region 
	$\mathcal{R}_{1}$ we have 
	\begin{align*}
		\int_{0}^{y_{1}}\ln \left( \frac{c(r_{0})|F^{\prime }(y_{1})|^{\frac{n}{2}+1}%
		}{f(y_{1})r^{\frac{n}{2}-1}}\right) ^{N}dr \lesssim Cr_{0}&+\int_{0}^{y_{1}}
		\left( \ln \frac{r_{0}}{r}\right)
		^{N}dr\\
		&+\int_{0}^{y_{1}}\left(F(y_{1})-F(r_{0})\right)^{N}dr\\
		&+\int_{0}^{y_{1}}
		\left( \ln \frac{|F^{\prime }(y_1)|}{|F^{\prime }(r_{0})|}\right) ^{N}dr.
	\end{align*}
	The first integral was estimated above and can be bounded by $Cr_{0}$. For
	the second integral we can use the trivial estimate 
	\begin{equation*}
		\int_{0}^{y_{1}}\left(F(y_{1})-F(r_{0})\right)^{N}dr\leq
		\int_{0}^{r_{0}}\left(F(r)-F(r_{0})\right)^{N}dr,
	\end{equation*}
	which reduces it to the integral I arising in region $\mathcal{R}_{1}$. It
	remains to estimate the third integral which we denote as 
	\begin{equation*}
		\mathcal{F}(y_{1})\equiv y_{1}\left( \ln \frac{|F^{\prime }(y_1)|}{%
			|F^{\prime }(r_{0})|}\right) ^{N}.
	\end{equation*}
	First note that from the doubling condition on $|F^{\prime }|$ which is
	assumption $(3)$, it follows that $\mathcal{F}(0)=0$. Also, clearly $%
	\mathcal{F}(r_{0})=0$ and thus $\mathcal{F}(y_{1})$ achieves a maximum
	inside the interval $(0,r_{0})$. Differentiating $\mathcal{F}(y_{1})$ and
	setting the derivative equal to zero, we obtain the following implicit
	expression for $y_{1}^{\ast} $ maximizing $\mathcal{F}(y_{1})$: 
	\begin{equation*}
		\ln \frac{|F^{\prime }(y_{1}^{\ast})|}{|F^{\prime }(r_{0})|}=N y_{1}^{\ast}%
		\frac{F^{\prime \prime }(y_{1}^{\ast})}{|F^{\prime }(y_{1}^{\ast})|}.
	\end{equation*}
	Substituting back into the definition of $\mathcal{F}$ and using assumption $%
	(5)$ on the geometry, we get 
	\begin{equation*}
		\mathcal{F}(y_{1}^{\ast})=y_{1}^{\ast}\left(N y_{1}^{\ast}\frac{F^{\prime
				\prime }(y_{1}^{\ast})}{|F^{\prime }(y_{1}^{\ast})|}\right)^{N}\leq
		Cy_{1}^{\ast}\leq Cr_{0}.
	\end{equation*}
	
	Parts (2) and (3) of the theorem follow easily from part (1).
\end{proof}

\section{Weak Sobolev inequality}

Now we turn to the global Sobolev inequality needed for the maximum principle, namely
\begin{equation}
\left\Vert w\right\Vert _{L^{\Phi }\left( \Omega \right) }\leq C
\left( \Omega \right) \left\Vert \nabla _{A}w\right\Vert _{L^{1}\left(
	\Omega \right) },\ \ \ \ \ w\in \left( W_{A}^{1,1}\right) _{0}\left( \Omega
\right)  \label{Sob_gen_geo}
\end{equation}%
where $\Phi =\Phi_N $ with $N>1$ as defined in (\ref{bump_N}).
\begin{proposition}\label{sob_global}
	Assume that for some $C>0$ the function 
	\begin{equation*}
		\varphi (r)\equiv C|F^{\prime }\left( r\right) |^{N}r^{N+1}
	\end{equation*}%
	satisfies $\sup_{0<s<r_{0}}\varphi(s)<\infty$ $\forall r_0>0$. Then the global Sobolev inequality (\ref{Sob_gen_geo}) holds with geometry $F$ and $C(\Omega)=\sup_{0<s<d(0,\Omega)+diam(\Omega)}\varphi(s)$.
\end{proposition}

\begin{proof}
	This is a consequence of Proposition \ref{sob_nd}. First, assume that for $r_0$ as in Proposition {\ref{sob_nd}} we have  $\Omega \subset B(0,r_{0})$. Extending $w$ to be $0$
	outside $\Omega $ we may assume $w\in \left( W_{A}^{1,2}\right) _{0}\left(
	B(0,r_{0})\right) $ and the result follows from part (2) of Proposition {\ref{sob_nd}}. More generally, for any bounded domain $\Omega$, we construct a finite partition of unity $\{\eta_k\}_{k=1}^{K}$ consisting of Lipschitz functions $\eta_k$ supported in metric balls $B_k\equiv B((x_1^k,\mathbf{x}_2^k,x_3^k),r_0/4)$. By translation invariance of the problem in $(\mathbf{x}_2,x_3)$ variables we may assume $(\mathbf{x}_2^k,x_3^k)=(\mathbf{0},0)$. Now if $|x_1^k|<3r_0/4$ then $B_k\subseteq B(0,r_0)$ and we can apply (\ref{Phi N norm}) to $w\eta_k$ in $B(0,r_0)$. Otherwise, $|x_1^k|\geq 3r_0/4$, and $\forall x\in B_k$ we have $|x|>r_0/2$ so the matrix $A$ is elliptic there and the Orlicz bump Sobolev inequality follows from the classical Sobolev inequality. Finally writing $w=\sum_{k=1}^{K}w\eta_k$ we obtain the result.
\end{proof}

\chapter{Geometric Theorems}

For convenience we recall from the introduction our two geometric theorems
dealing with respectively local boundedness and the maximum principle.

\begin{theorem}
	\label{geom loc bdd}Suppose that $\Omega \subset \mathbb{R}^{n}$ is a domain
	in $\mathbb{R}^{n}$ with $n\geq 2$ and that 
	\begin{equation*}
		\mathcal{L}u\equiv \func{div}\mathcal{A}\left( x,u\right) \nabla u,\ \ \ \ \
		x=\left( x_{1},...,x_{n}\right) \in \Omega ,
	\end{equation*}%
	where$\ \mathcal{A}\left( x,z\right) \sim \left[ 
	\begin{array}{cc}
	I_{n-1} & 0 \\ 
	0 & f\left( x_{1}\right) ^{2}%
	\end{array}%
	\right] $, $I_{n-1}$ is the $\left( n-1\right) \times \left( n-1\right) $
	identity matrix, $\mathcal{A}$ has bounded measurable components, and the
	geometry $F=-\ln f$ satisfies the structure conditions in Definition \ref%
	{structure conditions}.
	
	\begin{enumerate}
		\item If $F\leq F_{\sigma }$ for some $0<\sigma <1$, then every weak
		subsolution to $\mathcal{L}u=\phi $ with $A$-admissible $\phi $ is locally
		bounded in $\Omega $.
		
		\item On the other hand, if $n\geq 3$ and $\sigma \geq 1$, then there exists
		a locally unbounded weak solution $u$ in a neighbourhood of the origin in $%
		\mathbb{R}^{n}$ to the equation $Lu=0$ with geometry $F=F_{\sigma }$.
	\end{enumerate}
\end{theorem}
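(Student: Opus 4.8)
The two parts are proved by completely different means: Part (1) is a direct application of the abstract machinery of Part~2 once the relevant geometric Orlicz Sobolev inequality is in hand, whereas Part (2) is an explicit counterexample in the spirit of Kusuoka--Strook \cite{KuStr}.

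For Part (1), the plan is to verify the hypotheses of Proposition \ref{DG} for the geometry $F$. Since $F$ satisfies the structure conditions of Definition \ref{structure conditions}, Lemma \ref{cutoff exist} already furnishes the $(A,d)$-standard accumulating sequences of Lipschitz cutoff functions, and $A$-admissibility of $\phi$ is assumed; it remains only to produce the Orlicz Sobolev norm inequality (\ref{OSN}) with $\Phi=\Phi_N$ for some $N>1$, which I would get from Proposition \ref{sob_nd}. The arithmetic is the following. From structure condition (3) one has $x|F'(x)|\approx F(x/2)-F(x)\le F(x/2)$, so the hypothesis $F\le F_\sigma$ forces $|F'(x)|\lesssim x^{-1-\sigma}$. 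Choosing $\varepsilon\in(0,1-\sigma)$ and then any $N\in\bigl(1,\frac{1-\varepsilon}{\sigma}\bigr]$ — a nonempty interval precisely because $\sigma<1$ — one obtains $|F'(r)|\le Cr^{-1-(1-\varepsilon)/N}$ and $\varphi(r)=C|F'(r)|^N r^{N+1}\to 0$ as $r\to 0$, so Proposition \ref{sob_nd}(3) yields the $(\Phi_N,\varphi_{\max})$-Orlicz Sobolev bump inequality with bounded superradius. Feeding this into Proposition \ref{DG} produces the inner ball inequality (\ref{Inner ball inequ}) at every point of the $x_n$-axis, hence local boundedness there; off the axis $f(x_1)>0$ makes $\mathcal{L}$ uniformly elliptic on compact subsets, so classical DeGiorgi--Nash--Moser theory applies, and the two together give local boundedness throughout $\Omega$. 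For a weak \emph{solution} one applies the subsolution statement to both $u$ and $-u$, whose forcing terms $\pm\phi$ are both $A$-admissible.

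For Part (2), with $\sigma\ge 1$ and $n\ge 3$, the plan is to build a genuinely $n$-dimensional weak solution near the origin. First reduce to three active variables by seeking $u$ that depends only on $x_1$, on the degenerate variable $x_n$, and on one clean elliptic variable (say $x_2$), and is independent of the remaining coordinates; one cannot drop $x_2$ as well, since in two variables Fedii's theorem \cite{Fe} forces every solution of this (smooth) equation $Lu=0$ to be $C^\infty$, hence locally bounded, which is why $n\ge 3$ is essential. I would then superpose separated solutions $u_k(x)=c_k\,e^{-\mu_k x_2}\,g_{\nu_k}(x_n)\,w_k(x_1)$, where $g_{\nu_k}$ is a trigonometric mode of frequency $\nu_k$ and $w_k$ solves the reduced ODE $w_k''=\bigl(\nu_k^2 f(x_1)^2-\mu_k^2\bigr)w_k$, using a WKB / turning-point analysis to keep each $w_k$ uniformly bounded while the frequencies $\mu_k,\nu_k\uparrow\infty$ and the coefficients $c_k$ are tuned so that $\sum_k c_k u_k$ converges in $W_A^{1,2}$ of a neighbourhood of $0$ — here one exploits that the $A$-gradient weights $\partial_{x_n}$ by the exponentially small factor $f(x_1)$ — yet its partial sums are unbounded near $0$. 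The mechanism that makes this possible exactly when $\sigma\ge 1$ is that $\limsup_{r\to 0^+}|r\ln f_\sigma(r)|=\limsup_{r\to 0^+}r^{1-\sigma}>0$, i.e. the Kusuoka--Strook obstruction is present; equivalently, in this regime the control balls of $F_\sigma$ degenerate so severely that the Orlicz Sobolev inequality used in Part (1) must fail, which is the geometric shadow of the counterexample and pins the threshold at $\sigma=1$.

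The main obstacle is Part (2): carrying out the WKB analysis of the family $\{w_k\}$ sharply enough to certify \emph{simultaneously} that $u$ is a bona fide $W_A^{1,2}$ weak solution of $Lu=0$ on a neighbourhood of the origin and that $u$ is essentially unbounded there, in particular controlling the $A$-energy against the oscillatory/exponential transitions of the $w_k$ across their turning points. Part (1), by contrast, is essentially bookkeeping once the exponents are matched ($N$ slightly above $1$, $\varepsilon$ slightly below $1-N\sigma$): everything is then supplied by Propositions \ref{DG} and \ref{sob_nd} together with Lemma \ref{cutoff exist}.
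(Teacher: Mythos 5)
Your Part (1) is correct and follows the paper's own route: Proposition \ref{sob_nd} supplies the $(\Phi_N,\varphi)$-Orlicz Sobolev inequality, Lemma \ref{cutoff exist} supplies the cutoffs, and Proposition \ref{DG} (equivalently Theorem \ref{bound_gen_thm}) yields the inner ball inequality (\ref{Inner ball inequ}) on the degenerate axis, with ellipticity off the axis handling the rest. Your reduction of the hypothesis $F\leq F_{\sigma}$ to the derivative bound $\left\vert F^{\prime }\left( x\right) \right\vert \lesssim x^{-1-\sigma }$ via structure condition (3), and the exponent bookkeeping $\varepsilon <1-\sigma$, $1<N\leq \frac{1-\varepsilon }{\sigma }$, is in fact slightly more careful than the paper, which only records the computation for $F=F_{\sigma }$ itself.

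Part (2), however, has a genuine gap. You correctly identify the strategy — a Morimoto/Kusuoka--Strook separation of variables in three active variables, with the Kusuoka--Strook condition $\liminf_{x\rightarrow 0}\left\vert x\ln f_{\sigma }\left( x\right) \right\vert \neq 0$ as the mechanism — and your separated ansatz is structurally the same as the paper's (your ODE $w_{k}^{\prime \prime }=(\nu _{k}^{2}f^{2}-\mu _{k}^{2})w_{k}$ is exactly the eigenvalue problem $-v^{\prime \prime }+n^{2}f^{2}v=\lambda v$, and your $e^{-\mu _{k}x_{2}}$ plays the role of the paper's series $\sum_{N}\frac{t^{2N}}{\left( 2N\right) !}\lambda _{0}\left( 1,n\right) ^{N}$). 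But the two quantitative facts that make the construction work are absent. First, the point where $\sigma \geq 1$ actually enters is the eigenvalue bound (\ref{lambda bound}): because $f\left( x\right) ^{2}\leq Ce^{-\delta _{0}/\left\vert x\right\vert }$, the potential $n^{2}f^{2}$ is at most $1$ on an interval of length $\approx 1/\ln n$, so the lowest Dirichlet eigenvalue satisfies $\lambda _{0}\left( 1,n\right) \lesssim \left( \ln n\right) ^{2}$; this logarithmic growth is precisely what lets the $t$-series converge to a $W_{A}^{1,2}$-valued analytic function against coefficients decaying only polynomially as in (\ref{suitable decay}). A generic ``WKB across the turning point'' statement does not by itself produce this, and without it one cannot certify that the superposition is a weak solution near $t=0$. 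Second, the unboundedness is not a soft consequence of ``tuning $c_{k}$'': the paper must choose $a_{n}=n^{-\frac{1}{2}-\alpha ^{\prime }}$ with $\alpha ^{\prime }\leq \frac{1}{4}$, prove the pointwise lower bound $v_{n}\left( 0\right) ^{2}\geq \frac{1}{2}$ for the (rearranged, hence even and decreasing) eigenfunctions as in (\ref{lower bound}), and then run an $L^{4}\left( \mathbb{T}\right) $ computation on the convolution square $\sum_{k}a_{n-k}a_{k}\gtrsim n^{-2\alpha ^{\prime }}$ followed by a Fatou argument to upgrade divergence at the single point $\left( 0,0\right) $ to essential unboundedness on every neighbourhood. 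Since you flag exactly this as ``the main obstacle'' without a route through it, Part (2) remains a plan rather than a proof.
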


\begin{theorem}
	\label{geom max princ}Suppose that $F$ satisfies the geometric structure
	conditions in Definition \ref{structure conditions} and $F\leq F_{\sigma }$ for some $0<\sigma <1$. Assume that $u$ is a weak subsolution
	to $\mathcal{L}u=\phi $ in a domain $\Omega \subset \mathbb{R}^{n}$ with $%
	n\geq 2$, where $\mathcal{L}$ has degeneracy $F$ and $\phi $ is $A$%
	-admissible. Moreover, suppose that $u$ is bounded in the weak sense on the
	boundary $\partial \Omega $. Then $u$ is globally bounded in $\Omega $ and
	satisfies%
	\begin{equation*}
		\sup_{\Omega }u\leq \sup_{\partial \Omega }u+C\left\Vert \phi \right\Vert
		_{X\left( \Omega \right) }\ 
	\end{equation*}
	with the constant $C$ depending only on $\Omega$.
\end{theorem}

\section{Proofs of sufficiency}
%

The first part of the geometric Theorem \ref{geom loc bdd} follows from the
abstract Theorem \ref{bound_gen_thm} together with the Orlicz-Sobolev
inequality in Proposition \ref{sob_nd}. Indeed, in the special
case of the degenerate geometry $F_{\sigma }=\frac{1}{r^{\sigma }}$, we have 
\begin{equation*}
	|F_{\sigma }^{\prime }(r)|=\frac{\sigma }{r^{\sigma +1}},\quad F_{\sigma
	}^{\prime \prime }(r)=\frac{\sigma \left( \sigma +1\right) }{r^{\sigma +2}},
\end{equation*}%
and it is easy to check that the conditions of Proposition \ref{sob_nd} are
satisfied iff $\sigma <\frac{1}{N}$. Thus the superradius $\varphi \left(
r\right) $ is given by 
\begin{equation*}
	\varphi \left( r\right) =Cr^{-\sigma N+1}<\infty ,
\end{equation*}%
and so the Inner Ball Inequality (\ref{Inner ball inequ}) in Proposition \ref%
{DG} shows that weak subsolutions $u$ to the inhomogeneous equation $%
\mathcal{L}u=\phi $ are locally bounded above, and hence that solutions $u$
are locally bounded.

Similarly, the geometric Theorem \ref{geom max princ} follows from the abstract Theorem \ref{2D max} and Proposition \ref{sob_global}.

The second part of the geometric Theorem \ref{geom loc bdd} follows from the counterexample constructed in the next section.

\section{An unbounded weak solution}

In this final section of the final chapter of Part 3, we demonstrate that
weak solutions to our degenerate equations can fail to be locally bounded.
We modify an example of Morimoto \cite{Mor} that was used to provide an
alternate proof of a result of Kusuoka and Strook \cite{KuStr}.

\begin{theorem}
\label{actual}Suppose that $g\in C^{\infty }\left( \mathbb{R}\right) $
satisfies $g\left( x\right) \geq 0$, $g\left( 0\right) =0$ and the decay
condition%
\begin{equation}
\liminf_{x\rightarrow 0}\left\vert x\ln \frac{1}{g\left( x\right) }%
\right\vert \neq 0.  \label{decay}
\end{equation}%
Then for some $\varepsilon >0$, the operator%
\begin{equation*}
L\equiv \frac{\partial ^{2}}{\partial x^{2}}+g\left( x\right) \frac{\partial
^{2}}{\partial y^{2}}+\frac{\partial ^{2}}{\partial t^{2}}
\end{equation*}%
\textbf{fails} to be $W_{A}^{1,2}\left( \mathbb{R}^{2}\right) $-hypoelliptic
in an open subset $\left( -1,1\right) \times \mathbb{R}\times \left(
-\varepsilon ,\varepsilon \right) $ of $\mathbb{R}^{3}$ containing the
origin, where $\nabla _{A}\equiv \left( \frac{\partial }{\partial x},\sqrt{%
g\left( x\right) }\frac{\partial }{\partial y},\frac{\partial }{\partial t}%
\right) $ is the degenerate gradient associated with $\mathcal{L}$. In fact,
we construct a weak solution $u$ of the homogeneous equation $\mathcal{L}u=0$
in $\left( -1,1\right) \times \mathbb{T}\times \left( -\varepsilon
,\varepsilon \right) $ with $\left\Vert u\right\Vert _{L^{\infty }\left(
\left( -\delta ,\delta \right) \times \mathbb{T}\times \left( -\varepsilon
^{\prime },\varepsilon ^{\prime }\right) \right) }=\infty $ for all $\delta
>0$ and $0<\varepsilon ^{\prime }<\varepsilon $.
\end{theorem}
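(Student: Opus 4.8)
The plan is to adapt Morimoto's construction of an unbounded weak solution to the Kusuoka–Strook operator to the present rough, divergence-form setting, exploiting the Fourier structure in the periodic variable $y\in\mathbb{T}$. First I would separate variables: look for a solution of the form $u(x,y,t)=\sum_{n\ge 1} a_n\,v_n(x,t)\,e^{iny}$, where each mode $v_n$ solves the two-dimensional equation $\partial_x^2 v_n - n^2 g(x) v_n + \partial_t^2 v_n = 0$ in a strip $(-1,1)\times(-\varepsilon,\varepsilon)$. For fixed $n$ this is an elliptic equation with a large zeroth-order coefficient $n^2 g(x)$, degenerate only where $g$ vanishes; the point of the decay hypothesis \eqref{decay} is that $g$ does not decay too fast, so $n^2 g(x)$ is genuinely large on a set that shrinks slowly as $n\to\infty$. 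I would construct $v_n$ so that it is concentrated near $x=0$ and has amplitude growing in $n$ (e.g. normalized so that $\|v_n\|_{L^2}=1$ but with a pointwise value near the origin that is not controlled), and then choose the coefficients $a_n\to 0$ fast enough that the series converges in $W^{1,2}_A$ but the partial sums are unbounded near the $t$-axis $\{x=0\}$.

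The key steps, in order, are: (i) For each $n$ produce an explicit or semi-explicit positive solution $v_n$ of the two-dimensional modal equation on $(-1,1)\times(-\varepsilon,\varepsilon)$ with $v_n=0$ on the lateral boundary $x=\pm 1$ and prescribed behaviour at $t=\pm\varepsilon$, and estimate its $W^{1,2}_A$ energy $\mathcal{E}_n\equiv\int(|\partial_x v_n|^2+n^2 g(x)|v_n|^2+|\partial_t v_n|^2)$ against its sup near the origin. The separable ODE/eigenvalue analysis in $x$ should give that the lowest "mode" decays like $e^{-c\,n^{1/2}\int_0^{\cdot}\sqrt{g}}$ or similar, and the condition \eqref{decay} — which says $|x\ln(1/g(x))|\not\to 0$, equivalently $g(x)\gtrsim e^{-C/x}$ along a sequence — is precisely what prevents the usual spectral gap estimate that would force hypoellipticity, mirroring the Kusuoka–Strook dichotomy $\lim r\ln f(r)=0$. (ii) Choose $a_n>0$ with $\sum a_n^2\,\mathcal{E}_n<\infty$ so that $u=\sum a_n v_n e^{iny}\in W^{1,2}_A\big((-1,1)\times\mathbb{T}\times(-\varepsilon,\varepsilon)\big)$; verify $\mathcal{L}u=0$ weakly by testing against Lipschitz compactly supported $w$, expanding $w$ in its Fourier series in $y$ and using that each $v_n$ solves its modal equation (this uses orthogonality of $\{e^{iny}\}$ and dominated convergence, justified by the energy bound). (iii) Show $\|u\|_{L^\infty}=\infty$ on every box $(-\delta,\delta)\times\mathbb{T}\times(-\varepsilon',\varepsilon')$: evaluate (or take a limit of mollifications of) $u$ at points $(x_k,0,0)$ with $x_k\to 0$ chosen along the sequence where $g(x_k)\gtrsim e^{-C/x_k}$, and show $\sum_n a_n v_n(x_k,0)$ can be made arbitrarily large — this requires the lower bounds on $v_n$ near the origin to beat the decay forced by summability of $a_n^2\mathcal{E}_n$, which is exactly where the quantitative form of \eqref{decay} is consumed.

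I would handle the degeneracy of the underlying equation (the matrix $A$ degenerates where $f(x_1)=0$, i.e. where $g$ vanishes) by noting that away from $x=0$ the operator $\mathcal{L}$ is uniformly elliptic, so $v_n$ is smooth there and all manipulations are classical; only the behaviour as $x\to 0$ is delicate, and there one works with the energy form directly rather than pointwise, taking the unboundedness claim in the sense of the essential supremum as in the statement. The translation from Morimoto's smooth operator $\partial_x^2+g\partial_y^2+\partial_t^2$ to our quasilinear $\mathcal{L}u=\mathrm{div}\,\mathcal{A}(x,u)\nabla u$ is in fact trivial for the counterexample, since the theorem only asserts failure of boundedness for $Lu=0$ with the \emph{linear} operator having $\mathcal{A}=A$ — one simply takes $\mathcal{A}(x,z)=A(x)=\mathrm{diag}(1,g(x),1)$ and checks that the structure conditions in Definition \ref{structure conditions} hold for $F=F_\sigma$ with $\sigma\ge 1$, which forces $\liminf|x\ln(1/g)|=\liminf x\cdot x^{-\sigma}=\liminf x^{1-\sigma}\ne 0$ exactly when $\sigma\ge 1$, matching the hypothesis of Theorem \ref{actual}.

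The main obstacle I expect is step (i): obtaining a sufficiently sharp two-sided estimate relating the energy $\mathcal{E}_n$ of the modal solution $v_n$ to its amplitude near $x=0$, with constants tracked well enough in $n$ that steps (ii) and (iii) can be balanced simultaneously. Concretely, one needs the energy to grow at most like a fixed power of $n$ (or slower) while the near-origin amplitude grows at least like a competing power, and this is precisely a WKB/Agmon-distance type computation for the one-dimensional Schrödinger operator $-\partial_x^2+n^2 g(x)$ on $(-1,1)$ — the Agmon distance $\int_0^x \sqrt{n^2 g}$ is what controls tunnelling, and \eqref{decay} is the statement that this distance stays bounded (not $\to\infty$) on a fixed neighbourhood of $0$ as $n\to\infty$. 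Getting the inequalities to go the right way, and in particular ruling out that the Caccioppoli/energy estimate accidentally forces boundedness, is the crux; everything else (Fourier orthogonality, summability, verifying the weak formulation) is routine bookkeeping.
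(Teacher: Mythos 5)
Your overall architecture --- separation of variables in $y$, modal solutions of the form $v_n(x)\cosh\bigl(t\sqrt{\lambda_n}\bigr)$, and coefficients $a_n$ decaying just fast enough for $W_A^{1,2}$ membership but not fast enough for boundedness --- coincides with the paper's (which follows Morimoto). But there is a genuine gap at the step you yourself identify as the crux: you have read the hypothesis \eqref{decay} backwards. The condition $\liminf_{x\rightarrow 0}\left\vert x\ln \frac{1}{g(x)}\right\vert \neq 0$ means there is $\delta_0>0$ with $\left\vert x\ln \frac{1}{g(x)}\right\vert \geq \delta_0$ for \emph{all} sufficiently small $x$, i.e. $g(x)\leq e^{-\delta_0/\left\vert x\right\vert}$ uniformly near the origin: $g$ decays \emph{very fast}, not ``not too fast,'' and there is no sequence along which $g(x)\gtrsim e^{-C/x}$ gives a lower bound. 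The mechanism of the counterexample is the opposite of the one you describe: precisely because $g$ is so degenerate, the potential satisfies $n^2 g(x)\leq 1$ on the logarithmically shrinking interval $\left\vert x\right\vert \leq a(n)\approx \delta_0/(2\ln n)$, so by Rayleigh-quotient comparison with $-\partial_x^2+1$ on that interval, together with domain monotonicity, the ground-state eigenvalue on $(-1,1)$ obeys $\lambda_0(1,n)\leq C(\ln n)^2$. This logarithmic eigenvalue growth is what keeps the factors $\cosh\bigl(t\sqrt{\lambda_0(1,n)}\bigr)\lesssim n^{Ct}$ only polynomially large, so the series converges in $W_A^{1,2}$ for small $t$ while still blowing up near the origin. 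Your Agmon-distance heuristic is likewise inverted: under \eqref{decay} the quantity $\int_0^{a} n\sqrt{g}$ over a \emph{fixed} neighbourhood tends to infinity with $n$; what is controlled is the potential on the shrinking interval. As written, your step (i) would be attempting a spectral estimate from a lower bound on $g$ that the hypothesis not only fails to supply but actually contradicts.

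A second, smaller gap is in step (iii). ``Evaluating'' $u$ at points $(x_k,0,0)$ is not directly meaningful for a $W_A^{1,2}$ function, and divergence of $\sum_n a_n v_n(0)$ at a single point does not yield $\left\Vert u\right\Vert_{L^\infty}=\infty$ on every neighbourhood. The paper normalizes $\left\Vert v_n\right\Vert_{L^2}=1$, uses the symmetric-decreasing rearrangement to get the uniform lower bound $v_n(0)^2\geq \tfrac12$, and then proves $\left\Vert u(x,\cdot,t)\right\Vert_{L^4(\mathbb{T})}^4\rightarrow\infty$ as $(x,t)\rightarrow(0,0)$ by applying Plancherel to $u^2$ (via the convolution lower bound $\sum_{k=1}^{n-1}a_{n-k}a_k\gtrsim n^{-2\alpha'}$ with $a_n=n^{-1/2-\alpha'}$ and $\alpha'\leq\tfrac14$) and Fatou's lemma. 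Some integrated blow-up argument of this kind is required; it is not routine bookkeeping. Your closing observation that $F_\sigma$ with $\sigma\geq 1$ satisfies \eqref{decay} is correct and is exactly how the paper deduces part (2) of Theorem \ref{loc} from Theorem \ref{actual}.
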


\begin{proof}
For $a,\eta >0$ we follow Morimoto \cite{Mor}, who in turn followed Bouendi
and Goulaouic \cite{BoGo}, by considering the second order operator $L_{\eta
}\equiv -\frac{\partial ^{2}}{\partial x^{2}}+g\left( x\right) \eta ^{2}$
and the eigenvalue problem%
\begin{eqnarray*}
L_{\eta }v\left( x,\eta \right) &=&\lambda \ v\left( x,\eta \right) ,\ \ \ \
\ x\in I_{a}\equiv \left( -a,a\right) , \\
v\left( a,\eta \right) &=&v\left( -a,\eta \right) =0.
\end{eqnarray*}%
The least eigenvalue is given by the Rayleigh quotient formula%
\begin{eqnarray*}
\lambda _{0}\left( a,\eta \right) &=&\inf_{f\left( \neq 0\right) \in
C_{0}^{\infty }\left( I_{a}\right) }\frac{\left\langle L_{\eta
}f,f\right\rangle _{L^{2}}}{\left\Vert f\right\Vert _{L^{2}}^{2}} \\
&=&\inf_{f\left( \neq 0\right) \in C_{0}^{\infty }\left( I_{a}\right) }\frac{%
\int_{-a}^{a}f^{\prime }\left( x\right) ^{2}dx+\int_{-a}^{a}g\left( x\right)
\eta ^{2}f\left( x\right) ^{2}dx}{\left\Vert f\right\Vert _{L^{2}}^{2}},
\end{eqnarray*}%
from which it follows that%
\begin{equation}
\lambda _{0}\left( a,\eta \right) \leq \lambda _{0}\left( a_{0},\eta \right) 
\text{ if }a\geq a_{0}.  \label{follows that}
\end{equation}

The decay condition (\ref{decay}) above is equivalent to the existence of $%
\delta _{0}>0$ such that $g\left( x\right) \leq e^{-\frac{\delta _{0}}{%
\left\vert x\right\vert }}$ for $x$ small. So we may suppose $g\left(
x\right) \leq Ce^{-\frac{\delta _{0}}{\left\vert x\right\vert }}$ for $x\in
I\equiv \left[ -1,1\right] $ where $C\geq 1$, and then take $\left\vert \eta
\right\vert $ sufficiently large that with%
\begin{equation*}
a\left( \eta \right) \equiv \frac{\delta _{0}}{\ln C+2\ln \left\vert \eta
\right\vert },
\end{equation*}%
we have both $a\left( \eta \right) \leq 1$ and%
\begin{equation*}
g\left( x\right) \eta ^{2}\leq Ce^{-\frac{\delta _{0}}{\left\vert
x\right\vert }}\eta ^{2}\leq e^{\ln C-\frac{\delta _{0}}{a\left( \eta
\right) }+2\ln \left\vert \eta \right\vert }=1,\ \ \ \ \ x\in I_{a\left(
\eta \right) }.
\end{equation*}

Now let $\mu _{0}\left( a\left( \eta \right) \right) $ denote the least
eigenvalue for the problem%
\begin{eqnarray*}
\left\{ -\frac{\partial ^{2}}{\partial x^{2}}+1\right\} v\left( x,\eta
\right) &=&\mu \ v\left( x,\eta \right) ,\ \ \ \ \ x\in I_{a\left( \eta
\right) }=\left( -a\left( \eta \right) ,a\left( \eta \right) \right) , \\
v\left( a\left( \eta \right) ,\eta \right) &=&v\left( -a\left( \eta \right)
,\eta \right) =0,
\end{eqnarray*}%
and note that%
\begin{eqnarray*}
\mu _{0}\left( a\left( \eta \right) \right) &=&\inf_{f\left( \neq 0\right)
\in C_{0}^{\infty }\left( I_{a\left( \eta \right) }\right) }\frac{%
\left\langle -\frac{\partial ^{2}f}{\partial x^{2}}+f,f\right\rangle
_{L^{2}\left( I_{a\left( \eta \right) }\right) }}{\left\Vert f\right\Vert
_{L^{2}\left( I_{a\left( \eta \right) }\right) }^{2}} \\
&=&\inf_{f\left( \neq 0\right) \in C_{0}^{\infty }\left( I_{a\left( \eta
\right) }\right) }\frac{\int_{-a\left( \eta \right) }^{a\left( \eta \right)
}f^{\prime }\left( x\right) ^{2}dx+\int_{-a\left( \eta \right) }^{a\left(
\eta \right) }f\left( x\right) ^{2}dx}{\left\Vert f\right\Vert _{L^{2}\left(
I_{a\left( \eta \right) }\right) }^{2}}.
\end{eqnarray*}%
It follows that%
\begin{equation*}
\lambda _{0}\left( a\left( \eta \right) ,\eta \right) \leq \mu _{0}\left(
a\left( \eta \right) \right) ,\ \ \ \ \ \text{for }\left\vert \eta
\right\vert \text{ sufficiently large}.
\end{equation*}%
Now an easy classical calculation using exact solutions to $\left\{ -\frac{%
\partial ^{2}}{\partial x^{2}}+1-\mu \right\} v=0$ shows that%
\begin{equation*}
\mu _{0}\left( a\left( \eta \right) \right) =C_{1}\frac{1}{a\left( \eta
\right) ^{2}}+1,
\end{equation*}%
for some constant $C_{1}$ independent of $\eta $, and hence combining this
with (\ref{follows that}), we have%
\begin{eqnarray}
0 &<&\lambda _{0}\left( 1,\eta \right) \leq \lambda _{0}\left( a\left( \eta
\right) ,\eta \right) \leq \mu _{0}\left( a\left( \eta \right) \right)
\label{lambda bound} \\
&=&C_{1}\left( \frac{\ln C+2\ln \left\vert \eta \right\vert }{\delta _{0}}%
\right) ^{2}+1\leq C_{2}\left( \ln \left\vert \eta \right\vert \right)
^{2},\ \ \ \ \ \text{for }\left\vert \eta \right\vert \text{ sufficiently
large}.  \notag
\end{eqnarray}

Now let $v_{0}\left( x,\eta \right) $ be an eigenfunction on the interval $%
I=-I_{1}=\left[ -1,1\right] $ associated with $\lambda _{0}\left( 1,\eta
\right) $ and normalized so that 
\begin{equation}
\left\Vert v_{0}\left( \cdot ,\eta \right) \right\Vert _{L^{2}\left(
I\right) }=1.  \label{normalized}
\end{equation}%
Choose a sequence $\left\{ a_{n}\right\} _{n=-\infty }^{\infty }$ satisfying%
\begin{equation}
\left\vert a_{n}\right\vert \leq \frac{1}{1+\left\vert n\right\vert ^{\alpha
}}\rho _{n},  \label{suitable decay}
\end{equation}%
for some $\alpha >0$ where $\left\Vert \left\{ \rho _{n}\right\} _{n\in 
\mathbb{Z}}\right\Vert _{\ell ^{2}}=1$. For $y\in I_{\pi }=\left[ -\pi ,\pi %
\right] $, identified with the unit circle $\mathbb{T}$ upon identifying $%
-\pi $ and $\pi $, we formally define%
\begin{eqnarray*}
w\left( x,y\right)  &\equiv &\sum_{n\in \mathbb{Z}}e^{iyn}v_{0}\left(
x,n\right) a_{n}; \\
w_{N}\left( x,y\right)  &\equiv &\sum_{n\in \mathbb{Z}}\lambda _{0}\left(
1,n\right) ^{N}e^{iyn}v_{0}\left( x,n\right) a_{n}; \\
u\left( x,y,t\right)  &\equiv &\sum_{N=0}^{\infty }\frac{t^{2N}}{\left(
2N\right) !}w_{N}\left( x,y\right) .
\end{eqnarray*}%
We now claim that%
\begin{equation*}
w_{N}\left( x,y\right) =\left\{ -\frac{\partial ^{2}}{\partial x^{2}}%
-g\left( x\right) \frac{\partial ^{2}}{\partial y^{2}}\right\} ^{N}w\left(
x,y\right) .
\end{equation*}%
Indeed, assuming this holds for $N$, and using 
\begin{equation*}
-\frac{\partial ^{2}}{\partial x^{2}}v_{0}\left( x,n\right) =\left[ \lambda
_{0}\left( 1,n\right) -g\left( x\right) n^{2}\right] v_{0}\left( x,n\right) ,
\end{equation*}%
we obtain that%
\begin{eqnarray*}
\left\{ -\frac{\partial ^{2}}{\partial x^{2}}-g\left( x\right) \frac{%
\partial ^{2}}{\partial y^{2}}\right\} ^{N+1}w\left( x,y\right)  &=&\left\{ -%
\frac{\partial ^{2}}{\partial x^{2}}-g\left( x\right) \frac{\partial ^{2}}{%
\partial y^{2}}\right\} w_{N}\left( x,y\right)  \\
&=&-\sum_{n\in \mathbb{Z}}\lambda _{0}\left( 1,n\right) ^{N}e^{iyn}\frac{%
\partial ^{2}}{\partial x^{2}}v_{0}\left( x,n\right) a_{n} \\
&&+\sum_{n\in \mathbb{Z}}\lambda _{0}\left( 1,n\right) ^{N}e^{iyn}g\left(
x\right) n^{2}v_{0}\left( x,n\right) a_{n} \\
&=&\sum_{n\in \mathbb{Z}}\lambda _{0}\left( 1,n\right) ^{N}e^{iyn}\lambda
_{0}\left( 1,n\right) v_{0}\left( x,n\right) a_{n} \\
&&-\sum_{n\in \mathbb{Z}}\lambda _{0}\left( 1,n\right) ^{N}e^{iyn}g\left(
x\right) n^{2}v_{0}\left( x,n\right) a_{n} \\
&&+\sum_{n\in \mathbb{Z}}\lambda _{0}\left( 1,n\right) ^{N}e^{iyn}g\left(
x\right) n^{2}v_{0}\left( x,n\right) a_{n} \\
&=&\sum_{n\in \mathbb{Z}}\lambda _{0}\left( 1,n\right)
^{N+1}e^{iyn}v_{0}\left( x,n\right) a_{n}=w_{N+1}\left( x,y\right) .
\end{eqnarray*}%
It follows that%
\begin{equation*}
\left\{ -\frac{\partial ^{2}}{\partial x^{2}}-g\left( x\right) \frac{%
\partial ^{2}}{\partial y^{2}}\right\} w_{N}\left( x,y\right) =w_{N+1}\left(
x,y\right) ,
\end{equation*}%
and so formally we get%
\begin{eqnarray*}
Lu\left( x,y,t\right)  &=&\left\{ -\frac{\partial ^{2}}{\partial x^{2}}%
-g\left( x\right) \frac{\partial ^{2}}{\partial y^{2}}-\frac{\partial ^{2}}{%
\partial t^{2}}\right\} \sum_{N=0}^{\infty }\frac{t^{2N}}{\left( 2N\right) !}%
w_{N}\left( x,y\right)  \\
&=&\sum_{N=0}^{\infty }\frac{t^{2N}}{\left( 2N\right) !}w_{N+1}\left(
x,y\right) -\sum_{N=0}^{\infty }\frac{\partial ^{2}}{\partial t^{2}}\frac{%
t^{2N}}{\left( 2N\right) !}w_{N}\left( x,y\right)  \\
&=&\sum_{N=0}^{\infty }\frac{t^{2N}}{\left( 2N\right) !}w_{N+1}\left(
x,y\right) -\sum_{N=1}^{\infty }\frac{t^{2N-2}}{\left( 2N-2\right) !}%
w_{N}\left( x,y\right) =0.
\end{eqnarray*}

Now we show that $u\left( x,y,t\right) $ is well defined as an $L^{2}\left(
I\times \mathbb{T}\right) $-valued analytic function of $t$ for $t$ in some
small neighbourhood of $0$ provided $\left\{ a_{n}\right\} _{n\in \mathbb{Z}}
$ is in $\ell ^{2}\left( \mathbb{Z}\right) $ with suitable decay at $\infty $%
, namely (\ref{suitable decay}). Here $I=\left[ -1,1\right] $. Indeed, using
Plancherel's formula in the $y$ variable, and then Fubini's theorem, we have%
\begin{eqnarray*}
\left\Vert w_{N}\right\Vert _{L^{2}\left( I\times \mathbb{T}\right) }^{2}
&=&\int_{-1}^{1}\left\{ \int_{-\pi }^{\pi }\left\vert \sum_{n\in \mathbb{Z}%
}\lambda _{0}\left( 1,n\right) ^{N}e^{iyn}v_{0}\left( x,n\right)
a_{n}\right\vert ^{2}dy\right\} dx \\
&=&\int_{-1}^{1}\left\{ \sum_{n\in \mathbb{Z}}\left\vert \lambda _{0}\left(
1,n\right) ^{N}v_{0}\left( x,n\right) a_{n}\right\vert ^{2}\right\} dx \\
&=&\sum_{n\in \mathbb{Z}}\left\{ \int_{-1}^{1}\left\vert v_{0}\left(
x,n\right) \right\vert ^{2}dx\right\} \left\vert \lambda _{0}\left(
1,n\right) ^{N}a_{n}\right\vert ^{2} \\
&=&\sum_{n\in \mathbb{Z}}\left\vert \lambda _{0}\left( 1,n\right)
^{N}a_{n}\right\vert ^{2}.
\end{eqnarray*}%
Now from (\ref{lambda bound}) we have the bound $\lambda _{0}\left(
1,n\right) \leq C_{2}\left( \ln n\right) ^{2}$ for $n$ sufficiently large,
and hence from (\ref{suitable decay}),%
\begin{equation*}
\left\vert a_{n}\right\vert \leq \frac{1}{1+\left\vert n\right\vert ^{\alpha
}}\rho _{n},
\end{equation*}%
where $\left\Vert \left\{ \rho _{n}\right\} _{n\in \mathbb{Z}}\right\Vert
_{\ell ^{2}}=1$, we obtain%
\begin{eqnarray*}
\left\Vert w_{N}\right\Vert _{L^{2}\left( I\times \mathbb{T}\right) } &\leq
&C_{3}\sqrt{\sum_{n\in \mathbb{Z}}\left\vert \left( \ln n\right)
^{2N}a_{n}\right\vert ^{2}} \\
&\leq &C_{3}\sqrt{\sum_{n\in \mathbb{Z}}\left\vert \left( \ln n\right)
^{2N}\left( \frac{1}{1+e^{\alpha \ln n}}\right) \right\vert ^{2}\left\vert
\rho _{n}\right\vert ^{2}} \\
&\leq &C_{4}\sqrt{N}\alpha ^{-2N}\left( 2N\right) !\sqrt{\sum_{n\in \mathbb{Z%
}}\left\vert \rho _{n}\right\vert ^{2}}=C_{4}\frac{1}{\sqrt{N}}\alpha
^{-2N}\left( 2N\right) !
\end{eqnarray*}%
since the maximum value of $s^{2N}e^{-\alpha s}$ occurs at\thinspace $s=%
\frac{2N}{\alpha }$, and then by Stirling's formula, 
\begin{equation*}
\frac{s^{2N}}{1+e^{\alpha s}}\leq s^{2N}e^{-\alpha s}\leq \left( \frac{2N}{%
\alpha }\right) ^{2N}e^{-\alpha \frac{2N}{\alpha }}=\left( \frac{2N}{e}%
\right) ^{2N}\alpha ^{-2N}\leq \frac{1}{\sqrt{N}}\alpha ^{-2N}\left(
2N\right) !
\end{equation*}%
Thus we conclude that%
\begin{equation*}
\left\Vert u\left( x,y,t\right) \right\Vert _{L_{x,y}^{2}\left( I\times 
\mathbb{T}\right) }\leq \sum_{N=0}^{\infty }\frac{t^{2N}}{\left( 2N\right) !}%
\left\Vert w_{N}\right\Vert _{L^{2}\left( I\times \mathbb{T}\right) }\leq
C_{4}\sum_{N=0}^{\infty }\frac{1}{\sqrt{N}}\left( \frac{t}{\alpha }\right)
^{2N}<\infty 
\end{equation*}%
for $t\in \left( -\alpha ,\alpha \right) $, and it follows that $u\left(
x,y,t\right) $ is a well-defined $L^{2}\left( I\times \mathbb{T}\right) $%
-valued analytic function of $t\in \left( -\alpha ,\alpha \right) $ that
satisfies the homogeneous equation $\mathcal{L}u\left( x,y,t\right) =0$ for $%
\left( x,y,t\right) \in I\times \mathbb{T}\times \left( -\alpha ,\alpha
\right) $.

Next, we show that $u\in W_{A}^{1,2}\left( I\times \mathbb{T}\times \left(
-\varepsilon ,\varepsilon \right) \right) $ for some $\varepsilon >0$. We
first compute that%
\begin{eqnarray*}
&&\left\Vert \frac{\partial }{\partial x}w_{N}\right\Vert _{L^{2}\left(
I\times \mathbb{T}\right) }^{2}+\left\Vert \sqrt{g\left( x\right) }\frac{%
\partial }{\partial y}w_{N}\right\Vert _{L^{2}\left( I\times \mathbb{T}%
\right) }^{2} \\
&=&\left\langle \left\{ -\frac{\partial ^{2}}{\partial x^{2}}-g\left(
x\right) \frac{\partial ^{2}}{\partial y^{2}}\right\} w_{N}\left( x,y\right)
,w_{N}\left( x,y\right) \right\rangle _{L^{2}\left( I\times \mathbb{T}%
\right) } \\
&=&\left\langle w_{N+1}\left( x,y\right) ,w_{N}\left( x,y\right)
\right\rangle _{L^{2}\left( I\times \mathbb{T}\right) } \\
&\leq &\left\Vert w_{N+1}\right\Vert _{L^{2}\left( I\times \mathbb{T}\right)
}\left\Vert w_{N}\right\Vert _{L^{2}\left( I\times \mathbb{T}\right) } \\
&\leq &C_{4}\frac{1}{\sqrt{N+1}}\alpha ^{-2N-2}\left( 2N+2\right) !C_{4}%
\frac{1}{\sqrt{N+1}}\alpha ^{-2N}\left( 2N\right) ! \\
&\leq &C_{5}^{2}N\left[ \left( 2N\right) !\right] ^{2}\alpha ^{-4N-2}\ ,
\end{eqnarray*}%
which shows in particular that $w_{N}\in W_{A}^{1,2}\left( I\times \mathbb{T}%
\right) $ for each $N\geq 1$ with the norm estimate%
\begin{equation*}
\left\Vert \frac{w_{N}}{\left( 2N\right) !}\right\Vert _{W_{A}^{1,2}\left(
I\times \mathbb{T}\right) }\leq C_{5}\sqrt{N}\alpha ^{-2N-1}.
\end{equation*}%
Thus the $W_{A}^{1,2}\left( I\times \mathbb{T}\right) $-valued analytic
function $u\left( t\right) =\sum_{N=0}^{\infty }\frac{w_{N}}{\left(
2N\right) !}t^{2N}$ is $W_{A}^{1,2}\left( I\times \mathbb{T}\right) $%
-bounded in the complex disk $B\left( 0,\alpha \right) $ centered at the
origin with radius $\alpha $. Then we use Cauchy's estimates for the $%
W_{A}^{1,2}\left( I\times \mathbb{T}\right) $-valued analytic function $%
u\left( t\right) $ to obtain that $\frac{\partial }{\partial t}u\left(
t\right) $ is $W_{A}^{1,2}\left( I\times \mathbb{T}\right) $-bounded in any
complex disk $B\left( 0,\varepsilon \right) $ with $0<\varepsilon <\alpha
=\beta -\frac{1}{2}$, which shows that $\frac{\partial }{\partial t}u\in
L^{2}\left( I\times \mathbb{T}\times \left( -\varepsilon ,\varepsilon
\right) \right) $ for $0<\varepsilon <\beta -\frac{1}{2}$. This completes
the proof that $u\in W_{A}^{1,2}\left( I\times \mathbb{T}\times \left(
-\varepsilon ,\varepsilon \right) \right) $ for some $\varepsilon >0$.

Finally, we note that with $\rho _{n}=\frac{1}{1+\left\vert n\right\vert
^{\beta }}$ where $\frac{1}{2}<\beta \leq \frac{3}{2}-\alpha $, then $u$ is 
\emph{not} smooth near the origin since%
\begin{eqnarray*}
\left\Vert \frac{\partial }{\partial y}w\right\Vert _{L^{2}\left( I\times 
\mathbb{T}\right) }^{2} &=&\int_{-1}^{1}\left\{ \int_{-\pi }^{\pi
}\left\vert \sum_{n\in \mathbb{Z}}ine^{iyn}v_{0}\left( x,n\right)
a_{n}\right\vert ^{2}dy\right\} dx \\
&=&\int_{-1}^{1}\left\{ \sum_{n\in \mathbb{Z}}\left\vert v_{0}\left(
x,n\right) na_{n}\right\vert ^{2}\right\} dx \\
&=&\sum_{n\in \mathbb{Z}}\left\{ \int_{-1}^{1}\left\vert v_{0}\left(
x,n\right) \right\vert ^{2}dx\right\} \left\vert na_{n}\right\vert ^{2} \\
&=&\sum_{n\in \mathbb{Z}}\left\vert na_{n}\right\vert ^{2}=\sum_{n\in 
\mathbb{Z}}\left\vert \frac{n}{1+\left\vert n\right\vert ^{\alpha }}\rho
_{n}\right\vert ^{2}=\infty .
\end{eqnarray*}%
This is essentially the example of Morimoto \cite{Mor}. However, we need
more - namely, we must construct an essentially unbounded weak solution $u$
in some neighbourhood $\left( -\delta ,\delta \right) \times \mathbb{T}%
\times \left( -\varepsilon ,\varepsilon \right) $.

To accomplish this, we first derive the additional property (\ref{lower
bound}) below of the least eigenfunction $v_{n}\left( x\right) \equiv
v_{0}\left( x,n\right) $ that satisfies the equation%
\begin{eqnarray*}
\left\{ -\frac{\partial ^{2}}{\partial x^{2}}+g\left( x\right) n^{2}\right\}
v_{n}\left( x\right)  &=&\lambda _{0}\left( 1,n\right) \ v_{n}\left(
x\right) , \\
v_{n}\left( -1\right)  &=&v_{n}\left( 1\right) =0.
\end{eqnarray*}%
We claim that $v_{n}\left( x\right) $ is even on $\left[ -1,1\right] $ and
decreasing from $v_{n}\left( 0\right) $ to $0$ on the interval $\left[ 0,1%
\right] $. Indeed, the least eigenfunction $v_{n}$ minimizes the Rayleigh
quotient%
\begin{equation*}
\frac{\int_{-1}^{1}v_{n}^{\prime }\left( x\right)
^{2}dx+\int_{-1}^{1}g\left( x\right) n^{2}v_{n}\left( x\right) ^{2}dx}{%
\left\Vert v_{n}\right\Vert _{L^{2}}^{2}}=\inf_{f\left( \neq 0\right) \in
C_{0}^{\infty }\left( I_{a}\right) }\frac{\int_{-1}^{1}f^{\prime }\left(
x\right) ^{2}dx+\int_{-1}^{1}g\left( x\right) n^{2}f\left( x\right) ^{2}dx}{%
\left\Vert f\right\Vert _{L^{2}}^{2}},
\end{equation*}%
and since the radially decreasing rearrangement $v_{n}^{\ast }$ of $v_{n}$
on $\left[ -1,1\right] $ satisfies both 
\begin{equation*}
\int_{-1}^{1}v_{n}^{\ast \prime }\left( x\right) ^{2}dx\leq
\int_{-1}^{1}v_{n}^{\prime }\left( x\right) ^{2}dx\text{ and }%
\int_{-1}^{1}g\left( x\right) n^{2}v_{n}^{\ast }\left( x\right) ^{2}dx\leq
\int_{-1}^{1}g\left( x\right) n^{2}v_{n}\left( x\right) ^{2}dx,
\end{equation*}%
as well as $\left\Vert v_{n}^{\ast }\right\Vert _{L^{2}}^{2}=\left\Vert
v_{n}\right\Vert _{L^{2}}^{2}$, we conclude that $v_{n}=v_{n}^{\ast }$. The
only simple consequence we need from this is that%
\begin{equation}
2v_{n}\left( 0\right) ^{2}\geq \int_{-1}^{1}v_{n}\left( x\right) ^{2}dx=1,\
\ \ \ \ n\geq 1,  \label{lower bound}
\end{equation}%
where the equality follows from our normalizing assumption $\left\Vert
v_{n}\right\Vert _{L^{2}}=1$ in (\ref{normalized}).

Now recall $\alpha >0$ from (\ref{suitable decay}) above, and choose $%
0<\alpha <\alpha ^{\prime }\leq \frac{1}{4}$ and define 
\begin{equation}
a_{n}=\left\{ 
\begin{array}{ccc}
\frac{1}{n^{\frac{1}{2}+\alpha ^{\prime }}} & \text{ for } & n\geq 1 \\ 
0 & \text{ for } & n\leq 0%
\end{array}%
\right. .  \label{def an}
\end{equation}%
Then for each $\left( x,t\right) \in I\times \left( -\alpha ,\alpha \right) $%
, we have with 
\begin{equation*}
B_{n}\left( x,t\right) \equiv \left[ \sum_{N=0}^{\infty }\frac{t^{2N}}{%
\left( 2N\right) !}\lambda _{0}\left( 1,n\right) ^{N}\right] v_{n}\left(
x\right) a_{n},
\end{equation*}%
that 
\begin{eqnarray*}
u\left( x,y,t\right) &=&\sum_{N=0}^{\infty }\frac{t^{2N}}{\left( 2N\right) !}%
\sum_{n=1}^{\infty }\lambda _{0}\left( 1,n\right) ^{N}e^{iyn}v_{n}\left(
x\right) a_{n}=\sum_{n=1}^{\infty }e^{iyn}B_{n}\left( x,t\right) \ , \\
w\left( x,y\right) ^{2} &=&\left( \sum_{n=1}^{\infty }e^{iyn}B_{n}\left(
x,t\right) \right) ^{2}=\sum_{n=2}^{\infty }\left\{
\sum_{k=1}^{n-1}B_{n-k}\left( x,t\right) B_{k}\left( x,t\right) \right\}
e^{iyn}\ ,
\end{eqnarray*}%
and so by Plancherel's theorem,%
\begin{equation*}
\left\Vert u\left( x,\cdot ,t\right) \right\Vert _{L^{4}\left( \mathbb{T}%
\right) }^{4}=\left\Vert u\left( x,\cdot ,t\right) ^{2}\right\Vert
_{L^{2}\left( \mathbb{T}\right) }^{2}=\sum_{n=2}^{\infty }\left\vert
\sum_{k=1}^{n-1}B_{n-k}\left( x,t\right) B_{k}\left( x,t\right) \right\vert
^{2}.
\end{equation*}%
In particular we have from (\ref{lower bound}) that%
\begin{eqnarray*}
\left\Vert u\left( 0,\cdot ,0\right) \right\Vert _{L^{4}\left( \mathbb{T}%
\right) }^{4} &=&\sum_{n=2}^{\infty }\left\vert
\sum_{k=1}^{n-1}B_{n-k}\left( 0,0\right) B_{k}\left( 0,0\right) \right\vert
^{2} \\
&=&\sum_{n=2}^{\infty }\left\vert \sum_{k=1}^{n-1}v_{n-k}\left( 0\right)
a_{n-k}v_{k}\left( 0\right) a_{k}\right\vert ^{2}\geq \frac{1}{2}%
\sum_{n=2}^{\infty }\left\vert \sum_{k=1}^{n-1}a_{n-k}a_{k}\right\vert ^{2},
\end{eqnarray*}%
and now we obtain that $\left\Vert w\left( 0,\cdot \right) \right\Vert
_{L^{4}\left( \mathbb{T}\right) }^{4}=\infty $ from the estimates%
\begin{eqnarray*}
\sum_{k=1}^{n-1}a_{n-k}a_{k} &=&\sum_{k=1}^{n-1}\frac{1}{\left( n-k\right) ^{%
\frac{1}{2}+\alpha ^{\prime }}}\frac{1}{k^{\frac{1}{2}+\alpha ^{\prime }}}%
\gtrsim \frac{1}{\left( n-\frac{n}{2}\right) ^{\frac{1}{2}+\alpha ^{\prime }}%
}\sum_{k=1}^{\frac{n}{2}}\frac{1}{k^{\frac{1}{2}+\alpha ^{\prime }}} \\
&\gtrsim &\frac{1}{\left( \frac{n}{2}\right) ^{\frac{1}{2}+\alpha ^{\prime }}%
}\left( \frac{n}{2}\right) ^{\frac{1}{2}-\alpha ^{\prime }}\approx \frac{1}{%
n^{2\alpha ^{\prime }}}
\end{eqnarray*}%
and%
\begin{equation*}
\left\Vert w\left( 0,\cdot \right) \right\Vert _{L^{4}\left( \mathbb{T}%
\right) }^{4}\geq \frac{1}{2}\sum_{n=2}^{\infty }\left\vert
\sum_{k=1}^{n-1}a_{n-k}a_{k}\right\vert ^{2}\gtrsim \sum_{n=2}^{\infty }%
\frac{1}{n^{4\alpha ^{\prime }}}=\infty ,\ \ \ \ \ \text{\ for }\alpha
^{\prime }\leq \frac{1}{4}.
\end{equation*}

Now we note that each eigenfunction $v_{n}\left( x\right) $ is continuous in 
$x$ since it solves an elliptic second order equation on the interval $\left[
-1,1\right] $. Moreover it now follows that $B_{n}\left( x,t\right) $ is
jointly continuous on the rectangle $\left( -1,1\right) \times \left(
-\alpha ,\alpha \right) $. Then we write%
\begin{equation*}
\sum_{n=2}^{\infty }\left\vert \sum_{k=1}^{n-1}B_{n-k}\left( x,t\right)
B_{k}\left( x,t\right) \right\vert ^{2}=\sum_{\substack{ \beta =\left( \beta
_{1},\beta _{2},\beta _{3},\beta _{4}\right) \in \mathbb{N}^{4}  \\ \beta
_{1}+\beta _{2}=n=\beta _{3}+\beta _{4}}}B_{\beta _{1}}\left( x,t\right)
B_{\beta _{2}}\left( x,t\right) B_{\beta _{3}}\left( x,t\right) B_{\beta
_{4}}\left( x,t\right) ,
\end{equation*}%
and apply Fatou's lemma to conclude that%
\begin{eqnarray*}
\infty &=&\sum_{n=2}^{\infty }\left\vert \sum_{k=1}^{n-1}B_{n-k}\left(
0,0\right) B_{k}\left( 0,0\right) \right\vert ^{2}=\sum_{\substack{ \beta
=\left( \beta _{1},\beta _{2},\beta _{3},\beta _{4}\right) \in \mathbb{N}%
^{4}  \\ \beta _{1}+\beta _{2}=n=\beta _{3}+\beta _{4}}}B_{\beta _{1}}\left(
0,0\right) B_{\beta _{2}}\left( 0,0\right) B_{\beta _{3}}\left( 0,0\right)
B_{\beta _{4}}\left( 0,0\right) \\
&=&\sum_{\substack{ \beta =\left( \beta _{1},\beta _{2},\beta _{3},\beta
_{4}\right) \in \mathbb{N}^{4}  \\ \beta _{1}+\beta _{2}=n=\beta _{3}+\beta
_{4}}}\liminf_{\left( x,t\right) \rightarrow \left( 0,0\right) }\left\{
B_{\beta _{1}}\left( x,t\right) B_{\beta _{2}}\left( x,t\right) B_{\beta
_{3}}\left( x,t\right) B_{\beta _{4}}\left( x,t\right) \right\} \\
&\leq &\liminf_{\left( x,t\right) \rightarrow \left( 0,0\right) }\sum 
_{\substack{ \beta =\left( \beta _{1},\beta _{2},\beta _{3},\beta
_{4}\right) \in \mathbb{N}^{4}  \\ \beta _{1}+\beta _{2}=n=\beta _{3}+\beta
_{4}}}B_{\beta _{1}}\left( x,t\right) B_{\beta _{2}}\left( x,t\right)
B_{\beta _{3}}\left( x,t\right) B_{\beta _{4}}\left( x,t\right) \\
&=&\liminf_{\left( x,t\right) \rightarrow \left( 0,0\right)
}\sum_{n=2}^{\infty }\left\vert \sum_{k=1}^{n-1}B_{n-k}\left( x,t\right)
B_{k}\left( x,t\right) \right\vert ^{2}=\liminf_{\left( x,t\right)
\rightarrow \left( 0,0\right) }\left\Vert u\left( x,\cdot ,t\right)
\right\Vert _{L^{4}\left( \mathbb{T}\right) }^{4}.
\end{eqnarray*}%
Thus we have $\lim_{\left( x,t\right) \rightarrow \left( 0,0\right)
}\left\Vert u\left( x,\cdot ,t\right) \right\Vert _{L^{4}\left( \mathbb{T}%
\right) }^{4}=\infty $, which implies that $\left\Vert u\right\Vert
_{L^{\infty }\left( \left( -\delta ,\delta \right) \times \mathbb{T}\times
\left( -\varepsilon ^{\prime },\varepsilon ^{\prime }\right) \right)
}=\infty $ for all $0<\delta <1$ and $0<\varepsilon ^{\prime }<\varepsilon
<\alpha $. Thus we have shown that 
\begin{equation*}
u\left( x,y,t\right) =\sum_{N=0}^{\infty }\frac{t^{2N}}{\left( 2N\right) !}%
w_{N}\left( x,y\right) =w\left( x,y\right) +\sum_{N=1}^{\infty }\frac{t^{2N}%
}{\left( 2N\right) !}w_{N}\left( x,y\right)
\end{equation*}%
is a weak solution of $Lu=0$ in $I\times \mathbb{T}\times \left(
-\varepsilon ,\varepsilon \right) $ with $\left\Vert u\right\Vert
_{L^{\infty }\left( \left( -\delta ,\delta \right) \times \mathbb{T}\times
\left( -\varepsilon ^{\prime },\varepsilon ^{\prime }\right) \right)
}=\infty $ provided $0<\delta <1$ and $0<\varepsilon ^{\prime }<\varepsilon
<\alpha <\alpha ^{\prime }\leq \frac{1}{4}$. This completes the proof that $%
u $ fails to be essentially bounded on $\left( -\delta ,\delta \right)
\times \mathbb{T}\times \left( -\varepsilon ^{\prime },\varepsilon ^{\prime
}\right) $.
\end{proof}

\end{document}